\documentclass[reqno]{amsart}
\usepackage{amsmath, amsthm, amssymb, amstext}
\usepackage[left=3cm,right=3cm,top=3cm,bottom=3cm]{geometry}
\usepackage{hyperref}
\usepackage[dvipsnames]{xcolor}
\usepackage{orcidlink}
\hypersetup{pdfborder={0 0 0},colorlinks}
\usepackage{enumitem}
\allowdisplaybreaks
\usepackage{todonotes}

\newtheorem{theorem}{Theorem}
\newtheorem{remark}[theorem]{Remark}
\newtheorem{lemma}[theorem]{Lemma}
\newtheorem{proposition}[theorem]{Proposition}

\newtheorem{definition}[theorem]{Definition}

\newcommand{\Om} {\Omega}
\newcommand{\pa} {\partial}

\newcommand{\al} {\alpha}
\newcommand{\ba} {\beta}

\newcommand{\ga} {\gamma}

\newcommand{\la} {\lambda}
\newcommand{\si} {\sigma}

\newcommand{\lt} {\left}
\newcommand{\ri} {\right}
\newcommand{\e} {\varepsilon}
\newcommand{\ka} {\kappa}
\newcommand{\nl}{ N_{\lambda}}

\def\R{{\mathbb R}}
\def\N{{\mathbb N}}
\newcommand{\h}{\mathcal{H}}
\newcommand{\fp}{\mathcal{G}}
\newcommand{\op}{\mathcal{L}_{p,q}^{a}}
\newcommand{\ops}{\mathcal{L}_{p,q}^{s,b}}

\newcommand{\na} {\nabla}

\newcommand{\I} {\mathfrak{I}}
\newcommand{\s}{\mathcal{S}}
\newcommand{\So} {\mathcal{S}_p}
\newcommand{\nla} {\mathfrak{N}_{\lambda}}
\newcommand{\m}{\mathfrak{m}_{\lambda}}

\newcommand{\g} {\mathfrak{G}}

\newcommand{\mm}[1]{\lVert #1 \rVert}
\newcommand{\mmm}[1]{{\lVert #1 \rVert}_W}
 \newcommand{\p}[1]{\lvert \nabla#1 \rvert}
 \newcommand{\ve}[1]{\lvert #1 \rvert}

\def\dxy{\,{\rm d}x\,{\rm d}y}

\def\dx{\,{\rm d}x}
\def\dy{\,{\rm d}y}
\def\dv{\,{\rm d}\nu}

\numberwithin{theorem}{section}
\numberwithin{equation}{section}

\title{Mixed double phase equations with local and nonlocal operators}

\author[A. Arora]{Anupma Arora\orcidlink{0009-0006-9391-497X}}
\address[A. Arora]{Department of Mathematics, Birla Institute of Technology and Science Pilani, Pilani Campus, Vidya Vihar, Pilani, Jhunjhunu, Rajasthan, 333031, India}
\email{p20200439@pilani.bits-pilani.ac.in}

\author[S. Gupta]{Shilpa Gupta\orcidlink{0000-0002-4080-9782}}
\address[S. Gupta]{Department of Mathematics and Statistics, Indian Institute of Technology Kanpur, Kanpur, 208016, India}
\email{shilpagupta890@gmail.com}

\author[P. Winkert]{Patrick Winkert\orcidlink{0000-0003-0320-7026}}
\address[P. Winkert]{Technische Universit\"{a}t Berlin, Institut f\"{u}r Mathematik, Stra\ss{}e des 17.\,Juni 136, 10623 Berlin, Germany}
\email{winkert@math.tu-berlin.de}

\subjclass{35A15, 35B33, 35J60, 35M10, 35R11}
\keywords{Brezis-Nirenberg type problem, critical growth, double phase operator, mixed local and nonlocal operators, multiple solutions, Musielak-Orlicz Sobolev spaces}

\begin{document}

\begin{abstract}
	In this paper, we study a new class of mixed double phase problems that combine local and nonlocal operators. We consider two different models. The first model is driven by the fractional $p$-Laplacian together with a local double phase operator, while the second model involves the local $p$-Laplacian coupled with a fractional double phase operator. In order to describe the interaction between local and nonlocal effects within the double phase framework, we introduce an appropriate variational setting based on classical and fractional Musielak-Orlicz Sobolev spaces. Within this setting, we establish several existence and multiplicity results for weak solutions by means of variational and topological techniques. In particular, for the problem driven by the fractional $p$-Laplacian and a local double phase operator, we prove the existence of a nonnegative solution using the Nehari manifold method in the presence of concave-convex nonlinearities. We also investigate the associated Brezis-Nirenberg type problem and obtain the existence of infinitely many solutions via genus theory. For the problem governed by the local $p$-Laplacian and a fractional double phase operator, we show the existence of at least two nontrivial constant sign solutions by exploiting the variational structure of the associated energy functional. Furthermore, in the subcritical case, we prove the existence of a least energy sign-changing solution by combining the Poincar\'{e}-Miranda existence theorem with the quantitative deformation lemma.
\end{abstract}

\maketitle

\section{Introduction and main results}

In this work, we introduce and study two classes of mixed operators combining local and nonlocal effects. The first class consists of the fractional $p$-Laplacian coupled with a local double phase operator, while the second class involves the local $p$-Laplacian coupled with a fractional double phase operator. The study of operators of mixed order has recently attracted significant attention, as such operators arise naturally in various contexts. A notable example is their appearance as the superposition of different stochastic processes, such as a classical random walk and a L\'{e}vy flight, which also play an important role in the analysis of optimal animal foraging strategies, see, for example, the works by Dipierro--Proietti Lippi--Valdinoci  \cite{Dipierro-ProiettiLippi-Valdinoci-2023}, Dipierro--Valdinoci \cite{Dipierro-Valdinoci-2021}, Montefusco--Pellacci--Verzini \cite{Montefusco-Pellacci-Verzini-2013}, and Pellacci--Verzini \cite{Pellacci-Verzini-2018}.

We begin by studying a boundary value problem driven by the fractional $p$-Laplacian and a local double phase operator in the presence of concave-convex nonlinearities. More precisely, we consider the problem
\begin{equation}\label{1.1}
	\begin{aligned}
		(-\Delta)_{p}^s u -\op(u)  & =  \lambda\left(w_1(x)|u|^{k-2}u+w_2(x)|u|^{r-2}u\right) && \text{in } \Omega, \\
		u & = 0 &  & \text{in } \R^N\setminus\Omega,
	\end{aligned}
\end{equation}
where $\Omega\subseteq \R^{N}$, $N \geq 2$, is a  bounded domain with smooth boundary $\partial\Omega$ and
\begin{align*}
	(-\Delta)^{s}_{p}u=C \lim\limits_{\varepsilon\to 0}\int_{\R^{N}\setminus B_{\varepsilon}(x)}\dfrac{|u(x)-u(y)|^{p-2}(u(x)-u(y))}{|x-y|^{N+ps}}\dy,
\end{align*}
denotes the fractional $p$-Laplacian. Here $B_{\varepsilon}(x):=\{z\in \mathbb{R}^N\colon |z-x|<\varepsilon\}$ and $C$ is a normalization constant. Moreover,
\begin{align*}
	\op(u)=\operatorname{div}(\p{u}^{p-2} \nabla u+a(x)\p{u}^{q-2} \nabla u)
\end{align*}
is the classical double phase operator. We assume the following hypotheses.
\begin{enumerate}[label=\textnormal{(H$_1$)},ref=\textnormal{H$_1$}]
	\item\label{H1}
		\begin{enumerate}[label=\textnormal{(\roman*)},ref=\textnormal{\roman*}]
			\item\label{H1i}
				$1<p<q<N$, $1<k<p<q<r\leq p^*=\frac{Np}{N-p}$, and $s\in (0,1)$;
			\item\label{H1ii}
				$w_1\in L^{\frac{r}{r-k}}(\Omega)$, $w_2\in L^\infty(\Omega)$ with $w_1,w_2>0$ a.e.\,in $\Omega$ and $0\leq a(\cdot)\in C^{0,1}(\overline{\Omega})$ ; if $r=p^*$, then $w_1\in L^\infty(\Omega)$.
		\end{enumerate}
\end{enumerate}
A function $u\in W$ is said to be a weak solution of \eqref{1.1} if
\begin{align*}
	&\int_{\R^N}\int_{\R^N} \dfrac{|u(x)-u(y)|^{p-2}(u(x)-u(y))(v(x)-v(y))}{|x-y|^{N+ps}}\dxy \\
	& +\int_{\Omega}  (|\nabla u|^{p-2}\nabla u  + a(x) |\nabla u|^{q-2}\nabla u) \cdot\nabla v\dx =\lambda\int_{\Omega} \left(w_1(x)|u|^{k-2}u+w_2(x)|u|^{r-2}u\right) v \dx,
\end{align*}
for all test functions $v\in W$, where the space $W$ is defined in \eqref{space-W}.

Our first existence result concerning problem \eqref{1.1} is the following.

\begin{theorem}\label{t1}
	Let hypotheses \eqref{H1} be satisfied. Then, there exists $\lambda_{0} > 0$ such that for every $\lambda \in (0,\lambda_{0})$, problem \eqref{1.1} admits a nontrivial, nonnegative solution with negative energy.
\end{theorem}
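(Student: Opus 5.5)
We plan to use the Nehari manifold method for the energy functional $J_\lambda\colon W\to\R$,
\begin{align*}
J_\lambda(u)=\frac1p[u]_{s,p}^p+\int_\Omega\Big(\frac{|\nabla u|^p}{p}+\frac{a(x)|\nabla u|^q}{q}\Big)\dx-\frac{\lambda}{k}\int_\Omega w_1|u|^k\dx-\frac{\lambda}{r}\int_\Omega w_2|u|^r\dx .
\end{align*}
Here $W$ is the space from \eqref{space-W}, the closure of $C_c^\infty(\Omega)$ under the norm built from $\|\nabla u\|_{\mathcal H}$, where $\mathcal H(x,t)=t^p+a(x)t^q$. We take as known from the setting that $W\hookrightarrow W^{1,p}_0(\Omega)\hookrightarrow L^{r}(\Omega)$ continuously, compactly for $r<p^*$. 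The Gagliardo seminorm $[u]_{s,p}$ is controlled by $\|\nabla u\|_p$ for functions vanishing outside $\Omega$, so $J_\lambda\in C^1$. We set
\begin{align*}
\mathcal N_\lambda=\{u\ne0:\langle J_\lambda'(u),u\rangle=0\}.
\end{align*}
For $u\neq0$ we study the fibering map $\phi_u(t)=J_\lambda(tu)$ and split $\mathcal N_\lambda=\mathcal N^+\cup\mathcal N^0\cup\mathcal N^-$ according to the sign of $\phi_u''(1)$.

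\textbf{Step 1: $\mathcal N^0=\emptyset$ for small $\lambda$.} Write $A(u)=[u]^p_{s,p}+\|\nabla u\|_p^p$, $B(u)=\int a|\nabla u|^q$, $K(u)=\int w_1|u|^k$ and $R(u)=\int w_2|u|^r$. On $\mathcal N^0$ we have
\begin{align*}
A+B&=\lambda(K+R),\\
pA+qB&=\lambda(kK+rR).
\end{align*}
Eliminating $K$ gives $(p-k)A+(q-k)B=\lambda(r-k)R$, so $\|\nabla u\|_p^p\le C\lambda\|u\|_r^r\le C\lambda\|\nabla u\|_p^r$. Hence $\|\nabla u\|_p\ge c\lambda^{-1/(r-p)}$. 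Eliminating $R$ gives $(r-p)A\le(r-p)A+(r-q)B=\lambda(r-k)K$. Using Hölder with $w_1\in L^{r/(r-k)}$, this yields $\|\nabla u\|_p^{p-k}\le C\lambda$. The two bounds are incompatible for $\lambda<\lambda_0$. We choose $\lambda_0$ so that, in addition, $m^+:=\inf_{\mathcal N^+}J_\lambda<0$ and the fibering picture below holds. To get $m^+<0$: on $\mathcal N^+$, combining the identities gives $J_\lambda(u)\le -c(A+B)<0$, by the standard computation using $k<p<q<r$.

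\textbf{Step 2: fibering analysis and coercivity.} On $\mathcal N_\lambda$, $J_\lambda$ is coercive and bounded below, because
\begin{align*}
J_\lambda(u)\ge\Big(\frac1p-\frac1r\Big)\|\nabla u\|_p^p-C\lambda\|\nabla u\|_p^k .
\end{align*}
For each $u$ with $K(u)>0$, which holds for every $u\neq0$ since $w_1>0$, the map $\phi_u$ is negative for small $t$ because $k<p$. For small $\lambda$ one shows there is a unique $t^+(u)$ with $t^+u\in\mathcal N^+$ and $J_\lambda(t^+u)<0$. This uses the auxiliary function $\psi_u(t)=t^{p-k}A+t^{q-k}B-\lambda t^{r-k}R$. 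Because of the extra $q$-term, $\psi_u$ is not a pure two-power function. We handle this by bounding $\psi_u$ below by the pure $p$-$r$ expression to locate its maximum, and observing that $\psi_u'$ changes sign once, since $t^{-1}\cdot t^{\cdot}$ powers are ordered.

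\textbf{Step 3: minimization and passage to a weak limit.} Take a minimizing sequence $(u_n)\subset\mathcal N^+$. We may replace $u_n$ by $|u_n|$, since $J_\lambda(|u|)\le J_\lambda(u)$, $\big||u(x)|-|u(y)|\big|\le|u(x)-u(y)|$, and rescaling by $t^+$ only lowers the energy. Ekeland's principle on $\mathcal N_\lambda$, together with $\mathcal N^0=\emptyset$, makes $(u_n)$ a Palais--Smale sequence. It is bounded by coercivity, so $u_n\rightharpoonup u_0$ in $W$, which is reflexive. The subcritical term $K$ converges by compactness, since $k<p^*$ and $w_1\in L^{r/(r-k)}$. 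This is \textbf{the main obstacle}: the term $R$ when $r=p^*$. Here we exploit that $m^+<0$ is a low energy level. Brezis--Lieb splitting gives
\begin{align*}
\|\nabla(u_n-u_0)\|_p^p\le\lambda\|w_2\|_\infty\|u_n-u_0\|_{p^*}^{p^*}+o(1).
\end{align*}
The Sobolev inequality then forces either $u_n\to u_0$, or a loss of mass at least $(S/(\lambda\|w_2\|_\infty))^{N/p}$, which for small $\lambda$ contradicts the upper bound on energy coming from $m^+<0$.

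\textbf{Step 4: conclusion.} With strong convergence, using the $(S_+)$-property of the operator $-\Delta_p^s-\op$, we get $u_0\in\mathcal N^+$, $u_0\ge0$, and $J_\lambda(u_0)=m^+<0$, so $u_0\neq0$. Since $\mathcal N^0=\emptyset$, the Nehari manifold is a natural constraint, and the Lagrange multiplier vanishes. Hence $u_0$ is a nontrivial nonnegative weak solution with negative energy.
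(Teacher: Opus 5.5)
Your proposal follows the paper's proof of Theorem~\ref{t1} essentially step by step: $N_\lambda^0=\emptyset$ for $\lambda<\lambda_*$, the fibering analysis, $m_\lambda^+<0$, an Ekeland Palais--Smale sequence on the Nehari manifold, and compactness below a threshold that is positive for small $\lambda$ (Lemma~\ref{ps_condition}); the main difference is that you pass to $|u_n|$ along the minimizing sequence, whereas the paper replaces the minimizer by $t_1|u_\lambda|$ at the end. Step~3 tacitly relies on two ingredients that the paper supplies explicitly: the a.e.\ convergence $\nabla u_n\to\nabla u_0$ (Lemma~\ref{almost_cvg}), without which Brezis--Lieb cannot be applied to the local $p$- and weighted $q$-terms, and the lower bound $I_\lambda(u_0)\ge-\tilde c\lambda^{p/(p-k)}$ for the weak limit (Lemma~\ref{l5.1}), without which a loss of mass of order $\lambda^{-(N-p)/p}$ does not contradict $m_\lambda^+<0$.
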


We also consider a Brezis Nirenberg type problem obtained from \eqref{1.1} by assuming $w_2=\frac{1}{\lambda}$ and $r=p^*$. In this case, problem \eqref{1.1} reduces to
\begin{equation}\label{1.2}
	\begin{aligned}
		(-\Delta)_{p}^s u -\op(u)  & =  \lambda w_1(x)|u|^{k-2}u+|u|^{p^*-2}u  && \text{in } \Omega, \\
		u & = 0 &  & \text{in } \R^N\setminus\Omega.
	\end{aligned}
\end{equation}

\begin{theorem}\label{t2}
	Let hypotheses \eqref{H1} with $w_2=\frac{1}{\lambda}$ and $r=p^*$ be satisfied. Then, there exists $\lambda_{1} > 0$ such that for every $\lambda \in (0,\lambda_{1})$, problem \eqref{1.2} admits a nontrivial, nonnegative solution with negative energy.
\end{theorem}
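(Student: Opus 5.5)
We follow the scheme of Theorem \ref{t1}, now with critical term $|u|^{p^*-2}u$ carrying coefficient $1$. The energy functional on $W$ is
\begin{align*}
	J_\lambda(u)=\frac{1}{p}[u]_{s,p}^p+\int_\Omega\Big(\frac{|\nabla u|^p}{p}+\frac{a(x)|\nabla u|^q}{q}\Big)\dx-\frac{\lambda}{k}\int_\Omega w_1|u|^k\dx-\frac{1}{p^*}\int_\Omega |u|^{p^*}\dx,
\end{align*}
where $[u]_{s,p}^p$ is the Gagliardo seminorm. Since only the critical term has lost its $\lambda$, we minimize $J_\lambda$ on the negative part $\mathcal N_\lambda^-$... more precisely on $\mathcal N_\lambda^+$, the concave branch of the Nehari manifold $\mathcal N_\lambda=\{u\neq0:\langle J_\lambda'(u),u\rangle=0\}$.

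\emph{Fibering analysis.} For $u\neq0$ consider $\phi_u(t)=J_\lambda(tu)$. Because $k<p<q<p^*$, the map $t\mapsto t^{-k}\big(\text{derivative of the leading terms}\big)$ has the usual concave–convex shape. For $\lambda$ small, each $u$ has exactly one local minimum point $t^+(u)$ with $\phi_u(t^+)<0$ and one maximum point $t^-(u)$. The smallness condition reads $\lambda<\lambda_1$, where $\lambda_1$ is computed from Hölder with $w_1\in L^\infty$ (here $r=p^*$) and the embedding $W\hookrightarrow L^{p^*}(\Omega)$ applied to the $p$-part $\|\nabla u\|_p^p$. Then $\mathcal N_\lambda^0=\emptyset$, $\mathcal N_\lambda=\mathcal N_\lambda^+\cup\mathcal N_\lambda^-$ is a natural constraint via Lagrange multipliers, and $m^+:=\inf_{\mathcal N^+_\lambda}J_\lambda<0$. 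Moreover $J_\lambda$ is coercive and bounded below on $\mathcal N_\lambda$: on $\mathcal N_\lambda$ we have $J_\lambda\ge c\|u\|^p-C\lambda\|u\|^k$, using $\frac1p-\frac1{p^*}>0$ and $\frac1q-\frac1{p^*}>0$ together with $\|\nabla u\|_p^p\gtrsim\|u\|^p$ when $\|u\|$ is large in the Musielak–Orlicz modular sense.

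\emph{Minimizing sequence and compactness — the main obstacle.} By Ekeland's principle on $\mathcal N_\lambda^+$ we obtain a bounded Palais–Smale sequence $(u_n)$ at level $m^+<0$. Because the exponent is critical, we cannot rely on compact embeddings. Passing to a subsequence, $u_n\rightharpoonup u$ weakly in $W$, a.e., and strongly in $L^k$; the $w_1$-term then converges. We first establish $\nabla u_n\to\nabla u$ a.e., either via a Boccardo–Murat type argument for the mixed operator or through monotonicity of the $(S_+)$ type operator tested with truncations. Next we apply the Brezis–Lieb lemma to $[\cdot]_{s,p}^p$, $\|\nabla\cdot\|_p^p$, $\int a|\nabla\cdot|^q$ and $\|\cdot\|_{p^*}^{p^*}$ with $v_n=u_n-u$. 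This gives $J_\lambda(u_n)=J_\lambda(u)+\Phi(v_n)+o(1)$ and
\begin{align*}
	[v_n]^p+\|\nabla v_n\|_p^p+\int a|\nabla v_n|^q-\|v_n\|_{p^*}^{p^*}=o(1).
\end{align*}
If $\ell=\lim\|\nabla v_n\|_p^p>0$, then Sobolev's inequality $S\|v\|_{p^*}^p\le\|\nabla v\|_p^p$ forces $\ell\ge S^{N/p}$. Consequently $\liminf\Phi(v_n)\ge\frac1NS^{N/p}$ and $J_\lambda(u)\le m^+-\frac1NS^{N/p}$. We exclude this by showing that $J_\lambda(u)\ge -C\lambda^{p/(p-k)}$ for critical points, which holds because on the Nehari set $J_\lambda\ge\min_t(ct^p-C\lambda t^k)$. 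Shrinking $\lambda_1$ so that $C\lambda^{p/(p-k)}<\frac1NS^{N/p}$ gives the contradiction. Hence $v_n\to0$, so $u_n\to u$ in $W$, $u\in\mathcal N_\lambda^+$ and $J_\lambda(u)=m^+<0$; in particular $u\ne0$.

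\emph{Nonnegativity.} Since $J_\lambda(|u|)\le J_\lambda(u)$ (the Gagliardo term decreases and the local terms are unchanged) and $t^+(|u|)|u|\in\mathcal N_\lambda^+$ with energy $\le J_\lambda(|u|)$, we may replace $u$ by $t^+(|u|)|u|$. This element is again a minimizer on $\mathcal N_\lambda^+$, and therefore a nonnegative weak solution of \eqref{1.2} with negative energy.
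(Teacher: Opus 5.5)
Your proposal follows the paper's route. The steps match: the Nehari decomposition with $\mathcal N_\lambda^0=\emptyset$ and $m^+<0$ for small $\lambda$, an Ekeland Palais--Smale sequence, and a Brezis--Lieb splitting combined with the Sobolev constant. This gives compactness below a level that stays positive for small $\lambda$, because critical energies are bounded below by $-C\lambda^{p/(p-k)}$. The last step is the replacement $u\mapsto t^+(|u|)|u|$. Your compactness threshold $\frac1N\So^{N/p}$ comes from keeping $(\frac1p-\frac1{p^*})\|\nabla v_n\|_p^p$ and discarding only the $q$-term; it is correct and in fact sharper than the paper's $(\frac1q-\frac1{p^*})\So^{N/p}$. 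You tacitly use that the weak limit $u$ is a critical point, both in the splitting of $\langle J_\lambda'(u_n),u_n\rangle$ and in the lower bound. This must be proved by passing to the limit in $\langle J_\lambda'(u_n),v\rangle$ using the a.e.\ convergence of gradients, as in Lemma \ref{l5.1}.

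The one step you assert that is not automatic is $J_\lambda(t^+(|u|)|u|)\le J_\lambda(|u|)$. The map $t\mapsto J_\lambda(t|u|)$ decreases on $(0,t^+(|u|))$, increases on $(t^+(|u|),t^-(|u|))$, and then decreases to $-\infty$. So the inequality is only guaranteed if $1\le t^-(|u|)$, and neither $J_\lambda(|u|)<0$ nor $\langle J_\lambda'(|u|),|u|\rangle\le0$ rules out $1>t^-(|u|)$. What you need is $t^+(|u|)\ge1$, which the proof of Theorem \ref{t1} obtains as follows. Put
\[
\gamma_v(t)=t^{p-k}\big([v]_{s,p}^p+\|\nabla v\|_p^p\big)+t^{q-k}\int_\Omega a(x)|\nabla v|^q\,dx-t^{p^*-k}\|v\|_{p^*}^{p^*},
\]
so that $tv\in\mathcal N_\lambda$ if and only if $\gamma_v(t)=\lambda\int_\Omega w_1|v|^k\,dx$. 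Since $[|u|]_{s,p}\le[u]_{s,p}$ and all other terms coincide, $\gamma_{|u|}\le\gamma_u$. Hence $\gamma_u(t^+(|u|))\ge\gamma_{|u|}(t^+(|u|))=\lambda\int_\Omega w_1|u|^k\,dx=\gamma_u(1)$. Because $u\in\mathcal N_\lambda^+$ means $\gamma_u'(1)>0$, the point $t=1$ lies on the increasing branch of the unimodal function $\gamma_u$. Therefore $\{t>0\colon \gamma_u(t)\ge\gamma_u(1)\}=[1,t^-(u)]$, and so $t^+(|u|)\ge1$. Consequently $J_\lambda(t^+(|u|)|u|)\le J_\lambda(|u|)\le J_\lambda(u)=m^+$, and the rest of your argument goes through.
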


If, in addition, $w_1=1$, we obtain the following multiplicity result.

\begin{theorem}\label{t3}
	Let hypotheses \eqref{H1} with $w_2=\frac{1}{\lambda}$, $w_1=1$, and $r=p^*$ be satisfied. Then there exists $\Lambda>0$ such that for all $\lambda\in(0,\Lambda)$, problem \eqref{1.2} admits infinitely many nontrivial solutions with negative energy.
\end{theorem}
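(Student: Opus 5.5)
We will use the Krasnoselskii genus approach for concave--convex problems with critical growth, in the spirit of Garc\'{\i}a Azorero--Peral, via a truncation of the energy functional. The energy functional is
\begin{align*}
	J_\lambda(u)=\frac1p\int_{\R^N}\int_{\R^N}\frac{|u(x)-u(y)|^p}{|x-y|^{N+ps}}\dxy+\int_\Omega\Big(\frac{|\nabla u|^p}{p}+\frac{a(x)|\nabla u|^q}{q}\Big)\dx-\frac{\lambda}{k}\int_\Omega|u|^k\dx-\frac1{p^*}\int_\Omega|u|^{p^*}\dx,
\end{align*}
defined on $W$. It is even, of class $C^1$, and bounded from below on no large set, because of the critical term.

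\textbf{Step 1: lower bound and truncation.} For $\|u\|\le 1$ we use the lower bound of the modular by $\|u\|^q$, together with the Sobolev and H\"older inequalities (the norm of $W$ dominates $\|\nabla u\|_p$). This gives
\[
J_\lambda(u)\ge c_1\|u\|^q-\lambda c_2\|u\|^k-c_3\|u\|^{p^*}=:g(\|u\|).
\]
Since $k<q<p^*$, there is $\Lambda_0>0$ such that for $\lambda<\Lambda_0$ the function $g$ has exactly two positive zeros $R_0<R_1\le 1$, is negative on $(0,R_0)$ and positive on $(R_0,R_1)$. Choose a smooth nonincreasing cutoff $\tau$ with $\tau=1$ on $[0,R_0]$ and $\tau=0$ on $[R_1,\infty)$. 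Define $\tilde J_\lambda$ by multiplying the critical term by $\tau(\|u\|)$. Then $\tilde J_\lambda$ is even, coercive and bounded below. Moreover, $\tilde J_\lambda(u)<0$ implies $\|u\|<R_0$, and there $\tilde J_\lambda=J_\lambda$ with the same derivative. So negative critical levels of $\tilde J_\lambda$ give critical points of $J_\lambda$.

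\textbf{Step 2: local Palais--Smale condition.} This is the main obstacle. We show that $\tilde J_\lambda$ satisfies $(PS)_c$ for every $c<0$ provided $\lambda$ is small. Take a $(PS)_c$ sequence with $c<0$. It is bounded and eventually lies in the ball of radius $R_0$, so it is a PS sequence for $J_\lambda$. Extract $u_n\rightharpoonup u$ in $W$ and $u_n\to u$ in $L^k$. By Lions' concentration-compactness principle,
\[
|\nabla u_n|^p\rightharpoonup\mu\ge|\nabla u|^p+\sum_j\mu_j\delta_{x_j},\qquad |u_n|^{p^*}\rightharpoonup|u|^{p^*}+\sum_j\nu_j\delta_{x_j},\qquad S\nu_j^{p/p^*}\le\mu_j.
\]
We test with $u_n\phi_\varepsilon$, where $\phi_\varepsilon$ is a cutoff at $x_j$. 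The delicate terms are the nonlocal Gagliardo term and the $a(x)|\nabla u|^q$ term. The nonlocal commutator term vanishes as $\varepsilon\to0$, by the known estimate of Mosconi--Squassina type for $(-\Delta)_p^s$; the double phase term concentrates nonnegatively and can be dropped. This yields $\mu_j\le\nu_j$, hence either $\nu_j=0$ or $\nu_j\ge S^{N/p}$. In the latter case we compute
\[
c=\lim\Big(J_\lambda(u_n)-\frac1{q}\langle J_\lambda'(u_n),u_n\rangle\Big)\ge\Big(\frac1q-\frac1{p^*}\Big)S^{N/p}-C\lambda^{\frac{p^*}{p^*-k}}.
\]
This uses H\"older and Young on the $\lambda$-term, and requires $\frac1p-\frac1q\ge0$ to be handled by the $p$-parts. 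For $\lambda<\Lambda_1$ the right-hand side is nonnegative, contradicting $c<0$. So there are no atoms and $u_n\to u$ in $L^{p^*}$. Finally, the $(S_+)$-property of the operator $(-\Delta)^s_p-\op$ on $W$ gives $u_n\to u$ in $W$.

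\textbf{Step 3: genus construction.} For each $m\in\N$ pick an $m$-dimensional subspace $E_m\subset C_c^\infty(\Omega)\subset W$. On the finite-dimensional space all norms are equivalent, so for $u\in E_m$ with $\|u\|=\rho$ small,
\[
\tilde J_\lambda(u)\le C\rho^p-\lambda c_m\rho^k<0,
\]
since $k<p$ and the $q$-term is of lower order for small $\rho$. Hence the set $\{\tilde J_\lambda\le-\varepsilon_m\}$ contains a sphere of $E_m$ and has genus at least $m$. Define
\[
c_m=\inf_{\gamma(A)\ge m}\sup_{u\in A}\tilde J_\lambda(u).
\]
Then $-\infty<c_1\le c_2\le\dots<0$, and by the deformation lemma together with Step 2 each $c_m$ is a critical value. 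If $c_m=c_{m+l}$, the critical set at that level has genus at least $l+1$, hence is infinite. Either way we get infinitely many critical points with negative energy. Taking $\Lambda=\min\{\Lambda_0,\Lambda_1\}$, Step 1 shows these are solutions of \eqref{1.2}.
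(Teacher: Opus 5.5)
Your architecture (truncating the critical term \`a la Garc\'{\i}a Azorero--Peral, proving $(PS)_c$ for $c<0$, then running the genus minimax over $c_m$) is the same as the paper's. Steps 1 and 3 essentially match Lemmas \ref{lem6.2}, \ref{lem6.3} and the final argument.

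\textbf{The gap is in Step 2.} When you test with $u_n\phi_\varepsilon$, the double phase part produces two terms. The first, $\int_\Omega a(x)|\nabla u_n|^q\phi_\varepsilon\dx\ge 0$, can indeed be dropped. The second is the cross term
\[
\int_\Omega a(x)|\nabla u_n|^{q-2}\nabla u_n\cdot\nabla\phi_\varepsilon\,u_n\dx ,
\]
which has no sign and must vanish as $n\to\infty$ and then $\varepsilon\to0$. H\"older's inequality together with $u_n\to u$ in $L^q(\Omega)$ only gives the bound $C\big(\int_\Omega a(x)|\nabla\phi_\varepsilon|^q|u|^q\dx\big)^{1/q}$.

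At an atom with $a(x_j)>0$ this is harmless, since $u\in W^{1,q}$ near $x_j$. At an atom with $a(x_j)=0$, however, you only know $a(x)\le L|x-x_j|$ and $u\in L^{p^*}$. This yields $\int_\Omega a|\nabla\phi_\varepsilon|^q|u|^q\dx=o\big(\varepsilon^{1+N-Nq/p}\big)$, an estimate that is useful only when $q\le p+\frac{p}{N}$, and that restriction is not contained in \eqref{H1}. So $\mu_j\le\nu_j$ is not justified, and neither is the dichotomy $\nu_j=0$ or $\nu_j\ge\So^{N/p}$. This cross term is precisely what makes localized concentration-compactness awkward for double phase operators with a $p^*$-critical term; the fractional commutator, by contrast, is harmless.

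\textbf{How the paper avoids localization.} It combines four ingredients:
\begin{itemize}
\item the a.e.\ convergence $\nabla u_n\to\nabla u$ (Lemma \ref{almost_cvg}, from Farkas--Fiscella--Winkert);
\item the boundedness of $\{|\nabla u_n|^{q-2}\nabla u_n\}$ in the weighted space $L^{q'}(\Omega\setminus K,a)$;
\item the convergence $A_p(u_n,u)\to[u]_{s,p}^p$;
\item the Brezis--Lieb lemma.
\end{itemize}
With these, it expands $\langle\I_\lambda'(u_n),u_n-u\rangle=o_n(1)$ into the global identity
\[
\lim_{n\to\infty}\Big(\|\nabla(u_n-u)\|_p^p+\|\nabla(u_n-u)\|_{q,a}^q+[u_n-u]_{s,p}^p\Big)=\lim_{n\to\infty}\|u_n-u\|_{p^*}^{p^*}=l^{p^*}.
\]
It then applies the Sobolev inequality \eqref{a2} to $u_n-u$ itself, obtaining $\So l^p\le l^{p^*}$, so either $l=0$ or $l\ge\So^{1/(p^*-p)}$. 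The second alternative is excluded by exactly your energy estimate, with $l^{p^*}$ in place of $\nu_j$. The first gives strong convergence in $W$ via Lemma \ref{rel}. Replacing your concentration-compactness step with this global argument repairs the proof.

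\textbf{A minor point in Step 1.} The claim $R_1\le1$ is not automatic. Cut off at $\min\{R_1,1\}$ instead. For $\|u\|_W\ge1$, use $\xi(u)\ge\|u\|_W^p$ together with the extra smallness $\lambda\le k/(qC_k^k)$, as the paper does, to ensure the truncated functional is nonnegative there.
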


Subsequently, motivated by the interplay between local diffusion and nonlocal phase transitions, we turn our attention to a second problem in which the roles of the local and nonlocal operators are interchanged. In this case, the equation is governed by the local $p$-Laplacian and a fractional double phase operator, and it reads
\begin{align}\label{frac_dbl}
	-\Delta_{p} u -\ops(u) =  f(x,u) \quad\text{in } \Omega, \quad u = 0 \quad\text{in } \R^N\setminus\Omega,
\end{align}
where $\Omega\subseteq \R^{N}$, $N \geq 2$, is a  bounded domain with smooth boundary $\partial\Omega$, $ \Delta_{p}u=\operatorname{div}(\p{u}^{p-2}\na u)$ denotes the local $p$-Laplace operator, and the nonlocal operator $\ops$ is defined by
\begin{align*}
	&\ops(u)\\
	&=\lim\limits_{\varepsilon\to 0}\int_{\R^{N}\setminus B_{\varepsilon}(x)}\left(\dfrac{|u(x)-u(y)|^{p-2}(u(x)-u(y))}{|x-y|^{N+ps}}+b(x,y)\dfrac{|u(x)-u(y)|^{q-2}(u(x)-u(y))}{|x-y|^{N+qs}}\right)\dy.
\end{align*}
For problem \eqref{frac_dbl} we impose the following assumptions:
\begin{enumerate}[label=\textnormal{(H$_2$)},ref=\textnormal{H$_2$}]
	\item\label{H2}
		\begin{enumerate}[label=\textnormal{(\roman*)},ref=\textnormal{\roman*}]
			\item\label{H2i}
				$1<p<q<N$, $0\le b(\cdot,\cdot)\in L^{\infty}(\mathbb{R}^N\times\mathbb{R}^N)$ with $b(x,y)=b(y,x)$ for all $(x,y)\in\mathbb{R}^N\times\mathbb{R}^N$, and $0<s<1$;
			\item\label{H2ii}
				$f\colon \Omega\times \mathbb{R}\to \mathbb{R}$ is a Carath\'{e}odory function  and there exists a constant $c > 0$ such that
				\begin{align*}
					|f(x,t)| \le c \big(1 + |t|^{r-1}\big) \quad \text{for a.a.\,} x \in \Omega \text{ and for all } t\in \mathbb{R},
				\end{align*}
				where $r < p^{*}$;
			\item\label{H2iii}
				$ \displaystyle\lim_{t\to 0} \frac{f(x,t)}{|t|^{p-2}t} = 0$ uniformly for a.a.\,$x\in\Omega$;
			\item\label{H2iv}
				$ \displaystyle\lim_{t\to \pm\infty} \frac{f(x,t)}{|t|^{q-2}t} = \infty$ uniformly for a.a.\,$x\in\Omega$;
			\item\label{H2v}
				for a.a.\,$x \in \Omega$, the map $t \mapsto t f(x,t) - q F(x,t)$ is nondecreasing for $t \ge 0$ and nonincreasing for $t \le 0$, where
				\begin{align*}
					F(x,t) = \int_{0}^{t} f(x,\tau)\,\mathrm{d}\tau
				\end{align*}
			\item\label{H2vi}
				for a.a.\,$x \in \Omega$, the map $t \mapsto \dfrac{f(x,t)}{|t|^{q-1}}$ is strictly increasing on $(-\infty,0)$ and on $(0,\infty)$.
		\end{enumerate}
\end{enumerate}

A function $u\in E$ is called a weak solution of  \eqref{frac_dbl} if
\begin{align*}
	&\int_{\Omega}  |\nabla u|^{p-2}\nabla u \cdot\nabla v\dx  +\int_{Q}|D_s(u)|^{p-2}D_s(u)D_s(v)\dv\\
	&+\int_{Q}b(x,y)|D_s(u)|^{q-2}D_s(u)D_s(v)\dv =\int_{\Omega} f(x,u) v \dx,
\end{align*}
for all test functions $v\in E$ with $Q=\R^{2N}\setminus (\Omega^c\times \Omega^c)$, where the space $E$ is defined in \eqref{space-E}. Here and in the sequel, we use the notation
\begin{align}\label{notion-ds-dv}
	D_s(u)=\frac{u(x)-u(y)}{\ve{x-y}^s} \quad\text{and}\quad \dv=\frac{\dxy}{|x-y|^N}.
\end{align}

Our main results related to problem \eqref{frac_dbl} are the following ones.

\begin{theorem}\label{t4}
	Let hypotheses \eqref{H2} be satisfied. Then problem \eqref{frac_dbl} has at least two nontrivial constant sign solutions
	$u_{*}, v_{*} \in E$ such that
	\begin{align*}
		u_{*}(x) \geq 0 \quad \text{and} \quad v_{*}(x) \leq 0
		\quad \text{for a.a.\,} x \in \Omega.
	\end{align*}
\end{theorem}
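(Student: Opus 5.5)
We obtain $u_*$ and $v_*$ as minimizers of the energy over two sign-restricted Nehari-type sets; the standard route is truncation plus mountain pass. For $u\in E$ define the energy
\begin{align*}
	J(u)=\frac1p\int_\Omega \p{u}^p\dx+\frac1p\int_Q|D_s(u)|^p\dv+\frac1q\int_Q b(x,y)|D_s(u)|^q\dv-\int_\Omega F(x,u)\dx,
\end{align*}
and the truncated functionals
\begin{align*}
	J_\pm(u)=\frac1p\int_\Omega \p{u}^p\dx+\frac1p\int_Q|D_s(u)|^p\dv+\frac1q\int_Q b\,|D_s(u)|^q\dv-\int_\Omega F(x,\pm u^{\pm})\dx,
\end{align*}
where $u^\pm=\max\{\pm u,0\}$. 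By \eqref{H2ii} and the compact embedding of $E$ into $L^r(\Omega)$ for $r<p^*$ (from the $W^{1,p}_0$ part of the norm), $J_\pm\in C^1(E)$. We first check that critical points of $J_+$ are nonnegative and those of $J_-$ are nonpositive. To see this for $J_+$, test $J_+'(u)=0$ with $-u^-$. The local term gives $\int_\Omega\p{u^-}^p\dx$. For the nonlocal terms we use the elementary inequality $|a-b|^{t-2}(a-b)(b^- - a^-)\ge |a^- - b^-|^t$ for $t=p,q$, with $b\ge0$ handling the weighted term. The right-hand side vanishes since $f(x,u^+)u^-=0$. Hence $u^-=0$, and symmetrically for $J_-$.

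Next we verify the mountain pass geometry for $J_+$. By \eqref{H2ii} and \eqref{H2iii}, for every $\e>0$ there is $c_\e$ with $F(x,t)\le \frac{\e}{p}|t|^p+c_\e|t|^r$, where $r$ is fixed with $q<r<p^*$ (growth with a smaller exponent is dominated by this one). The $p$-part of the norm controls $\|u\|_p$ via Poincaré. For $\|u\|=\rho$ small, the modular is comparable to $\|u\|^q$ or $\|u\|^p$; taking the worse exponent $q$ when $\|u\|\le1$ does not help. So we exploit directly that $J_+(u)\ge \frac1p\|\na u\|_p^p(1-\e C)-c\|\na u\|_p^r$, which is positive on a small sphere in the $W^{1,p}_0$-seminorm-dominated norm. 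Here I plan to use the norm $\|\na u\|_p+[u]_{\text{modular}}$, chosen so that its smallness forces smallness of every term. For the other half of the geometry, take $\varphi\ge0$ nontrivial. Condition \eqref{H2iv} gives $F(x,t)/t^q\to\infty$, so $J_+(t\varphi)\to-\infty$ as $t\to\infty$.

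The main obstacle is the Cerami (or Palais--Smale) condition, since no Ambrosetti--Rabinowitz condition is assumed. Condition \eqref{H2v} replaces it, in the style of Jeanjean/Li--Yang. Given a Cerami sequence $(u_n)$, we first show boundedness by contradiction. Suppose $\|u_n\|\to\infty$ and set $y_n=u_n/\|u_n\|$, with $y_n\rightharpoonup y$ in $E$ and strongly in $L^r$. If $y\neq0$, then \eqref{H2iv} together with Fatou gives $\int F(x,u_n^+)/\|u_n\|^q\to\infty$. This contradicts $J_+(u_n)\le c$, because the modular part divided by $\|u_n\|^q$ is bounded. If $y=0$, choose $t_n\in[0,1]$ maximizing $J_+(tu_n)$ and evaluate at $t\,u_n=\mu y_n$ for fixed $\mu$. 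Strong convergence gives $\int F(x,\mu y_n^+)\to0$, so $J_+(t_nu_n)\ge c\mu^p\to\infty$ as $\mu$ grows. On the other hand, \eqref{H2v} gives $qJ_+(t_nu_n)\le qJ_+(t_nu_n)-\langle J_+'(t_nu_n),t_nu_n\rangle$. This quantity is at most $\int(f u_n^+-qF)\dx+$ the modular terms. These are bounded via the monotonicity in \eqref{H2v} and the fact that $\frac qp-1>0$ multiplies only terms controlled by $\langle J_+'(u_n),u_n\rangle\to0$ and $J_+(u_n)$. This contradiction yields boundedness. Strong convergence then follows from compactness of the lower-order term and the $(S_+)$ property of the operator $u\mapsto -\Delta_pu-\ops(u)$, which holds by monotonicity and uniform convexity of each part.

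With these pieces, the mountain pass theorem gives a critical point $u_*$ of $J_+$ with $J_+(u_*)\ge$ the mountain-pass level $>0$, so $u_*\ne0$. By the sign argument above, $u_*\ge0$ and $u_*$ is a weak solution of \eqref{frac_dbl}. The same argument applied to $J_-$ yields $v_*\le0$. Condition \eqref{H2vi} is not needed here; it is reserved for the sign-changing result.
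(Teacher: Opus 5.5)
Your overall route is the paper's: truncated functionals $J_\pm$, mountain pass with the Cerami condition obtained from the $t_n$-maximization argument and \eqref{H2}\eqref{H2v}, the (S$_+$)-property of the principal part, and the sign test with $-u^-$. However, the mountain pass geometry step fails as written. You discard the nonlocal terms, use $J_+(u)\ge \frac1p(1-\varepsilon C)\|\nabla u\|_p^p-c\|\nabla u\|_p^r$, and then want positivity on a small sphere of the $E$-norm. On such a sphere $\|\nabla u\|_p$ can be arbitrarily small. There is in general no embedding $W_0^{1,p}(\Omega)\hookrightarrow W_0^{s,\mathcal{G}}(\Omega)$, and $[u]_{s,p}\le C\|\nabla u\|_p$, so the $b$-weighted $q$-term can carry almost all of $\|u\|_E$. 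Your lower bound then tends to $0$ and gives no uniform $\alpha>0$.

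Your diagnosis of the obstruction is right: the term $\varepsilon\|u\|_p^p$ cannot be absorbed into $\|u\|_E^q$ when $\|u\|_E<1$. Your cure is also right: absorb it into $\frac1p\|\nabla u\|_p^p$ via Poincar\'e. But afterwards you must keep the nonlocal terms. This gives $J_+(u)\ge \min\{\frac1p-\varepsilon C,\frac1q\}\,\eta(u)-c\|u\|_E^r\ge c_0\|u\|_E^q-c\|u\|_E^r$ for $\|u\|_E<1$. That is positive on small spheres exactly because $q<r$, so the exponent-$q$ bound does help; this is the paper's Lemma \ref{mpt_geometry_1}. Alternatively, your estimate is valid as it stands on the set $\{u\in E\colon \|\nabla u\|_p=\rho\}$. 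That set separates $0$ from $t\varphi$ in $E$, since $E\hookrightarrow W_0^{1,p}(\Omega)$ continuously. Then, however, you need the mountain pass theorem with a general separating set rather than Theorem \ref{ce2} as stated.

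There is a smaller slip in the Cerami argument. You normalize $u_n$ and split into $y\neq0$ and $y=0$. Fatou only yields $\int_\Omega F(x,u_n^+)\dx/\|u_n\|_E^q\to\infty$ when $\{y>0\}$ has positive measure, so a limit with $y\le 0$, $y\neq 0$ is not covered. You can split on $y^+\neq0$ versus $y^+=0$ instead: your second case only uses $y_n^+\to 0$ in $L^r(\Omega)$ and then goes through verbatim. Or you can first show $u_n^-\to0$ in $E$ by testing with $-u_n^-$, as the paper does. Also, your first sentence (minimizers over sign-restricted Nehari sets) does not describe the argument you actually carry out.
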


\begin{theorem}\label{t5}
	Let hypotheses \eqref{H2} be satisfied. Then problem \eqref{frac_dbl} has a least energy sign-changing solution.
\end{theorem}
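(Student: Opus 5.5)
We follow the standard nodal Nehari set strategy, adapted to the non-homogeneous operator $-\Delta_p-\ops$. Let $J\colon E\to\R$ denote the energy functional
\begin{align*}
	J(u)=\frac1p\int_\Omega|\nabla u|^p\dx+\frac1p\int_Q|D_s(u)|^p\dv+\frac1q\int_Q b(x,y)|D_s(u)|^q\dv-\int_\Omega F(x,u)\dx .
\end{align*}
Let $\mathcal N=\{u\ne0:\langle J'(u),u\rangle=0\}$ be its Nehari manifold, and define the nodal set
\begin{align*}
	\mathcal M=\{u\in E: u^\pm\ne0,\ \langle J'(u),u^+\rangle=\langle J'(u),u^-\rangle=0\}.
\end{align*}
We aim to show that $m:=\inf_{\mathcal M}J$ is positive, that it is attained, and that any minimizer is a critical point. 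A solution found this way changes sign, and it has least energy among sign-changing solutions because every sign-changing solution lies in $\mathcal M$.

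A key technical point is that the nonlocal terms are not additive in $u^+$ and $u^-$. For $x,y$ with $u(x)>0>u(y)$ we have $|u(x)-u(y)|^{p}\ge|u^+(x)-u^+(y)|^p+|u^-(x)-u^-(y)|^p$, and similarly for the $q$-term. Consequently, $\langle J'(u),u^+\rangle$ contains cross terms. We will therefore work with the map
\begin{align*}
	\Phi(\alpha,\beta)=\bigl(\langle J'(\alpha u^+-\beta u^-),\alpha u^+\rangle,\ \langle J'(\alpha u^+-\beta u^-),-\beta u^-\rangle\bigr).
\end{align*}
\textbf{Step 1.} For each $u$ with $u^\pm\neq0$ we show that there is a unique pair $(\alpha_u,\beta_u)\in(0,\infty)^2$ with $\alpha_u u^+-\beta_u u^-\in\mathcal M$.

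Existence follows from the Poincar\'e--Miranda theorem on a rectangle $[r,R]^2$. For small $r$, assumption \eqref{H2iii} gives $\Phi_1(r,\beta)>0$ for all $\beta\in[r,R]$. For large $R$, assumption \eqref{H2iv} gives $\Phi_1(R,\beta)<0$. The cross terms must be controlled uniformly in $\beta$, which is possible since they are sign-favourable and bounded by $C(\alpha+\beta)^{q-1}\alpha$ times fixed integrals. The analogous bounds hold for $\Phi_2$.

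Uniqueness is where we expect the main obstacle, because the operator is not homogeneous. The plan is to divide by $\alpha^q$ and use the strict monotonicity in \eqref{H2vi$)$}-type form, namely that $t\mapsto f(x,t)/|t|^{q-1}$ is strictly increasing. On the left-hand side, the quantities $\alpha^{p-q}\cdot(\ldots)$ are nonincreasing in the scaling. The cross terms are of the form $|\alpha u^+(x)+\beta u^-(y)|^{q-2}(\ldots)$, and after division by $\alpha^{q-1}$ they are monotone in $\beta/\alpha$. If this turns out to be delicate, we would fall back on the alternative argument that the map $(\alpha,\beta)\mapsto J(\alpha u^+-\beta u^-)$ has a unique critical point, which is its global maximum on $(0,\infty)^2$. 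This is the approach we would try first: prove strict concavity in the variables $(\alpha^q,\beta^q)$, using \eqref{H2v} and \eqref{H2vi}.

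\textbf{Step 2.} We next show that $m>0$ and that $m$ is attained. Assumptions \eqref{H2ii} and \eqref{H2iii} together with the Sobolev embedding give $\|u^\pm\|\ge\delta>0$ on $\mathcal M$. Assumption \eqref{H2v} gives
\begin{align*}
	J(u)=J(u)-\tfrac1q\langle J'(u),u\rangle\ge(\tfrac1p-\tfrac1q)\|\nabla u\|_p^p+\ldots\ge c>0 .
\end{align*}
The same inequality shows that minimizing sequences are bounded. Weak limits $u_n^\pm\to u_0^\pm$ are obtained from compactness of the subcritical embedding, since $r<p^*$. Using Step 1, we project $u_0$ onto $\mathcal M$ via $(\alpha,\beta)$ with $\alpha,\beta\le1$, which follows from the maximality property combined with weak lower semicontinuity. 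Since $J(\alpha u_n^+-\beta u_n^-)\le J(u_n)$, we obtain $J(\alpha u_0^+-\beta u_0^-)\le m$, so the infimum is attained.

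\textbf{Step 3.} Finally we show that the minimizer $w$ is a critical point, arguing by contradiction. Suppose $J'(w)\ne0$. Then $\|J'(v)\|\ge\gamma$ on a ball around $w$. We apply the quantitative deformation lemma to obtain a deformation $\eta$ that lowers the level $m$ away from this ball. Consider the family $g(\alpha,\beta)=\eta(1,\alpha w^+-\beta w^-)$ on a small square $D$ around $(1,1)$. On $\partial D$ we have $J<m$ strictly, by the uniqueness of the maximum established in Step 1, so $\eta$ acts as the identity there. Applying Poincar\'e--Miranda (or a degree argument) to the map $(\alpha,\beta)\mapsto(\langle J'(g),g^+\rangle,\langle J'(g),g^-\rangle)$ yields a point of $D$ whose image lies in $\mathcal M$ with energy below $m$. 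This contradicts the definition of $m$, so $w$ is a least energy sign-changing solution.
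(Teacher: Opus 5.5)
Your skeleton matches the paper's proof: the nodal set $\mathcal{S}$, Poincar\'e--Miranda to produce $\tau_u u^+-\gamma_u u^-\in\mathcal{S}$, minimization of $J$ on $\mathcal{S}$, and the quantitative deformation lemma combined with Poincar\'e--Miranda on a square around $(1,1)$. Several sub-steps are done differently, mostly more economically.

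\emph{Positivity and boundedness.} You use $J(u)-\frac1q\langle J'(u),u\rangle\ge(\frac1p-\frac1q)\bigl(\|\nabla u\|_p^p+\int_Q|D_s(u)|^p\,\mathrm{d}\nu\bigr)$ from (H$_2$)(v). The lower bound on $\|\nabla u^\pm\|_p$ comes directly from $\langle J'(u),u^\pm\rangle=0$, (H$_2$)(iii) and $W^{1,p}_0(\Omega)\hookrightarrow L^r(\Omega)$. The paper instead uses the mountain-pass estimate and a separate blow-up argument, Proposition \ref{coercive}. This identity kills the $b$-weighted term, so boundedness in $E$ needs one extra line: $\int_Q b(x,y)|D_s(u)|^q\,\mathrm{d}\nu\le\int_\Omega f(x,u)u\,\mathrm{d}x\le c(1+\|\nabla u\|_p^r)$.

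\emph{Attainment.} Your inequality $J(\alpha u_0^+-\beta u_0^-)\le\liminf_n J(\alpha u_n^+-\beta u_n^-)\le m$ needs no bound on $\alpha,\beta$. So you bypass Proposition \ref{sprop2} and the use of (H$_2$)(v) at that point. You must still prove $u_0^\pm\neq0$, which you do not state: test $\langle J'(u_n),u_n^+\rangle=0$, drop the nonnegative nonlocal terms, and combine compactness with the uniform lower bound.

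\emph{Uniqueness.} Your concavity plan is valid and needs only (H$_2$)(vi). In $(a,b)=(\alpha^q,\beta^q)$:
\begin{itemize}
\item the $p$-terms are concave, being $c\,a^{p/q}$ or, on mixed pairs, $(c_1a^{1/q}+c_2b^{1/q})^p$;
\item the $b$-term is linear on same-sign pairs and equals the concave $(c_1a^{1/q}+c_2b^{1/q})^q$ on mixed pairs;
\item $a\mapsto F(x,a^{1/q}t)$ has the strictly increasing derivative $\frac1q\frac{f(x,\sigma)}{\sigma^{q-1}}t^q$, where $\sigma=a^{1/q}t$.
\end{itemize}
The paper instead derives uniqueness from the comparison inequalities (i)--(iv) of Proposition \ref{sprop1}.

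The point you leave open is exactly what those comparison inequalities do in the paper: the boundary behaviour in Step 3. Poincar\'e--Miranda needs $\Psi_1(1-\ell,\zeta)>0>\Psi_1(1+\ell,\zeta)$, and the analogue for $\Psi_2$, for every $\zeta\in[1-\ell,1+\ell]$. The paper gets these from (i)--(iv). For instance, if $\tau>1$ and $\gamma\le\tau$, the mixed terms satisfy $(\tau u^+(x)+\gamma u^-(y))^{p-1}\tau u^+(x)\le\tau^q(u^+(x)+u^-(y))^{p-1}u^+(x)$. Then (H$_2$)(vi) forces $\langle J'(\tau u^+-\gamma u^-),\tau u^+\rangle<0$. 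Your first plan (divide by $\alpha^q$) would produce exactly these inequalities.

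Strict concavity alone does not give these edge signs on a square: a strictly concave function with strong off-diagonal coupling violates them. So on the concavity route you must use degree instead. Set $\tilde T(a,b)=J(a^{1/q}w^+-b^{1/q}w^-)$. Then $-\nabla\tilde T$ is strictly monotone, hence has nonzero degree on a neighbourhood of its unique zero $(1,1)$. On $\partial D$ the deformed map is undeformed and equals $\nabla\tilde T$ up to the positive factors $qa$ and $qb$, so it has the same nonzero degree. Either way the step is fixable, but it is the crux of Step 3 and must be written out. You also need, as in the paper, the estimate $\|\Pi(1,u)-u\|\le\delta_0$ to guarantee that the zero you find still has $g^\pm\neq0$.
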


Double phase operators originate from the seminal work of Zhikov \cite{Zhikov-1986}, motivated by the mathematical modeling of strongly anisotropic materials whose constitutive laws exhibit different growth behaviors in different regions of the domain. The associated energy functionals typically switch between two distinct polynomial growths, which leads to differential operators of the form
\begin{align*}
	\operatorname{div}(\p{u}^{p-2} \nabla u+a(x)\p{u}^{q-2} \nabla u).
\end{align*}
Such operators naturally fit into the framework of Musielak-Orlicz Sobolev spaces and present substantial analytical difficulties due to their nonhomogeneous and nonuniformly elliptic structure.

The spectral and variational theory for double phase operators was initiated by Colasuonno--Squassina \cite{Colasuonno-Squassina-2016}, who investigated the associated eigenvalue problem and established the existence of variational eigenvalues together with a detailed analysis of their qualitative properties. Since then, a rapidly growing literature has addressed the existence, multiplicity, and qualitative behavior of solutions to elliptic problems driven by double phase operators, mainly by means of variational techniques. In this direction, we refer to the works of Liu--Dai  \cite{Liu-Dai-2018, Liu-Dai-2020}, who studied ground state solutions and multiple solutions in $\R^N$, respectively. Problems involving concave convex nonlinearities were investigated by Kim--Kim--Oh--Zeng \cite{Kim-Kim-Oh-Zeng-2022} and by Mishra--Silva--Tripathi \cite{Mishra-Silva-Tripathi-2023}. Multiplicity results for problems with nonlinear boundary conditions were obtained by Amoroso--Crespo-Blanco--Pucci--Winkert \cite{Amoroso-Crespo-Blanco-Pucci-Winkert-2024}. Moreover, Farkas--Fiscella--Winkert \cite{Farkas-Fiscella-Winkert-2022} established multiplicity results in the presence of critical growth of order $p^*$. To deal with the lack of compactness caused by the critical exponent, they performed a refined convergence analysis of the gradients. Further contributions concerning elliptic problems with critical nonlinearities can be found in Arora--Fiscella--Mukherjee--Winkert \cite{Arora-Fiscella-Mukherjee-Winkert-2022} and Bahrouni--Fiscella--Winkert \cite{Bahrouni-Fiscella-Winkert-2026}. For problems involving critical growth switching between the Sobolev exponents $p^*$ and $q^*$, we refer to the works of Ho--Winkert \cite{Ho-Winkert-2023}, Ha--Ho \cite{Ha-Ho-2024,Ha-Ho-2025}, and Farkas--Fiscella--Ho--Winkert \cite{Farkas-Fiscella-Ho-Winkert-2026}. Kirchhoff type double phase problems were investigated by Crespo-Blanco--Gasi\'{n}ski--Winkert \cite{Crespo-Blanco-Gasinski-Winkert-2024}, who considered equations of the form
\begin{equation}\label{dbl_kirc}
	\begin{aligned}
		M\left(\int_{\Omega}\left( |\nabla u|^p+a(x)|\nabla u|^q\right)\dx\right)\op(u) & =  f(x,u) && \text{in } \Omega, \\
		u & = 0 &  & \text{on } \partial\Omega,
	\end{aligned}
\end{equation}
where $\Omega\subseteq \R^{N}$, $N \geq 2$, is a  bounded domain with Lipschitz boundary $\partial\Omega$, $1<p<N$, $1<p<q\leq p^*=\frac{Np}{N-p}$ and the Kirchhoff function is given by $M(t)=m_1+m_2t^{\sigma-1}$.  By combining variational methods with the Poincar\'{e}-Miranda existence theorem and the quantitative deformation lemma, the authors established the existence of two constant sign solutions together with a least energy sign changing solution.

All the literature discussed above concerns double phase operators involving the local
$p$- and $q$-Laplacians. By contrast, the analysis of double phase operators involving nonlocal terms is relatively recent. In this direction, Cheng--Bai \cite{Cheng-Bai-2025} investigated a fractional double phase problem of the form
\begin{equation}\label{frac_dbl_first}
	\begin{aligned}
		M\left(\int_{Q}\left( \dfrac{|u(x)-u(y)|^p}{|x-y|^{N+ps}}+b(x,y)\dfrac{|u(x)-u(y)|^q}{|x-y|^{N+qs}}\right)\dxy\right) \ops(u) & =f(x,u) && \text{in } \Omega, \\
		u & = 0 &  & \text{in } \R^N\setminus\Omega,
	\end{aligned}
\end{equation}
where $\Omega\subseteq \R^{N}$ is a  bounded smooth domain, $0<s<1<p<q<N/s$ and $M(t)=m_1+m_2t^{\sigma-1}$. The nonlinearity $f$ exhibits both singular and Choquard type growth. By employing the Nehari manifold method, Cheng--Bai \cite{Cheng-Bai-2025} proved the existence of two distinct weak solutions to problem \eqref{frac_dbl_first}. Subsequently, Zeng--Lu--R\u{a}dulescu--Winkert \cite{Zeng-Lu-Radulescu-Winkert-2026} studied an inclusion problem involving a fractional double phase operator with a logarithmic perturbation. By means of the sub- and supersolution method, they established several existence results. Moreover, the authors also investigated the fractional counterpart of problem \eqref{dbl_kirc}, namely a Kirchhoff-type problem of the form \eqref{frac_dbl_first}. By combining variational techniques with the Poincar\'{e}-Miranda existence theorem and the quantitative deformation lemma, they proved the existence of two constant sign solutions together with one sign changing solution.

We now turn to elliptic problems involving mixed operators, namely operators obtained as a superposition of local and nonlocal components. Such operators naturally arise in models where classical diffusion interacts with long-range effects. In recent years, problems combining local and nonlocal features have attracted considerable attention, and a growing body of literature has been devoted to the study of existence, multiplicity, and qualitative properties of solutions. In particular, several works have investigated equations of the form
\begin{align*}
	-\Delta_pu+(-\Delta)_q^su=f \quad \text{in } \Omega,
\end{align*}
where $p,q\in (1,\infty)$ and $s\in (0,1)$. In the case $p=q=2$, we mention the contributions of Biagi--Dipierro--Valdinoci--Vecchi \cite{Biagi-Dipierro-Valdinoci-Vecchi-2022}, where existence results, maximum principles, and regularity properties were established. Problems involving Hardy type potentials were studied by Biagi--Esposito--Montoro--Vecchi \cite{Biagi-Esposito-Montoro-Vecchi-2025}, while the Brezis Nirenberg problem was addressed in Biagi--Dipierro--Valdinoci--Vecchi \cite{Biagi-Dipierro-Valdinoci-Vecchi-2025}. We also mention the work by Balci \cite{Balci-2023} about nonlocal and mixed models with Lavrentiev gap. Singular and critical nonlinearities were considered by Biagi--Vecchi \cite{Biagi-Vecchi-2024} , and by Anthal--Giacomoni--Sreenadh \cite {Anthal-Giacomoni-Sreenadh-2025} in the presence of singular and critical Choquard type nonlinearities. Multiplicity results and the existence of sign changing solutions via descending flow methods were obtained by Su--Valdinoci--Wei--Zhang \cite{Su-Valdinoci-Wei-Zhang-2024}. Finally, Kirchhoff type problems with critical growth were investigated by Tripathi \cite{Tripathi-2024} using the Nehari manifold method. In the case $p=q$, we refer to the work of Garain--Ukhlov \cite{Garain-Ukhlov-2022}, where existence, uniqueness, and symmetry properties were established for singular problems. Critical nonlinearities were studied by da Silva--Fiscella--Viloria \cite{daSilva-Fiscella-Viloria-2024}, while Bhowmick--Ghosh \cite{Bhowmick-Ghosh-2025} obtained the existence of sign changing solutions by combining the Nehari manifold method with Browder degree theory. When  $p\neq q$, Dhanya--Giacomoni--Jana \cite{Dhanya-Giacomoni-Jana-2025} derived existence and multiplicity results for concave-convex nonlinearities in the presence of sign changing weights. More recently, Malhotra--Pandey--Sreenadh \cite {Malhotra-Pandey-Sreenadh-2025} investigated a mixed local and nonlocal problem with variable exponents. In their work, the authors introduced the appropriate functional framework and proved existence results for singular and superlinear nonlinearities by means of the Nehari manifold technique.

Despite the extensive literature on double phase problems and the rapidly growing work on mixed local-nonlocal operators, these two research directions have so far evolved largely independently. Existing results on double phase problems mainly concern operators that are either purely local or purely nonlocal, whereas the theory of mixed operators has been developed almost exclusively for homogeneous growth structures, typically involving the superposition of a local  $p$-Laplacian and a fractional $q$-Laplacian. To the best of our knowledge, problems involving a genuine coupling of local and nonlocal effects within a double phase framework have not yet been investigated. In particular, the interaction between a fractional $p$-Laplacian and a local double phase operator, as well as the converse situation involving a local $p$-Laplacian coupled with a fractional double phase operator, appears to be completely unexplored.

Motivated by these gaps, the present paper investigates two new classes of mixed local and nonlocal double phase problems, namely \eqref{1.1} and \eqref{frac_dbl}. By developing an appropriate variational framework based on classical and fractional Musielak-Orlicz Sobolev spaces, we establish existence and multiplicity results through Nehari manifold techniques, genus theory, and a combination of variational and topological methods. Our findings substantially extend both the theory of double phase problems and the theory of mixed local and nonlocal operators, by capturing the delicate interaction between local and nonlocal effects within a double phase setting. Since both models involve double phase operators, the analysis naturally requires working in generalized Musielak-Orlicz Sobolev spaces. In particular, we introduce two function spaces specifically tailored to the mixed double phase structure and establish their main properties, which play a crucial role in the variational approach developed in this work.

In problem \eqref{1.1}, we deal with concave-convex nonlinearities in the presence of positive weights. The existence of solutions is established by means of the Nehari manifold approach. We also investigate the case in which the weight satisfies $w_2=\frac{1}{\la}$ and the exponent is critical, namely $r=p^*$, which corresponds to the Brezis-Nirenberg type problem. Inspired by the works of Farkas--Fiscella--Winkert \cite{Farkas-Fiscella-Winkert-2022}, da Silva--Fiscella--Viloria \cite{daSilva-Fiscella-Viloria-2024} and Dhanya--Giacomoni--Jana \cite{Dhanya-Giacomoni-Jana-2025}, we address this setting by combining the Nehari manifold method with genus theory. In this way, we obtain both existence and multiplicity results for the associated problem.

For problem \eqref{frac_dbl}, we deal with a subcritical nonlinearity that does not satisfy the Ambrosetti-Rabinowitz condition. We prove the existence of three distinct solutions, namely one positive solution, one negative solution, and one solution that changes sign. The existence of the constant sign solutions is obtained through a variational critical point argument of mountain pass type, while the sign-changing solution is constructed by working on a suitable constraint set and applying the Poincar\'{e}-Miranda existence theorem. The variational strategy and the corresponding existence results for this problem are inspired by the works of Crespo-Blanco--Gasi\'{n}ski--Winkert \cite{Crespo-Blanco-Gasinski-Winkert-2024}  and Zeng--Lu--R\u{a}dulescu--Winkert \cite{Zeng-Lu-Radulescu-Winkert-2026}.

The paper is organized as follows. In Section \ref{Section_2}, we introduce the Sobolev spaces that provide the functional framework for our analysis. In addition, we collect several preliminary lemmas and technical results that will be used throughout the paper. Section \ref{Section_3} is devoted to the study of the fractional $p$-Laplacian combined with a double phase operator, where existence and multiplicity results are obtained by means of the Nehari manifold technique and topological arguments, see Theorems \ref{t1}--\ref{t3}. Finally, in Section \ref{Section_4}, we investigate the local $p$-Laplacian coupled with a fractional double phase operator, and prove the existence of constant sign solutions as well as a least energy solution that changes sign by using variational tools such as the Poincar\'{e}-Miranda existence theorem and the Deformation Lemma, see Theorems \ref{t4} and \ref{t5}.

\section{Functional setting and preliminaries}\label{Section_2}

In order to develop the variational framework associated with the problems under consideration, it is necessary to precisely describe the function spaces in which weak solutions are sought. This section is devoted to recalling the main properties of classical and fractional Sobolev spaces, together with their generalized counterparts in the sense of Musielak. Furthermore, we introduce two function spaces specifically designed to treat problems of the form \eqref{1.1} and \eqref{frac_dbl}. We investigate their fundamental properties, which are essential for the variational methods employed in the subsequent analysis.

Throughout the paper, $c>0$ denotes a generic constant, whose value may change from line to line. For $a \in (1,\infty)$, we denote by $a' := \frac{a}{a-1}$ the conjugate exponent of $a$. For  $x \in \mathbb{R}^N$ and $r>0$, we write
\begin{align*}
	B_r(x) := \left\{ y \in \mathbb{R}^N \colon  |y-x| < r \right\}
\end{align*}
for the open ball of radius $r$ centered at $x$. When the center is the origin, we simply write $B_r := B_r(0)$. Unless otherwise stated, $\Omega \subseteq \R^N$ denotes a bounded domain with smooth boundary, and $|\Omega|$ stands for the Lebesgue measure of $\Omega$. Moreover, for $t\in\R$, we define $t^{\pm}=\max\{\pm t,0\}$, the positive and negative parts of $t$. Clearly, $t=t^+-t^-$.

For $r\in[1,\infty)$, we denote by $L^r(\Om)$  the usual Lebesgue spaces endowed with the norm $\|u\|_r = \Big(\int_{\Omega} |u|^r \dx \Big)^{1/r}$. Let $M(\Omega)$ be the space of all measurable functions $u\colon\Omega\to\R$. Given a measurable function $w\colon \Omega \to (0,\infty)$ and $1 \leq r < \infty$, we define the weighted Lebesgue space $L^{r}(\Omega, w)$ by
\begin{align*}
	L^{r}(\Omega, w) := \left\{u\in M(\Omega) \colon
	\int_{\Omega}w(x) |u(x)|^{r} \dx < \infty \right\},
\end{align*}
which is equipped with the norm
\begin{align*}
	\|u\|_{r, w} :=
	\left( \int_{\Omega} w(x)|u(x)|^{r} \dx \right)^{\frac{1}{r}}.
\end{align*}
We denote by $W^{1,r}(\Om)$ and $W_0^{1,r}(\Om)$ the usual Sobolev spaces endowed with the norms
\begin{align*}
	\|u\|_{1,r} = \|u\|_r + \|\nabla u\|_r
	\quad \text{and} \quad
	\|u\|_{1,r,0} = \|\nabla u\|_r.
\end{align*}
For $1<r<N$, the Sobolev embedding $W_0^{1,r}(\Omega)\hookrightarrow L^\ell(\Omega)$ for $\ell \in [1,r^*]$ holds, where $r^*=\frac{Nr}{N-r}$. More precisely, there exists a constant $S_\ell>0$ such that
\begin{align*}
	\|u\|_{\ell} \leq S_\ell \|\nabla u\|_{r}\quad\text{for all }u \in W_0^{1,r}(\Omega).
\end{align*}

In order to treat the critical Sobolev term, we introduce the best Sobolev constant defined by
\begin{align}\label{a2}
	\So := \inf_{u \in W_0^{1,p}(\Omega)\setminus\{0\}}
	\frac{\|\nabla u\|_{p}^p}{\|u\|_{p^{*}}^p},
\end{align}
for $1<p<N$. It is well known that $\So$ is strictly positive and finite. Next, let $s\in(0,1)$ and $r \in [1,\infty)$. Then, the fractional Sobolev space $W^{s,r}(\R^N)$ is defined as
\begin{align*}
	W^{s,r}(\R^N):=\left\{u\in L^r(\R^N)\colon [u]_{s,r}^r<\infty\right\},
\end{align*}
equipped with the norm
\begin{align*}
	\mm{u}_{s,r}=\left(\mm{u}_r^r+[u]_{s,r}^r\right)^{\frac{1}{r}},
\end{align*}
where
\begin{align*}
	[u]_{s,r}^r=\int_{\R^N }\int_{\R^N}\frac{\ve{u(x)-u(y)}^r}{\ve{x-y}^{N+sr}}\dxy
\end{align*}
is the Gagliardo seminorm. For a comprehensive treatment of fractional Sobolev spaces and their properties, we refer to Di Nezza--Palatucci--Valdinoci \cite{DiNezza-Palatucci-Valdinoci-2012}.

Define,
\begin{align*}
	\fp(x,y,t)=\int_{0}^{|t|} g(x,y,\tau)\tau\,\mathrm{d}\tau,
\end{align*}
where $ g\colon \Omega\times\Omega\times [0,\infty)\to [0,\infty)$ is a given function. Recall that, $\fp\colon \Omega\times\Omega\times \R\to[0,\infty)$ is called a generalized $N$-function (denoted by $\fp\in N(\Omega\times\Omega)$) if it satisfies the following conditions:
\begin{enumerate}
	\item[\textnormal{(i)}]
		For a.a.\,$x,y\in\Omega$, the map $\fp(x,y,\cdot)\colon \R\to[0,\infty)$ is continuous, even, increasing, and convex.
	\item[\textnormal{(ii)}]
		For every $t\geq 0$, the map $\fp(\cdot,\cdot,t)\colon \Omega\times\Omega\to[0,\infty)$ is measurable.
	\item[\textnormal{(iii)}]
		$\fp(x,y,t)=0$ if and only if $t=0$ for a.a.\,$(x,y)\in\Omega\times \Omega$.
	\item[\textnormal{(iv)}]
		$\displaystyle\lim_{t\to 0}\frac{\fp(x,y,t)}{t}=0$ and $\displaystyle\lim_{t\to \infty}\frac{\fp(x,y,t)}{t}=\infty$ for a.a.\,$(x,y)\in\Omega\times \Omega$.
\end{enumerate}
We say that a generalized $N$-function $\fp$ satisfies the weak $\Delta_{2}$-condition if there exist $\delta_{0}>0$ and a nonnegative function $k\in L^{1}(\Omega)$ such that
\begin{align*}
	\fp(x,y,2t)\leq \delta_{0}\fp(x,y,t)+k(x)
\end{align*}
for a.a.\,$(x, y) \in \Omega\times\Omega$ and for all $t \geq 0$. If $k=0$, then $\fp$ is said to satisfy the  $\Delta_{2}$-condition.

We say that $\fp\in N(\Omega\times\Omega)$ is locally integrable if, for any $t>0$ and for every compact set $K\subseteq\Omega$,
\begin{align*}
	\int_{K\times K}\fp(x,y,t)\dxy< \infty \quad\text{and}\quad \int_{K}\fp(x,x,t)\dx<\infty.
\end{align*}
Similarly, one defines a generalized $N$-function $\h\colon \Omega\times \R\to[0,\infty)$ (denoted by $\h\in N(\Omega)$).

Let $\h\in N(\Omega)$. The Musielak-Orlicz Lebesgue space $L^{\h}(\Omega)$ is defined by
\begin{align*}
	L^{\h}(\Omega)=\left\{ u\in M(\Omega)\colon \int_{\Omega}\h\left( x,\tau|u|\right) \dx<\infty \text{ for some }\tau>0\right\}.
\end{align*}
Equipped with the Luxemburg norm
\begin{align*}
	\| u\|_{\h}=\inf\left\{ \tau>0\colon \int_{\Omega}\h\left( x,\frac{|u|}{\tau}\right) \dx\leq 1\right\},
\end{align*}
the space $L^{\h}(\Omega)$  becomes a Banach space, see Musielak \cite{Musielak-1983}.

The following result can be found in Youssfi--Ahmida \cite[Remark B.1, Theorem B.3 and Theorem 2.2]{Youssfi-Ahmida-2020}.

\begin{theorem}
	Let $\h\in N(\Omega)$.
	\begin{enumerate}
		\item[\textnormal{(i)}]
			If $\h$ satisfies the $\Delta_{2}$-condition, then $L^{\h}(\Omega)$ is a separable and reflexive Banach space.
		\item[\textnormal{(ii)}]
			If $\h$ is locally integrable then $C_c^{\infty}(\Omega)$ is dense in $L^{\h}(\Omega)$.
	\end{enumerate}
\end{theorem}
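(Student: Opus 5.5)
This statement is quoted from Youssfi--Ahmida, so the plan is to reconstruct the standard Musielak--Orlicz arguments.

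\emph{Part (i).} The plan is to proceed in four steps.

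First, use the $\Delta_2$-condition $\h(x,2t)\le\delta_0\h(x,t)$ to show that the modular $\rho(u)=\int_\Omega\h(x,|u|)\dx$ is finite on all of $L^{\h}(\Omega)$. It then follows that norm convergence is equivalent to modular convergence: $\|u_n\|_{\h}\to0$ if and only if $\rho(u_n)\to0$.

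Second, obtain separability. Simple functions $\sum c_i\chi_{E_i}$ with rational $c_i$ and $E_i$ drawn from a countable generating family (rational boxes intersected with $\Omega$) should be dense. Truncating $u$ on $\{|u|\le n\}\cap B_n$ and using dominated convergence on the modular, which is legitimate because $\rho$ is finite by $\Delta_2$, gives the density.

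Third, obtain reflexivity. The natural route is to identify the dual with $L^{\h^*}(\Omega)$, where $\h^*$ is the conjugate (Young) function. Since $\Delta_2$ makes $L^{\h}$ coincide with its space of finite elements, the Riesz-type representation for Musielak spaces should give $(L^{\h})^*\cong L^{\h^*}$.

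Fourth, close the reflexivity argument. Reflexivity would follow if $\h^*$ also satisfied $\Delta_2$, but that is not automatic. I expect this to be the main obstacle: in the cited source reflexivity likely requires $\Delta_2$ for both $\h$ and $\h^*$, or else uniform convexity. In the double phase applications of this paper, $\h(x,t)=t^p+a(x)t^q$ is uniformly convex, which yields reflexivity via Milman--Pettis. So I would either invoke Youssfi--Ahmida's precise hypotheses or, for the concrete $\h$ used here, prove uniform convexity of the modular and deduce uniform convexity of the Luxemburg norm.

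\emph{Part (ii).} The plan is in three steps.

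First, use local integrability to show that bounded functions with compact support in $\Omega$ belong to $L^{\h}$ and are dense. This is done by truncation, $u\mapsto T_n(u)\chi_{K_n}$ with $K_n$ compact and increasing to $\Omega$. Modular convergence of $\h(x,\tau|u-T_n u\,\chi_{K_n}|)$ for a suitable $\tau>0$ follows by dominated convergence.

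Second, approximate such a bounded, compactly supported function by smooth ones. Rather than mollification, which would require a log-Hölder-type regularity of $\h$ that we do not have, I would use Lusin's theorem to approximate it by continuous functions with the same bound $M$ and support in a fixed compact $K$. The differences are bounded by $2M$, and $\h(x,2M\tau)$ is integrable on $K$ by local integrability, so dominated convergence yields convergence in modular, hence in norm for a small $\tau$.

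Third, approximate continuous compactly supported functions uniformly by $C_c^\infty(\Omega)$ functions, again with modular control from local integrability.

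The delicate point in part (ii) is that without $\Delta_2$, modular convergence only yields convergence in the weaker ``mean'' sense. Density therefore has to be stated for norm closure, using $\rho(\lambda(u_n-u))\to0$ for every $\lambda>0$. This holds here because the approximants are uniformly bounded, so dominated convergence works for every $\lambda$.
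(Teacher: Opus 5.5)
The paper gives no argument of its own here; it only cites Youssfi--Ahmida. The statement as printed is stronger than what is true. You sense this in (i), but in (ii) you assert the false strengthening, and that is where your proof breaks.

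\emph{Part (ii).} Your last paragraph is wrong. In the truncation step the dominating function is $\mathcal{H}(x,\lambda|u|)$, so you get $\rho(\lambda(u-T_n(u)\chi_{K_n}))\to 0$ only for those $\lambda$ with $\rho(\lambda u)<\infty$. The errors $u-T_n(u)\chi_{K_n}$ inherit the unboundedness of $u$, so the ``uniformly bounded, hence every $\lambda$'' argument covers Steps 2--3 but not Step 1.

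This cannot be patched: without $\Delta_2$, $C_c^\infty(\Omega)$ is not norm dense. Take $\Omega=B_1$, $\mathcal{H}(x,t)=e^{|t|}-|t|-1$ (an $N$-function, trivially locally integrable), and $u(x)=\log(1/|x|)$. Then $\rho(\lambda u)<\infty$ for $\lambda<N$, so $u\in L^{\mathcal{H}}(\Omega)$. But if $|v|\le M$, then near $0$ one has $\mathcal{H}(x,\lambda|u-v|)\ge \frac12 e^{-\lambda M}|x|^{-\lambda}$. Hence $\rho(\lambda(u-v))=\infty$ for $\lambda\ge N$, and $\|u-v\|_{\mathcal{H}}\ge 1/N$ for every $v\in L^\infty(\Omega)$.

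What your argument actually proves is density in the modular sense: $\rho(\lambda_0(u-\varphi_n))\to 0$ for some fixed $\lambda_0>0$. That is the form such results take without $\Delta_2$. Norm density needs $\Delta_2$ as well; then $\rho(\lambda u)<\infty$ for all $\lambda$, and your Step 1 does work for every $\lambda$. For the paper's $\mathcal{H}(x,t)=t^p+a(x)t^q$ this is harmless, since $\Delta_2$ holds.

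\emph{Part (i).} You are right that Step 4 cannot be closed, and you should say so outright: reflexivity is false under $\Delta_2$ alone. The function $\mathcal{H}(t)=(1+t)\log(1+t)-t$ is an $N$-function with $\mathcal{H}(2t)/\mathcal{H}(t)$ bounded (it tends to $4$ at $0$ and to $2$ at $\infty$). Yet $L^{\mathcal{H}}=L\log L$ is separable with non-separable dual $L^{\mathcal{H}^*}$, where $\mathcal{H}^*(s)=e^s-s-1$, hence not reflexive. So the hypothesis $\mathcal{H}^*\in\Delta_2$ (or uniform convexity) has to be added. Your fallback for $t^p+a(x)t^q$ is the correct repair in the paper's setting.

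There is also a smaller point in your separability step. The function $c\chi_E$ with $E$ a rational box lies in $L^{\mathcal{H}}$ only if $\mathcal{H}(\cdot,c)$ is integrable on $E$, so you tacitly use local integrability. Part (i) does not assume it: $\mathcal{H}(x,t)=|x|^{-N}t^2$ on $B_1$ satisfies $\Delta_2$ but fails this. Intersecting the boxes with the sets $\{x\colon \mathcal{H}(x,m)\le k\}$, which exhaust $\Omega$, removes this dependence.
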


For further background on Musielak-Orlicz Lebesgue spaces, we refer to Fukagai--Ito--Narukawa \cite{Fukagai-Ito-Narukawa-2006}.

For a given $\h\in N(\Omega)$, we define the Musielak-Orlicz Sobolev space $W^{1,\h}(\Om)$ by
\begin{align*}
	W^{1,\h}(\Om):=\left\{u\in L^{\h}(\Om) \colon  \p{u} \in L^{\h}(\Om)\right\}.
\end{align*}
It is equipped with the norm
\begin{align*}
	\mm{u}_{1,\h}:=\mm{u}_{\h}+\mm{\na u}_{\h}.
\end{align*}
The subspace $W^{1,\h}_0(\Omega)$ is defined as the closure of $C_c^\infty(\Omega)$ with respect to $\|\cdot\|_{1,\h}$. Since the operator in problem \eqref{1.1} involves the local double phase operator, we define the generalized $N$-function $\h\colon  \Omega\times [0,\infty) \to[0, \infty)$ defined by
\begin{align*}
	\h(x,t):=t^{p}+a(x)t^{q}
\end{align*}
under hypothesis \eqref{H1}\eqref{H1i}. It is well known that $W^{1,\h}(\Omega)$ and $W^{1,\h}_0(\Omega)$ are separable, reflexive Banach spaces, see Colasuonno--Squassina \cite[Proposition 2.14]{Colasuonno-Squassina-2016}.

The following result can be found in Liu--Dai \cite[Proposition 2.1]{Liu-Dai-2018}

\begin{lemma}
	Let $u\in L^{\h}(\Om)$, define the modular
	\begin{align*}
		\rho_{\h}(u)=\int_{\Omega}\h(x,\ve{u})\dx,
	\end{align*}
	and let $c>0$. Then the following assertions hold:
	\begin{enumerate}
		\item[\textnormal{(i)}]
			for $u \neq 0$, one has $\mm{u}_{\h}=c$ if and only if $\rho_{\h}(u/c)=1$;
		\item[\textnormal{(ii)}]
			$\mm{u}_{\h}<1$ (resp.\,$=1,>1$) if and only if $\rho_{\h}(u)<1$ (resp.\,$=1,>1$);
		\item[\textnormal{(iii)}]
			if $\mm{u}_{\h}<1,$ then $\mm{u}_{\h}^{q}\leq \rho_{\h}(u) \leq \mm{u}_{\h}^{p}$;
		\item[\textnormal{(iv)}]
			if $\mm{u}_{\h}>1,$ then $\mm{u}_{\h}^{p}\leq \rho_{\h}(u) \leq \mm{u}_{\h}^{q}$;
		\item[\textnormal{(v)}]
			$\mm{u}_{\h} \to 0$ if and only if $\rho_{\h}(u) \to 0$.
	\end{enumerate}
\end{lemma}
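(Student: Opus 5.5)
The plan is to derive everything from the elementary two-sided scaling inequality of $\h$. Since $1<p<q$ and $a\ge 0$, for a.a.\ $x\in\Omega$, all $t\ge 0$ and all $\tau>0$ we have
\begin{align*}
	\min\{\tau^p,\tau^q\}\,\h(x,t)\le \h(x,\tau t)\le \max\{\tau^p,\tau^q\}\,\h(x,t).
\end{align*}
Integrating gives the same bounds for $\rho_{\h}(\tau u)$ in terms of $\rho_{\h}(u)$. We also need continuity of $\tau\mapsto\rho_{\h}(u/\tau)$ on $(0,\infty)$, which follows from dominated convergence because $\rho_{\h}(u/\tau_0)<\infty$ for some $\tau_0$ and the scaling bound controls $\rho_{\h}(u/\tau)$ for all $\tau$. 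Finally, this function is strictly decreasing when $u\neq 0$, with limit $\infty$ as $\tau\to 0^+$ and $0$ as $\tau\to\infty$. Here finiteness of $\rho_{\h}(u/\tau)$ for every $\tau$ again comes from the upper scaling bound; this is where the $\Delta_2$-type structure of $\h$ is used.

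For (i): by these properties there is a unique $\tau_*>0$ with $\rho_{\h}(u/\tau_*)=1$. The set $\{\tau>0:\rho_{\h}(u/\tau)\le 1\}$ is exactly $[\tau_*,\infty)$, so its infimum $\|u\|_{\h}$ equals $\tau_*$. Both directions of (i) follow.

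For (ii): set $c=\|u\|_{\h}$ and $u\neq0$ (the case $u=0$ is trivial). By (i), $\rho_{\h}(u/c)=1$. Strict monotonicity in the scaling parameter shows that $\rho_{\h}(u)=\rho_{\h}(c\cdot u/c)$ is $<1$, $=1$ or $>1$ exactly when $c<1$, $c=1$ or $c>1$.

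For (iii) and (iv): write $u=c\,(u/c)$ and apply the scaling inequality with $\tau=c$ and $\rho_{\h}(u/c)=1$. This gives $\min\{c^p,c^q\}\le\rho_{\h}(u)\le\max\{c^p,c^q\}$. If $c<1$, this reads $c^q\le\rho_{\h}(u)\le c^p$; if $c>1$, it reads $c^p\le\rho_{\h}(u)\le c^q$.

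For (v): if $\|u_n\|_{\h}\to0$, then eventually $\|u_n\|_{\h}<1$ and (iii) gives $\rho_{\h}(u_n)\le\|u_n\|_{\h}^p\to0$. Conversely, if $\rho_{\h}(u_n)\to0$, then by (ii) eventually $\|u_n\|_{\h}<1$, and (iii) gives $\|u_n\|_{\h}\le\rho_{\h}(u_n)^{1/q}\to0$.

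The main (mild) obstacle is the continuity and strict monotonicity of $\tau\mapsto\rho_{\h}(u/\tau)$ needed in (i). Strictness holds because $u\neq0$ on a set of positive measure and $t\mapsto t^p$ is strictly increasing there. Continuity follows from dominated convergence using the upper scaling bound on compact $\tau$-intervals. Everything else is bookkeeping from the two-sided scaling inequality.
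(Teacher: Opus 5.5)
Your proof is correct and follows the standard route. The paper proves nothing here itself but cites Liu--Dai \cite[Proposition 2.1]{Liu-Dai-2018}, whose argument rests on the same two-sided scaling inequality together with continuity and strict monotonicity of $\tau\mapsto\rho_{\h}(u/\tau)$. A minor simplification: $\rho_{\h}(\lambda u)=\lambda^p\|u\|_p^p+\lambda^q\int_\Omega a(x)|u|^q\dx$ with both coefficients finite, so continuity and strict monotonicity are immediate and dominated convergence is not needed.
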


Since problem \eqref{1.1} also involves the fractional Laplacian together with a double phase operator, we introduce the function space
\begin{align}\label{space-W}
	W := \overline{C_c^{\infty}(\Omega)}^{\,\|\cdot\|_{L^{\xi}}},
\end{align}
where the Luxemburg type norm is defined by
\begin{align*}
	\|u\|_{L^{\xi}} := \inf\left\{\tau>0 \colon  \xi\left(\tfrac{u}{\tau}\right) \le 1 \right\},
\end{align*}
and the corresponding modular is given by
\begin{align*}
	\xi(u) := \int_{\R^N}\int_{\R^N} \frac{|u(x)-u(y)|^p}{|x-y|^{N+sp}}\dxy
	+ \int_{\R^N}\Big(|\nabla u|^{p} + a(x)|\nabla u|^{q}\Big)\dx.
\end{align*}
For simplicity, in the sequel we denote $\|\cdot\|_{L^\xi}$ by $\|\cdot\|_{W}$.

The proof of the following lemma follows along the same lines as in Liu--Dai \cite[Proposition 2.1]{Liu-Dai-2018}.

\begin{lemma}\label{rel}
	Let $u\in W$ and $c>0$. Then the following assertions hold:
	\begin{enumerate}
		\item[\textnormal{(i)}]
			for $u \neq 0$ one has $\mm{u}_{L^{\xi}}=c$ if and only if $\xi(u/c)=1$;
		\item[\textnormal{(ii)}]
			$\mm{u}_{L^{\xi}}<1$ (resp.\,$=1,>1$) if and only if $\xi(u)<1$ (resp.\,$=1,>1$);
		\item[\textnormal{(iii)}]
			if $\mm{u}_{L^{\xi}}<1$, then $\mm{u}_{L^{\xi}}^{q}\leq \xi(u) \leq \mm{u}_{L^{\xi}}^{p}$;
		\item[\textnormal{(iv)}]
			if $\mm{u}_{L^{\xi}}>1$, then $\mm{u}_{L^{\xi}}^{p}\leq \xi(u) \leq \mm{u}_{L^{\xi}}^{q}$;
		\item[\textnormal{(v)}]
			$\mm{u}_{L^{\xi}} \to 0$ if and only if $\xi(u) \to 0$.
	\end{enumerate}
\end{lemma}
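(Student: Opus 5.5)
The whole argument rests on the scaling behaviour of the modular $\xi$, so I would establish that first. For $u\in W$ and $t>0$ the three pieces of $\xi(tu)$ scale as $t^p$, $t^p$ and $t^q$. Hence
\begin{align*}
	\min\{t^p,t^q\}\,\xi(u)\le \xi(tu)\le \max\{t^p,t^q\}\,\xi(u).
\end{align*}
Next I would record the basic properties of the map $t\mapsto \xi(tu)$ for fixed $u\neq0$, where $\xi(u)>0$. It is finite on $W$, which follows from the definition via $C_c^\infty$ closure together with $a$ bounded. It is continuous, since it is a polynomial-type expression $t^pA+t^qB$ with $A>0$ and $B\ge0$. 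It is strictly increasing. It tends to $0$ as $t\to0^+$ and to $\infty$ as $t\to\infty$.

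For (i), I would set $\tau_0=\|u\|_{L^\xi}$. The set $\{\tau>0\colon \xi(u/\tau)\le1\}$ equals $[\tau^*,\infty)$, where $\tau^*$ is the unique solution of $\xi(u/\tau)=1$; this follows from strict monotonicity and continuity in $1/\tau$. Its infimum is therefore $\tau^*$. So $\|u\|_{L^\xi}=c$ if and only if $\xi(u/c)=1$.

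For (ii), write $c=\|u\|_{L^\xi}$. By (i), $\xi(u/c)=1$. Then $\xi(u)=\xi(c\cdot u/c)$, which is $<1$, $=1$ or $>1$ exactly when $c<1$, $c=1$ or $c>1$, by strict monotonicity. The trivial case $u=0$ is handled separately.

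For (iii) and (iv), I would apply the scaling inequality with $t=c$ to $v=u/c$, using $\xi(v)=1$. If $c<1$, this gives $c^q\le \xi(u)\le c^p$. If $c>1$, it gives $c^p\le\xi(u)\le c^q$.

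Part (v) then follows from (iii). When the norm is small, $\xi(u)$ is squeezed between $\|u\|^q$ and $\|u\|^p$. Conversely, if $\xi(u_n)\to0$, then eventually $\xi(u_n)<1$, so $\|u_n\|<1$ by (ii), and $\|u_n\|\le \xi(u_n)^{1/q}\to0$.

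The only point requiring care is the strict monotonicity and continuity of $t\mapsto\xi(tu)$. This needs $\xi(u)>0$ for $u\neq0$. For $u\in W\setminus\{0\}$, $u$ is nonconstant with compact-support approximations vanishing outside $\Omega$, so $\|\nabla u\|_p>0$. Finiteness of $\xi$ on $W$ also needs care and follows from the completion construction. Everything else is a routine transcription of Liu--Dai.
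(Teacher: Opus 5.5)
Your proposal is correct and follows essentially the same route as the paper, which proves this lemma by deferring to the standard scaling argument of Liu--Dai: you use the explicit form $\xi(tu)=t^p\bigl([u]_{s,p}^p+\|\nabla u\|_p^p\bigr)+t^q\int_\Omega a(x)|\nabla u|^q\,\mathrm{d}x$ to get continuity, strict monotonicity and the two-sided power bounds, and (i)--(v) then follow exactly as you describe. The only omission is that the trivial case $u=0$ in (iii) should be noted explicitly, as you did for (ii).
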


Here and in the sequel, every function $u \in W^{1,\mathcal{H}}_0(\Omega)$
is identified with its zero extension to $\mathbb{R}^N$. Moreover, we extend the coefficient $a \colon \Omega \to [0,\infty)$ by setting
\begin{align*}
	a(x) := 0 \quad \text{for } x \in \mathbb{R}^N\setminus\Omega,
\end{align*}
and we continue to denote this extension by $a$. Accordingly, the generalized $N$-function $\mathcal{H} \colon \Omega \times [0,\infty) \to [0,\infty)$
is extended to $\mathbb{R}^N \times [0,\infty)$ by defining
\begin{align*}
	\mathcal{H}(x,t) :=
	\begin{cases}
		\mathcal{H}(x,t), & \text{if } (x,t) \in \Omega \times [0,\infty), \\
		0, & \text{if } (x,t) \in (\mathbb{R}^N\setminus\Omega) \times [0,\infty).
	\end{cases}
\end{align*}
With this convention, both $a$ and $\mathcal{H}$ are understood as functions defined on the whole space $\mathbb{R}^N$.

We now establish the following result.

\begin{proposition}\label{prop-def-w}
	Let $\Omega \subset \R^N$ be a bounded domain. Then the norms
	$\|\cdot\|_{L^{\xi}}$ and $\|\cdot\|_{1,\h}$ are equivalent on $C_c^{\infty}(\Omega)$. In particular,
	\begin{align}\label{space-W-2}
		W = \overline{C_c^{\infty}(\Omega)}^{\,\|\cdot\|_{1,\h}}
		= \left\{u\in W^{1,\h}(\R^N)\colon  u|_{\Omega}\in W_0^{1,\h}(\Omega), u=0  \text{ in } \R^N\setminus \Omega \right\}.
	\end{align}
	Moreover, $W$ is a reflexive Banach space, and the embedding
	$W \hookrightarrow L^r(\Omega)$ is continuous for any $r \in [1,p^*]$ and compact for any $r \in [1,p^*)$.
\end{proposition}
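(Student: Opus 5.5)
The plan is to show that on $C_c^\infty(\Omega)$ the two norms are comparable, and then pass to closures.

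\textbf{Step 1: upper bound for $\|u\|_W$.} The key estimate is that the Gagliardo term is controlled by the gradient term. For $u\in C_c^\infty(\Omega)$, extended by zero, the standard estimate $[u]_{s,p}^p\le C(N,s,p)\,\|u\|_{W^{1,p}(\R^N)}^p$ from Di Nezza--Palatucci--Valdinoci \cite{DiNezza-Palatucci-Valdinoci-2012} applies. Combined with the Poincar\'e inequality on the bounded set $\Omega$, it gives
\begin{align*}
	[u]_{s,p}^p\le C\|\nabla u\|_p^p .
\end{align*}
This immediately yields
\begin{align*}
	\rho_{\h}(\nabla u)\le \xi(u)\le (1+C)\rho_{\h}(\nabla u).
\end{align*}

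\textbf{Step 2: converting the modular bound into a norm bound.} Set $\tau=\|\nabla u\|_{\h}$ and $K=1+C\ge 1$. Applying the modular comparison to $u/(K^{1/p}\tau)$ gives
\begin{align*}
	\xi\Big(\frac{u}{K^{1/p}\tau}\Big)\le K\rho_{\h}\Big(\frac{\nabla u}{K^{1/p}\tau}\Big)\le K\,K^{-1}\rho_{\h}\Big(\frac{\nabla u}{\tau}\Big)= 1,
\end{align*}
where we use that $\h(x,\lambda t)\le \lambda^p\h(x,t)$ for $\lambda\le1$. Hence $\|u\|_W\le K^{1/p}\|\nabla u\|_{\h}$.

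\textbf{Step 3: lower bound for $\|u\|_W$.} Conversely, $\rho_{\h}(\nabla u)\le\xi(u)$ gives $\|\nabla u\|_{\h}\le\|u\|_W$ directly from the definition of the Luxemburg norm. It remains to control $\|u\|_{\h}$ by $\|\nabla u\|_{\h}$. This is the Poincar\'e inequality in $W^{1,\h}_0(\Omega)$, valid since $a\in C^{0,1}(\overline\Omega)$ and $q<N$ (so $q/p<1+1/N$ holds in the usual form used by Colasuonno--Squassina \cite{Colasuonno-Squassina-2016}). Thus $\|\cdot\|_{1,\h}\sim\|\nabla\cdot\|_{\h}\sim\|\cdot\|_W$ on $C_c^\infty(\Omega)$, and in particular the two closures coincide.

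\textbf{Step 4: identifying the closure.} We show that the closure of $C_c^\infty(\Omega)$ in $\|\cdot\|_{1,\h}$ equals the set in \eqref{space-W-2}. By definition, $W_0^{1,\h}(\Omega)$ is this closure on $\Omega$. The zero extension of an element of it lies in $W^{1,\h}(\R^N)$ (with $a=0$ outside $\Omega$), because the extension is the limit of smooth compactly supported functions whose gradients converge in $L^{\h}$. Conversely, any $u$ in the right-hand set restricts to an element of $W_0^{1,\h}(\Omega)$, so it is such a limit.

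\textbf{Step 5: reflexivity and embeddings.} Reflexivity of $W$ follows from that of $W^{1,\h}_0(\Omega)$, via the isomorphism just established. For the embeddings, use $\rho_{\h}(\nabla u)\ge\|\nabla u\|_p^p$, which gives $W\hookrightarrow W_0^{1,p}(\Omega)$ continuously. Composing with the Sobolev embedding into $L^r(\Omega)$ for $r\le p^*$, and with Rellich--Kondrachov compactness for $r<p^*$, yields the stated continuous and compact embeddings.

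The main obstacle is Step 1, namely the uniform control of the nonlocal seminorm by the local gradient, together with the non-homogeneous norm-modular conversion in Step 2. The Poincar\'e inequality in the Musielak--Orlicz space in Step 3 is the one point relying on the assumptions on $a$. If that reference is not applicable in the needed form, I would first try to bypass it through the equivalence $\|u\|_{\h}\le c\|u\|_{L^{q}}$-type bounds combined with Sobolev embedding.
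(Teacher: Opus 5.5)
Your proof is correct in substance and follows the paper's route. You bound the Gagliardo seminorm through $W^{1,p}(\R^N)\hookrightarrow W^{s,p}(\R^N)$, read off $\|\nabla u\|_{\h}\le\|u\|_W$ from the modular comparison, and close the reverse estimate with the Poincar\'e inequality $\|u\|_{\h}\le c\|\nabla u\|_{\h}$ on $W_0^{1,\h}(\Omega)$. The differences are minor:
\begin{itemize}
\item You absorb $\|u\|_p$ with the classical $W_0^{1,p}$ Poincar\'e inequality. You then convert $\xi(u)\le K\rho_{\h}(\nabla u)$ into $\|u\|_W\le K^{1/p}\|\nabla u\|_{\h}$ using $\h(x,\lambda t)\le \lambda^p\h(x,t)$. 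The paper instead bounds $[u]_{s,p}$ by $c(\|u\|_{\h}+\|\nabla u\|_{\h})$ and tests $\xi$ at $u/(3([u]_{s,p}+\|\nabla u\|_{\h}))$.
\item You get the embeddings directly from $\|\nabla u\|_p\le\|u\|_W$ rather than citing Liu--Dai.
\end{itemize}

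The justification you give in Step 3 is false. $q<N$ does not imply $q/p<1+1/N$; for example $p=2$, $q=3$, $N=4$. So you cannot invoke Colasuonno--Squassina on that basis, and the Lipschitz continuity of $a$ plays no role here. The Musielak--Orlicz Poincar\'e inequality needs $a\in L^\infty(\Omega)$ and $q\le p^*$. The second condition does not follow from $q<N$, but it is available from \eqref{H1}\eqref{H1i}, since $q<r\le p^*$. The result the paper cites, Crespo-Blanco--Gasi\'nski--Harjulehto--Winkert, uses exactly $q<p^*$.

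With this, your fallback goes through:
\begin{itemize}
\item By H\"older, $\rho_{\h}(u)\le |\Omega|^{1-p/q}\|u\|_q^p+\|a\|_\infty\|u\|_q^q$.
\item By Sobolev, $\|u\|_q\le S_q\|\nabla u\|_p\le S_q\|\nabla u\|_{\h}$.
\item So if $\|\nabla u\|_{\h}=1$, then $\rho_{\h}(u)$ is bounded by a constant $M\ge1$. Hence $\|u\|_{\h}\le M^{1/p}$, and homogeneity of the norms gives the inequality in general.
\end{itemize}
You should state the hypothesis $q\le p^*$ explicitly rather than attribute the step to $q<N$.
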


\begin{proof}
	Let $u \in C_c^{\infty}(\Omega)$. By Liu--Dai \cite[Lemma 2.7]{Liu-Dai-2020}, the embedding $W^{1,\h}(\R^N)\hookrightarrow W^{1,p}(\R^N)$ is continuous. Furthermore, from Di Nezza--Palatucci--Valdinoci \cite[Proposition 2.2]{DiNezza-Palatucci-Valdinoci-2012}, $W^{1,p}(\R^N)$ embeds continuously into $W^{s,p}(\R^N)$. Hence, there exist constants $c_1,c_2>0$ such that
	\begin{align}\label{eq:2.1}
		[u]_{s,p} \leq c_1\big(\|u\|_{p}+\|\nabla u\|_{p}\big)
		\leq c_2\big(\|u\|_{\h}+\|\nabla u\|_{\h}\big).
	\end{align}
	Set $\tau_1=[u]_{s,p}$ and $\tau_2=\|\nabla u\|_{\h}$. Using the definition of the modular $\xi$, we compute
	\begin{align*}
		\xi \left(\frac{u}{3(\tau_1+\tau_2)}\right)
		& = \int_{\R^N}\int_{\R^N}\left|\frac{u(x)-u(y)}{3(\tau_1+\tau_2)}\right|^p \frac{1}{|x-y|^{N+sp}}\dxy \\
		& \quad + \int_{\R^N}\left(\left|\frac{\nabla u}{3(\tau_1+\tau_2)}\right|^p + a(x)\left|\frac{\nabla u}{3(\tau_1+\tau_2)}\right|^q\right)\dx \\
		& \leq \frac{1}{3^p}\left(\frac{\tau_1}{\tau_1+\tau_2}\right)^p +  \frac{1}{3^p}\left(\frac{\tau_2}{\tau_1+\tau_2}\right)^p+ \frac{1}{3^q}\left(\frac{\tau_2}{\tau_1+\tau_2}\right)^q \leq \frac{2}{3^p}\leq 1,
	\end{align*}
	which implies
	\begin{align*}
		\|u\|_{L^{\xi}} \leq 3( \tau_1+\tau_2)=3([u]_{s,p}+\|\nabla u\|_{\h}).
	\end{align*}
	Combining this with \eqref{eq:2.1}, we obtain
	\begin{align}\label{eq:2.2}
		\|u\|_{L^{\xi}} \leq c \big(\|u\|_{\h}+\|\nabla u\|_{\h}\big).
	\end{align}

	Since $C_c^{\infty}(\Omega)\subset W_0^{1,\h}(\Omega)$, the Poincar\'{e} inequality gives
	\begin{align*}
		\|u\|_{\h}\leq c \|\nabla u\|_{\h},
	\end{align*}
	see Crespo-Blanco--Gasi\'{n}ski--Harjulehto--Winkert \cite[Proposition 2.18]{Crespo-Blanco-Gasinski-Harjulehto-Winkert-2022}.
	Moreover, by definition of the norms, $\|\nabla u\|_{\h}\leq \|u\|_{L^{\xi}}$.
	Therefore,
	\begin{align}\label{eq:2.3}
		\|u\|_{\h}+\|\nabla u\|_{\h} \leq c \|u\|_{L^{\xi}}.
	\end{align}
	From \eqref{eq:2.2} and \eqref{eq:2.3}, the equivalence of the norms
	$\|\cdot\|_{L^{\xi}}$ and $\|\cdot\|_{1,\h}$ on $C_c^{\infty}(\Omega)$ follows. Since $\Omega$ is bounded and the norms are equivalent on $C_c^{\infty}(\Omega)$, the characterization of $W$ in \eqref{space-W-2} follows. Because $W^{1,\h}(\R^N)$ is separable and reflexive and $W$ is a closed subspace, $W$ is also reflexive.

	Finally, the embedding $W_0^{1,\h}(\Omega) \hookrightarrow L^r(\Omega)$ is continuous for $r \in [1,p^*]$ and compact for $r \in [1,p^*)$, see Liu--Dai \cite{Liu-Dai-2018}. Consequently, the same embedding properties hold for $W$.
\end{proof}

Since $W \hookrightarrow L^\ell(\Omega)$ for every $\ell \in [1,p^*]$, there exists a constant $C_\ell>0$ such that
\begin{align*}
	\|u\|_{\ell} \leq C_\ell \|u\|_{W}\quad \text{for all }u \in W.
\end{align*}

For any generalized $N$-function $\fp\colon \Omega\times\Omega\times \R\to[0,\infty),$ we define
the function $ g_x\colon \Omega\times [0,\infty)\to[0,\infty)$ such that
\begin{align*}
	g_x(x,t)=g(x,x,t)\quad\text{and}\quad \fp_x(x,t)=\int_{0}^{|t|} g_x(x,\tau)\tau \,\mathrm{d}\tau.
\end{align*}
For a given generalized $N$-function $\fp$ and $s\in(0,1)$, the fractional Musielak-Orlicz Sobolev space is defined by
\begin{align*}
	W^{s,\fp}(\Omega)=\left\{ u\in L^{\fp_{x}}(\Omega)\colon \int_{\Omega}\int_{\Omega}\fp\left(x,y, \dfrac{\tau|u(x)-u(y)|}{|x-y|^{s}}\right)\dfrac{\dxy}{|x-y|^{N}} <\infty \text{ for some } \tau>0\right\}.
\end{align*}
The space $W^{s,\fp}(\Omega)$ is equipped with the  norm
\begin{align*}
	\| u\|_{W^{s,\fp}(\Omega)}=\| u\|_{L^{\fp_{x}}(\Omega)}+[u]_{s,\fp,\Omega},
\end{align*}
where
\begin{align*}
	[u]_{s,\fp,\Omega}=\inf\left\{ \tau>0\colon \int_{\Omega}\int_{\Omega}\fp\left(x,y, \dfrac{|u(x)-u(y)|}{\tau|x-y|^{s}}\right)\dfrac{\dxy}{|x-y|^{N}}\leq 1\right\}.
\end{align*}
Recall that if $\mathcal{G}$ is a generalized $N$-function satisfying the $\Delta_{2}$-condition, then $W^{s,\mathcal{G}}(\Omega)$ is a separable and reflexive Banach space, see Azroul--Benkirane--Shimi--Srati \cite{Azroul-Benkirane-Shimi-Srati-2022}.

Since problem \eqref{frac_dbl} involves the fractional double phase structure, we introduce the generalized $N$-function $\fp\colon \Omega\times\Omega\times [0,\infty) \to[0, \infty)$ by
\begin{align*}
	\fp(x,y,t):=t^{p}+b(x,y)t^{q},
\end{align*}
under hypothesis \eqref{H2}\eqref{H2i}. It is straightforward to verify that the generalized $N$-function $\fp(x,y,t)=t^{p}+b(x,y)t^{q}$ satisfies the $\Delta_{2}$-condition and is locally integrable. Because problem \eqref{frac_dbl} involves the nonlocal operator under Dirichlet boundary conditions, we define the associated fractional Sobolev type space
\begin{align*}
	X(\Omega)=\left\lbrace u\colon \R^N\to\R \colon  u|_{\Omega}\in L^{\fp_{x}}(\Omega),\int_{Q}\fp\left(x,y, \dfrac{\tau|u(x)-u(y)|}{|x-y|^{s}}\right)\dfrac{\dxy}{|x-y|^{N}} <\infty   \text{ for some } \tau>0\right\rbrace,
\end{align*}
where $Q=\R^{2N}\setminus (\Omega^c\times \Omega^c)$. The space $X(\Omega)$ is equipped with the norm
\begin{align*}
	\| u\|_{X(\Omega)}=\| u\|_{L^{\fp_{x}}(\Omega)}+[u]_{X},
	\end{align*}
where
\begin{align*}
	[u]_{X}=\inf\left\{  \tau>0\colon \ \int_{Q}\fp\left(x,y, \dfrac{|u(x)-u(y)|}{\tau|x-y|^{s}}\right)\dfrac{\dxy}{|x-y|^{N}}\leq 1\right\}.
\end{align*}
We define the subspace
\begin{align*}
	W_{0}^{s,\mathcal{G}}(\Omega)=\{u\in X(\Omega)\colon  u=0 \text{ a.e.\,in } \  \R^{N}\setminus \Omega\},
\end{align*}
which is a normed space with norm
\begin{align*}
	\| u\|_{W_{0}^{s,\mathcal{G}}(\Omega)}=\| u\|_{L^{\fp_{x}}(\Omega)}+[u]_{X}.
\end{align*}
If $u\in W_{0}^{s,\mathcal{G}}(\Omega)$, then
\begin{align*}
	\int_{Q}\fp\left(x,y, \dfrac{|u(x)-u(y)|}{\tau|x-y|^{s}}\right)\dfrac{\dxy}{|x-y|^{N}}=\int_{\R^N}\int_{\R^N}\fp\left(x,y, \dfrac{|u(x)-u(y)|}{\tau|x-y|^{s}}\right)\dfrac{\dxy}{|x-y|^{N}}.
\end{align*}
since the integrand vanishes whenever both points lie outside $\Omega$. Consequently,
\begin{align*}
	W_{0}^{s,\mathcal{G}}(\Omega)=\left\{u\in W^{s,\fp}(\R^N)\colon  u=0 \ \text{ a.e.\,in } \R^{N}\setminus \Omega\right\},
\end{align*}
which is a well-known space studied by Azroul--Benkirane--Shimi--Srati \cite{Azroul-Benkirane-Shimi-Srati-2023}.

Next, we state the generalized Poincar\'{e}'s inequality, see Azroul--Benkirane--Shimi-Srati.

\begin{theorem}
	Let $\Omega$ be a bounded domain of $ \R^{N}$ with smooth boundary and $0<s<1.$ Then there exists a positive constant $c$ such that
	\begin{align*}
		\| u\|_{L^{\mathcal{G}_{x}}(\Omega)}\leq c [u]_{s,\mathcal{G},\R^N}\quad \text{for all } u\in  W_{0}^{s,\mathcal{G}}(\Omega).
	\end{align*}
\end{theorem}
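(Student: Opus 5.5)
The plan is a compactness argument by contradiction. Homogeneity alone does not close the estimate, because $\mathcal{G}$ mixes the powers $p$ and $q$. We use three facts about $W_0^{s,\mathcal{G}}(\Omega)$ with $\mathcal{G}(x,y,t)=t^p+b(x,y)t^q$.
\begin{enumerate}[label=\textnormal{(\alph*)}]
\item It is a reflexive Banach space. This holds because $\mathcal{G}$ satisfies the $\Delta_2$-condition, as recalled above.
\item The map $u\mapsto [u]_{s,\mathcal{G},\R^N}$ is convex and continuous, hence weakly lower semicontinuous.
\item The embedding $W_0^{s,\mathcal{G}}(\Omega)\hookrightarrow L^{\mathcal{G}_x}(\Omega)$ is compact. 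I would take this from Azroul--Benkirane--Shimi--Srati \cite{Azroul-Benkirane-Shimi-Srati-2023}.
\end{enumerate}

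Suppose the inequality fails. Then there are $u_n\in W_0^{s,\mathcal{G}}(\Omega)$ with $\|u_n\|_{L^{\mathcal{G}_x}(\Omega)}>n[u_n]_{s,\mathcal{G},\R^N}$. Normalize so that $\|u_n\|_{L^{\mathcal{G}_x}(\Omega)}=1$, which forces $[u_n]_{s,\mathcal{G},\R^N}<1/n\to0$. The sequence is then bounded in $W_0^{s,\mathcal{G}}(\Omega)$. By reflexivity, a subsequence converges weakly to some $u$. By compactness, $u_n\to u$ in $L^{\mathcal{G}_x}(\Omega)$, so $\|u\|_{L^{\mathcal{G}_x}(\Omega)}=1$.

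By weak lower semicontinuity, $[u]_{s,\mathcal{G},\R^N}\le\liminf_n[u_n]_{s,\mathcal{G},\R^N}=0$. The modular dominates the $p$-part, so
\begin{align*}
\int_{\R^N}\int_{\R^N}\frac{|u(x)-u(y)|^p}{|x-y|^{N+sp}}\dxy=0 .
\end{align*}
Hence $u$ is a.e.\ constant on $\R^N$. Since $u=0$ a.e.\ in $\R^N\setminus\Omega$ and this set has positive measure ($\Omega$ is bounded), $u\equiv0$. This contradicts $\|u\|_{L^{\mathcal{G}_x}(\Omega)}=1$.

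The main obstacle is the compact embedding into $L^{\mathcal{G}_x}(\Omega)$. Its $q$-part $\int_\Omega b(x,x)|u|^q\dx$ is not automatically controlled by the fractional $p$-seminorm when $q$ exceeds the fractional critical exponent $p_s^*=Np/(N-sp)$. It is also unclear what the trace $b(x,x)$ of an $L^\infty$ function on the diagonal means.

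I would first try to quote the compactness result of Azroul et al.\ directly. Failing that, I would argue by hand using $b\in L^\infty$. The $p$-part converges strongly by the fractional Rellich theorem. Setting $\lambda=[u]_{s,\mathcal{G},\R^N}$, the modular bound $\int\int\mathcal{G}(x,y,|D_s u|/\lambda)\dv\le1$ gives
\begin{align*}
\|u\|_p\le C\,[u]_{s,p}\le C\lambda
\end{align*}
via the classical fractional Poincar\'e inequality. For the $q$-part I would use the $q$-seminorm weighted by $b$ contained in the modular. This yields the estimate directly for the $p$-piece of the Luxemburg norm and reduces the $q$-piece to the structural hypotheses behind the cited embedding.
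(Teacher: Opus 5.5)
You have a genuine gap at fact (c). The contradiction scheme itself is sound as a reduction: both quantities are Luxemburg (semi)norms, so the normalization is legitimate, and $[u]_{s,\mathcal{G},\mathbb{R}^N}=0$ does force $[u]_{s,p}=0$ and hence $u=0$. The trouble is that the whole content of the theorem now sits in (c). That compact embedding is stronger than the inequality you want, and you neither prove it nor can safely quote it. Your fallback controls only the $p$-piece and explicitly leaves $\int_\Omega b(x,x)|u|^q\,\mathrm{d}x$ open.

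This is not a technicality. With only $b\in L^\infty(\mathbb{R}^N\times\mathbb{R}^N)$, the diagonal values $b(x,x)$ are not determined by $b$, and the seminorm never sees them. Take the representative of $b\equiv 0$ that equals $1$ on the diagonal. Then $[\cdot]_{s,\mathcal{G},\mathbb{R}^N}=[\cdot]_{s,p}$, while $\|u\|_q\le \|u\|_{L^{\mathcal{G}_x}(\Omega)}$. Now let $u_\varepsilon=\varepsilon^{-(N-sp)/p}\phi(\cdot/\varepsilon)$ with $\phi\in C_c^\infty(B_\rho)$ and $B_\rho\subset\Omega$. Then $[u_\varepsilon]_{s,p}=[\phi]_{s,p}$, but $\|u_\varepsilon\|_q=\varepsilon^{N/q-(N-sp)/p}\|\phi\|_q\to\infty$ whenever $q>Np/(N-sp)$, and (H$_2$) permits this.

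So neither (c) nor the inequality follows from the stated hypotheses. Some structural link between $b(x,x)$ and the off-diagonal values of $b$ is indispensable. The paper gives no argument here either; it simply quotes the inequality from Azroul--Benkirane--Shimi--Srati. Your proposal therefore replaces that citation with a citation of a stronger statement.

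The compactness detour is also unnecessary, and the mixed homogeneity is not an obstacle. Since $u=0$ on $\mathbb{R}^N\setminus\Omega$, for $x\in\Omega$ and $y\notin\Omega$ you have $|u(x)|=|u(x)-u(y)|$. Because $\Omega$ is bounded, $\int_{\mathbb{R}^N\setminus\Omega}|x-y|^{-N-sp}\,\mathrm{d}y\ge c_0>0$ uniformly in $x\in\Omega$. This gives $\int_\Omega|u|^p\,\mathrm{d}x\le c_0^{-1}\int_Q|D_s(u)|^p\,\mathrm{d}\nu$ directly.

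The same computation handles the $q$-piece under a hypothesis such as $b(x,x)\le C\int_{\mathbb{R}^N\setminus\Omega}b(x,y)|x-y|^{-N-sq}\,\mathrm{d}y$ for a.a.\ $x\in\Omega$. This yields a modular inequality $\int_\Omega\mathcal{G}_x(x,|v|)\,\mathrm{d}x\le C\int_Q\mathcal{G}(x,y,|D_s(v)|)\,\mathrm{d}\nu$ with $C\ge1$. Apply it to $v=u/[u]_{s,\mathcal{G},\mathbb{R}^N}$, whose modular is at most $1$. Convexity, $\mathcal{G}_x(x,t/C)\le \mathcal{G}_x(x,t)/C$, then gives $\|u\|_{L^{\mathcal{G}_x}(\Omega)}\le C[u]_{s,\mathcal{G},\mathbb{R}^N}$. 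This route needs no compactness and no restriction relating $q$ to $p_s^*$; the only real issue is the diagonal behaviour of $b$.
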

As a consequence, the seminorm $[\cdot]_{s,\mathcal{G},\R^N}$ defines a norm on $W_{0}^{s,\mathcal{G}}(\Omega)$, which is equivalent to the norm $\| \cdot\|_{W^{s,\mathcal{G}}(\Omega)}$.

From now on, we assume that $\Omega$ is a smooth bounded domain in $\R^N$.

Taking into account the interplay between the local $p$-Laplacian and the fractional double phase operator in problem \eqref{frac_dbl}, we introduce the space
\begin{align}\label{space-E}
	E := W_0^{1,p}(\Omega)\cap W_{0}^{s,\mathcal{G}}(\Omega),
\end{align}
which is a Banach space endowed with the norm,
\begin{align}\label{space-E-norm}
	\|u\|_{E} = \|u\|_{W_0^{1,p}(\Omega)}+\|u\|_{W_{0}^{s,\mathcal{G}}(\Omega)}
	= \|\nabla u\|_{p} + [u]_{s,\mathcal{G},\R^N}.
\end{align}
This choice is natural, since in general there is no continuous embedding between $W_0^{1,p}(\Omega)$ and  $W_{0}^{s,\mathcal{G}}(\Omega)$. Hence neither space alone captures the full structure of the operator in \eqref{frac_dbl}. The intersection space $E$ therefore provides the appropriate functional framework for the variational analysis of the problem.

By definition, there is a continuous embedding $E\hookrightarrow W_0^{1,p}(\Omega)$. Since $W_0^{1,p}(\Omega)$ is continuously embedded in $L^{r}(\Omega)$ for $1\leq r\leq p^*$ and compactly embedded for $1\leq r<p^*$, we deduce that
\begin{align}\label{embed}
	E \hookrightarrow L^{r}(\Omega) \quad \text{continuously for } 1 \leq r\leq p^*, \quad E \hookrightarrow L^{r}(\Omega) \quad \text{compactly for } 1 \leq r<p^*.
\end{align}
Hence, for every $r \in [1,p^*]$, there exists a constant $\mathbb{S}_r>0$ such that
\begin{align*}
	\|u\|_{r} \leq \mathbb{S}_r \|u\|_{E} \quad\text{for all } u \in E.
\end{align*}
Define the modular functional
\begin{align*}
	\eta(u) := \int_{\Omega}|\nabla u|^p\dx+ \int_{Q}|D_s(u)|^p \dv+\int_{Q}b(x,y)|D_s(u)|^q \dv,
\end{align*}
where we used the notation introduced in \eqref{notion-ds-dv}.

The proof of the following lemma is analogous to that of Liu--Dai \cite[Proposition 2.1]{Liu-Dai-2018} and is therefore omitted.

\begin{lemma}\label{re2}
	Let $u\in E$ and $c>0$. Then the following assertions hold:
	\begin{enumerate}
		\item[\textnormal{(i)}]
			for $u \neq 0$ one has $\mm{u}_{E}=c$ if and only if $\eta(u/c)=1$;
		\item[\textnormal{(ii)}]
			$\mm{u}_{E}<1$ (resp.\,$=1,>1$) if and only if $\eta(u)<1$ (resp.\,$=1,>1$);
		\item[\textnormal{(iii)}]
			if $\mm{u}_{E}<1$, then $\mm{u}_{E}^{q}\leq \eta(u) \leq \mm{u}_{E}^{p}$;
		\item[\textnormal{(iv)}]
			if $\mm{u}_{E}>1$, then $\mm{u}_{E}^{p}\leq \eta(u) \leq \mm{u}_{E}^{q}$;
		\item[\textnormal{(v)}]
			$\mm{u}_{E} \to 0$ if and only if $\eta(u) \to 0$.
	\end{enumerate}
\end{lemma}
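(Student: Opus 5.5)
The plan is to prove the lemma for the Luxemburg-type norm generated by the modular $\eta$, in exact analogy with $\|\cdot\|_{L^\xi}$ for $W$. So I would set
\begin{align*}
	\|u\|_{E}:=\inf\left\{\tau>0\colon \eta\left(\tfrac{u}{\tau}\right)\le 1\right\}.
\end{align*}
This choice matters. For the sum norm $\|\nabla u\|_p+[u]_{s,\mathcal{G},\R^N}$ in \eqref{space-E-norm}, assertion (i) cannot hold exactly as stated, since that norm is not tied to $\eta$ by an identity. The first step is therefore to check that this Luxemburg norm is equivalent to the norm in \eqref{space-E-norm}, so that nothing else in the functional setting changes. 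This goes as in the proof of Proposition \ref{prop-def-w}. Put $\tau_1=\|\nabla u\|_p$ and $\tau_2=[u]_{s,\mathcal{G},\R^N}$. Splitting $\eta(u/(3(\tau_1+\tau_2)))$ into its local and nonlocal parts bounds it by $2/3^p\le 1$. In the other direction, each of $\|\nabla u\|_p$ and $[u]_{s,\mathcal{G},\R^N}$ is dominated by the Luxemburg norm, because each partial modular is bounded by $\eta$. I expect this reconciliation with the definition of $\|\cdot\|_E$ to be the only genuine obstacle; everything after it is routine.

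The core tool is the scaling behaviour of $\eta$. Fix $u\in E\setminus\{0\}$ and set $\varphi(t):=\eta(tu)$ for $t\ge0$. The three terms of $\eta$ are homogeneous of degrees $p$, $p$ and $q$ respectively, and $b\ge0$. Hence
\begin{align*}
	\varphi(t)=t^p\Big(\int_\Omega|\nabla u|^p\dx+\int_Q|D_s(u)|^p\dv\Big)+t^q\int_Q b(x,y)|D_s(u)|^q\dv .
\end{align*}
So $\varphi$ is continuous and strictly increasing, with $\varphi(0)=0$ and $\varphi(t)\to\infty$; the first bracket is positive because $u\ne0$ and $u\in W_0^{1,p}(\Omega)$. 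Moreover, since $p<q$,
\begin{align*}
	t^q\eta(u)\le\eta(tu)\le t^p\eta(u)\quad\text{for } t\in[0,1],\qquad t^p\eta(u)\le\eta(tu)\le t^q\eta(u)\quad\text{for } t\ge1 .
\end{align*}

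The assertions then follow in order. For (i), the set $\{\tau>0\colon\eta(u/\tau)\le1\}$ is the interval $[\tau_0,\infty)$, where $\tau_0$ is the unique solution of $\varphi(1/\tau)=1$, which exists by continuity and strict monotonicity; hence $\|u\|_E=\tau_0$, and uniqueness gives the converse. For (ii), apply (i) with $c=\|u\|_E$ and use monotonicity of $\varphi$: $\eta(u)=\varphi(1)$ is less than, equal to, or greater than $\varphi(1/c)=1$ according as $c<1$, $c=1$, or $c>1$. For (iii) and (iv), write $u=c\,(u/c)$ with $c=\|u\|_E$, apply the scaling inequalities with $t=c$ to $u/c$, and use $\eta(u/c)=1$; this gives $c^q\le\eta(u)\le c^p$ when $c<1$ and $c^p\le\eta(u)\le c^q$ when $c>1$. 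Finally, (v) follows from (iii), since eventually $\|u\|_E<1$ or $\eta(u)<1$, and then $\|u\|_E^q\le\eta(u)\le\|u\|_E^p$.
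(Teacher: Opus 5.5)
Your proof is correct and follows essentially the paper's route. The paper omits the proof as analogous to Liu--Dai (Proposition 2.1), which is the same homogeneity and monotonicity argument you give for $t\mapsto\eta(tu)$. Your point that the relations hold for the Luxemburg norm generated by $\eta$, and not literally for the sum norm \eqref{space-E-norm}, together with your equivalence check, matches the Remark right after the lemma, where the paper introduces $\|\cdot\|_\eta$ for exactly this reason.
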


\begin{remark}
	Define the Luxemburg-type norm associated with the modular functional $\eta$ by
	\begin{align*}
		\|u\|_{\eta} := \inf\left\{ \tau>0 \colon \eta\left(\frac{u}{\tau}\right)\le 1 \right\}.
	\end{align*}
	Then $\|\cdot\|_{\eta}$ is a norm on $E$, and it is equivalent to the norm \eqref{space-E-norm}. Moreover, the norm-modular relations stated in Lemma \ref{re2} hold with respect to the Luxemburg norm $\|\cdot\|_{\eta}$. Since equivalent norms generate the same topology on $E$, we may use $\|\cdot\|_{E}$ and $\|\cdot\|_{\eta}$ interchangeably in the sequel.
\end{remark}

Next, we recall several notions that will be used in the variational analysis. We begin with the definition of the genus and the related topological preliminaries, see Rabinowitz \cite{Rabinowitz-1986}.

\begin{definition}
	Let $X$ be a Banach space and $A \subset X\setminus\{0\}$. The set $A$ is said to be symmetric with respect to the origin if $u\in A$ implies $-u \in A$. The genus $\ga(A)$ of $A$ is defined as the least positive integer $n$ such that there exists an odd mapping $\phi \in C(A,\R^n\setminus\{0\})$. If no such $n$ exists, then $\gamma(A)=\infty$.
\end{definition}

Define
\begin{align*}
	\Sigma=\left\{U\subset X \setminus \{0\}\colon U \ \text{is closed in } X \text{ and symmetric with respect to the origin}\right\}.
\end{align*}

\begin{proposition} \label{genus}
	Let $A,B \in \Sigma$. Then the following assertions hold:
	\begin{enumerate}
		\item[\textnormal{(i)}]
			If there exists an odd continuous mapping from $A$ to $B$, then $\gamma(A) \leq \gamma(B).$
		\item[\textnormal{(ii)}]
			If there exists an odd homeomorphism from $A$ to $B$, then $\gamma(A) = \gamma(B).$
		\item[\textnormal{(iii)}]
			If $\gamma(B) < \infty$, then $\gamma(A \setminus B) \geq \gamma(A) - \gamma(B)$.
		\item[\textnormal{(iv)}]
			The $n$-dimensional sphere $S^n$ has genus $n+1$ by the Borsuk-Ulam theorem.
		\item[\textnormal{(v)}]
			If $A$ is compact, then $\gamma(A) < \infty$ and there exists $\delta > 0$ such that
			\begin{align*}
				N_\delta(A):= \{ x \in X \colon  \operatorname{dist}(x,A) \leq \delta \} \in\Sigma\quad \text{and}\quad \gamma(N_\delta(A)) = \gamma(A).
			\end{align*}
	\end{enumerate}
\end{proposition}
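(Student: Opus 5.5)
The plan is to prove the five properties of the genus directly from the definition. The basic tools are composition of odd maps, the Tietze extension theorem combined with odd symmetrization, and the Borsuk--Ulam theorem.

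\emph{Symmetrization device.} If $C\in\Sigma$ and $\varphi\colon C\to\R^n$ is odd and continuous, extend each component by Tietze to a continuous $\tilde\varphi\colon X\to\R^n$. Then set $\hat\varphi(u)=\tfrac12(\tilde\varphi(u)-\tilde\varphi(-u))$. This map is odd, continuous on $X$, and agrees with $\varphi$ on $C$, because $C$ is symmetric and $\varphi$ is odd.

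For (i), let $h\colon A\to B$ be odd and continuous. Let $n=\gamma(B)$; if $\gamma(B)=\infty$ there is nothing to prove. Take an odd continuous $\varphi\colon B\to\R^n\setminus\{0\}$. Then $\varphi\circ h\colon A\to\R^n\setminus\{0\}$ is odd and continuous, so $\gamma(A)\le n$. Property (ii) follows by applying (i) to the homeomorphism and to its inverse.

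For (iii), the set $A\setminus B$ need not be closed. I will interpret its genus as that of $\overline{A\setminus B}$, which is the standard convention. This set lies in $A$, since $A$ is closed, so it avoids $0$, and it is symmetric; hence it belongs to $\Sigma$. We may assume $m:=\gamma(\overline{A\setminus B})<\infty$. Let $n=\gamma(B)$ and take odd maps $\varphi\colon B\to\R^n\setminus\{0\}$ and $\psi\colon\overline{A\setminus B}\to\R^m\setminus\{0\}$. Extend both to odd continuous maps on $X$ by the symmetrization device. Then $\Phi=(\varphi,\psi)\colon A\to\R^{n+m}$ is odd and continuous. It never vanishes, because every $u\in A$ lies either in $B$, where $\varphi(u)\neq0$, or in $\overline{A\setminus B}$, where $\psi(u)\neq0$. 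Hence $\gamma(A)\le n+m$, which is the claim.

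For (iv), the identity map $S^n\to\R^{n+1}\setminus\{0\}$ gives $\gamma(S^n)\le n+1$. Suppose instead there were an odd continuous $\varphi\colon S^n\to\R^k\setminus\{0\}$ with $k\le n$. Viewing $\R^k\subset\R^n$, the map $\varphi/|\varphi|$ would be an odd continuous map $S^n\to S^{n-1}$. This contradicts the Borsuk--Ulam theorem, so $\gamma(S^n)=n+1$.

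For (v), the main work is the construction on a compact $A$. Since $0\notin A$, every $x\in A$ has the open ball $U_x=B_{\|x\|/4}(x)$ satisfying $U_x\cap(-U_x)=\emptyset$. By compactness, finitely many balls $U_1,\dots,U_k$ cover $A$. Define
\[
\varphi_i(u)=\operatorname{dist}(u,X\setminus U_i)-\operatorname{dist}(-u,X\setminus U_i).
\]
Each $\varphi_i$ is odd and continuous. If $u\in U_i$, then $-u\notin U_i$, so $\varphi_i(u)>0$. Therefore $\varphi=(\varphi_1,\dots,\varphi_k)$ is an odd continuous map $X\to\R^k$ that does not vanish on $A$, and so $\gamma(A)\le k<\infty$.

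The delicate step is the neighbourhood claim, for which I argue by contradiction using compactness. Suppose there are $u_j$ with $\operatorname{dist}(u_j,A)\to0$ and $\varphi(u_j)=0$. Choose $a_j\in A$ with $\|u_j-a_j\|\to0$. Passing to a subsequence, $a_j\to a\in A$, so $u_j\to a$ and continuity gives $\varphi(a)=0$, a contradiction. Hence there is $\delta>0$ such that $\varphi\neq0$ on $N_\delta(A)$. Shrinking $\delta$ so that $\delta<\operatorname{dist}(0,A)$, the set $N_\delta(A)$ is closed, symmetric and excludes $0$, so $N_\delta(A)\in\Sigma$.

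Now take an odd map $\psi\colon A\to\R^{n}\setminus\{0\}$ with $n=\gamma(A)$. Extend it oddly to $X$ by the symmetrization device, and apply the same compactness argument to $\psi$, shrinking $\delta$ further if needed. Then $\psi\neq0$ on $N_\delta(A)$, so $\gamma(N_\delta(A))\le\gamma(A)$. The reverse inequality follows from (i) applied to the inclusion $A\hookrightarrow N_\delta(A)$, which gives equality.
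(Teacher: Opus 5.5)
Your proof is correct. The paper gives no argument of its own for this proposition and simply refers to Rabinowitz \cite{Rabinowitz-1986}; what you wrote is essentially the standard proof from that source. It uses composition of odd maps for (i)--(ii), Tietze extension with odd symmetrization for the subadditivity behind (iii), Borsuk--Ulam for (iv), and, for (v), a finite cover by sets disjoint from their antipodes together with a compactness argument applied to an odd extension.

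The one point worth tightening is (iii). You replace $\gamma(A\setminus B)$ by $\gamma(\overline{A\setminus B})$, which is Rabinowitz's formulation. However, the paper's definition of the genus applies to arbitrary symmetric subsets of $X\setminus\{0\}$. Since $\gamma(A\setminus B)\le\gamma(\overline{A\setminus B})$ (restrict maps), the inequality as literally stated is slightly stronger than what you prove.

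The literal version also holds with a small change to your argument:
\begin{itemize}
\item Extend $\varphi$ oddly to $\hat\varphi$ on $X$.
\item The set $Z=\{u\in A\colon \hat\varphi(u)=0\}$ is closed, symmetric and contained in $A\setminus B$.
\item Take an odd $\psi\colon A\setminus B\to\R^m\setminus\{0\}$ with $m=\gamma(A\setminus B)$, and extend only $\psi|_Z$ oddly to $\hat\psi$ on $X$.
\end{itemize}
Then $(\hat\varphi,\hat\psi)$ still has no zero on $A$, which gives $\gamma(A)\le\gamma(B)+\gamma(A\setminus B)$.
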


\begin{definition}
	Let $X$ be a Banach space and $I\colon X\to \R $ a $C^{1}$-functional.
	\begin{enumerate}
		\item[\textnormal{(i)}]
			A sequence $\{u_{n}\}_{n\in\mathbb{N}}\subset X$ is called a Cerami sequence at level $c$ (\textnormal{(C)$_c$}-sequence for short), if
			\begin{align*}
				I(u_{n})\to c\quad \text{and}\quad (1+\|u_{n}\|)\|I'(u_{n})\|_{*}\to 0.
			\end{align*}
			We say that $I$ satisfies the Cerami condition at level $c$ (\textnormal{(C)$_c$}-condition for short) if every \textnormal{(C)$_c$}-sequence has a convergent subsequence. Moreover, $I$ satisfies the Cerami condition (\textnormal{(C)}-condition for short) if it satisfies the \textnormal{(C)$_c$}-condition for every $c\in\mathbb{R}$.
		\item[\textnormal{(ii)}]
			A sequence $\{u_{n}\}_{n\in\mathbb{N}}\subset X$  is called a  Palais-Smale sequence at level $c$ (\textnormal{(PS)$_c$}-sequence for short) if it satisfies
			\begin{align*}
				I(u_n)\to c \quad \text{and} \quad \|I'(u_n)\|_{*}\to 0.
			\end{align*}
			We say that $I$ satisfies the Palais-Smale condition at level $c$ (\textnormal{(PS)$_c$}-condition for short) if every \textnormal{(PS)$_c$}-sequence admits a convergent subsequence. Moreover, the functional $I$ satisfies the Palais-Smale condition  (\textnormal{(PS)}-condition for short) if it satisfies the \textnormal{(PS)$_c$}-condition for every $c\in\mathbb{R}$.
\end{enumerate}
\end{definition}

We recall the following versions of the mountain pass theorem, see Ambrosetti--Rabinowitz \cite{Ambrosetti-Rabinowitz-1973} and Cerami \cite{Cerami-1978,Cerami-1980}.

\begin{theorem}\label{ce2}
	Let $X$ be a Banach space and let $I\colon  X\to\mathbb{R}$ be a $C^1$-functional satisfying $I(0)=0$. Suppose that there exist constants $\rho, \alpha>0$ such that
	\begin{enumerate}
		\item[\textnormal{(i)}]
			$I(u)\geq \alpha$ whenever $\|u\|=\rho$;
		\item[\textnormal{(ii)}]
			there exists $v\in X$ with $\|v\|>\rho$ such that $I(v)\leq 0$.
	\end{enumerate}
	Then $I$ admits a \textnormal{(C)$_c$}-sequence at the level
	\begin{align}\label{C-condition}
		c=\inf_{\gamma\in\Gamma}\max_{t\in[0,1]} I(\gamma(t)),
	\end{align}
	where $\Gamma=\{\gamma \in C([0,1],X)\colon \gamma(0)=0, \gamma(1)=v\}$.
\end{theorem}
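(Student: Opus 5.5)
Since this is the classical Cerami version of the mountain pass theorem, recalled from \cite{Cerami-1978,Cerami-1980}, I would prove it by the standard deformation argument, adapting the pseudo-gradient flow to the Cerami weight $(1+\|u\|)$.

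\emph{Step 1: the level $c$ is finite and $c\ge\alpha$.} The straight path $t\mapsto tv$ lies in $\Gamma$, so $c<\infty$. For any $\gamma\in\Gamma$ we have $\|\gamma(0)\|=0<\rho<\|v\|=\|\gamma(1)\|$. By continuity of $t\mapsto\|\gamma(t)\|$ there is $t_0$ with $\|\gamma(t_0)\|=\rho$. Hypothesis (i) then gives $\max_{t} I(\gamma(t))\ge\alpha$, hence $c\ge\alpha>0$.

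\emph{Step 2: argue by contradiction.} If there is no (C)$_c$-sequence, there exist $\varepsilon_0,\delta>0$ such that
\begin{align*}
(1+\|u\|)\|I'(u)\|_*\ge\delta \quad\text{whenever } |I(u)-c|\le 2\varepsilon_0 .
\end{align*}
Fix $\varepsilon\in(0,\varepsilon_0)$ with $\varepsilon<\alpha/2$. Then $I(0)=0$ and $I(v)\le 0$ both lie below $c-2\varepsilon$.

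\emph{Step 3: deformation.} On the set $\tilde X=\{u\colon I'(u)\neq0\}$ I take a locally Lipschitz pseudo-gradient field $V$ with $\|V(u)\|\le 2\|I'(u)\|_*$ and $\langle I'(u),V(u)\rangle\ge\|I'(u)\|_*^2$. Let $\chi$ be a locally Lipschitz cutoff equal to $1$ on $I^{-1}([c-\varepsilon,c+\varepsilon])$ and $0$ outside $I^{-1}((c-2\varepsilon,c+2\varepsilon))$. Consider the flow
\begin{align*}
\frac{d}{dt}\eta(t,u)=-\chi(\eta)\,(1+\|\eta\|)\frac{V(\eta)}{\|V(\eta)\|}.
\end{align*}
The vector field has linear growth, so Gronwall's inequality gives global existence of the flow. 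Along the flow $I$ is nonincreasing. Wherever $\chi=1$,
\begin{align*}
\frac{d}{dt}I(\eta)\le-\tfrac12(1+\|\eta\|)\|I'(\eta)\|_*\le-\tfrac{\delta}{2}.
\end{align*}
Consequently, for $T=4\varepsilon/\delta$, the map $\eta(T,\cdot)$ sends the sublevel set $\{I\le c+\varepsilon\}$ into $\{I\le c-\varepsilon\}$. It also fixes every point with $I\le c-2\varepsilon$, in particular $0$ and $v$.

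\emph{Step 4: conclusion.} Choose $\gamma\in\Gamma$ with $\max_t I(\gamma(t))\le c+\varepsilon$. Then $\eta(T,\gamma(\cdot))$ belongs to $\Gamma$ and satisfies $\max_t I\le c-\varepsilon$. This contradicts the definition of $c$.

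The main obstacle is Step 3. Unlike the Palais--Smale case, we only control $(1+\|u\|)\|I'(u)\|_*$, not $\|I'(u)\|_*$ itself, so the speed of the flow must scale like $1+\|u\|$. One then has to check two things. First, this unbounded speed does not cause blow-up in finite time; this follows from the linear growth bound and Gronwall. Second, the decrease of $I$ is still uniform, at rate $\delta/2$, which follows from the Cerami lower bound in Step 2.
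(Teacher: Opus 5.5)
Your argument is correct. The paper gives no proof of Theorem~\ref{ce2}; it only recalls the result from Ambrosetti--Rabinowitz and Cerami. Your Cerami-weighted deformation is the standard proof of that result: the flow speed is scaled by $1+\|u\|$, global existence follows from Gronwall, and $I$ decreases at the uniform rate $\delta/2$ on the strip $|I-c|\le\varepsilon$.

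One point is left implicit. The field $-\chi(u)(1+\|u\|)V(u)/\|V(u)\|$ is a priori defined only where $I'(u)\neq 0$. It extends by zero to a locally Lipschitz field on all of $X$, because every critical point satisfies $|I(u)-c|>2\varepsilon_0>2\varepsilon$, so $\chi$ vanishes on a neighborhood of it.
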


\begin{theorem}\label{ce1}
	Let $X$ be a Banach space and let $I\colon X\to\mathbb{R}$ be a $C^1$-functional such that $I(0)=0$ and $I$ satisfies the \textnormal{(C)$_c$}-condition with $c$ as in \eqref{C-condition}. Suppose that there exist $\rho, \alpha>0$ such that
	\begin{enumerate}
		\item[\textnormal{(i)}]
			$I(u)\geq \alpha$ whenever $\|u\|=\rho$;
		\item[\textnormal{(ii)}]
			there exists $v\in X$ with $\|v\|>\rho$ such that $I(v)\leq 0$.
	\end{enumerate}
	Then
	\begin{align*}
		c:=\inf_{\gamma\in\Gamma}\max_{t\in[0,1]} I(\gamma(t))>\alpha
	\end{align*}
	is a critical value of $I$ with $\Gamma$ as in Theorem \ref{ce2}.
\end{theorem}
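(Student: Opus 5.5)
The plan is to derive this statement directly from Theorem \ref{ce2}. It splits into three steps: bounding the minimax level $c$ from below, extracting a Cerami sequence at that level, and passing to the limit using the \textnormal{(C)$_c$}-condition.

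\emph{Step 1: lower bound for $c$.} Fix $\gamma\in\Gamma$. The map $t\mapsto\|\gamma(t)\|$ is continuous on $[0,1]$, and it satisfies $\|\gamma(0)\|=0<\rho<\|v\|=\|\gamma(1)\|$. By the intermediate value theorem there is $t_0\in(0,1)$ with $\|\gamma(t_0)\|=\rho$. Hypothesis \textnormal{(i)} then gives $\max_{t\in[0,1]}I(\gamma(t))\ge I(\gamma(t_0))\ge\alpha$. Taking the infimum over $\Gamma$, which is nonempty since it contains the segment $t\mapsto tv$, yields $c\ge\alpha$, so $c$ is finite and positive.

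This argument only produces $c\ge\alpha$, not the strict inequality written in the statement. For the purpose of obtaining a nontrivial critical point, $c\ge\alpha>0$ is what is actually needed. I would therefore read ``$>\alpha$'' as ``$\ge\alpha$'' (the classical formulation). Alternatively, one could shrink $\alpha$ slightly so that the stated inequality holds for the new constant. I do not see a way to get genuine strictness without further structure, so this is the point I would flag.

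\emph{Step 2: a Cerami sequence.} The hypotheses of Theorem \ref{ce2} are exactly \textnormal{(i)}, \textnormal{(ii)} and $I(0)=0$. Theorem \ref{ce2} therefore provides a sequence $\{u_n\}_{n\in\mathbb{N}}\subset X$ with $I(u_n)\to c$ and $(1+\|u_n\|)\|I'(u_n)\|_*\to0$; in particular $\|I'(u_n)\|_*\to0$.

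\emph{Step 3: passing to the limit.} By the \textnormal{(C)$_c$}-condition at this level $c$, some subsequence converges strongly: $u_{n_k}\to u$ in $X$. Since $I\in C^1(X)$, both $I$ and $I'\colon X\to X^*$ are continuous, so
\begin{align*}
	I(u)=\lim_{k\to\infty}I(u_{n_k})=c \quad\text{and}\quad \|I'(u)\|_*=\lim_{k\to\infty}\|I'(u_{n_k})\|_*=0.
\end{align*}
Hence $u$ is a critical point at level $c$, and $c$ is a critical value. Moreover $c\ge\alpha>0=I(0)$ forces $u\ne0$.

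The deformation-type difficulty is entirely absorbed by Theorem \ref{ce2}, so every remaining step is routine. The only delicate point is the strict-versus-nonstrict inequality discussed in Step 1.
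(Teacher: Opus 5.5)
Your argument is correct and is the standard derivation behind the paper's citation; the paper gives no proof of its own and only refers to Ambrosetti--Rabinowitz and Cerami. As you describe, Theorem~\ref{ce2} supplies a Cerami sequence at level $c$, the (C)$_c$-condition gives a strongly convergent subsequence, and continuity of $I$ and $I'$ finishes.

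Your flag on the strict inequality is also correct: $c>\alpha$ is false in general. Take $X=\mathbb{R}$, $I(u)=\alpha\bigl(2u^2/\rho^2-u^4/\rho^4\bigr)$ and $v=2\rho$. Every hypothesis holds, including the (C)$_c$-condition, since any sequence with $I(u_n)\to\alpha$ has $|u_n|\to\rho$. Yet $c=\alpha$ exactly.

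The correct conclusion is therefore $c\ge\alpha$. This is also all the paper actually uses, namely positivity of $c_\pm$ in Theorem~\ref{t4}.
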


Next, we recall the quantitative deformation lemma and the Poincar\'{e}–Miranda existence theorem, which will be employed to obtain a least energy sign-changing solution for the problem \eqref{frac_dbl}, see Willem \cite[Lemma 2.7]{Willem-1996} and Dinca--Mawhin \cite[Corollary 2.2.15]{Dinca-Mawhin-2021}, respectively.

\begin{lemma}\label{defm}
	Let $X$ be a Banach space, $\varphi \in C^{1}(X;\mathbb{R})$, $\emptyset \neq M \subseteq X$,
	$c \in \mathbb{R}$, $\varepsilon, \delta_0 > 0$ such that for all
	$u \in \varphi^{-1}([c - 2\varepsilon, c + 2\varepsilon]) \cap M_{2 \delta_0}$, there holds $\|\varphi'(u)\|_* \geq 8\varepsilon/\delta_0$, where
	$M_{r} = \{y \in X \colon d(y,M) = \inf_{u \in M} \|u - y\| < r \}$
	for any $r> 0$. Then there exists $\Pi \in C([0,1] \times X; X)$ such that
	\begin{enumerate}
		\item[\textnormal{(i)}]
			$\Pi(t,u) = u$, if $t = 0$ or if $u \notin \varphi^{-1}([c - 2\varepsilon, c + 2\varepsilon]) \cap M_{2\delta_0}$;
		\item[\textnormal{(ii)}]
			$\varphi(\Pi(1,u)) \leq c-\varepsilon$ for all $u \in\varphi^{-1}((-\infty,c+\varepsilon])\cap M$;
		\item[\textnormal{(iii)}]
			$\Pi(t,\cdot)$ is a homeomorphism of $X$ for all $t\in [0,1]$;
		\item[\textnormal{(iv)}]
			$\mm{\Pi(t,u)-u}\leq \delta_0$ for all $u \in X$ and $t \in [0,1]$;
		\item[\textnormal{(v)}]
			$\varphi(\Pi(\cdot,u))$ is decreasing for all $u\in X$;
		\item[\textnormal{(vi)}]
			$\varphi(\Pi(t,u))<c$ for all $u\in \varphi^{-1}((-\infty,c])\cap M_{\delta_0}$ and $t\in[0,1]$.
	\end{enumerate}
\end{lemma}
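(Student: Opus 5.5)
This is the quantitative deformation lemma of Willem \cite[Lemma 2.7]{Willem-1996}. I would prove it by deforming along the flow of a cut-off, normalized pseudo-gradient vector field.

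\textbf{Setup.} Put
\begin{align*}
	A:=\varphi^{-1}([c-2\varepsilon,c+2\varepsilon])\cap M_{2\delta_0},
	\qquad
	B:=\varphi^{-1}([c-\varepsilon,c+\varepsilon])\cap M_{\delta_0},
\end{align*}
so that $B\subseteq A$. By hypothesis, $\|\varphi'(u)\|_*\geq 8\varepsilon/\delta_0>0$ on $A$.

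\textbf{Vector field.} On the open set $\widetilde X=\{u\colon \varphi'(u)\neq 0\}$, which contains $A$, I take a locally Lipschitz pseudo-gradient field $V$. This means
\begin{align*}
	\|V(u)\|\leq 2\|\varphi'(u)\|_*
	\quad\text{and}\quad
	\langle \varphi'(u),V(u)\rangle\geq \|\varphi'(u)\|_*^2 .
\end{align*}
Such a field exists by the standard partition-of-unity construction on the paracompact metric space $\widetilde X$. Next I define the locally Lipschitz cutoff
\begin{align*}
	\psi(u)=\frac{\operatorname{dist}(u,X\setminus A)}{\operatorname{dist}(u,X\setminus A)+\operatorname{dist}(u,B)} .
\end{align*}
It equals $1$ on $B$, vanishes off $A$, and takes values in $[0,1]$. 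I then set $f(u)=\psi(u)\|V(u)\|^{-2}V(u)$ on $A$ and $f=0$ elsewhere.

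\textbf{Flow.} Since $\|V\|\geq\|\varphi'\|_*$, we get $\|f(u)\|\leq 1/\|\varphi'(u)\|_*\leq \delta_0/(8\varepsilon)$. So $f$ is bounded and locally Lipschitz on $X$, and the Cauchy problem $\sigma'=-f(\sigma)$, $\sigma(0,u)=u$ has a unique global solution $\sigma\in C(\R\times X;X)$. I define $\Pi(t,u)=\sigma(8\varepsilon t,u)$.

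\textbf{Checking the properties.}
\begin{enumerate}
	\item[(i)] If $u\notin A$, then $f(u)=0$, so $u$ is a stationary point and $\Pi(t,u)=u$; also $\Pi(0,u)=u$ trivially.
	\item[(iii)] The group property of the flow gives $\sigma(t,\cdot)^{-1}=\sigma(-t,\cdot)$, so each $\Pi(t,\cdot)$ is a homeomorphism.
	\item[(iv)] $\|\Pi(t,u)-u\|\leq 8\varepsilon\cdot\delta_0/(8\varepsilon)=\delta_0$.
	\item[(v)] Along the flow,
	\begin{align*}
		\frac{d}{dt}\varphi(\sigma(t,u))=-\psi\,\frac{\langle\varphi',V\rangle}{\|V\|^2}\leq -\frac{\psi(\sigma)}{4}\leq 0,
	\end{align*}
	using $\langle\varphi',V\rangle\geq\|\varphi'\|_*^2\geq\|V\|^2/4$. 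Hence $\varphi(\Pi(\cdot,u))$ is decreasing.
\end{enumerate}

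\textbf{Key step (ii).} This is the only step with real content. Let $u\in M$ with $\varphi(u)\leq c+\varepsilon$. If $\varphi(\sigma(t,u))<c-\varepsilon$ for some $t\in[0,8\varepsilon]$, monotonicity gives the claim. Otherwise the trajectory stays in $\varphi^{-1}([c-\varepsilon,c+\varepsilon])$. By (iv) it also stays within distance $\delta_0$ of $u\in M$, hence in $M_{\delta_0}$; I need to verify this distance bound carefully, including the strict-versus-nonstrict inequality in the definition of $M_{\delta_0}$. So the trajectory lies in $B$, where $\psi=1$. Integrating the derivative bound over $[0,8\varepsilon]$ then gives
\begin{align*}
	\varphi(\Pi(1,u))\leq \varphi(u)-2\varepsilon\leq c-\varepsilon .
\end{align*}

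\textbf{Property (vi).} Take $u\in M_{\delta_0}$ with $\varphi(u)\leq c$ and $t\in(0,1]$; for $t=0$ the statement is read with the trivial case $\Pi(0,u)=u$. If $\varphi(u)<c$, monotonicity suffices. If $\varphi(u)=c$, then $u\in B$, so $\psi(u)=1$ and the derivative is at most $-1/4$ at time $0$. Hence $\varphi$ drops strictly below $c$ immediately, and it stays there by monotonicity.
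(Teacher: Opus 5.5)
Your argument is the standard pseudo-gradient flow proof of Willem's quantitative deformation lemma, which is exactly what the paper relies on (the paper gives no proof of its own and simply cites Willem). The open-versus-closed issue you flag in (ii) is harmless: $\|\sigma(t,u)-u\|\le t\delta_0/(8\varepsilon)<\delta_0$ for $t<8\varepsilon$ already gives $\psi(\sigma(t,u))=1$ on $[0,8\varepsilon)$, and that is all the integration needs.

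State (vi) more sharply. At $t=0$ it genuinely fails when $\varphi(u)=c$, and Willem states it only for $t\in(0,1]$. Your restriction to $t\in(0,1]$ is therefore a necessary correction of the statement, not a convention. This does not affect the paper, which only uses (i), (ii) and (iv).
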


\begin{theorem} \label{IVP}
	Let $P=[-a_1,a_1] \times \cdots \times [-a_N,a_N]$ with $a_i > 0$ for $i=1,\ldots,N$
	and let $d \colon P \to \R^N$ be continuous. If for each $i \in \{1,\ldots,N\}$ one has
	\begin{align*}
		d_i(t) \le 0 \quad &\text{when } t \in P \text{ and } t_i = -a_i,\\
		d_i(t) \ge 0 \quad &\text{when } t \in P \text{ and } t_i = a_i.
	\end{align*}
	Then $d$ has at least one zero point in $P$.
\end{theorem}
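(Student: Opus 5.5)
The plan is to reduce the statement to the Brouwer fixed point theorem on the box $P$. $P$ is a compact convex set, homeomorphic to the closed unit ball of $\R^N$. The idea is to build a continuous self-map of $P$ whose fixed points are exactly the zeros of $d$, with the sign conditions on the faces ensuring that no spurious fixed points appear on $\partial P$.

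For each $i\in\{1,\ldots,N\}$ let $\pi_i\colon\R\to[-a_i,a_i]$ be the truncation $\pi_i(\tau)=\max\{-a_i,\min\{\tau,a_i\}\}$, which is continuous. Define $T\colon P\to P$ by
\begin{align*}
	T(t)=\bigl(\pi_1(t_1-d_1(t)),\ldots,\pi_N(t_N-d_N(t))\bigr).
\end{align*}
Since $d$ is continuous, $T$ is continuous, and by construction it maps $P$ into $P$. Brouwer's theorem therefore yields some $t^*\in P$ with $T(t^*)=t^*$.

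It remains to show that $d(t^*)=0$, which we verify coordinatewise; this is the only point where the hypotheses on the faces enter. Fix $i$ and distinguish three cases.
\begin{enumerate}
	\item[\textnormal{(a)}] If $t^*_i-d_i(t^*)\in(-a_i,a_i)$, the truncation is inactive. Hence $t^*_i=t^*_i-d_i(t^*)$, that is, $d_i(t^*)=0$.
	\item[\textnormal{(b)}] If $t^*_i-d_i(t^*)\ge a_i$, then $t^*_i=\pi_i(t^*_i-d_i(t^*))=a_i$. Consequently $a_i-d_i(t^*)\ge a_i$, so $d_i(t^*)\le 0$. But $t^*\in P$ with $t^*_i=a_i$, so the hypothesis gives $d_i(t^*)\ge0$. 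Thus $d_i(t^*)=0$.
	\item[\textnormal{(c)}] If $t^*_i-d_i(t^*)\le -a_i$, the argument is symmetric. We get $t^*_i=-a_i$ and $d_i(t^*)\ge0$, while the hypothesis on the face $t_i=-a_i$ gives $d_i(t^*)\le 0$.
\end{enumerate}
Hence every component of $d(t^*)$ vanishes, and $t^*$ is the desired zero of $d$ in $P$.

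There is no real obstacle here. The only point needing care is the choice of sign in $t-d(t)$: with the stated orientation ($d_i\ge0$ on the face $t_i=a_i$), the map $t-d(t)$ pushes boundary points inward, which is what makes cases (b) and (c) close. With the opposite convention one would use $t+d(t)$ instead.
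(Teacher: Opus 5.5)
Your proof is correct. The three cases are exhaustive, and the truncated map $T$ is a continuous self-map of the compact convex set $P$, so Brouwer's theorem applies. In each boundary case the face hypothesis supplies exactly the reverse inequality, which forces $d_i(t^*)=0$.

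The paper itself does not prove this statement. It is the Poincar\'{e}--Miranda theorem, quoted from Dinca--Mawhin and used as a black box. So your proposal does not compete with an argument in the paper; it is a self-contained derivation from Brouwer's fixed point theorem.

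The other standard route goes through Brouwer degree. If $d$ vanishes somewhere on $\partial P$ you are done. Otherwise the homotopy $H(t,\lambda)=\lambda d(t)+(1-\lambda)t$ has no zeros on $\partial P\times[0,1]$: on the face $t_i=a_i$ one has $H_i\ge(1-\lambda)a_i>0$ for $\lambda<1$, symmetrically on $t_i=-a_i$, and $H=d$ for $\lambda=1$. Hence $\deg(d,\operatorname{int}P,0)=\deg(\mathrm{id},\operatorname{int}P,0)=1$. That approach yields more, namely a nonzero degree and hence persistence of a zero under small perturbations of $d$. But it needs the degree machinery and a separate treatment of zeros on $\partial P$. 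Your truncation trick handles the non-strict face inequalities in one stroke, using only Brouwer.

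Your closing remark on orientation is also relevant to how the paper uses the result. In Step I of Proposition \ref{sprop1} and in the proof of Theorem \ref{t5}, the boxes are $[\alpha,\beta]^2$ and $[1-\ell,1+\ell]^2$, and the signs are reversed: positive on the lower face, negative on the upper. This reduces to the stated version by translating the box to be centered at the origin and replacing $d$ by $-d$.
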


We also recall the following known estimates, see Crespo-Blanco \cite[Lemma 1.1]{Crespo-Blanco-2025}.

\begin{lemma}\label{inequalities}
	Let $r > 1$, for any $\xi, \eta \in \R^N$
	\begin{align*}
		\left( |\xi|^{r-2} \xi - |\eta|^{r-2} \eta \right)
		\cdot \left( \xi - \eta \right) \geq
		C_r |\xi - \eta|^r, \quad&\text{if }r \geq 2,\\
		\left( |\xi| + |\eta| \right)^{2 - r} \left(  |\xi|^{r-2} \xi - |\eta|^{r-2} \eta \right)
		\cdot \left( \xi - \eta \right)  \geq
		C_r |\xi - \eta|^2 \quad&\text{if }1 < r < 2
	\end{align*}
	where
	\begin{align*}
		C_r =
		\begin{cases}
			\min \{ 2^{2-r}, 2^{-1} \} & \text{if } r \geq 2,  \\
			r-1& \text{if } 1 < r < 2.
		\end{cases}
	\end{align*}
	The constant $C_r$ is not necessarily optimal.
\end{lemma}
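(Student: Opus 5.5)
For $r\ge 2$ we start from an exact algebraic identity; for $1<r<2$ we use an integral representation along the segment joining $\eta$ and $\xi$. In both cases the stated constant comes out directly.

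\emph{Case $r\ge2$.} With $A=|\xi|^{r-2}$ and $B=|\eta|^{r-2}$, expanding both sides shows
\begin{align*}
	\left(|\xi|^{r-2}\xi-|\eta|^{r-2}\eta\right)\cdot(\xi-\eta)
	=\tfrac12(A+B)|\xi-\eta|^2+\tfrac12(A-B)\left(|\xi|^2-|\eta|^2\right).
\end{align*}
Indeed, both sides equal $A|\xi|^2+B|\eta|^2-(A+B)\,\xi\cdot\eta$. Since $r-2\ge0$, the map $t\mapsto t^{r-2}$ is nondecreasing on $[0,\infty)$. Hence $A-B$ and $|\xi|^2-|\eta|^2$ have the same sign, and the second term is nonnegative. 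It follows that
\begin{align*}
	\left(|\xi|^{r-2}\xi-|\eta|^{r-2}\eta\right)\cdot(\xi-\eta)\ge \tfrac12\left(|\xi|^{r-2}+|\eta|^{r-2}\right)|\xi-\eta|^2 .
\end{align*}
It remains to bound $|\xi|^{r-2}+|\eta|^{r-2}$ from below by a multiple of $|\xi-\eta|^{r-2}$. We have $|\xi-\eta|^{r-2}\le(|\xi|+|\eta|)^{r-2}$, and
\begin{align*}
	(|\xi|+|\eta|)^{r-2}\le \max\{1,2^{r-3}\}\left(|\xi|^{r-2}+|\eta|^{r-2}\right).
\end{align*}
For $r-2\in[0,1]$ this holds with factor $1$ by subadditivity of $t\mapsto t^{r-2}$. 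For $r-2\ge1$ it holds with factor $2^{r-3}$ by convexity, i.e.\ $(a+b)^m\le2^{m-1}(a^m+b^m)$ with $m=r-2$. Combining the two displays gives the lower bound
\begin{align*}
	\tfrac12\min\{1,2^{3-r}\}|\xi-\eta|^r=\min\{2^{-1},2^{2-r}\}|\xi-\eta|^r ,
\end{align*}
which is exactly the stated $C_r$.

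\emph{Case $1<r<2$.} If $\xi=\eta$ there is nothing to prove, so assume $\xi\ne\eta$ and set $z_t=\eta+t(\xi-\eta)$ for $t\in[0,1]$. The line through $\eta$ and $\xi$ meets the origin in at most one point, so $z_t\neq0$ for all but at most one $t$. Away from $0$, the map $V(z)=|z|^{r-2}z$ is $C^1$ with
\begin{align*}
	DV(z)=|z|^{r-2}\left(I+(r-2)\tfrac{z\otimes z}{|z|^2}\right).
\end{align*}
The eigenvalues of $DV(z)$ are $|z|^{r-2}$ and $(r-1)|z|^{r-2}$, so $DV(z)h\cdot h\ge (r-1)|z|^{r-2}|h|^2$ for all $h\in\R^N$. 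Since $|z_t|^{r-2}$ has a singularity of order $|t-t_0|^{r-2}$ with $r-2>-1$, it is integrable in $t$. Moreover, $t\mapsto V(z_t)$ is absolutely continuous, so
\begin{align*}
	V(\xi)-V(\eta)=\int_0^1 DV(z_t)(\xi-\eta)\,\mathrm{d}t .
\end{align*}
Taking the scalar product with $\xi-\eta$ gives
\begin{align*}
	\left(V(\xi)-V(\eta)\right)\cdot(\xi-\eta)\ge (r-1)|\xi-\eta|^2\int_0^1|z_t|^{r-2}\,\mathrm{d}t .
\end{align*}
Because $|z_t|\le|\xi|+|\eta|$ and $r-2<0$, we have $|z_t|^{r-2}\ge(|\xi|+|\eta|)^{r-2}$. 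This yields
\begin{align*}
	\left(V(\xi)-V(\eta)\right)\cdot(\xi-\eta)\ge (r-1)(|\xi|+|\eta|)^{r-2}|\xi-\eta|^2 .
\end{align*}
Multiplying by $(|\xi|+|\eta|)^{2-r}$, which is positive since $\xi\ne\eta$, gives the claim with $C_r=r-1$.

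The main point requiring care is the justification of the integral formula when the segment passes through the origin. Integrability of $|z_t|^{r-2}$ settles this; alternatively, one can perturb $\eta$ slightly off the line through $0$ and $\xi$ and pass to the limit using continuity of both sides of the inequality.
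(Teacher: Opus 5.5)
Your proof is correct in both regimes and gives exactly the stated constants.

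\emph{Case $r\ge 2$.} The identity
$\bigl(|\xi|^{r-2}\xi-|\eta|^{r-2}\eta\bigr)\cdot(\xi-\eta)=\tfrac12(A+B)|\xi-\eta|^2+\tfrac12(A-B)\bigl(|\xi|^2-|\eta|^2\bigr)$
holds. Combined with the monotonicity of $t\mapsto t^{r-2}$ and the bound $(|\xi|+|\eta|)^{r-2}\le \max\{1,2^{r-3}\}\bigl(|\xi|^{r-2}+|\eta|^{r-2}\bigr)$, it yields precisely $\tfrac12\min\{1,2^{3-r}\}=\min\{2^{-1},2^{2-r}\}$.

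\emph{Case $1<r<2$.} Integrating along the segment $z_t$ and using the smallest eigenvalue $(r-1)|z|^{r-2}$ of $DV(z)$ together with $|z_t|\le|\xi|+|\eta|$ gives the constant $r-1$. You also handle a segment through the origin correctly, via continuity of $V$ and integrability of $|t-t_0|^{r-2}$.

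\emph{Comparison with the paper.} The paper gives no argument for this lemma. It quotes it from Crespo-Blanco (Lemma 1.1) and uses it only as a black box in the $(\mathrm{S}_+)$ proof, to get nonnegativity of the monotonicity terms. So your write-up is not a competing route; it is a self-contained verification of what the paper imports. Its value is that the reader can check the constants directly: $\min\{2^{2-r},2^{-1}\}$ and $r-1$ are exactly what this standard symmetrization-identity and segment-integral argument produces.
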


\section{\texorpdfstring{$p$}{p}-fractional Laplacian with double phase}\label{Section_3}

In this section, we study problem \eqref{1.1} and prove Theorems \ref{t1}--\ref{t3}. Throughout this section, we assume that \eqref{H1} holds. The associated energy functional $I_{\la}\colon W\to\R$ corresponding to \eqref{1.1} is defined by
\begin{align*}
	I_{\la}(u)= & \frac{1}{p}\int_{\R^N}\int_{\R^N} \dfrac{|u(x)-u(y)|^p}{|x-y|^{N+sp}}\dxy+\int_{\Omega}\lt(\frac{|\nabla u|^p}{p}+a(x)\frac{|\nabla u|^q}{q}\ri)\dx \\
	& -\lambda\int_{\Omega}\left(\frac{w_1(x)}{k}|u|^{k}+\frac{w_2(x)}{r}|u|^{r}\right)\dx.
\end{align*}
To prove Theorem \ref{t1}, we apply the Nehari manifold approach. First, we define
\begin{equation}\label{def-Ap-Bl}
	\begin{aligned}
	A_p(u,v)&=\int_{\R^N}\int_{\R^N} \dfrac{|u(x)-u(y)|^{p-2}(u(x)-u(y))(v(x)-v(y))}{|x-y|^{N+ps}}\dxy,\\ B_\ell(a(x),u,v)&=\int_{\Omega}a(x) |\nabla u|^{\ell-2}\nabla u\cdot \nabla v\dx.
	\end{aligned}
\end{equation}
Next, we fix the constants
\begin{align}
	\la_*&:=\lt(\frac{(r-p)}{(r-k)}\ri)^{\frac{r-p}{r-k}}\lt(\frac{1}{\mm{w_1}_{\frac{r}{r-k}}S_r^k}\ri)^{\frac{r-p}{r-k}} \lt(\frac{(p-k)}{(r-k) \mm{w_2}_{\infty} S_r^r}\ri)^{\frac{(p-k)}{(r-k)}}, \label{lambda1}\\
	\overline{\la}_*&:=\frac{(p^*-p)}{(p^*-k)}\frac{1}{S_{p^*}^k\mm{w_1}_{\frac{p^*}{p^*-k}}}\lt(\frac{(p-k)}{(r-k)S_{p^*}^{p^*}}\ri)^{\frac{p-k}{p^*-p}},\label{lambda2}\\
	\tilde{c} &= \left(\frac{1}{k}-\frac{1}{r}\right)\left[\left(\frac{p}{k}\left(\frac{1}{p}-\frac{1}{r}\right)\left(\frac{1}{k}-\frac{1}{r}\right)^{-1}\right)^{\frac{-k}{p}}\|w_1\|_{r/r-k} S_r^{k}\right]^{\frac{p}{p-k}}.\label{tilde{c}}
\end{align}
The Nehari manifold associated with $I_{\la}$ is defined by
\begin{align*}
	N_{\la}:=\{u\in W\setminus \{0\}\colon \langle I'_{\la}(u),u \rangle=0\}
\end{align*}
which is equivalent to
\begin{align*}
	N_{\la}=\left\{u\in W\setminus \{0\}\colon  [u]^p_{s,p}+\mm{\nabla u}_p^p+\mm{\na u}^q_{q,a} =\lambda\int_{\Omega}(w_1(x)|u|^{k}+w_2(x)|u|^{r})\dx\right\}.
\end{align*}
For each $u \in W$, we introduce the fibering map $\si_u\colon [0,\infty) \to \R $ defined by $\si_u(t):=I_{\la}(tu)$. A direct computation gives
\begin{align*}
	\si_u(t)& =\frac{t^p}{p}[u]^p_{s,p}+\int_{\Omega}\left(\frac{t^p}{p}\p{u}^p+a(x)\frac{t^q}{q}\p{u}^q\right)\dx \\
	&\quad  -\la \int_{\Omega}\left(w_1(x)\frac{t^k}{k}\ve{u}^k+w_2(x)\frac{t^r}{r}\ve{u}^{r}\right)\dx, \\
	\si'_u(t)  & =t^{p-1}[u]^p_{s,p}+\int_{\Omega}\left(t^{p-1}\p{u}^p+a(x) t^{q-1}\p{u}^q\right)\dx\\
	&\quad -\la  \int_{\Omega}\left(w_1(x) t^{k-1}\ve{u}^k+t^{r-1}w_2(x)\ve{u}^{r}\right)\dx, \\
	\si''_u(t) & =(p-1)t^{p-2}([u]_{s,p}^p+\mm{\na u}_p^p)+(q-1)t^{q-2}\mm{\na u}_{q,a}^q\\
	&\quad  -\la \int_{\Omega}\bigg((k-1)t^{k-2}w_1(x)\ve{u}^k +(r-1)t^{r-2}w_2(x)\ve{u}^{r}\bigg)\dx.
\end{align*}
By the definition of the fibering map, it follows that $tu\in \nl$ if and only if $\si_u'(t)=0$.

\begin{lemma}\label{coer}
	The functional $I_{\la}$ is coercive and bounded from below on the Nehari manifold $\nl$.
\end{lemma}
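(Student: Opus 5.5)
On $\nl$ we will use the Nehari identity to eliminate the convex term $w_2|u|^r$ and control the concave term $w_1|u|^k$ by the Sobolev embedding. For $u\in\nl$ we have
\begin{align*}
	\lambda\int_{\Omega}w_2(x)|u|^r\dx=[u]^p_{s,p}+\mm{\nabla u}_p^p+\mm{\na u}^q_{q,a}-\lambda\int_{\Omega}w_1(x)|u|^k\dx .
\end{align*}
Substituting this into $I_\la(u)$ gives
\begin{align*}
	I_\la(u)=\Big(\frac1p-\frac1r\Big)\big([u]^p_{s,p}+\mm{\nabla u}_p^p\big)+\Big(\frac1q-\frac1r\Big)\mm{\na u}^q_{q,a}-\lambda\Big(\frac1k-\frac1r\Big)\int_\Omega w_1(x)|u|^k\dx .
\end{align*}
Since $p<q<r$, both leading coefficients are positive and at least $\frac1q-\frac1r$. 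Thus the first two terms dominate $\big(\frac1q-\frac1r\big)\xi(u)$.

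For the last term we apply H\"older's inequality with exponents $\frac{r}{r-k}$ and $\frac rk$, followed by the embedding of Proposition \ref{prop-def-w}. This gives
\begin{align*}
	\int_\Omega w_1|u|^k\dx\le \mm{w_1}_{\frac{r}{r-k}}\,\|u\|_r^k\le \mm{w_1}_{\frac{r}{r-k}}C_r^k\,\mm{u}_W^k ,
\end{align*}
which is valid also when $r=p^*$ because of the continuous embedding. Next we use Lemma \ref{rel}. If $\mm{u}_W>1$ then $\xi(u)\ge \mm{u}_W^p$, so
\begin{align*}
	I_\la(u)\ge \Big(\frac1q-\frac1r\Big)\mm{u}_W^p-\lambda\Big(\frac1k-\frac1r\Big)\mm{w_1}_{\frac{r}{r-k}}C_r^k\mm{u}_W^k .
\end{align*}
Because $k<p$, the right-hand side tends to $+\infty$ as $\mm{u}_W\to\infty$, which proves coercivity on $\nl$.

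For boundedness from below we split into two cases. On $\{\mm{u}_W\le 1\}$ the negative term is bounded in absolute value by $\lambda\big(\frac1k-\frac1r\big)\mm{w_1}_{\frac{r}{r-k}}C_r^k$ and the other terms are nonnegative. On $\{\mm{u}_W>1\}$ the function $t\mapsto At^p-Bt^k$ with $A,B>0$ attains a finite minimum on $[1,\infty)$. Together these give a uniform lower bound, so $I_\la$ is bounded below on $\nl$.

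The only delicate point is the mismatch between norm and modular, since $\xi$ is not homogeneous. The case split with Lemma \ref{rel}(iii)--(iv) handles this. One could sharpen the estimate to a bound of the form $-\tilde c\lambda^{p/(p-k)}$ as suggested by \eqref{tilde{c}}, using the scaling of the $p$-part alone via Young's inequality, but the cruder bound above is enough for the lemma as stated.
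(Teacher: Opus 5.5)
Your proof is correct and follows the paper's argument essentially verbatim. You eliminate the $w_2$-term via the Nehari identity, bound the principal part below by $\bigl(\frac1q-\frac1r\bigr)\xi(u)$, control the $w_1$-term by H\"older's inequality and the embedding $W\hookrightarrow L^r(\Omega)$, and conclude with the norm--modular relations of Lemma \ref{rel}. The only difference is cosmetic: the paper writes the lower bound in one line as $\bigl(\frac1q-\frac1r\bigr)\min\{\|u\|_W^p,\|u\|_W^q\}-\lambda\bigl(\frac1k-\frac1r\bigr)\|w_1\|_{\frac{r}{r-k}}C_r^k\|u\|_W^k$, whereas your explicit case split makes the boundedness-from-below step slightly more transparent.
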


\begin{proof}
	Let $u\in \nl$. Then, by applying Lemma \ref{rel} and H\"{o}lder's inequality, we have
	\begin{align*}
		I_{\la}(u) & =\lt(\frac{1}{p}-\frac{1}{r}\ri)([u]_{s,p}^p+\mm{\na u}_p^p)+\lt(\frac{1}{q}-\frac{1}{r}\ri)\mm{\na{u}}_{q,a}^{q}-\la\lt(\frac{1}{k}-\frac{1}{r}\ri)\int_{\Omega}w_1(x)\ve{u}^k\dx \\
		& \geq \lt(\frac{1}{q}-\frac{1}{r}\ri)\xi(u)-\la\lt(\frac{1}{k}-\frac{1}{r}\ri) \mm{w_1}_{{\frac{r}{r-k}}}\mm{u}_r^k \\
		& \geq  \lt(\frac{1}{q}-\frac{1}{r}\ri)\min(\mmm{u}^p, \mmm{u}^q)-\la\lt(\frac{1}{k}-\frac{1}{r}\ri) \mm{w_1}_{{\frac{r}{r-k}}} C_r^k\mmm{u}^k.
	\end{align*}
	Since $1<k<p<q$, it follows that $I_{\la}(u)\to\infty$ as  $\mmm{u}\to\infty$. Therefore, the functional $I_{\la}$ is coercive and bounded from below on the Nehari manifold $\nl$.
\end{proof}

We now decompose the Nehari manifold  $\nl$ into three disjoint subsets according to the sign of the second derivative of the associated fibering map $\si_u$. Specifically, we define
\begin{align*}
	\nl^+&:=\{u\in \nl\colon  \si_u''(1)>0\},\\
	\nl^-&:=\{u\in \nl\colon  \si_u''(1)<0\},\\
	\nl^0&:=\{u\in \nl\colon  \si_u''(1)=0\}.
\end{align*}
Clearly, we have the decomposition
\begin{align*}
	\nl=\nl^+\cup\nl^-\cup \nl^0.
\end{align*}
We also define the corresponding infimum levels of the energy functional on these subsets by
\begin{align*}
	m_{\la}=\displaystyle\inf_{u\in \nl}I_{\la}(u)\quad\text{and}\quad m_{\la}^{\pm}=\displaystyle\inf_{u \in \nl^{\pm}}I_{\la}(u).
\end{align*}

We now establish the following structural property of the Nehari manifold. The first result is standard and can be proved as in the paper by Dr\'{a}bek--Pohozaev \cite{Drabek-Pohozaev-1997}.

\begin{lemma}
	If $u$ is a minimizer of $I_{\la}$ on $\nl$ and $u\not\in\nl^0$, then $u$ is a critical point of $I_{\la}$.
\end{lemma}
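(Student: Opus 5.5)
The approach is the standard Lagrange multiplier argument. Write $J_\la(u):=\langle I_\la'(u),u\rangle$, so that $\nl=\{u\in W\setminus\{0\}\colon J_\la(u)=0\}$. First we note that $I_\la$ and $J_\la$ are of class $C^1$ on $W$. Explicitly,
\begin{align*}
	J_\la(u)=[u]_{s,p}^p+\mm{\na u}_p^p+\mm{\na u}_{q,a}^q-\la\int_\Omega\left(w_1|u|^k+w_2|u|^r\right)\dx .
\end{align*}
Each term is a $C^1$ modular on $W$. For the nonlinear terms this uses the continuous embedding $W\hookrightarrow L^r(\Omega)$ from Proposition \ref{prop-def-w}, together with Hölder's inequality and $w_1\in L^{\frac{r}{r-k}}(\Omega)$, $w_2\in L^\infty(\Omega)$. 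Moreover
\begin{align*}
	\langle J_\la'(u),u\rangle=p\left([u]_{s,p}^p+\mm{\na u}_p^p\right)+q\mm{\na u}_{q,a}^q-\la\int_\Omega\left(k\,w_1|u|^k+r\,w_2|u|^r\right)\dx .
\end{align*}
Subtracting $J_\la(u)=0$ gives, for $u\in\nl$, the identity $\langle J_\la'(u),u\rangle=\si_u''(1)$. This follows by comparing with the formula for $\si_u''$ at $t=1$: each coefficient $\ell$ becomes $\ell-1$.

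Now let $u$ be a minimizer of $I_\la$ on $\nl$ with $u\notin\nl^0$. Then $\langle J_\la'(u),u\rangle=\si_u''(1)\neq0$, so $J_\la'(u)\neq0$. Hence near $u$ the set $\nl$ is a $C^1$ submanifold of codimension one; note that $u\neq0$ and $\nl$ is relatively open in $J_\la^{-1}(0)$ near $u$. By the Lagrange multiplier rule there exists $\mu\in\R$ with
\begin{align*}
	I_\la'(u)=\mu J_\la'(u)\quad\text{in } W^*.
\end{align*}
Testing this with $u$ gives $0=J_\la(u)=\langle I_\la'(u),u\rangle=\mu\langle J_\la'(u),u\rangle=\mu\,\si_u''(1)$. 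Since $\si_u''(1)\neq0$, we get $\mu=0$, so $I_\la'(u)=0$ and $u$ is a critical point of $I_\la$.

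The only point requiring care is the $C^1$ regularity of $J_\la$, in particular of the weighted $k$-term. Since $k>1$, the map $u\mapsto\int_\Omega w_1|u|^k\dx$ is $C^1$ on $L^r(\Omega)$ thanks to $w_1\in L^{\frac{r}{r-k}}(\Omega)$. In the critical case $r=p^*$ the hypothesis $w_1\in L^\infty(\Omega)$ also suffices. The double phase term $\mm{\na u}_{q,a}^q$ is $C^1$ on $W$ by the standard argument for Musielak–Orlicz modulars.

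If one prefers to avoid the manifold structure, an alternative is the fibering/implicit function argument of Drábek–Pohozaev. For $v\in W$, solve $J_\la(t(u+\e v))=0$ for $t=t(\e)$ near $t(0)=1$ using $\si_u''(1)\neq0$. Then differentiate $\e\mapsto I_\la(t(\e)(u+\e v))$ at $\e=0$. Since $\si_u'(1)=0$, this derivative equals $\langle I_\la'(u),v\rangle$, and it must vanish by minimality. Both routes reduce to the same nondegeneracy condition $\si_u''(1)\neq0$.
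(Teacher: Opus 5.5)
Your proof is correct and is essentially the standard argument the paper invokes by citing Dr\'{a}bek--Pohozaev; the paper gives no proof of its own. That argument is the Lagrange multiplier rule for the constraint $\langle I_\la'(u),u\rangle=0$, with the multiplier eliminated by testing against $u$ and using $\langle J_\la'(u),u\rangle=\si_u''(1)\neq 0$; your alternative fibering/implicit-function route is the same mechanism the paper itself uses in Lemma \ref{imp}.
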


\begin{lemma}\label{empty}
	Let $\la \in (0, \la_*)$, where $\la_*$ is defined in \eqref{lambda1}. Then the set $\nl^0$ is empty.
\end{lemma}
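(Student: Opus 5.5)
We argue by contradiction. Suppose $u\in\nl^0$ for some $\la\in(0,\la_*)$, and abbreviate
\begin{align*}
	\alpha:=[u]_{s,p}^p+\mm{\na u}_p^p,\qquad \beta:=\mm{\na u}_{q,a}^q,\qquad K:=\int_\Om w_1|u|^k\dx,\qquad R:=\int_\Om w_2|u|^r\dx .
\end{align*}
Membership in $\nl$ gives $\alpha+\beta=\la(K+R)$. The condition $\si_u''(1)=0$ gives
\begin{align*}
	(p-1)\alpha+(q-1)\beta=\la\big((k-1)K+(r-1)R\big).
\end{align*}

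\textbf{Two inequalities from the constraints.} We eliminate $K$ and $R$ in turn.
\begin{itemize}
	\item Subtract $(k-1)$ times the first relation from the second. This gives $(p-k)\alpha+(q-k)\beta=\la(r-k)R$. Since $q>p$, we get $(p-k)(\alpha+\beta)\le \la(r-k)R$.
	\item Subtract the second relation from $(r-1)$ times the first. This gives $(r-p)\alpha+(r-q)\beta=\la(r-k)K$. Since $r>q$, we get $(r-q)(\alpha+\beta)\le\la(r-k)K$.
\end{itemize}
The second bound has the inconvenient factor $(r-q)$, whereas $\la_*$ involves $(r-p)$. For that reason we instead keep $(r-p)\alpha\le \la(r-k)K$ together with $(r-p)\alpha+(r-q)\beta\le\la(r-k)K$, and work with $\alpha$, or with $\xi(u)\ge\alpha$, as convenient. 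The constant $\la_*$ is built from $\|w_1\|_{r/(r-k)}S_r^k$ and $\|w_2\|_\infty S_r^r$, so it is natural to measure everything through $\|u\|_r$.

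\textbf{Estimating via $\|u\|_r$.} We combine the Sobolev inequality $\|u\|_r\le S_r\|\na u\|_p$ (valid since $r\le p^*$) with $\mm{\na u}_p^p\le\alpha$.
\begin{itemize}
	\item Using H\"older and the first inequality,
	\begin{align*}
		(p-k)\,S_r^{-p}\|u\|_r^p\le(p-k)\alpha\le \la(r-k)\|w_2\|_\infty\|u\|_r^r .
	\end{align*}
	Hence $\|u\|_r^{r-p}\ge \dfrac{p-k}{\la(r-k)\|w_2\|_\infty S_r^p}$, which is a lower bound on $\|u\|_r$.
	\item Using the second inequality,
	\begin{align*}
		(r-p)\,S_r^{-p}\|u\|_r^p\le \la(r-k)\|w_1\|_{r/(r-k)}\|u\|_r^k .
	\end{align*}
	Hence $\|u\|_r^{p-k}\le \dfrac{\la(r-k)\|w_1\|_{r/(r-k)}S_r^p}{r-p}$, which is an upper bound on $\|u\|_r$.
\end{itemize}

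\textbf{Comparing the bounds.} Raise the lower bound to the power $1/(r-p)$ and the upper bound to $1/(p-k)$. Compatibility of the two bounds forces
\begin{align*}
	\la^{\frac{1}{r-p}+\frac{1}{p-k}}\ \ge\ \Big(\tfrac{r-p}{(r-k)\|w_1\|S_r^p}\Big)^{\frac1{p-k}}\Big(\tfrac{p-k}{(r-k)\|w_2\|_\infty S_r^p}\Big)^{\frac1{r-p}} .
\end{align*}
Since $\frac1{r-p}+\frac1{p-k}=\frac{r-k}{(r-p)(p-k)}$, this rearranges to $\la\ge$ an explicit constant. That constant is exactly of the shape of $\la_*$ in \eqref{lambda1}, with exponents $\frac{r-p}{r-k}$ and $\frac{p-k}{r-k}$. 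This contradicts $\la<\la_*$, so $\nl^0=\emptyset$.

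\textbf{Main obstacle.} The hard part is bookkeeping of the Sobolev constant powers so that they match \eqref{lambda1}. In \eqref{lambda1} the constants appear as $S_r^k$ and $S_r^r$, whereas the derivation above naturally produces $S_r^p$.
\begin{itemize}
	\item First attempt: phrase the bounds in terms of $t=\|\na u\|_p$ rather than $\|u\|_r$, writing $K\le\|w_1\|S_r^kt^k$ and $R\le\|w_2\|_\infty S_r^rt^r$. This yields exactly the powers $S_r^k$ and $S_r^r$, and the same comparison then gives the contradiction with \eqref{lambda1}. I will use this version.
	\item Fallback: if the exponents still do not match exactly, a slightly smaller threshold also suffices for the statement. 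In that case the proof only requires showing that $\la_*$ is bounded by the derived constant.
\end{itemize}
Note that $\mm{\na u}_p^p\le\alpha$ is used in both inequalities, and that $\beta\ge0$ is simply discarded.
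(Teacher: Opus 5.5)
Your proof is correct and is essentially the paper's argument. The paper derives the same two relations and the same lower bound $\|\nabla u\|_p\ge\bigl(\frac{p-k}{\lambda(r-k)\|w_2\|_\infty S_r^r}\bigr)^{1/(r-p)}$, then shows that $L_\lambda(u)=\frac{r-p}{r-k}\alpha+\frac{r-q}{r-k}\beta-\lambda K$, which vanishes on $N_\lambda^0$, is positive for $\lambda<\lambda_*$ --- exactly the incompatibility of your lower and upper bounds on $t=\|\nabla u\|_p$. Your ``main obstacle'' is not real: since $k\frac{r-p}{r-k}+r\frac{p-k}{r-k}=p$, you have $S_r^{k(r-p)/(r-k)}S_r^{r(p-k)/(r-k)}=S_r^p$, so your $\|u\|_r$ version also yields exactly $\lambda_*$ and the fallback is unnecessary.
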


\begin{proof}
	Assume by contradiction that $\nl^0 \ne \emptyset$. Then there exists $u\in\nl^0$. By the definition of $\nl^0$, we have
	\begin{align}
		[u]^p_{s,p}+\mm{\nabla u}_p^p+\mm{\na u}^q_{q,a} & =\lambda\int_{\Omega}(w_1(x)|u|^{k}+w_2(x)|u|^{r})\dx,\label{eq:nehari1}\\
		(p-1)([u]^p_{s,p}+\mm{\nabla u}_p^p)+(q-1)\mm{\na u}^q_{q,a} & =\lambda\int_{\Omega}((k-1)w_1(x)|u|^{k}+(r-1)w_2(x)|u|^{r})\dx.\label{eq:nehari2}
	\end{align}
	Subtracting suitable multiples of equation \eqref{eq:nehari1} from equation \eqref{eq:nehari2}, we obtain
	\begin{align}
		(p-k)[u]^p_{s,p}+(p-k)\mm{\nabla u}_p^p+(q-k)\mm{\na u}^q_{q,a}=\lambda\int_{\Omega}(r-k)w_2(x)|u|^{r}\dx,\label{eq:rel1} \\
		(r-p)[u]^p_{s,p}+(r-p)\mm{\nabla u}_p^p+(r-q)\mm{\na u}^q_{q,a}=\lambda\int_{\Omega}(r-k)w_1(x)|u|^{k}\dx.\label{eq:rel2}
	\end{align}

	Now, define the auxiliary functional
	\begin{align*}
		L_{\la}(u):=\frac{r-p}{r-k}[u]^p_{s,p}+\frac{r-p}{r-k}\mm{\nabla u}_p^p+\frac{r-q}{r-k}\mm{\na u}^q_{q,a}-\lambda\int_{\Omega}w_1(x)|u|^{k}\dx.
	\end{align*}
	Using \eqref{eq:rel2}, we see that $L_{\la}(u)=0$ for all $u\in \nl^0$. On the other hand, by \eqref{eq:rel1} and the Sobolev embedding, we estimate
	\begin{align*}
		0\geq (p-k) \mm{\nabla u}_p^p -\la (r-k) \mm{w_2}_{\infty} S_r^r \mm{\nabla u}_p^r,
	\end{align*}
	which implies
	\begin{align*}
		\mm{\nabla u}_p \geq \lt(\frac{p-k}{\la(r-k) \mm{w_2}_{\infty} S_r^r}\ri)^{\frac{1}{r-p}}.
	\end{align*}
	Substituting this lower bound into the expression of $ L_{\la}(u)$, and using H\"{o}lder's inequality together with the Sobolev embedding once again, we obtain
	\begin{align*}
		L_{\la}(u) & \geq \frac{r-p}{r-k}\mm{\nabla u}_p^p-\lambda\int_{\Omega}w_1(x)|u|^{k}\dx\\
		& \geq \frac{r-p}{r-k}\mm{\nabla u}_p^p-\la \mm{w_1}_{\frac{r}{r-k}}S_r^k \mm{\nabla u}_p^k\\
		& \geq\mm{\nabla u}_p^k \lt(\frac{r-p}{r-k}\cdot\lt(\frac{p-k}{\la(r-k) \mm{w_2}_{\infty} S_r^r}\ri)^{\frac{p-k}{r-p}}-\la \mm{w_1}_{\frac{r}{r-k}}S_r^k\ri).
	\end{align*}
	Therefore, for every $\la \in (0,\la_*)$, where $\la_*$ is defined in \eqref{lambda1}, the term in brackets is positive. This implies that $L_{\la}(u)>0$, which contradicts the fact that $L_{\la}(u)=0$ for all $u\in\nl^0$. Hence, $\nl^0=\emptyset$ for all $\la \in (0,\la_*)$.
\end{proof}

\begin{lemma}\label{geometry}
	Let $\la \in (0, \la_*)$, where $\la_*$ is defined in \eqref{lambda1}.  For each $u\in W\setminus\{0\}$, there exists a unique number $t_m=t_m(u,\la)>0$ such that $\si_u''(t_m)=0$. Furthermore, there exist constants $t_1$ and $t_2$ satisfying $t_1<t_m<t_2$ such that $t_1u\in \nl^+$ and $t_2u\in \nl^-$. In addition, the first derivative of the fibering map satisfies
	\begin{align*}
		\si_u'(t)<0 \quad\text{for all }t\in [0,t_1)\quad\text{and}\quad \si_u'(t)>0\quad \text{for all }t\in (t_1,t_2).
	\end{align*}
\end{lemma}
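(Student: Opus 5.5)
The plan is to study the sign structure of $\si_u'$ and $\si_u''$ on $(0,\infty)$ directly. Fix $u\in W\setminus\{0\}$ and write
\begin{align*}
A=[u]_{s,p}^p+\mm{\nabla u}_p^p>0,\quad B=\mm{\na u}_{q,a}^q\ge 0,\quad C=\int_\Omega w_1|u|^k\dx>0,\quad D=\int_\Omega w_2|u|^r\dx>0.
\end{align*}
Then
\begin{align*}
\si_u'(t)=t^{k-1}\varphi(t),\qquad \varphi(t):=t^{p-k}A+t^{q-k}B-\la C-\la t^{r-k}D,
\end{align*}
and
\begin{align*}
\si_u''(t)=t^{k-2}h(t),\qquad h(t):=(p-1)t^{p-k}A+(q-1)t^{q-k}B-\la(k-1)C-\la(r-1)t^{r-k}D .
\end{align*}
Zeros of $\si_u'$ and $\si_u''$ on $(0,\infty)$ are therefore the zeros of $\varphi$ and $h$.

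\textbf{Step 1: both auxiliary functions are unimodal.} I would write
\begin{align*}
h'(t)=t^{p-k-1}g(t),\qquad g(t):=(p-1)(p-k)A+(q-1)(q-k)Bt^{q-p}-\la(r-1)(r-k)Dt^{r-p}.
\end{align*}
Since $g(0)>0$ and $g'(t)=t^{q-p-1}\big[c_1B-c_2Dt^{r-q}\big]$ with $c_1,c_2>0$, the function $g$ increases and then decreases to $-\infty$ (or only decreases if $B=0$). Hence $g$ has exactly one zero, so $h$ increases and then strictly decreases. The same argument applies to $\varphi$.

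\textbf{Step 2: $\varphi$ has two simple zeros when $\la<\la_*$.} We have $\varphi(0^+)=-\la C<0$ and $\varphi(t)\to-\infty$ as $t\to\infty$. I would evaluate $\varphi$ at the point maximizing $t^{p-k}A-\la t^{r-k}D$ (dropping the nonnegative $B$-term). Then H\"older's inequality and the embedding of Proposition \ref{prop-def-w}, with exactly the constants in \eqref{lambda1}, give $\max\varphi>0$. This is the same computation as in Lemma \ref{empty}. So $\varphi$ has exactly two zeros $t_1<t_2$, with $\varphi<0$ on $(0,t_1)$, $\varphi>0$ on $(t_1,t_2)$ and $\varphi<0$ on $(t_2,\infty)$. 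This is precisely the claimed sign behaviour of $\si_u'$, and $t_1u,t_2u\in\nl$. Since $\nl^0=\emptyset$ by Lemma \ref{empty}, we get $\si_u''(t_i)\neq0$. Because $\si_u'$ crosses from negative to positive at $t_1$ and from positive to negative at $t_2$, it follows that $\si_u''(t_1)>0$ and $\si_u''(t_2)<0$, that is, $t_1u\in\nl^+$ and $t_2u\in\nl^-$.

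\textbf{Step 3: locating $t_m$.} By the intermediate value theorem, $h$ has a zero $t_m\in(t_1,t_2)$, and by Step 1 $h$ has at most two zeros on $(0,\infty)$. Since $h(0^+)<0$, $h(t_1)>0$ and $h(t_2)<0$, the zeros of $h$ are exactly one in $(0,t_1)$ and one, $t_m$, in $(t_1,t_2)$. So $t_m$ is the unique zero of $\si_u''$ in $(t_1,\infty)$, and it is the maximum point of $\si_u'$.

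\textbf{Main obstacle.} The delicate point is the word ``unique''. Near $0$, $\si_u''$ is dominated by the concave term and is negative, so the unimodality of $h$ forces a second zero of $\si_u''$ in $(0,t_1)$, namely the minimum of $\si_u'$. For this reason I would prove uniqueness in the sense needed, namely as the unique zero of $\si_u''$ in the region where $\si_u'$ becomes nonnegative. That is exactly what separates $t_1$ from $t_2$ and is all that is used later. If literal global uniqueness is required, I expect it to fail, and the statement would have to be read in this restricted sense.
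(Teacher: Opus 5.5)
Your argument is correct, and for $t_1,t_2$ it is essentially the paper's. The paper works with $\gamma_u(t)=t^{p-k}A+t^{q-k}B-\la t^{r-k}D=\varphi(t)+\la C$ and factors $\gamma_u'(t)=t^{p-k-1}h_u(t)$ with $h_u$ changing sign exactly once, which is your Step 1 for $\varphi$. It then bounds $\max\gamma_u$ from below at an explicit point $t_0\le t_m$, whereas you use the maximizer of $t^{p-k}A-\la t^{r-k}D$. Both computations give exactly $\la_*$, provided you use $\|u\|_r\le S_r\|\nabla u\|_p$ together with $A\ge\|\nabla u\|_p^p$. The constant $C_r$ of Proposition \ref{prop-def-w} refers to $\|u\|_W$ and would not reproduce \eqref{lambda1}. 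The paper gets $t_1u\in\nl^+$ and $t_2u\in\nl^-$ directly from $\gamma_u'(t_1)>0>\gamma_u'(t_2)$, which your Step 1 also gives. So your appeal to Lemma \ref{empty} is harmless but unnecessary.

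Your objection to the uniqueness claim is justified, and it exposes a misstatement in the lemma rather than a gap in your proof. Since $\si_u''(t)=t^{k-2}h(t)$ with $h(0^+)=-\la(k-1)C<0<h(t_1)$ and $h(t_2)<0$, the function $\si_u''$ has exactly two zeros. The paper's proof never produces a zero of $\si_u''$. Its $t_m$ is the unique critical point of $\gamma_u$, where $\si_u''(t_m)=(k-1)t_m^{k-2}(\gamma_u(t_m)-\la C)>0$. The link $\si_{tu}''(1)=t^{k+1}\gamma_u'(t)$ stated there holds only when $tu\in\nl$, and $t_mu\notin\nl$ because $\gamma_u(t_m)>\la C$.

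So the intended claim is $\gamma_u'(t_m)=0$, i.e.\ $t_m$ is the unique maximum point of $t\mapsto t^{1-k}\si_u'(t)$. In that form, global uniqueness is exactly your Step 1 applied to $\varphi$. Your Step 3 is a correct extra fact about a different point, namely the maximizer of $\si_u'$ itself. Likewise $\si_u'(0)=0$ because $k>1$, so the first sign condition holds on $(0,t_1)$, as you wrote it, not on $[0,t_1)$.

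Nothing later in the paper uses a zero of $\si_u''$. The proof of Theorem \ref{t1} uses the sign pattern of $\si_u'$ and the monotonicity of $\gamma_u$ on either side of its maximum, and your Steps 1--2 supply both.
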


\begin{proof}
	Define the auxiliary function  $\ga_u\colon (0,\infty) \to \R$ by
	\begin{align*}
		\ga_u(t):=t^{p-k}[u]_{s,p}^{p}+t^{p-k}\mm{\na u}_p^p+t^{q-k}\mm{\na u}_{q,a}^q-\la t^{r-k}\int_{\Omega}w_2(x) \ve{u}^{r}\dx.
	\end{align*}
	From the structure of the Nehari manifold, $tu\in \nl$ if and only if $\ga_u(t)=\la \int_{\Omega}w_1(x)\ve{u}^k\dx$. Moreover, if $tu\in \nl$, then $\si_{tu}''(1)=t^{k+1}\ga_u'(t)$.

	\textbf{Claim:} There exists a unique $t_m>0$ such that $\ga_u'(t_m)=0$.

	To verify this, we compute
	\begin{align*}
		\ga_u'(t) & =t^{p-k-1}\lt((p-k)[u]_{s,p}^{p}+(p-k)\mm{\na u}_p^p+(q-k)t^{q-p}\mm{\na u}_{q,a}^q-\la t^{r-p}(r-k)\int_{\Omega}w_2(x) \ve{u}^{r}\dx\ri) \\
		          & = t^{p-k-1}h_u(t),
	\end{align*}
	where we define
	\begin{align*}
		h_u(t)&:=(p-k)[u]_{s,p}^{p}+(p-k)\mm{\na u}_p^p-g_u(t),\\
		g_u(t)&:=-(q-k)t^{q-p}\mm{\na u}_{q,a}^q+\la t^{r-p}(r-k)\int_{\Omega}w_2(x) \ve{u}^{r}\dx.
	\end{align*}
	Observe that for small $t>0$, we have $g_u(t)<0$, while $g_u(t) \to \infty$ as $t\to \infty$. Therefore, $g_u$ crosses zero exactly once, and hence there exists a unique $T_0$ such that $g_u(T_0)=0$. Consequently, there exists a unique $t_m>T_0$ such that  $g_u(t_m)=(p-k)[u]_{s,p}^{p}+(p-k)\mm{\na u}_p^p$, and thus $\ga'_u(t_m)=0$. Moreover, $\ga'_u(t)>0$ for $t\in (0,t_m)$ and $\ga'_u(t)<0$ for $t\in(t_m,\infty)$. To estimate $t_m$, we use $\ga'_u(t_m)=0$ and H\"{o}lder's inequality to obtain
	\begin{align*}
		(p-k)\mm{\na u}_p^p \leq \la t_m^{r-p}(r-k)\int_{\Omega}w_2(x) \ve{u}^{r}\dx \leq \la  t_m^{r-p}(r-k) \mm{w_2}_{\infty}S_r^r\mm{\na u}_p^r.
	\end{align*}
	This implies
	\begin{align}\label{l4.31}
		t_m\geq \frac{1}{\mm{\na u}_p}\lt(\frac{(p-k)}{\la S_r^r(r-k)\mm{w_2}_{\infty}}\ri)^{\frac{1}{r-p}}:=t_{0}.
	\end{align}
	From \eqref{l4.31}, we obtain
	\begin{align*}
		\ga_u(t_m)
		& \geq \ga_u(t_{0})
		\geq t_{0}^{p-k}\mm{\na u}_p^p-\la t_{0}^{r-k}\mm{w_2}_{\infty}S_r^r\mm{\na u}_p^r \\
		& =\mm{\na u}_p^k\lt(\frac{p-k}{\la S_r^r(r-k)\mm{w_2}_{\infty}}\ri)^{\frac{p-k}{r-p}}-\mm{\na u}_p^k\la \mm{w_2}_{\infty}S_r^r\lt(\frac{p-k}{\la S_r^r(r-k)\mm{w_2}_{\infty}}\ri)^{\frac{r-k}{r-p}} \\
		& \geq \mm{\na u}_p^k\lt(\frac{1}{\la} \ri)^{\frac{p-k}{r-p}}\lt(\frac{p-k}{ S_r^r(r-k)\mm{w_2}_{\infty}}\ri)^{\frac{p-k}{r-p}}\frac{r-p}{r-k}.
	\end{align*}
	If $\la <\la_*$, then
	\begin{align*}
		0<\la \int_{\Omega}w_1(x)\ve{u}^k\dx < \la_*\mm{w_1}_{\frac{r}{r-k}}S_r^k \mm{\na u}_p^k \leq \ga_u(t_m).
	\end{align*}
	Hence, the equation $\ga_u(t)=\la \int_{\Omega}w_1(x)\ve{u}^k\dx$ admits exactly two distinct solutions. Therefore, there exist $t_1,t_2$ with $t_1<t_m<t_2$ such that $\ga_u(t_1)=\ga_u(t_2)=\la\int_{\Omega}w_1(x)\ve{u}^k\dx$, that is, $t_1u, t_2u \in \nl$. Moreover, since $\ga_u(t)$ is increasing on $(0,t_m)$ and decreasing on $(t_m, \infty)$, we have $\ga_u'(t_1)>0$ and $\ga_u'(t_2)<0$. Hence $t_1u \in \nl^+$ and $ t_2u \in \nl^-$.
\end{proof}

\begin{lemma} \label{inf}
	The quantity $m_{\la}^{+}=\displaystyle\inf_{u \in \nl^{+}}I_{\la}(u)$ is strictly negative, that is,  $m_{\la}^+<0$.
\end{lemma}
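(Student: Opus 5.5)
The plan is to show two things: $\nl^{+}$ is nonempty, and every $u\in\nl^{+}$ has $I_{\la}(u)<0$; together these give $m_{\la}^{+}<0$. Throughout, $\la\in(0,\la_*)$ as in Lemmas \ref{empty} and \ref{geometry}, which is implicitly needed for $\nl^{+}$ to be well structured. Nonemptiness comes straight from Lemma \ref{geometry}: for any $u\in W\setminus\{0\}$ we get $t_1u\in\nl^{+}$.

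To get the sign of the energy on $\nl^{+}$, I will use the two identities $\si_u'(1)=0$ and $\si_u''(1)>0$ to eliminate the $w_2$-term. Write $P:=[u]_{s,p}^p+\mm{\na u}_p^p$, $Q:=\mm{\na u}_{q,a}^q$, $K:=\la\int_\Omega w_1|u|^k\dx$ and $R:=\la\int_\Omega w_2|u|^r\dx$. The Nehari identity gives $P+Q=K+R$. The condition $\si_u''(1)>0$ combined with it (this is the computation behind \eqref{eq:rel1}) gives
\begin{align*}
(p-k)P+(q-k)Q>(r-k)R.
\end{align*}
Next I substitute $K=P+Q-R$ into $I_{\la}(u)=\frac{P}{p}+\frac{Q}{q}-\frac{K}{k}-\frac{R}{r}$. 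This yields
\begin{align*}
I_{\la}(u)=\Big(\frac1p-\frac1k\Big)P+\Big(\frac1q-\frac1k\Big)Q+\Big(\frac1k-\frac1r\Big)R.
\end{align*}
Using the strict inequality $R<\frac{(p-k)P+(q-k)Q}{r-k}$ together with $\frac1k-\frac1r=\frac{r-k}{kr}$, I obtain
\begin{align*}
I_{\la}(u)<\frac{p-k}{k}\Big(\frac1r-\frac1p\Big)P+\frac{q-k}{k}\Big(\frac1r-\frac1q\Big)Q.
\end{align*}
Since $k<p<q<r$, both coefficients are negative. Also $P>0$ because $u\neq0$, so $I_{\la}(u)<0$.

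Finally, taking the infimum gives $m_{\la}^{+}\le I_{\la}(t_1u)<0$, which is the claim. The only delicate point is keeping the algebra signs straight; the $q$-term is harmless because $q<r$.
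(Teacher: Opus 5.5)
Your proof is correct and is essentially the paper's argument: both combine $\sigma_u'(1)=0$ with $\sigma_u''(1)>0$ to remove one nonlinear term. You eliminate the $w_1$-term and bound the $w_2$-term from above, while the paper eliminates the $w_2$-term and bounds the $w_1$-term from below; both give the same negative coefficients $-\frac{(r-p)(p-k)}{kpr}$ and $-\frac{(r-q)(q-k)}{kqr}$. Your explicit appeal to Lemma~\ref{geometry}, with $\lambda\in(0,\lambda_*)$, to guarantee $N_\lambda^{+}\neq\emptyset$ addresses a point the paper's proof leaves implicit, but which is needed for the infimum to be negative rather than $+\infty$.
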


\begin{proof}
	Let $u\in \nl^+$. Then $\si_u'(1)=0$ and $\si_u''(1)>0$. From $\si_u''(1)>0$ we get
	\begin{align}\label{4.11}
		\frac{r-p}{r-k}\left([u]^p_{s,p}+\mm{\nabla u}_p^p\right)+\frac{r-q}{r-k}\mm{\nabla u}_{q,a}^q \leq \la \int_{\Omega}w_1(x)\ve{u}^k\dx.
	\end{align}
	Using $\si_u'(1)=0$ together with \eqref{4.11}, we compute
	\begin{align*}
		I_{\la}(u) & =\lt(\frac{1}{p}-\frac{1}{r}\ri)([u]_{s,p}^p+\mm{\na u}_p^p)+\lt(\frac{1}{q}-\frac{1}{r}\ri)\mm{\na{u}}_{q,a}^{q}-\la\lt(\frac{1}{k}-\frac{1}{r}\ri)\int_{\Omega}w_1(x)\ve{u}^k\dx \\
		& \leq\lt(\frac{1}{p}-\frac{1}{r}- \frac{r-p}{kr}\ri)([u]_{s,p}^p+\mm{\na u}_p^p)+\lt(\frac{1}{q}-\frac{1}{r}-\frac{r-q}{rk}\ri)\mm{\na{u}}_{q,a}^{q}\\
		& =\frac{r-p}{r}\lt(\frac{1}{p}-\frac{1}{k}\ri)([u]_{s,p}^p+\mm{\na u}_p^p)+\frac{r-q}{r}\lt(\frac{1}{q}-\frac{1}{k}\ri)\mm{\na{u}}_{q,a}^{q}<0
	\end{align*}
	since $k<p<q$. It follows that $m_{\la}^{+}=\displaystyle\inf_{u \in \nl^{+}}I_{\la}(u)<0$.
\end{proof}

\begin{lemma}\label{imp}
	Let $\la \in (0,\la_*)$, where $\la_*$ is defined in \eqref{lambda1}, and let $u \in \nl$. Then there exist $\e>0$ and a differentiable function $g\colon B_{\e}(0) \to (-\infty, \infty)$ such that $g(0)=1$ and $g(z)(u-z)\in \nl$ for all $z \in B_{\e}(0) $. Moreover,
	\begin{equation}\label{4.13}
		\begin{aligned}
			&\langle g'(0), z \rangle\\
			&=\frac{pB_p(1,u,z)+q B_q(a(x),u,z) +p A_p(u,z)-\la\int_{\Omega}(k w_1(x)\ve{u}^{k-2}+rw_2(x)\ve{u}^{r-2})uz\dx}{(p-k)([u]^p_{s,p}+\mm{\na u}_p^p)+(q-k)\mm{\na u}_{q,a}^{q}-\lambda(r-k)\int_{\Omega}w_2(x)\ve{u}^r\dx},
		\end{aligned}
	\end{equation}
	where $A_p$ and $B_q$ are defined in \eqref{def-Ap-Bl}.
\end{lemma}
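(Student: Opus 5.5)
The plan is a standard implicit function theorem argument applied to the Nehari constraint along rays. Define $F\colon \R\times W\to\R$ by
\begin{align*}
	F(t,z):=t^{p-k}\big([u-z]^p_{s,p}+\mm{\na (u-z)}_p^p\big)+t^{q-k}\mm{\na (u-z)}^q_{q,a}-\la\int_{\Omega}w_1(x)\ve{u-z}^k\dx-\la t^{r-k}\int_{\Omega}w_2(x)\ve{u-z}^r\dx .
\end{align*}
For $t>0$ and $u-z\neq 0$ this is $t^{-k}\langle I_\la'(t(u-z)),t(u-z)\rangle$. Hence $F(t,z)=0$ is equivalent to $t(u-z)\in\nl$.

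First I check that $F$ is $C^1$ near $(1,0)$. The terms $[\cdot]^p_{s,p}$, $\mm{\na\cdot}_p^p$ and $\mm{\na\cdot}^q_{q,a}$ are $C^1$ on $W$, with derivatives $pA_p$, $pB_p(1,\cdot,\cdot)$ and $qB_q(a(x),\cdot,\cdot)$. The integral terms $\int w_1|v|^k$ and $\int w_2|v|^r$ are $C^1$ on $W$ by H\"older's inequality, using $w_1\in L^{r/(r-k)}(\Omega)$, $w_2\in L^\infty(\Omega)$ and the embedding $W\hookrightarrow L^r(\Omega)$ from Proposition~\ref{prop-def-w}. Since $u\in\nl$, we have $F(1,0)=0$. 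One point needs care: $|v|^k$ with $k>1$ is $C^1$ with derivative $k|v|^{k-2}v$, so this is fine.

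Next I compute
\begin{align*}
	\partial_t F(1,0)=(p-k)\big([u]^p_{s,p}+\mm{\na u}_p^p\big)+(q-k)\mm{\na u}^q_{q,a}-\la(r-k)\int_\Omega w_2\ve{u}^r\dx .
\end{align*}
Because $u\in\nl$, this equals $\si_u''(1)$; indeed, with $\ga_u$ as in Lemma~\ref{geometry}, it is $\ga_u'(1)$. By Lemma~\ref{empty}, $\nl^0=\emptyset$ for $\la\in(0,\la_*)$, so $\partial_tF(1,0)\ne0$. This nondegeneracy is the one genuinely nontrivial step, and it is where the restriction $\la<\la_*$ enters.

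The implicit function theorem then gives $\e>0$ and a $C^1$ (in particular differentiable) map $g\colon B_\e(0)\to\R$ with $g(0)=1$ and $F(g(z),z)=0$. Shrinking $\e$ so that $g(z)>0$ and $u-z\ne0$ (take $\e<\mmm{u}$), we obtain $g(z)(u-z)\in\nl$. Finally, $\langle g'(0),z\rangle=-\langle\partial_zF(1,0),z\rangle/\partial_tF(1,0)$. Differentiating in $z$ at $z=0$ produces, up to the sign coming from $u-z$, the numerator $pA_p(u,z)+pB_p(1,u,z)+qB_q(a(x),u,z)-\la\int_\Omega(kw_1|u|^{k-2}+rw_2|u|^{r-2})uz\dx$. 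This yields \eqref{4.13}, where the two minus signs cancel.
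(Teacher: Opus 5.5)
Your proposal is correct and follows the paper's route: the paper applies the implicit function theorem to $G_u(z,t)=\langle I_\la'(t(u-z)),t(u-z)\rangle$, obtains the nondegeneracy $\partial_t G_u(0,1)\neq 0$ from $\nl^0=\emptyset$ (Lemma~\ref{empty}), and then reads off $g'(0)$. Your $F$ is simply $F(t,z)=t^{-k}G_u(z,t)$. With this normalization $\partial_tF(1,0)$ is literally the denominator of \eqref{4.13}, whereas for $G_u$ one needs the Nehari identity $G_u(0,1)=0$ to put it in that form, and your use of $\langle g'(0),z\rangle=-\langle\partial_zF(1,0),z\rangle/\partial_tF(1,0)$ is cleaner than the paper's difference-quotient limits.
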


\begin{proof}
	Fix $u\in \nl$ and define $G_u\colon W \times \R \to \R$ by
	\begin{align*}
		G_u(z,t):=\langle I_{\la}'(t(u-z)), t(u-z)\rangle,
	\end{align*}
	that is,
	\begin{align*}
		G_u(z,t)
		& =t^p([u-z]_{s,p}^p+\mm{\na u-\na z}_p^p)+t^q \mm{\na u-\na z}_{q,a}^q \\
		&\quad -\la \int_{\Omega}\lt(w_1(x)t^k\ve{u-z}^k+w_2(x)t^r \ve{u-z}^r\ri)\dx.
	\end{align*}
	Since $u \in \nl$, it holds $\langle I'_{\la}(u),u\rangle=G_u(0,1)=0$.

	Next, we compute the partial derivative of $G_u$ with respect to $t$ at $(0,1)$. We have
	\begin{align*}
		\frac{\partial}{\partial t}G_u(0,1)=(p-k)([u]_{s,p}^p+\mm{\na u}_p^p)+(q-k)\mm{\na u}_{q,a}^q-\la (r-k)\int_{\Omega}w_2(x)\ve{u}^r\dx\neq0,
	\end{align*}
	where the inequality follows from Lemma \ref{empty}.

	Therefore, the assumptions of the Implicit Function Theorem are satisfied at $(z,t)=(0,1)$. Hence there exist $\e>0$ and a $C^1$-function $g\colon B_{\e}(0) \to \R$ such that $g(0)=1$ and $g(z)(u-z)\in \nl$ for all $z\in B_{\e}(0)$. Since both $u$ and $g(z)(u-z)$ belong to $\nl$, we obtain
	\begin{align}\label{4.14}
		[u]^p_{s,p}+\mm{\nabla u}_p^p+\mm{\na u}^q_{q,a}-\lambda\int_{\Omega}(w_1(x)|u|^{k}+w_2(x)|u|^{r})\, dx=0
	\end{align}
	and
	\begin{equation}\label{4.15}
		\begin{aligned}
			&g^p(z)[u-z]^p_{s,p}+g^p(z)\mm{\nabla u-\na z}_p^p+g^q(z)\mm{\na u-\na z}^q_{q,a}\\
			&-\lambda g^k(z)\int_{\Omega}w_1(x)\ve{u-z}^{k}\dx- \la g^r(z)\int_{\Omega}w_2(x)|u-z|^{r}\dx=0.
		\end{aligned}
	\end{equation}
	Moreover, the following limits hold as $z\to 0$ in $W$:
	\begin{align*}
		& \lim_{z \to 0}\frac{ g^p(z)\mm{\nabla u-\na z}_p^p-\mm{\na u}_p^p}{z}=p \langle g'(0),z \rangle\mm{\na u}_p^p-p\int_{\Omega}\ve{\na u}^{p-2}\na u\cdot\na z \dx, \\
		& \lim_{z \to 0}\frac{ g^q(z)\mm{\nabla u-\na z}_{q,a}^q-\mm{\na u}_{q,a}^q}{z}=q \langle g'(0),z \rangle\mm{\na u}_{q,a}^q-q\int_{\Omega}a(x)\ve{\na u}^{q-2}\na u\cdot\na z \dx, \\
		& \lim_{z \to 0} \frac{g^p(z)[u-z]^p_{s,p}-[u]^p_{s,p}}{z}=p\langle g'(0),z \rangle[u]_{s,p}^{p}-pA_p(u,z),\\
		& \lim_{z \to 0} \int_{\Omega}w_1(x)\frac{g^k(z)\ve{u-z}^k-\ve{u}^k}{z}\,dx=k\langle g'(0),z \rangle\int_{\Omega}w_1(x)\ve{u}^k\dx-k\int_{\Omega}w_1(x)\ve{u}^{k-2}uz\dx,\\
		& \lim_{z \to 0} \int_{\Omega}w_2(x)\frac{g^r(z)\ve{u-z}^r-\ve{u}^r}{z}\,dx=r\langle g'(0),z \rangle\int_{\Omega}w_2(x)\ve{u}^r\dx-r\int_{\Omega}w_2(x)\ve{u}^{r-2}uz\dx.
	\end{align*}
	Substituting these limits into the difference of \eqref{4.15} and \eqref{4.14} and simplifying yields \eqref{4.13}. This completes the proof.
\end{proof}

\begin{lemma}\label{ps_sequence}
	Let $\la \in(0,\la_*)$, where $\la_*$ is defined in \eqref{lambda1}. Then there exists a minimizing sequence $\{u_n\}_{n\in\mathbb{N}}\subset \nl$ such that
	\begin{align*}
		I_{\la}(u_n)=m_{\la}+o_n(1)\quad\text{and}\quad I'_{\la}(u_n)=o_n(1).
	\end{align*}
\end{lemma}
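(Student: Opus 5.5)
This is the standard Tarantello-type argument. We combine Ekeland's variational principle on $\nl$ with the implicit-function parametrization of Lemma \ref{imp}.

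\textbf{Step 1: Ekeland's principle.} By Lemma \ref{coer}, $I_\la$ is bounded from below on $\nl$. The set $\nl\cup\{0\}$ is closed in $W$: it is the zero set of the continuous map $u\mapsto\langle I_\la'(u),u\rangle$. By Lemma \ref{empty} and the Sobolev estimate in its proof, elements of $\nl$ stay away from $0$, since $\|\nabla u\|_p$ is bounded below on the relevant part. Hence Ekeland's variational principle applies on the complete metric space $\overline{\nl}$. It yields a minimizing sequence $\{u_n\}\subset\nl$ with
\begin{align*}
I_\la(u_n)<m_\la+\tfrac1n,\qquad I_\la(w)\ge I_\la(u_n)-\tfrac1n\|w-u_n\|_W\quad\text{for all } w\in\nl.
\end{align*}
By Lemma \ref{inf}, $m_\la\le m_\la^+<0$, so $I_\la(u_n)<0$ for large $n$. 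Coercivity (Lemma \ref{coer}) gives $\|u_n\|_W\le C$.

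\textbf{Step 2: lower bound on $\|u_n\|$.} The identity for $I_\la$ on $\nl$ gives
\[
\la\Big(\tfrac1k-\tfrac1r\Big)\int_\Omega w_1|u_n|^k\,\dx \ge -I_\la(u_n)\ge c_0>0 .
\]
Hence $\|u_n\|_W\ge c>0$.

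\textbf{Step 3: the constrained inequality.} Apply Lemma \ref{imp} at $u_n$ to get $g_n$ with $g_n(0)=1$. Take $z=\delta v$ with $\|v\|_W=1$ and $\delta$ small, and set $w_\delta=g_n(\delta v)(u_n-\delta v)\in\nl$. Ekeland's inequality, a Taylor expansion of $I_\la$ at $u_n$, and $\langle I_\la'(u_n),u_n\rangle=0$ give
\[
\langle I'_\la(u_n),v\rangle\le \tfrac Cn\big(1+\|g_n'(0)\|_{W^*}\big).
\]
Replacing $v$ by $-v$ yields $\|I'_\la(u_n)\|_{W^*}\le \tfrac Cn(1+\|g_n'(0)\|)$.

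\textbf{Step 4: the main obstacle.} It remains to show that $\|g_n'(0)\|$ is bounded uniformly in $n$. By formula \eqref{4.13}, the numerator is bounded by $C\|v\|_W$, using the bound on $u_n$ and Hölder/Sobolev estimates. So the task reduces to bounding the denominator
\[
D_n=(p-k)\big([u_n]^p_{s,p}+\|\nabla u_n\|_p^p\big)+(q-k)\|\nabla u_n\|_{q,a}^q-\la(r-k)\int_\Omega w_2|u_n|^r\,\dx
\]
away from $0$. Suppose instead that $D_n\to0$ along a subsequence. Then the computation in the proof of Lemma \ref{empty} goes through with $o_n(1)$ errors. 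First, $D_n=o(1)$ together with the Sobolev embedding gives $\|\nabla u_n\|_p\ge c$. Next, combining $D_n=o(1)$ with the Nehari identity shows that the auxiliary functional satisfies $L_\la(u_n)=o(1)$. On the other hand, that same proof gives
\[
L_\la(u_n)\ge \|\nabla u_n\|_p^k\,(\text{positive constant})\ge c>0
\]
for $\la<\la_*$, using the lower bound from Step 2. This is a contradiction. The delicate point is making this quantitative, namely tracking the $o_n(1)$ errors uniformly, which requires the bounds $0<c\le\|u_n\|_W\le C$. With $D_n$ bounded away from zero, $\|I_\la'(u_n)\|_{W^*}\to0$ follows.
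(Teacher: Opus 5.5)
Your proposal is correct and follows the paper's proof: Ekeland's principle on the Nehari manifold, the implicit-function map $g_n$ of Lemma \ref{imp} giving $\langle I_\la'(u_n),v\rangle\le \frac{C}{n}(1+\|g_n'(0)\|_{W'})$, and a contradiction through $D_n=-(r-k)L_\la(u_n)\to 0$ to bound $\|g_n'(0)\|_{W'}$. Your Step 2 lower bound (from $I_\la(u_n)<0$) is what makes this last step quantitative; the paper glosses over this point by inserting $\la_*$ for $\la$ and concluding only $L_\la(u_n)>0$. One small correction: $\nl$ itself is not bounded away from $0$, since points $t_1u\in\nl^+$ can have arbitrarily small norm, but nothing in your argument depends on this, because $\nl\cup\{0\}$ is closed and $I_\la(0)=0>m_\la$ forces the Ekeland points into $\nl$ for large $n$.
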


\begin{proof}
	From Lemma \ref{coer}, the functional $I_{\la}$ is coercive and bounded from below on $\nl$. Hence, by Ekeland's variational principle, there exists a sequence $\{u_n\}_{n\in\mathbb{N}}\subset \nl$ such that
	\begin{align}\label{4.17}
		I_{\la}(u_n)\leq m_{\la}+\frac{1}{n} \quad\text{and}\quad I_{\la}(u_n) \leq I_{\la}(v)+\frac{1}{n}\mm{v-u_n}_W \quad \text{for all }v \in \nl.
	\end{align}
	Since $m_{\la}^+<0$ by Lemma~\ref{inf}, it follows that for large $n$
	\begin{align*}
		I_{\la}(u_n)\leq m_{\la}+\frac{1}{n}\leq m_{\la}^++\frac{1}{n}<0,
	\end{align*}
	so $u_n \not\equiv 0$. Using H\"{o}lder's inequality, Lemma \ref{rel} and the embedding estimate $\|u_n\|_r\leq C_r \|u_n\|_W$, we obtain
	\begin{align*}
		0>I_{\la}(u_n)
		& =\lt(\frac{1}{p}-\frac{1}{r} \ri)([u_n]^p_{s,p}+\mm{\nabla u_n}_p^p)+\lt(\frac{1}{q}-\frac{1}{r} \ri)\mm{\nabla u_n}_{q,a}^q\\
		& \quad-\la \lt(\frac{1}{k}-\frac{1}{r} \ri) \int_{\Omega}w_1(x)\ve{u_n}^k\dx \\
		& \geq \lt(\frac{1}{q}-\frac{1}{r} \ri) \xi( u_n)-\la \lt(\frac{1}{k}-\frac{1}{r} \ri) \mm{w_1}_{\frac{r}{r-k}}\mm{u_n}_{r}^k \\
		& >\lt(\frac{1}{q}-\frac{1}{r} \ri) \min(\mm{u_n}_W^p,\mm{u_n}_W^{q})-\la \lt(\frac{1}{k}-\frac{1}{r} \ri)\mm{w_1}_{\frac{r}{r-k}}C_r^k\mm{u_n}_W^k.
	\end{align*}
	Hence,
	\begin{align*}
		\mm{u_n}_W \leq \lt(\la \frac{(r-k)q}{k(r-q)} \mm{w_1}_{\frac{r}{r-k}}C_r^k\ri)^{\frac{1}{\alpha-k}},
	\end{align*}
	where $\alpha=p$ if $\mm{u_n}_W\geq 1$ and $\alpha=q$ otherwise. Thus, $\{u_n\}_{n\in\mathbb{N}}$ is bounded in $W$.

	To prove $I'_{\la}(u_n) \to 0$ as $n \to \infty$, we use Lemma \ref{imp}. For each $u_n$, there exists $\e_n>0$ and a $C^1$-map $g_n\colon B_{\e_n}(0) \to (0,\infty)$ such that $g_n(z)(u_n-z)\in \nl$.

	Fix $n$ large enough so that $u_n \not\equiv 0$, and let $\kappa\in(0,\e_n)$. For a given $u \in W$, we define
	\begin{align*}
		\nu_{\kappa}:=\frac{\kappa u}{\mm{u}_{W}} \quad\text{and}\quad \eta_{\kappa}:=g_{n}(\nu_{\ka})(u_n-\nu_{\ka}).
	\end{align*}
	From \eqref{4.17}, we get
	\begin{align*}
		I_{\la}(\eta_{\ka})-I_{\la}(u_n)\geq -\frac{1}{n}\mm{\eta_{\ka}-u_n}_W.
	\end{align*}
	Using Taylor's expansion at $u_n$,
	\begin{align*}
		\langle I'_{\la}(u_n),\eta_{\ka}-u_n\rangle+o_n(\mm{\eta_{\ka}-u_n}_W)\geq -\frac{1}{n}\mm{\eta_{\ka}-u_n}_W.
	\end{align*}
	Substituting $\eta_{\ka}-u_n=-\nu_{\ka}+(g_n(\nu_{\ka})-1)(u_n-\nu_{\ka})$, we deduce
	\begin{align*}
		\left\langle I'_{\la}(u_n),\frac{u}{\mm{u}_W}\right\rangle\leq \frac{1}{n \ka} \mm{\eta_{\ka}-u_n}_W+\frac{(g_n(\nu_\ka)-1)}{\ka}\langle I'_{\la}(u_n)-I'_{\la}(\eta_{\ka}), u_n-\nu_{\ka}\rangle+\frac{1}{\ka}o_n(\mm{\eta_{\ka}-u_n}_W).
	\end{align*}
	Letting $\ka \to 0$ for fixed $n$, we deduce
	\begin{align}\label{4.21}
		\left\langle I'_{\la}(u_n),\frac{u}{\mm{u}_W}\right\rangle\leq  \frac{c}{n}(1+\mm{g'_n(0)}_{W'}),
	\end{align}
	for some $c>0$.

	It remains to show that $\mm{g'_n(0)}_{W'}$ is bounded. From Lemma \ref{imp} and H\"{o}lder's inequality, we have
	\begin{align*}
		\langle g_n'(0), z \rangle\leq\frac{C\mm{z}_{W}}{(p-k)([u_n]^p_{s,p}+\mm{\na u_n}_p^p)+(q-k)\mm{\na u_n}_{q,a}^{q}-\la(r-k)\int_{\Omega}w_2(x)\ve{u_n}^r\dx},
	\end{align*}
	where $C$ is a positive constant. Suppose  $\mm{g'_n(0)}_{W'}$ is not bounded. Then, along a subsequence, the denominator goes to $0$. However, since $u_n \in \nl$, it follows that $L_{\la}(u_n)=o_n(1)$ and
	\begin{align*}
		(p-k)\mm{\na u_n}_p^p
		& \leq (p-k)([u_n]^p_{s,p}+\mm{\na u_n}_p^p)+(q-k)\mm{\na u_n}_{q,a}^{q} \\
		& \leq \la_* (r-k) \mm{w_2}_{\infty} S_r^{r}\mm{\na u_n}_p^r+o_n(1),
	\end{align*}
	which leads to
	\begin{align*}
		\mm{\na u_n}_p \geq \lt(\frac{(p-k)}{(r-k)\la_*S_r^r}\ri)^{\frac{1}{r-p}}+o_n(1).
	\end{align*}
	Thus, for $n$ sufficiently large, we have $\mm{\na u_n}_p>c>0$. Then we get
	\begin{align*}
		L_{\la}(u_n)
		& \geq \frac{r-p}{r-k}\mm{\na u_n}_p^p-\la \mm{w_1}_{\frac{r}{r-k}}S_r^k \mm{\na u_n}_p^k  \\
		& > \mm{\na u_n}_p^k \lt(\frac{r-p}{r-k} \lt(\frac{p-k}{(r-k)\la_*S_r^r} \ri)^\frac{p-k}{r-p} -\la_* \mm{w_1}_{\frac{r}{r-k}}S_r^k\ri)
		=0,
	\end{align*}
	which is a contradiction to $L_{\la}(u_n)=o_n(1)$. Hence, $\mm{g'_n(0)}_{W'}$ is bounded, and from \eqref{4.21} we conclude that $I'_{\la}(u_n)=o_n(1)$.
\end{proof}

\begin{lemma}\label{almost_cvg}
	Let $\lambda > 0$ and $\{u_n\}_{n\in\mathbb{N}} \subset W$ be a bounded \textnormal{(PS)$_c$}-sequence for some $c \in \R$. Then, up to a subsequence, we have
	\begin{align*}
		\nabla u_n(x) \to \nabla u(x) \quad \text{a.e.\,in } \Omega  \text{ as } n \to \infty.
	\end{align*}
\end{lemma}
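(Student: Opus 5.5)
We follow the approach of Farkas--Fiscella--Winkert \cite{Farkas-Fiscella-Winkert-2022}, which uses a truncation argument in the spirit of Boccardo--Murat. Since $\{u_n\}_{n\in\mathbb{N}}$ is bounded in $W$ and $W$ is reflexive (Proposition \ref{prop-def-w}), after passing to a subsequence we may assume the following: $u_n\rightharpoonup u$ in $W$, $u_n\to u$ in $L^\ell(\Omega)$ for every $\ell<p^*$, and $u_n\to u$ a.e.\ in $\Omega$. In addition, $\{|u_n|^{r-1}\}$ and $\{w_1|u_n|^{k-1}\}$ are bounded in $L^1(\Omega)$. This holds even when $r=p^*$: in that case $w_1\in L^\infty(\Omega)$ and $|u_n|^{p^*-1}$ is bounded in $L^{(p^*)'}$. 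For $\varepsilon>0$ let $T_\varepsilon(t)=\max\{-\varepsilon,\min\{t,\varepsilon\}\}$. The functions $T_\varepsilon(u_n-u)$ are bounded in $W$, because truncation by a Lipschitz function with $T_\varepsilon(0)=0$ does not increase $\xi$. Hence they are admissible test functions, and $\langle I'_\lambda(u_n),T_\varepsilon(u_n-u)\rangle\to0$.

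Testing with $T_\varepsilon(u_n-u)$ gives
\begin{align*}
	&\int_{\Omega}\big(|\nabla u_n|^{p-2}\nabla u_n+a(x)|\nabla u_n|^{q-2}\nabla u_n\big)\cdot\nabla T_\varepsilon(u_n-u)\dx + A_p(u_n,T_\varepsilon(u_n-u))\\
	&\quad\le o_n(1)+\lambda\varepsilon\int_\Omega\big(w_1|u_n|^{k-1}+w_2|u_n|^{r-1}\big)\dx\le o_n(1)+c\,\varepsilon.
\end{align*}
The nonlocal term must be controlled from below. Because $t\mapsto|t|^{p-2}t$ is monotone and $T_\varepsilon$ is nondecreasing, the term $A_p(u_n,T_\varepsilon(u_n-u))-A_p(u,T_\varepsilon(u_n-u))$ is nonnegative. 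It therefore remains to show $A_p(u,T_\varepsilon(u_n-u))\to0$. This follows from weak convergence: $T_\varepsilon(u_n-u)\rightharpoonup 0$ in $W^{s,p}$, since it is bounded there and converges to $0$ a.e. The same reasoning shows that the local term involving $u$, namely $\int(|\nabla u|^{p-2}\nabla u+a|\nabla u|^{q-2}\nabla u)\cdot\nabla T_\varepsilon(u_n-u)\dx$, tends to $0$, because $\nabla T_\varepsilon(u_n-u)\rightharpoonup0$ in $L^{\mathcal H}$. Combining these facts, we obtain
\begin{align*}
	\limsup_{n\to\infty}\int_{\{|u_n-u|<\varepsilon\}}\big(\mathcal A(\nabla u_n)-\mathcal A(\nabla u)\big)\cdot(\nabla u_n-\nabla u)\dx\le c\,\varepsilon,
\end{align*}
where $\mathcal A(\zeta)=|\zeta|^{p-2}\zeta+a(x)|\zeta|^{q-2}\zeta$.

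Write $e_n=(\mathcal A(\nabla u_n)-\mathcal A(\nabla u))\cdot(\nabla u_n-\nabla u)\ge0$. We show $e_n^{\theta}\to0$ in $L^1(\Omega)$ for some $\theta\in(0,1)$ by splitting $\Omega$ into $\{|u_n-u|<\varepsilon\}$ and $\{|u_n-u|\ge\varepsilon\}$. On the first set we apply Hölder's inequality together with the estimate above, which gives a bound of order $(c\varepsilon)^\theta|\Omega|^{1-\theta}$. On the second set the measure tends to $0$ because $u_n\to u$ in measure, while $e_n$ is bounded in $L^1$; Hölder's inequality again makes this contribution small. Letting first $n\to\infty$ and then $\varepsilon\to0$, we conclude that $e_n\to0$ a.e.\ along a subsequence. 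Lemma \ref{inequalities}, applied to the $p$-part with $a\ge0$, gives the pointwise lower bound $e_n\ge(|\zeta_n|^{p-2}\zeta_n-|\zeta|^{p-2}\zeta)\cdot(\zeta_n-\zeta)$, where $\zeta_n=\nabla u_n(x)$ and $\zeta=\nabla u(x)$. This bound is strictly positive unless $\zeta_n=\zeta$, and it forces $\zeta_n\to\zeta$. Indeed, $e_n\to0$ first implies that $\zeta_n$ is bounded, and then strict monotonicity together with continuity yields convergence. Hence $\nabla u_n\to\nabla u$ a.e.\ in $\Omega$.

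The main obstacle is the critical case $r=p^*$. There $|u_n|^{r-2}u_n$ does not converge strongly, so the right-hand side cannot be made $o_n(1)$ directly. The truncation handles this: since $|T_\varepsilon|\le\varepsilon$, only $L^1$-boundedness of $|u_n|^{r-1}$ is required. A secondary technical point is verifying that the nonlocal term has the right sign, or at least vanishes in the limit, with the truncated test function. This relies on the monotonicity of both $T_\varepsilon$ and $|t|^{p-2}t$.
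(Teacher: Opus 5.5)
Your proposal is correct and follows the argument the paper defers to. The paper omits the proof and points to Farkas--Fiscella--Winkert (Lemma 3.1) and da Silva--Fiscella--Viloria (Lemma 2.2), i.e.\ a Boccardo--Murat truncation with $T_\varepsilon(u_n-u)$ that also handles the nonlocal term. Your sketch supplies those omitted details, and the key monotonicity step, $A_p(u_n,T_\varepsilon(u_n-u))-A_p(u,T_\varepsilon(u_n-u))\ge 0$ because $T_\varepsilon$ is nondecreasing, is exactly what makes the mixed local--nonlocal case work.
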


\begin{proof}
	The proof follows from Farkas--Fiscella--Winkert \cite[Lemma~3.1]{Farkas-Fiscella-Winkert-2022} and da Silva--Fiscella--Viloria \cite[Lemma~2.2]{daSilva-Fiscella-Viloria-2024}. For the sake of brevity, we omit the details.
\end{proof}

\begin{lemma}\label{l5.1}
	Suppose that $\{u_n\}_{n\in\mathbb{N}}\subset \nl$ is a \textnormal{(PS)$_c$}-sequence for the functional $I_{\lambda}$ at the level $c$, and that $u_{n}\rightharpoonup u$ weakly in $W$. Then $u$ is a critical point of $I_{\lambda}$. Moreover, the energy of the weak limit satisfies
	\begin{align*}
		I_{\lambda}(u)\geq -\tilde{c}\lambda^{\frac{p}{p-k}},
	\end{align*}
	where $\tilde{c}$ is defined in \eqref{tilde{c}}.
\end{lemma}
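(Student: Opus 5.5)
The plan is to pass to the limit in the weak formulation of $\langle I_\la'(u_n),v\rangle\to 0$ for every fixed $v\in W$, using the pointwise convergence of gradients from Lemma \ref{almost_cvg}. The energy estimate will then follow by evaluating $I_\la(u)-\frac1r\langle I_\la'(u),u\rangle$ and minimizing a one-variable function.

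\emph{Step 1: convergences.} Since $u_n\rightharpoonup u$ in $W$, the sequence is bounded in $W$. By Proposition \ref{prop-def-w} we have $u_n\to u$ in $L^t(\Omega)$ for all $t\in[1,p^*)$, and, along a subsequence, $u_n\to u$ a.e.\ in $\R^N$ (recall $u_n=u=0$ outside $\Omega$). Lemma \ref{almost_cvg} gives $\na u_n\to\na u$ a.e.\ in $\Omega$. I then use the standard principle that a sequence bounded in a reflexive $L^m$ space and converging a.e.\ converges weakly in $L^m$ to its pointwise limit. Concretely:
\begin{itemize}
\item[(a)] $|\na u_n|^{p-2}\na u_n\rightharpoonup|\na u|^{p-2}\na u$ in $L^{p'}(\Omega)$.
\item[(b)] $a^{1/q'}|\na u_n|^{q-2}\na u_n\rightharpoonup a^{1/q'}|\na u|^{q-2}\na u$ in $L^{q'}(\Omega)$; this sequence is bounded because $\int_\Omega a|\na u_n|^q\dx\le c$ by Lemma \ref{rel}.
\item[(c)] $\dfrac{|u_n(x)-u_n(y)|^{p-2}(u_n(x)-u_n(y))}{|x-y|^{(N+sp)/p'}}$ converges weakly in $L^{p'}(\R^{2N})$ to the analogous expression for $u$. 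It is bounded since $[u_n]_{s,p}\le c$, and it converges a.e.\ in $\R^{2N}$.
\item[(d)] $|u_n|^{r-2}u_n\rightharpoonup|u|^{r-2}u$ in $L^{r'}(\Omega)$. This covers also the critical case $r=p^*$, where no strong convergence is available.
\end{itemize}
Testing (a)--(c) against $\na v$, $a^{1/q}\na v$ and $(v(x)-v(y))|x-y|^{-(N+sp)/p}$ respectively, and (d) against $w_2v\in L^r(\Omega)$, handles all terms except the concave one. For $\int_\Omega w_1|u_n|^{k-2}u_nv\dx$ I split into cases:
\begin{itemize}
\item[(i)] If $r<p^*$: $u_n\to u$ strongly in $L^r$, and $w_1\in L^{r/(r-k)}$ gives convergence by H\"older's inequality and continuity of the Nemytskii map.
\item[(ii)] If $r=p^*$: $w_1\in L^\infty$ and $k<p^*$, so strong convergence in $L^k$ suffices.
\end{itemize}
Hence $\langle I_\la'(u),v\rangle=\lim_n\langle I_\la'(u_n),v\rangle=0$ for all $v\in W$, so $u$ is a critical point.

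\emph{Step 2: energy bound.} Since $\langle I_\la'(u),u\rangle=0$,
\begin{align*}
I_\la(u)&=\Big(\frac1p-\frac1r\Big)\big([u]^p_{s,p}+\|\na u\|_p^p\big)+\Big(\frac1q-\frac1r\Big)\|\na u\|^q_{q,a}-\la\Big(\frac1k-\frac1r\Big)\int_\Omega w_1|u|^k\dx\\
&\ge \Big(\frac1p-\frac1r\Big)\|\na u\|_p^p-\la\Big(\frac1k-\frac1r\Big)\|w_1\|_{\frac{r}{r-k}}S_r^k\|\na u\|_p^k ,
\end{align*}
where I drop the nonnegative terms and use H\"older's inequality with the Sobolev constant $S_r$. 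Set $A=\frac1p-\frac1r$ and $B=\la(\frac1k-\frac1r)\|w_1\|_{\frac{r}{r-k}}S_r^k$, and consider $f(t)=At^p-Bt^k$ on $[0,\infty)$. Its global minimum is attained at $t_0=(kB/(pA))^{1/(p-k)}$. The minimum value is $-\big(1-\frac kp\big)B\,t_0^k$, which equals $-C\la^{\frac{p}{p-k}}$ for an explicit constant $C$. Rewriting $C$ should give exactly the constant $\tilde c$ in \eqref{tilde{c}}; if the bookkeeping produces a slightly different but explicit constant, the inequality still holds with $\tilde c$ so defined.

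\emph{Main obstacle.} The only delicate point is the a.e.\ convergence of gradients, which is needed to identify the weak limits of the nonlinear local terms (especially the weighted $q$-term and the critical term). This is supplied by Lemma \ref{almost_cvg}, since $\{u_n\}$ is a bounded (PS)$_c$-sequence. Everything else is routine.
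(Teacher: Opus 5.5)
Your proposal is correct and follows essentially the same route as the paper. You identify the weak limits of the nonlinear terms from boundedness plus a.e.\ convergence, using Lemma \ref{almost_cvg}; this is exactly what the cited Chen--Mosconi--Squassina and Autuori--Pucci results supply. You then bound $I_\lambda(u)=I_\lambda(u)-\frac1r\langle I_\lambda'(u),u\rangle$ from below, and your one-variable minimization plays the role of the paper's Young inequality.

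On your hedge about the constant: the minimum of $At^p-Bt^k$ is exactly $-(1-\frac kp)\tilde c\,\lambda^{\frac{p}{p-k}}$, because the paper's Young step simply discards the factor $\frac{p-k}{p}$. Your constant is therefore smaller than $\tilde c$, so your fallback is justified and your bound implies the stated one.
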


\begin{proof}
	Since $\{u_n\}_{n\in\mathbb{N}}$ is a \textnormal{(PS)$_c$}-sequence for $I_{\lambda}$, we have
	\begin{align*}
		\langle I'_{\la}(u_n),u_n\rangle&= [u_n]^p_{s,p}+\mm{\nabla u_n}_p^p+\mm{\na u_n}^q_{q,a}-\lambda\int_{\Omega}(w_1(x)|u_n|^{k}+w_2(x)|u_n|^{r})\dx =o_n(1),\\
		I_{\lambda}(u_n)&=\frac{1}{p}[u_n]^p_{s,p}+\frac{1}{p}\mm{\nabla u_n}_p^p+\frac{1}{q}\mm{\na u_n}^q_{q,a}-\frac{\lambda}{k}\int_{\Omega}w_1(x)|u_n|^{k}\dx-\frac{\lambda}{r}\int_{\Omega}w_2(x)|u_n|^{r}\dx\\
		& =c+o_n(1).
	\end{align*}
	Using Lemma~\ref{rel} and the Sobolev embedding theorem, we estimate
	\begin{align*}
		c_1+c_2\|u_n\|_{W} & \geq I_{\lambda}(u_n)-\frac{1}{r} \langle I'_{\la}(u_n),u_n\rangle   \\& = \left(\frac{1}{p}-\frac{1}{r}\right)[u_n]^p_{s,p}+\left(\frac{1}{p}-\frac{1}{r}\right)\mm{\nabla u_n}_p^p+\left(\frac{1}{q}-\frac{1}{r}\right)\mm{\na u_n}^q_{q,a}     \\
		& \quad-\lambda\left(\frac{1}{k}- \frac{1}{r}\right)\int_{\Omega}w_1(x)|u_n|^{k}\dx   \\
		& \geq\left(\frac{1}{q}-\frac{1}{r}\right) \xi(u_n)-\lambda \left(\frac{1}{k}-\frac{1}{r}\right)\mm{w_1}_{\frac{r}{r-k}}\mm{u_n}_{r}^k \\
		& \geq\left(\frac{1}{q}-\frac{1}{r}\right) \min\{\|u_n\|_{W}^{p}, \|u_n\|_{W}^{q}\}-\lambda \left(\frac{1}{k}-\frac{1}{r}\right)\mm{w_1}_{\frac{r}{r-k}}C_r^k\mm{u_n}_W^k,
	\end{align*}
	for some $c_1,c_2>0$. Hence,
	\begin{align*}
		c_3\geq \left(\frac{1}{q}-\frac{1}{r}\right) \min\{\|u_n\|_{W}^{p}, \|u_n\|_{W}^{q}\}-\lambda c_4\left(\frac{1}{k}-\frac{1}{r}\right)\|u_n\|^{k}_{W},
	\end{align*}
	for some $c_3,c_4>0$. Since $k<p<q<r$, the last inequality implies that $\{u_n\}_{n\in\mathbb{N}}$ is bounded in $W$.

	From this and the reflexivity of $W$, there exists $u\in W$ such that $u_{n}\rightharpoonup u$ weakly in $W$, up to a subsequence not relabeled. Therefore, we have the compactness properties
	\begin{align}\label{convergence}
		\begin{cases}
			u_n\to u, & \text{strongly in } L^\delta(\Omega) \text{ for }  \delta \in [1, p^*), \\
			u_n  \to u,  & \text{a.e.\,in } \Omega, \\
			|u_n|^{\delta-2}u_n \rightharpoonup |u|^{\delta-2}u, & \text{weakly in } L^{\delta'}(\Omega)  \text{ for } \delta \in (1, p^*] \\
			|\nabla u_n|^{\delta-2}  \nabla u_n & \text{is bounded in }  L^{\delta^{'}}(\Omega)  \text{ for }  \delta \in \{p,q\}.
		\end{cases}
	\end{align}
	By Lemma 2.2 of Chen--Mosconi--Squassina \cite{Chen-Mosconi-Squassina-2018} and \eqref{convergence}, we have
	\begin{equation}\label{cp1}
		\begin{aligned}
			\lim_{n\to\infty}A_p(u_n,v) &=A_p(u,v), \quad \text{for all }v\in W,\\
			\lim_{n\to\infty}B_p(1,u_n,v)&=B_p(1,u,v) , \quad \text{for all }v\in W.
		\end{aligned}
	\end{equation}
	Let
	\begin{align}\label{def-K}
		K := \{x \in \Omega \colon a(x)=0\}.
	\end{align}
	Since $a(\cdot)$ is Lipschitz continuous, the set $\Omega\setminus K$ is an open subset of $\R^N$. Moreover, $ \{ \ve{\na u_n}^{q-2}\na u_n\}_{n\in\mathbb{N}}$ is bounded in  $L^{q'}(\Om\setminus K, a(x))$. Using \eqref{convergence}, Lemma \ref{almost_cvg} and Proposition A.8 of Autuori--Pucci \cite{Autuori-Pucci-2013}, we obtain
	\begin{align*}
		\lim_{n \to \infty} B_q(a(x),u_n,u)=\lim_{n \to \infty}\int_{\Omega\setminus K} a(x)\ve{\na u_n}^{q-2}\na u_n\cdot \na u \dx
		= B_q(a(x),u,u).
	\end{align*}
	Moreover, from \eqref{convergence}, we deduce
	\begin{equation}\label{cp5}
		\begin{aligned}
			\int_{\Omega} w_1(x)|u_n|^{k-2}u_n v \dx&\to \int_{\Omega} w_1(x)|u|^{k-2}u v\dx, \quad \text{for all } v\in W,\\
			\int_{\Omega}w_2(x)|u_n|^{r-2}u_n v\dx &\to \int_{\Omega} w_2(x)|u|^{r-2}u v\dx, \quad \text{for all } v\in W.
		\end{aligned}
	\end{equation}
	Combining \eqref{cp1} and \eqref{cp5} gives
	\begin{align*}
		&\langle I'_{\la}(u_n),v\rangle -\langle I'_{\la}(u),v\rangle\\
		& =(A_p(u_n,v) -A_p(u,v))+ (B_p(1,u_n,v)-B_p(1,u,v)) \\
		& \quad+(B_q(a(x),u_n,v)-B_q(a(x),u,v))-\lambda \int_{\Omega} w_1(x)(|u_n|^{k-2}u_n -|u|^{k-2}u)v\dx \\
		& \quad -\lambda \int_{\Omega} w_2(x)(|u_n|^{r-2}u_n -|u|^{r-2}u)v\dx=o_n(1).
	\end{align*}
	Since $I'_{\la}(u_n)\to 0$ in $W'$, it follows that $\langle I'_{\la}(u),v\rangle=0$ for all $v\in W$. Hence, $u$ is a critical point of $I_{\la}$.

	Finally, since $u$ is a critical point of $I_{\la}$, we may write
	\begin{align*}
		I_{\lambda}(u)\geq\left(\frac{1}{p}-\frac{1}{r}\right)\mm{\nabla u}_p^p-\lambda\left(\frac{1}{k}-\frac{1}{r}\right)\int_{\Omega}w_1(x)|u|^{k}\dx.
	\end{align*}
	Using H\"{o}lder’s inequality, the Sobolev embedding theorem, and Young’s inequality, we obtain the estimate
	\begin{align*}
		&\lambda \int_{\Omega} w_1(x)|u|^k\dx\\
		&\leq \left(\frac{p}{k}\left(\frac{1}{p}-\frac{1}{r}\right)\left(\frac{1}{k}-\frac{1}{r}\right)^{-1}\right)^{\frac{k}{p}}\mm{\nabla u}_p^k  \lambda\left(\frac{p}{k}\left(\frac{1}{p}-\frac{1}{r}\right)\left(\frac{1}{k}-\frac{1}{r}\right)^{-1}\right)^{\frac{-k}{p}}\|w_1\|_{r/(r-k)} S_r^{k} \\
		& \leq\left(\left(\frac{1}{p}-\frac{1}{r}\right)\left(\frac{1}{k}-\frac{1}{r}\right)^{-1}\right)\mm{\nabla u}_p^p+A \lambda^{\frac{p}{p-k}},
	\end{align*}
	where
	\begin{align*}
		A=\left[\left(\frac{p}{k}\left(\frac{1}{p}-\frac{1}{r}\right)\left(\frac{1}{k}-\frac{1}{r}\right)^{-1}\right)^{\frac{-k}{p}}\|w_1\|_{r/r-k} S_r^{k}\right]^{\frac{p}{p-k}}.
	\end{align*}
	Thus, we deduce
	\begin{align*}
		I_\lambda(u) \geq-\left(\frac{1}{k}-\frac{1}{r}\right) A \lambda^{\frac{p}{p-k}}=-\tilde{c}\lambda^{\frac{p}{p-k}},
	\end{align*}
	which completes the proof.
\end{proof}

\begin{lemma}\label{ps_condition}
	Suppose that $\{u_n\}_{n\in\mathbb{N}}\subset \nl$ is a \textnormal{(PS)$_c$}-sequence for the functional $I_{\lambda}$ at the level $c$. Then $\{u_n\}_{n\in\mathbb{N}}$ has a convergent subsequence in $W$ provided that
	\begin{align}\label{level}
		-\infty<c<c_{0}:=\left(\frac{1}{q}-\frac{1}{p^{*}}\right)\frac{\So^{\frac{N}{p}}}{(\lambda \|w_2\|_{\infty})^{\frac{N-p}{p}}}-\tilde{c}\lambda^{\frac{p}{p-k}},
	\end{align}
	where $\tilde{c}$ is defined in \eqref{tilde{c}}.
\end{lemma}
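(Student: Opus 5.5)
The plan is the standard Brezis--Lieb/concentration argument in the Brezis--Nirenberg setting ($r=p^*$; in the subcritical case the compact embedding of Proposition \ref{prop-def-w} finishes immediately). First, exactly as in the proof of Lemma \ref{l5.1}, the sequence $\{u_n\}_{n\in\mathbb{N}}$ is bounded in $W$. Passing to a subsequence, $u_n\rightharpoonup u$ in $W$, and \eqref{convergence} holds. By Lemma \ref{l5.1}, $u$ is a critical point of $I_\la$ with $I_\la(u)\ge -\tilde c\la^{\frac{p}{p-k}}$. By Lemma \ref{almost_cvg}, $\nabla u_n\to\nabla u$ a.e.\ in $\Omega$. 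We also have a.e.\ convergence of the difference quotients $D_s(u_n)\to D_s(u)$ on $\R^{2N}$.

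Set $v_n=u_n-u$. By the Brezis--Lieb lemma applied to each of the terms $[\cdot]_{s,p}^p$, $\|\nabla\cdot\|_p^p$, $\|\nabla\cdot\|_{q,a}^q$ and $\int_\Omega w_2|\cdot|^{p^*}\dx$, we get
\begin{align*}
[u_n]_{s,p}^p&=[u]_{s,p}^p+[v_n]_{s,p}^p+o_n(1),\\
\|\nabla u_n\|_{q,a}^q&=\|\nabla u\|_{q,a}^q+\|\nabla v_n\|_{q,a}^q+o_n(1),
\end{align*}
and analogously for the other two terms. Since $u_n\to u$ in $L^k$ and $w_1\in L^{\frac{r}{r-k}}(\Omega)$, the $w_1$-term converges. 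Subtracting $\langle I_\la'(u),u\rangle=0$ from $\langle I_\la'(u_n),u_n\rangle=o_n(1)$ gives
\begin{align*}
[v_n]_{s,p}^p+\|\nabla v_n\|_p^p+\|\nabla v_n\|_{q,a}^q=\la\int_\Omega w_2|v_n|^{p^*}\dx+o_n(1).
\end{align*}
Similarly,
\begin{align*}
I_\la(u_n)=I_\la(u)+\tfrac1p\big([v_n]_{s,p}^p+\|\nabla v_n\|_p^p\big)+\tfrac1q\|\nabla v_n\|_{q,a}^q-\tfrac{\la}{p^*}\int_\Omega w_2|v_n|^{p^*}\dx+o_n(1).
\end{align*}

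Along a further subsequence, let $\ell\ge0$ be the common limit of both sides of the first identity. If $\ell=0$, then $\xi(v_n)\to0$, so $u_n\to u$ in $W$ by Lemma \ref{rel}(v), and we are done. Suppose instead $\ell>0$. By \eqref{a2},
\begin{align*}
\ell\ge\limsup_{n\to\infty}\|\nabla v_n\|_p^p\ge \So\limsup_{n\to\infty}\|v_n\|_{p^*}^p\ge \So\Big(\frac{\ell}{\la\|w_2\|_\infty}\Big)^{p/p^*}.
\end{align*}
Hence $\ell^{1-p/p^*}\ge \So(\la\|w_2\|_\infty)^{-p/p^*}$, that is, $\ell\ge \So^{N/p}(\la\|w_2\|_\infty)^{-(N-p)/p}$.

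Now pass to the limit in the energy identity, using $\frac1p\ge\frac1q$:
\begin{align*}
c\ge I_\la(u)+\Big(\frac1q-\frac1{p^*}\Big)\ell\ge-\tilde c\la^{\frac{p}{p-k}}+\Big(\frac1q-\frac1{p^*}\Big)\frac{\So^{N/p}}{(\la\|w_2\|_\infty)^{\frac{N-p}{p}}}=c_0.
\end{align*}
This contradicts \eqref{level}, so $\ell=0$ and the subsequence converges strongly.

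I expect the main obstacle to be justifying the Brezis--Lieb splitting for the weighted $q$-term and the gradient terms. This requires the a.e.\ convergence of gradients, which is supplied by Lemma \ref{almost_cvg}, together with boundedness in the respective $L^p$ spaces. For the nonlocal term, the a.e.\ convergence of $u_n$ gives a.e.\ convergence of the difference quotients on $\R^{2N}$ directly, so that term causes no difficulty. The only real care needed is to work with the two modular pieces rather than the Luxemburg norm, and to apply the Sobolev inequality to $\|\nabla v_n\|_p$ alone, discarding the nonnegative nonlocal and weighted terms.
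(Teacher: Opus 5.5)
Your proposal is correct and follows the paper's proof essentially step by step: boundedness, the critical-point property and the energy bound from Lemma \ref{l5.1}, the Brezis--Lieb splittings \eqref{5.5}--\eqref{5.6}, the dichotomy on the limit of $\xi(u_n-u)$, and the Sobolev bound via \eqref{a2}, which forces $c\ge c_0$. Your non-strict inequalities, $\ell\ge \So^{N/p}(\la\|w_2\|_\infty)^{-(N-p)/p}$ and $c\ge c_0$, are exactly what the argument yields (the paper writes them as strict), and they already contradict $c<c_0$.
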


\begin{proof}
	Since  $\{u_n\}_{n\in\mathbb{N}}$ is a \textnormal{(PS)$_c$}-sequence, Lemma \ref{l5.1} implies that $\{u_n\}_{n\in\mathbb{N}}$ is bounded in $W$. By reflexivity, there exists $u\in W$ such that, up to a subsequence, $u_{n}\rightharpoonup u$ weakly in $W$. By Lemma \ref{l5.1}, the weak limit $u$ is a critical point of $I_{\lambda}$. Moreover, since $u_n\to u$ strongly in $L^{\delta}(\Omega)$ for every $\delta\in[1,p^{*})$, we have
	\begin{align*}
		\int_{\Omega}w_1(x)|u_n|^{k}\dx\to \int_{\Omega}w_1(x)|u|^{k}\dx.
	\end{align*}
	Applying the Brezis-Lieb lemma, we obtain the decompositions
	\begin{align}
		\frac{1}{p}[u_n-u]^p_{s,p}+\frac{1}{p}\mm{\nabla u_n-\nabla u}_p^p+\frac{1}{q}\mm{\nabla u_n- \nabla u}^q_{q,a}-\frac{\lambda}{r}\|u_n-u\|_{r,w_2}^{r}+I_{\lambda}(u)&=c+o_n(1),\label{5.5}\\
		[u_n-u]^p_{s,p}+\mm{\nabla u_n-\nabla u}_p^p+\mm{\nabla u_n-\nabla u}^q_{q,a}-\lambda\|u_n-u\|_{r,w_2}^{r}&=o_n(1).\label{5.6}
	\end{align}
	Define, \begin{align*}
		\kappa=\lim_{n\to\infty}([u_n-u]^p_{s,p}+\mm{\nabla u_n-\nabla u}_p^p+\mm{\nabla u_n-\nabla u}^q_{q,a})\geq 0.
	\end{align*}
	From \eqref{5.6}, it follows that
	\begin{align*}
		\lim_{n\to\infty}\lambda\|u_n-u\|_{r,w_2}^{r}= \kappa.
	\end{align*}
	If $\kappa=0$, then
	\begin{align*}
		\lim_{n\to\infty}([u_n-u]^p_{s,p}+\mm{\nabla u_n-\nabla u}_p^p+\mm{\nabla u_n-\nabla u}^q_{q,a})=0
	\end{align*}
	and Lemma \ref{rel} yields $u_n\to u$ strongly in $W$, completing the proof.

	If $r<p^*$, then $u_n\to u$ strongly in $L^{r}(\Omega)$ for all $r\in[1,p^{*})$. Hence,
	\begin{align*}
		\lim_{n\to\infty}\lambda\|u_n-u\|_{r,w_2}^{r}= \kappa=0.
	\end{align*}
	It remains to consider the case $r=p^{*}$. Using \eqref{a2}, we obtain
	\begin{align*}
		\kappa=\lim_{n\to\infty}\lambda\|u_n-u\|_{p^{*},w_2}^{p^{*}}\leq \lambda \|w_2\|_{\infty} \So^{-\frac{p^{*}}{p}}\lim_{n\to\infty}\|\nabla u_n-\nabla u\|_{p}^{p^{*}}\leq \lambda \|w_2\|_{\infty} \So^{-\frac{p^{*}}{p}}\kappa^{\frac{p^{*}}{p}}.
	\end{align*}
	This yields
	\begin{align*}
		\frac{\So^{\frac{N}{p}}}{(\lambda \|w_2\|_{\infty})^{\frac{N-p}{p}}}<\kappa.
	\end{align*}
	Now \eqref{5.5} gives
	\begin{align*}
		c-I_{\lambda}(u)
		& =\lim_{n\to\infty}\left[\frac{1}{p} [u_n-u]^p_{s,p}+\frac{1}{p}\mm{\nabla u_n-\nabla u}_p^p+\frac{1}{q}\mm{\nabla u_n-\nabla u}^q_{q,a}-\frac{\lambda}{p^{*}}\|u_n-u\|_{p^{*},w_2}^{p^{*}}\right] \\
		& \geq \left(\frac{1}{q}-\frac{1}{p^{*}}\right)k \\
		& > \left(\frac{1}{q}-\frac{1}{p^{*}}\right)\frac{\So^{\frac{N}{p}}}{(\lambda \|w_2\|_{\infty})^{\frac{N-p}{p}}}.
	\end{align*}
	Hence,
	\begin{align*}
		c >\left(\frac{1}{q}-\frac{1}{p^{*}}\right)\frac{\So^{\frac{N}{p}}}{(\lambda \|w_2\|_{\infty})^{\frac{N-p}{p}}}+I_{\lambda}(u)                    \geq \left(\frac{1}{q}-\frac{1}{p^{*}}\right)\frac{\So^{\frac{N}{p}}}{(\lambda \|w_2\|_{\infty})^{\frac{N-p}{p}}}-\tilde{c}\lambda^{\frac{p}{p-k}},
	\end{align*}
	contradicting \eqref{level}.  Therefore,  $\kappa=0$, and hence $u_n\to u$ strongly in $W$.
\end{proof}

\begin{proof}[Proof of Theorem \ref{t1}]
	Let $\lambda_{**}>0$ be chosen so that, for every $\lambda\in(0,\lambda_{**})$, the inequality
	\begin{align*}
		\left(\frac{1}{q}-\frac{1}{p^{*}}\right)\frac{\So^{\frac{N}{p}}}{(\lambda \|w_2\|_{\infty})^{\frac{N-p}{p}}}-\tilde{c}\lambda^{\frac{p}{p-k}}>0.
	\end{align*}
	holds. Define $\lambda_{0}=\min\{\lambda_{*},\lambda_{**}\}$.
	For all $\lambda \in (0, \lambda_0)$, by Lemma \ref{ps_sequence}, there exists a minimizing sequence $\{u_n\}_{n\in\mathbb{N}}\subset\nl$. Moreover, $\{u_n\}_{n\in\mathbb{N}}$ is a \textnormal{(PS)$_{m_\lambda}$}-sequence for $I_\lambda$ at the level $m_\lambda$. By Lemma \ref{ps_condition}, there exists $u_\lambda \in W$ such that, up to a subsequence,
	\begin{align*}
		u_n \to  u_\lambda \quad \text{strongly in } W.
	\end{align*}
	Consequently, for these values of $\lambda$, the function $u_\lambda$ satisfies $\langle I'_{\la}(u_\lambda),v\rangle=0$ for all $v\in W$, that is, $u_\lambda$ is a critical point of $I_\la$. In particular,
	\begin{align*}
		I_{\lambda}(u_\lambda)
		& \geq\left(\frac{1}{p}-\frac{1}{r}\right)\mm{\nabla u_\lambda}_p^p-\lambda\left(\frac{1}{k}-\frac{1}{r}\right)\int_{\Omega}w_1(x)|u_\lambda|^{k}\dx \\
		& \geq-\lambda\left(\frac{1}{k}- \frac{1}{r}\right)\int_{\Omega}w_1(x)|u_\lambda|^{k}\dx.
	\end{align*}
	Therefore, since $I_\la(u_\lambda)=m_\lambda$, it follows that
	\begin{align*}
		\int_{\Omega}w_1(x)|u_\lambda|^{k}\dx \geq -\frac{m_\lambda}{\lambda} \left( \frac{1}{k} - \frac{1}{r} \right)^{-1} > 0,
	\end{align*}
	implying $u_\lambda \not\equiv 0$. Hence, $u_\lambda \in N_\lambda$ and $I_\lambda(u_\lambda) = m_\lambda$.

	Next, we prove that $u_\lambda \in N_\lambda^{+}$. Suppose, by contradiction, that $u_\lambda \in N_\lambda^{-}$. Then, by Lemma \ref{geometry}, there exist numbers $t_1 < t_2 = 1$ such that $t_1 u_\lambda \in N_\lambda^{+}$ and $t_2 u_\lambda \in N_\lambda^{-}$. Since the map $t \mapsto I_\lambda(t u_\lambda)$ is increasing on $[t_1, t_2)$, we obtain
	\begin{align*}
		m_\lambda\leq I_\lambda(t_1 u_\lambda)< I_\lambda(t u_\lambda)\leq I_\lambda(u_\lambda) = m_\lambda, \quad \text{for all }t \in (t_1, 1),
	\end{align*}
	which is a contradiction. Hence, $u_\lambda \in N_\lambda^{+}$, and therefore $m_\lambda = I_\lambda(u_\lambda) = m_\lambda^{+}$. If $u_\lambda \ge 0$, then $u_\lambda $ itself is a nonnegative weak solution of \eqref{1.1} and minimizes $ I_\lambda $ on $ N_\lambda^{+} $. Otherwise, assume that $ u_\lambda $ changes sign. By Lemma \ref{geometry}, there exists a unique $ t_1 > 0 $ such that $ t_1 |u_\lambda| \in N_\lambda^{+} $. Moreover,
	\begin{align*}
		\ga_{|u_\lambda|}(1) \leq \ga_{u_\lambda}(1)=\lambda \int_{\Omega} w_1(x)\, |u_\lambda|^k \dx= \ga_{|u_\lambda|}(t_1)\leq  \ga_{u_\lambda}(t_1).
	\end{align*}
	Since $ u_\lambda \in N_\lambda^{+} $, we have $ \ga_{u_\lambda}'(1) > 0 $, which implies $ t_1 \ge 1 $. Consequently,
	\begin{align*}
		m_\lambda^{+} \leq  \sigma_{|u_\lambda|}(t_1)\leq  \sigma_{u_\lambda}(t_1) \leq  \sigma_{u_\lambda}(1)= m_\lambda^{+}.
	\end{align*}
	Therefore equality holds throughout, and we obtain
	\begin{align*}
		I_\lambda\bigl( t_1\, |u_\lambda| \bigr) = \sigma_{|u_\lambda|}(t_1) = m_\lambda^{+},
	\end{align*}
	with $ t_1  |u_\lambda| \in N_\lambda^{+} $. Hence, $ t_1 |u_\lambda| $ is a nonnegative weak solution of \eqref{1.1} lying in $ N_\lambda^{+} $.
\end{proof}

In the second part of this section, we prove Theorems \ref{t2} and \ref{t3}. Recall that  $w_2= \frac{1}{\lambda}$ and $r=p^*$ while still \eqref{H1} \eqref{H1i} and \eqref{H1ii} are assumed. The energy functional $\I_{\la}\colon W\to\R$  associated with problem \eqref{1.2} is defined by
\begin{align*}
	\I_{\la}(u)=
	& \frac{1}{p}\int_{\R^N}\int_{\R^N} \dfrac{|u(x)-u(y)|^p}{|x-y|^{N+sp}}\dxy+\int_{\Omega}\lt(\frac{|\nabla u|^p}{p}+a(x)\frac{|\nabla u|^q}{q}\ri)\dx \\
	& -\lambda\int_{\Omega}\frac{w_1(x)}{k}|u|^{k}\dx -\int_{\Omega} \frac{1}{p^*}|u|^{p^*}\dx.
\end{align*}
As in Section \ref{Section_3}, we define the Nehari manifold by
\begin{align*}
	\nla:=\left\{u\in W\setminus \{0\}\colon \langle \I'_{\la}(u),u \rangle=0\right\}.
\end{align*}
For each $u \in W\setminus\{0\}$, we define the fibering map $\varsigma_u(t):=\I_{\la}(tu)$. Furthermore, we decompose the set $\nla$ into three disjoint subsets, namely $\nla^{\pm}$ and $\nla^0$ in the same way as in Section \ref{Section_3}. We also define the quantities $\m$ and $\m^{+}$ analogously.

Note that $\I_{\la}$ is coercive and bounded from below on $\nla$ by Lemma \ref{coer}. Moreover, arguing as in Lemma \ref{empty}, one has $\nla^0=\emptyset$
for $\la \in (0,\overline{\la}_*)$, where $\overline{\la}_*$ is defined in \eqref{lambda2}. Proceeding as in Lemma \ref{geometry}, one can show that for $\la \in (0,\overline{\la}_*)$ and every $u\in W\setminus\{0\}$, there exist unique $t_m>0$ and numbers $t_1,t_2$ with $t_1<t_m<t_2$ such that $\varsigma''_u(t_m)=0$, $t_1u\in \nla^+$, and $t_2 u\in \nla^-$. Repeating the argument of Lemma \ref{inf}, we deduce $\m^+<0$.

Next, proceeding as in Lemmas \ref{imp}, \ref{ps_sequence} and \ref{ps_condition}, we obtain the following auxiliary results.

\begin{lemma}
	Let $\la \in (0,\overline{\la}_*)$, where $\overline{\la}_*$ is defined in \eqref{lambda2}, and let $u \in \nla$. Then there exist $\e>0$ and a differentiable function $\g\colon  B_{\e}(0) \to (-\infty, \infty)$ such that $\g(0)=1$ and $\g(z)(u-z)\in \nla$. Moreover,
	\begin{align*}
		\langle \g'(0), z \rangle=\frac{p B_p(1,u,z)+q B_q(a(x),u,z) +p A_p(u,z)-\int_{\Omega}(\la k w_1(x)\ve{u}^{k-2}+p^*\ve{u}^{p^*-2})uz\dx}{(p-k)([u]^p_{s,p}+\mm{\na u}_p^p)+(q-k)\mm{\na u}_{q,a}^{q}-(p^*-k)\int_{\Omega}\ve{u}^{p^*}\dx}.
	\end{align*}
\end{lemma}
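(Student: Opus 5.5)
The plan is to mirror the proof of Lemma \ref{imp}, applying the Implicit Function Theorem to the map measuring membership in $\nla$ along the rays through $u-z$. Fix $u\in\nla$ and define $\mathcal{F}_u\colon W\times\R\to\R$ by
\begin{align*}
	\mathcal{F}_u(z,t):=\langle \I_{\la}'(t(u-z)),t(u-z)\rangle
	&=t^p\big([u-z]_{s,p}^p+\mm{\na u-\na z}_p^p\big)+t^q\mm{\na u-\na z}_{q,a}^q\\
	&\quad-\la t^k\int_{\Omega}w_1(x)\ve{u-z}^k\dx-t^{p^*}\int_{\Omega}\ve{u-z}^{p^*}\dx .
\end{align*}
Each term is $C^1$ on $W\times(0,\infty)$. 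For the modular terms this is the standard Fréchet differentiability of $v\mapsto[v]_{s,p}^p$, $\mm{\na v}_p^p$ and $\mm{\na v}_{q,a}^q$ on $W$, with derivatives $pA_p(v,\cdot)$, $pB_p(1,v,\cdot)$ and $qB_q(a(x),v,\cdot)$. For the lower-order terms it follows from the continuous embeddings $W\hookrightarrow L^{p^*}(\Omega)$ and $W\hookrightarrow L^{p^*}(\Omega)$ combined with $w_1\in L^{p^*/(p^*-k)}(\Omega)$ (Proposition \ref{prop-def-w} and Hölder's inequality). Since $u\in\nla$, we have $\mathcal{F}_u(0,1)=0$.

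Next compute $\partial_t\mathcal{F}_u(0,1)=p([u]^p_{s,p}+\mm{\na u}_p^p)+q\mm{\na u}_{q,a}^q-\la k\int_\Omega w_1\ve{u}^k\dx-p^*\int_\Omega\ve{u}^{p^*}\dx$. Subtracting $k$ times the Nehari identity gives
\begin{align*}
	\partial_t\mathcal{F}_u(0,1)=(p-k)([u]^p_{s,p}+\mm{\na u}_p^p)+(q-k)\mm{\na u}_{q,a}^{q}-(p^*-k)\int_{\Omega}\ve{u}^{p^*}\dx .
\end{align*}
This equals $\varsigma_u''(1)$ rewritten via the Nehari identity, and it is nonzero. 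The reason is that $\nla^0=\emptyset$ for $\la\in(0,\overline{\la}_*)$, which was noted above as the analogue of Lemma \ref{empty} with $r=p^*$, $w_2\equiv1/\la$ and $S_{p^*}$ in place of $S_r$. This is the step where the hypothesis $\la<\overline{\la}_*$ enters.

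The Implicit Function Theorem then gives $\e>0$ and a $C^1$ map $\g\colon B_\e(0)\to\R$ with $\g(0)=1$ and $\mathcal{F}_u(z,\g(z))=0$. Shrinking $\e$ so that $\g>0$ and $u-z\neq0$ on $B_\e(0)$ yields $\g(z)(u-z)\in\nla$. The derivative formula comes from $\langle\g'(0),z\rangle=-\partial_z\mathcal{F}_u(0,1)z/\partial_t\mathcal{F}_u(0,1)$. Here $\partial_z\mathcal{F}_u(0,1)z=-\big(pA_p(u,z)+pB_p(1,u,z)+qB_q(a(x),u,z)-\int_\Omega(\la kw_1\ve{u}^{k-2}+p^*\ve{u}^{p^*-2})uz\dx\big)$, and the two minus signs cancel to give exactly the stated quotient.

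The only genuine obstacle is the nondegeneracy $\nla^0=\emptyset$. I would verify it explicitly by repeating the argument of Lemma \ref{empty}, working with the auxiliary functional built from the $p^*$-relations and checking that the bracketed constant is positive precisely for $\la<\overline{\la}_*$.
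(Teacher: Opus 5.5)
Your proposal is correct and follows the paper's route: the paper obtains this lemma by repeating the proof of Lemma \ref{imp}, applying the Implicit Function Theorem to $(z,t)\mapsto\langle \I_{\la}'(t(u-z)),t(u-z)\rangle$ at $(0,1)$, with $\partial_t\neq 0$ because $\nla^0=\emptyset$ for $\la\in(0,\overline{\la}_*)$. The emptiness of $\nla^0$ does hold with the constant in \eqref{lambda2} for $r=p^*$, since the lower bound on $\|\nabla u\|_p$ no longer depends on $\la$ when $w_2\equiv 1/\la$; your formula $\langle\g'(0),z\rangle=-\partial_z\mathcal{F}_u(0,1)z/\partial_t\mathcal{F}_u(0,1)$ is the compact form of the paper's limit computation, and your remark that shrinking $\e$ keeps $\g(z)(u-z)\neq 0$ fills a point the paper leaves implicit.
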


\begin{lemma}\label{psc}
	For $\la \in (0,\overline{\la}_*)$, where $\overline{\la}_*$ is defined in \eqref{lambda2}, there exists a minimizing sequence $\{u_n\}_{n\in\mathbb{N}}\subset \nla$ such that
	\begin{align*}
		\I_{\la}(u_n)=\m+o_n(1)\quad\text{and}\quad \I'_{\la}(u_n)=o_n(1).
	\end{align*}
\end{lemma}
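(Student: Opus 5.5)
The plan is to follow the proof of Lemma \ref{ps_sequence} essentially verbatim for the functional $\I_{\la}$, where $w_2$ is replaced by $1/\la$ and $r$ by $p^*$. The only change is that the threshold $\la_*$ becomes $\overline{\la}_*$ from \eqref{lambda2}.

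First, the bound on $\I_\la$ over $\nla$ from the remark after the definition of $\nla$ (via Lemma \ref{coer}) lets us apply Ekeland's variational principle on $\nla$, which is closed in $W$ after removing the origin. This yields $\{u_n\}\subset\nla$ with
\begin{align*}
\I_\la(u_n)\le \m+\tfrac1n,\qquad \I_\la(u_n)\le \I_\la(v)+\tfrac1n\|v-u_n\|_W\quad\text{for all } v\in\nla.
\end{align*}
Since $\m\le\m^+<0$, we get $\I_\la(u_n)<0$ for large $n$. On $\nla$ we can write
\begin{align*}
\I_\la(u)\ge (\tfrac1q-\tfrac1{p^*})\xi(u)-\la(\tfrac1k-\tfrac1{p^*})\|w_1\|_{\frac{p^*}{p^*-k}}C_{p^*}^k\|u\|_W^k.
\end{align*}
Combined with Lemma \ref{rel}, this shows that $\{u_n\}$ is bounded in $W$ and bounded away from $0$.

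Next, we use the implicit function lemma for $\nla$ (the analogue of Lemma \ref{imp}, stated just before). It gives $C^1$ maps $\g_n$ on $B_{\e_n}(0)$ with $\g_n(0)=1$ and $\g_n(z)(u_n-z)\in\nla$. For $u\in W\setminus\{0\}$ we take $\nu_\ka=\ka u/\|u\|_W$ and test Ekeland's inequality with $\g_n(\nu_\ka)(u_n-\nu_\ka)$. A Taylor expansion followed by $\ka\to0$ gives, exactly as in \eqref{4.21},
\begin{align*}
\left\langle \I'_\la(u_n),\frac{u}{\|u\|_W}\right\rangle\le \frac cn\bigl(1+\|\g_n'(0)\|_{W'}\bigr).
\end{align*}
Since $u_n$ is bounded, the numerator in the formula for $\langle\g_n'(0),z\rangle$ is bounded by $C\|z\|_W$. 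This uses H\"older's inequality, the critical embedding $W\hookrightarrow L^{p^*}(\Omega)$ of Proposition \ref{prop-def-w}, and $w_1\in L^\infty(\Omega)$ in the critical case.

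The main obstacle is to show that $\|\g_n'(0)\|_{W'}$ stays bounded, i.e. that the denominator
\begin{align*}
D_n=(p-k)([u_n]^p_{s,p}+\|\nabla u_n\|_p^p)+(q-k)\|\nabla u_n\|^q_{q,a}-(p^*-k)\|u_n\|_{p^*}^{p^*}
\end{align*}
does not tend to $0$ along a subsequence. Suppose it does. Then, as in Lemma \ref{empty}, we get
\begin{align*}
(p-k)\|\nabla u_n\|_p^p\le (p^*-k)S_{p^*}^{p^*}\|\nabla u_n\|_p^{p^*}+o_n(1),
\end{align*}
which gives a uniform positive lower bound for $\|\nabla u_n\|_p$; here we use that $u_n$ is bounded away from $0$, so the $o_n(1)$ term does not spoil the bound. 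Subtracting the Nehari identity then gives
\begin{align*}
\overline L_\la(u_n)=\tfrac{p^*-p}{p^*-k}([u_n]^p_{s,p}+\|\nabla u_n\|_p^p)+\tfrac{p^*-q}{p^*-k}\|\nabla u_n\|^q_{q,a}-\la\int_\Omega w_1|u_n|^k\dx=o_n(1).
\end{align*}
On the other hand, we have
\begin{align*}
\overline L_\la(u_n)\ge \|\nabla u_n\|_p^k\Bigl(\tfrac{p^*-p}{p^*-k}\|\nabla u_n\|_p^{p-k}-\la\|w_1\|_{\frac{p^*}{p^*-k}}S_{p^*}^k\Bigr).
\end{align*}
For $\la<\overline{\la}_*$, inserting the lower bound on $\|\nabla u_n\|_p$ makes the bracket strictly positive, and uniformly so. This contradicts $\overline L_\la(u_n)=o_n(1)$, and the choice of $\overline{\la}_*$ in \eqref{lambda2} is precisely what makes this bracket positive. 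Hence $\|\g_n'(0)\|_{W'}\le C$, so $\I'_\la(u_n)\to0$ in $W'$, and $\I_\la(u_n)\to\m$.
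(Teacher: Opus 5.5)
Your proposal is correct and follows the same route as the paper, which proves Lemma~\ref{psc} by rerunning the proof of Lemma~\ref{ps_sequence} with $w_2=1/\la$, $r=p^*$ and threshold $\overline{\la}_*$: Ekeland on $\nla\cup\{0\}$, the implicit-function maps $\g_n$, and a contradiction via $\overline L_\la(u_n)=o_n(1)$ to keep the denominator of $\langle \g_n'(0),z\rangle$ away from zero. Your remark that $u_n$ must stay away from $0$ is genuinely needed to pass from $(p-k)\|\nabla u_n\|_p^p\le (p^*-k)S_{p^*}^{p^*}\|\nabla u_n\|_p^{p^*}+o_n(1)$ to a positive lower bound, a point the paper skips; the cleanest way to secure it is to note that $\I_\la(u_n)\le \m/2<0$ gives $\frac{\la}{k}\|w_1\|_{\frac{p^*}{p^*-k}}S_{p^*}^k\|\nabla u_n\|_p^k+\frac{1}{p^*}S_{p^*}^{p^*}\|\nabla u_n\|_p^{p^*}\ge |\m|/2$, so $\|\nabla u_n\|_p$ itself (not just $\|u_n\|_W$) is bounded below.
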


\begin{lemma}\label{ps_condition1}
	Suppose that $\{u_n\}_{n\in\mathbb{N}}\subset \nla$ is a \textnormal{(PS)$_c$}-sequence of $\I_{\lambda}$ at the level $c$. Then $\{u_n\}_{n\in\mathbb{N}}$ has a convergent subsequence in $W$ provided that
	\begin{align*}
		-\infty<c<c_{0}:=\left(\frac{1}{q}-\frac{1}{p^{*}}\right)\So^{\frac{N}{p}}-\tilde{c}\lambda^{\frac{p}{p-k}},
	\end{align*}
	where $\tilde{c}$ is defined in \eqref{tilde{c}}.
\end{lemma}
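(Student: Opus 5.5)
The plan is to repeat the proof of Lemma \ref{ps_condition}, now with $w_2=\frac{1}{\lambda}$ and $r=p^*$, so that $\lambda\|w_2\|_\infty=1$.

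\textbf{Step 1: boundedness and a critical weak limit.} First I will show that the (PS)$_c$-sequence is bounded. As in Lemma \ref{l5.1}, I estimate $\I_\la(u_n)-\frac{1}{p^*}\langle \I_\la'(u_n),u_n\rangle$ from below by
\begin{align*}
\left(\frac{1}{q}-\frac{1}{p^*}\right)\min\{\|u_n\|_W^p,\|u_n\|_W^q\}-\lambda\left(\frac{1}{k}-\frac{1}{p^*}\right)\|w_1\|_{\frac{p^*}{p^*-k}}C_{p^*}^k\|u_n\|_W^k.
\end{align*}
Since $k<p$, this bounds $\|u_n\|_W$. Reflexivity of $W$ (Proposition \ref{prop-def-w}) then gives $u_n\rightharpoonup u$ in $W$. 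The compact embedding gives $u_n\to u$ in $L^\delta(\Omega)$ for $\delta<p^*$ and a.e. Lemma \ref{almost_cvg} gives $\nabla u_n\to\nabla u$ a.e.

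Next I pass to the limit in $\langle \I'_\la(u_n),v\rangle$, exactly as in Lemma \ref{l5.1}. The nonlocal term and the $p$-gradient term follow from \cite{Chen-Mosconi-Squassina-2018}. The weighted $q$-term follows from Autuori--Pucci on $\Omega\setminus K$. The term $|u_n|^{p^*-2}u_n\rightharpoonup|u|^{p^*-2}u$ holds weakly in $L^{(p^*)'}$. The $w_1$-term converges strongly, because $w_1\in L^\infty$ when $r=p^*$. Hence $u$ is a critical point of $\I_\la$.

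The lower bound $\I_\la(u)\ge-\tilde c\lambda^{p/(p-k)}$ comes from the same Young inequality argument, with $r=p^*$. I write $\I_\la(u)=\I_\la(u)-\frac1{p^*}\langle\I'_\la(u),u\rangle$ and absorb the $w_1$-term into $\left(\frac1p-\frac1{p^*}\right)\|\nabla u\|_p^p$.

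\textbf{Step 2: Brezis--Lieb splitting.} Write $v_n=u_n-u$. The a.e. convergence of $u_n$ and $\nabla u_n$ lets me split $[u_n]_{s,p}^p$, $\|\nabla u_n\|_p^p$, $\|\nabla u_n\|_{q,a}^q$ and $\|u_n\|_{p^*}^{p^*}$ by the Brezis--Lieb lemma. Using $\langle\I'_\la(u),u\rangle=0$, I obtain
\begin{align*}
\frac1p[v_n]_{s,p}^p+\frac1p\|\nabla v_n\|_p^p+\frac1q\|\nabla v_n\|_{q,a}^q-\frac1{p^*}\|v_n\|_{p^*}^{p^*}&=c-\I_\la(u)+o_n(1),\\
[v_n]_{s,p}^p+\|\nabla v_n\|_p^p+\|\nabla v_n\|_{q,a}^q-\|v_n\|_{p^*}^{p^*}&=o_n(1).
\end{align*}
Let $\kappa$ be the limit (along a subsequence) of $[v_n]_{s,p}^p+\|\nabla v_n\|_p^p+\|\nabla v_n\|_{q,a}^q$. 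Then $\|v_n\|_{p^*}^{p^*}\to\kappa$.

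\textbf{Step 3: ruling out $\kappa>0$.} If $\kappa>0$, the definition \eqref{a2} gives
\begin{align*}
\kappa\le \So^{-p^*/p}\lim\|\nabla v_n\|_p^{p^*}\le\So^{-p^*/p}\kappa^{p^*/p},
\end{align*}
so $\kappa\ge\So^{N/p}$. The first identity together with $\frac1p\ge\frac1q$ gives
\begin{align*}
c-\I_\la(u)\ge\left(\frac1q-\frac1{p^*}\right)\kappa\ge\left(\frac1q-\frac1{p^*}\right)\So^{N/p}.
\end{align*}
Combined with $\I_\la(u)\ge-\tilde c\lambda^{p/(p-k)}$, this yields $c\ge c_0$, a contradiction. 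Hence $\kappa=0$, and Lemma \ref{rel}(v) gives $u_n\to u$ in $W$.

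\textbf{Main obstacle.} The hard step is the limit in the double phase term. It needs the a.e. gradient convergence of Lemma \ref{almost_cvg} for the Brezis--Lieb splitting of $\|\nabla u_n\|_{q,a}^q$. It also needs it for identifying the weak limit of $a|\nabla u_n|^{q-2}\nabla u_n$. I will take both from the cited lemmas, as was done in Lemma \ref{l5.1}.
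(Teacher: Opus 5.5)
Your proposal is correct and takes the same route as the paper. The paper proves this lemma by repeating the proof of Lemma \ref{ps_condition} with $\lambda\|w_2\|_\infty=1$ and $r=p^*$: boundedness and a critical weak limit with $\I_\lambda(u)\ge -\tilde c\lambda^{p/(p-k)}$ as in Lemma \ref{l5.1}, then the Brezis--Lieb splitting, then the Sobolev-constant bound $\kappa\ge \So^{N/p}$, which contradicts $c<c_0$ (your non-strict $c\ge c_0$ already gives the contradiction).
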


\begin{proof}[Proof of Theorem \ref{t2}]
	Let $\overline{\la}_{**}>0$ be chosen so that, for all $\lambda\in(0,\overline{\la}_{**})$, the inequality
	\begin{align*}
		\left(\frac{1}{q}-\frac{1}{p^{*}}\right)\So^{\frac{N}{p}}-\tilde{c}\lambda^{\frac{p}{p-k}}>0
	\end{align*}
	is satisfied. We define $\lambda_{1}=\min\{\overline{\la}_{*} ,\overline{\la}_{**}\}$. For all $\lambda \in (0, \lambda_1)$, by Lemma \ref{psc}, there exists a minimizing sequence $\{u_n\}_{n\in\mathbb{N}}\subset \nla$, which is also a Palais-Smale sequence for $\I_\lambda$ at the level $\m$. Applying Lemma \ref{ps_condition1}, we find a function $u_\lambda \in W$ such that, up to a subsequence,
	\begin{align*}
		u_n \to  u_\lambda \quad \text{strongly in } W.
	\end{align*}
	Proceeding exactly as in the proof of Theorem \ref{t1} yields the conclusion.
\end{proof}

Next, we additionally assume that $w_1\equiv 1$. Consider $\tilde{\la}>0$ such that
\begin{align}\label{tiltla}
	\tilde{\la}< \frac{\So^{\frac{p^*-k}{p^*-p}}}{\ve{\Omega}^{\frac{p^*-k}{p^*}}}\lt(\frac{1}{q}-\frac{1}{p^*}\ri)\lt(\frac{1}{k}-\frac{1}{q}\ri)^{-1}.
\end{align}

\begin{lemma}\label{PSc}
	Let $\la\in(0,\tilde{\la})$. Then $\I_{\la}$ satisfies the \textnormal{(PS)$_c$}-condition at level $c$ for every $c<0$.
\end{lemma}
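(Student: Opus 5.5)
Let $\{u_n\}_{n\in\mathbb{N}}\subset W$ be a (PS)$_c$-sequence for $\I_\la$ with $c<0$. The argument has three steps: show the sequence is bounded, pass to a weak limit that is a critical point, and then use a Brezis--Lieb splitting together with a concentration alternative to force strong convergence.

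\textbf{Step 1: boundedness and the weak limit.} We have
\begin{align*}
	\I_\la(u_n)-\frac1q\langle \I'_\la(u_n),u_n\rangle
	\ge \Big(\frac1p-\frac1q\Big)\big([u_n]_{s,p}^p+\|\nabla u_n\|_p^p\big)
	+\Big(\frac1q-\frac1{p^*}\Big)\|u_n\|_{p^*}^{p^*}
	-\la\Big(\frac1k-\frac1q\Big)\int_\Omega |u_n|^k\dx .
\end{align*}
By H\"older's inequality the last integral is at most $|\Omega|^{\frac{p^*-k}{p^*}}\|u_n\|_{p^*}^k$. Since $k<p^*$, this controls $\|u_n\|_{p^*}$. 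Using $\I_\la(u_n)-\frac1{p^*}\langle \I'_\la(u_n),u_n\rangle$ as in Lemma \ref{l5.1} then bounds $\xi(u_n)$, so the sequence is bounded in $W$ by Lemma \ref{rel}. By reflexivity, $u_n\rightharpoonup u$ in $W$ up to a subsequence. Lemma \ref{almost_cvg} gives $\nabla u_n\to\nabla u$ a.e., and $u_n\to u$ in $L^k$ and a.e. The argument of Lemma \ref{l5.1} (with $w_1=1$, $w_2=1/\la$, $r=p^*$) shows that $u$ is a critical point of $\I_\la$.

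\textbf{Step 2: Brezis--Lieb splitting and the dichotomy.} By the Brezis--Lieb lemma, applied to the Gagliardo term, to $\|\nabla\cdot\|_p^p$, to the weighted term $\|\nabla\cdot\|_{q,a}^q$ (this uses a.e.\ gradient convergence), and to $\|\cdot\|_{p^*}^{p^*}$, we get
\begin{align*}
	\frac1p[v_n]_{s,p}^p+\frac1p\|\nabla v_n\|_p^p+\frac1q\|\nabla v_n\|_{q,a}^q-\frac1{p^*}\|v_n\|_{p^*}^{p^*}&=c-\I_\la(u)+o_n(1),\\
	[v_n]_{s,p}^p+\|\nabla v_n\|_p^p+\|\nabla v_n\|_{q,a}^q-\|v_n\|_{p^*}^{p^*}&=o_n(1),
\end{align*}
where $v_n=u_n-u$. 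Let $\kappa$ be the common limit of the two sides of the second identity (along a subsequence). By \eqref{a2}, $\kappa\le \So^{-p^*/p}\kappa^{p^*/p}$, so either $\kappa=0$ or $\kappa\ge \So^{N/p}$. If $\kappa=0$, then $\xi(v_n)\to0$ and Lemma \ref{rel}(v) gives $u_n\to u$ in $W$. In the case $\kappa\ge\So^{N/p}$, the first identity yields
\[
c-\I_\la(u)\ge\Big(\frac1q-\frac1{p^*}\Big)\kappa\ge\Big(\frac1q-\frac1{p^*}\Big)\So^{N/p}.
\]

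\textbf{Step 3: lower bound for the limit energy (the main obstacle).} We need a lower bound for $\I_\la(u)$ tailored to \eqref{tiltla}. Since $u$ is critical,
\begin{align*}
	\I_\la(u)=\I_\la(u)-\frac1q\langle \I'_\la(u),u\rangle
	\ge\Big(\frac1q-\frac1{p^*}\Big)\|u\|_{p^*}^{p^*}-\la\Big(\frac1k-\frac1q\Big)|\Omega|^{\frac{p^*-k}{p^*}}\|u\|_{p^*}^k .
\end{align*}
The function $f(t)=At^{p^*}-Bt^k$ on $t\ge0$ has minimum $-C\,B^{p^*/(p^*-k)}A^{-k/(p^*-k)}$, which is of order $\la^{p^*/(p^*-k)}$. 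Then $c\ge(\frac1q-\frac1{p^*})\So^{N/p}+\min f$, and we want this to be $\ge0$, contradicting $c<0$. This requires $\min f\ge-(\frac1q-\frac1{p^*})\So^{N/p}$, i.e.\ an explicit upper bound on $\la$. I expect the smallness condition \eqref{tiltla} to be designed exactly for this, possibly after a cruder estimate: for instance, bounding $\|u\|_{p^*}$ via criticality and \eqref{a2}, or absorbing through Young's inequality with exponents $p^*/k$ and $p^*/(p^*-k)$. The remaining work is checking that the constants in \eqref{tiltla} are consistent with this. Once they are, the case $\kappa>0$ is excluded, so $\kappa=0$ and the (PS)$_c$-condition holds for all $c<0$.
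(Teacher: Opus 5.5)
Your proposal is correct and is essentially the paper's argument. The paper reaches the same inequality $c\ge(\frac1q-\frac1{p^*})\bigl(\mathcal{S}_p^{N/p}+\|u\|_{p^*}^{p^*}\bigr)-\lambda(\frac1k-\frac1q)|\Omega|^{\frac{p^*-k}{p^*}}\|u\|_{p^*}^k$ by evaluating $\mathfrak{I}_\lambda(u_n)-\frac1q\langle\mathfrak{I}_\lambda'(u_n),u_n\rangle$ directly, applying Brezis--Lieb only to the $L^{p^*}$ term and getting the splitting from $\langle\mathfrak{I}_\lambda'(u_n),u_n-u\rangle\to0$; you instead split into $c-\mathfrak{I}_\lambda(u)$ and $\mathfrak{I}_\lambda(u)$ and use criticality of $u$, which is only a cosmetic difference.

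The constant check you left open does close. Young's inequality with exponents $\frac{p^*}{k}$ and $\frac{p^*}{p^*-k}$ gives $c\ge(\frac1q-\frac1{p^*})\mathcal{S}_p^{N/p}-|\Omega|(\frac1q-\frac1{p^*})^{-\frac{k}{p^*-k}}\bigl(\lambda(\frac1k-\frac1q)\bigr)^{\frac{p^*}{p^*-k}}$, and your exact minimum of $At^{p^*}-Bt^k$ is even slightly better. Raising to the power $\frac{p^*-k}{p^*}$ shows that positivity of this right-hand side is exactly the bound in \eqref{tiltla}, so $c>0$, contradicting $c<0$.
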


\begin{proof}
	Let $\{u_n\}_{n\in\mathbb{N}}$ be a \textnormal{(PS)$_c$}-sequence for $\I_{\la}$. Proceeding as in Lemma \ref{l5.1}, we see that $\{u_n\}_{n\in\mathbb{N}}$ is bounded in $W$. Since $W$ is reflexive, there exists $u\in W$ and a subsequence, not relabeled, such that $u_{n}\rightharpoonup u$ weakly in $W$. Consequently, up to a subsequence,
	\begin{align}\label{con}
		\begin{cases}
			u_n \to u,  & \text{strongly in } L^\delta(\Omega)  \text{ for } \delta \in [1, p^*),\\
			u_n \to u, & \text{a.e.\,in } \Omega, \\
			|u_n|^{\delta-2}u_n \rightharpoonup |u|^{\delta-2}u, & \text{weakly in } L^{\delta'}(\Omega)  \text{ for }  \delta \in (1, p^*], \\
			\na u_n(x)  \to \na u(x)  & \text{a.e.\,in } \Omega, \\
			\mm{u_n-u}_{p^*}     \to l,  & \text{where }  l\geq 0.
		\end{cases}
	\end{align}
	Let $K := \{x \in \Omega \colon a(x)=0\}$ as in \eqref{def-K}. Since $a(\cdot)$ is Lipschitz, $\Omega\setminus K$ is open. From \eqref{con}, we get
	\begin{align*}
		\lim_{n \to \infty}\int_{\Omega}\ve{\na u_n}^{p-2}\na u_n\cdot \na u\dx=\mm{\na u}_p^p.
	\end{align*}
	Further, $ \{ \ve{\na u_n}^{q-2}\na u_n\}_{n\in\mathbb{N}}$ is bounded in $L^{q'}(\Om\setminus K, a(x))$. Using \eqref{con} and Proposition A.8 of Autuori--Pucci \cite{Autuori-Pucci-2013}, we arrive at
	\begin{align*}
		\lim_{n \to \infty} B_q(a(x),u_n,u)=\lim_{n \to \infty}\int_{\Omega\setminus K} a(x)\ve{\na u_n}^{q-2}\na u_n\cdot \na u \dx
		= \mm{\na u}_{q,a}^q.
	\end{align*}
	In addition, by \eqref{con} and Lemma 2.2 of Chen--Mosconi--Squassina \cite{Chen-Mosconi-Squassina-2018},
	\begin{align}\label{cv3}
		\lim_{n\to\infty}A_p(u_n,u)
		=A_p(u,u)=[u]_{s,p}^p.
	\end{align}
	Therefore, using \eqref{con}, \eqref{cv3}, and the Brezis-Lieb Lemma, we compute
	\begin{align*}
		o_n(1)
		& =\langle \I'_{\la}(u_n), u_n-u \rangle\\
		& =B_p(1, u_n,u_n-u)+B_q(a(x),u_n,u_n-u)+A_p(u_n,u_n-u)-\la \int_{\Omega} \ve{u_n}^{k-2}u_n(u_n-u)\dx\\
		& \quad -\int_{\Omega} \ve{u_n}^{p^*-2}u_n(u_n-u)\dx\\
		& = \mm{\na u_n}_p^p-\mm{\na u}_p^p+\mm{\na  u_n}_{q,a}^q-\mm{\na  u}_{q,a}^q+[u_n]_{s,p}^p-[u]_{s,p}^p-\mm{u_n}_{p^*}^{p^*}+\mm{u}_{p^*}^{p^*}+o_n(1) \\
		& =\mm{\na u_n-\na u}_{p}^p+\mm{\na u_n-\na u}_{q,a}^q+[u_n-u]_{s,p}^p-\mm{u_n-u}_{p^*}^{p^*}+o_n(1).
	\end{align*}
	Hence,
	\begin{align}\label{cv4}
		\lim_{n \to \infty}\lt(\mm{\na u_n-\na u}_{p}^p+\mm{\na u_n-\na u}_{q,a}^q+[u_n-u]_{s,p}^p \ri)=\lim_{n \to \infty} \mm{u_n-u}_{p^*}^{p^*}=l^{p^*}.
	\end{align}
	If $l=0$, then Lemma \ref{rel} yields $u_n \to u$ strongly in $W$, and we are done. Suppose, for contradiction, that $l>0$. From \eqref{cv4} and \eqref{a2}, we obtain
	\begin{align*}
		\So l^p \leq \mm{\na u_n-\na u}_p^{p}\leq l^{p^*},
	\end{align*}
	and therefore
	\begin{align}\label{cv6}
		l \geq \So^{\frac{1}{p^*-p}}.
	\end{align}
	Next, observe that
	\begin{align*}
		c+o_n(1) & =\I_{\la}(u_n)-\frac{1}{q}\langle \I'_{\la}(u_n),u_n\rangle \\
		& \geq \lt(\frac{1}{p}-\frac{1}{q}\ri)(\mm{\na u_n}_p^p+[u_n]_{s,p}^p)-\la\lt(\frac{1}{k}-\frac{1}{q}\ri) \mm{u_n}_k^k+\lt(\frac{1}{q}-\frac{1}{p^*}\ri)\mm{u_n}_{p^*}^{p^*}.
	\end{align*}
	Letting $n \to \infty$ and using \eqref{con}, the Brezis-Lieb Lemma, H\"{o}lder's inequality, Young's inequality and \eqref{cv6}, we deduce
	\begin{align*}
		c & \geq  \lt(\frac{1}{q}-\frac{1}{p^*}\ri)(l^{p^*}+\mm{u}_{p^*}^{p^*})- \la\lt(\frac{1}{k}-\frac{1}{q}\ri) \mm{u}_k^k  \\
		& \geq \lt(\frac{1}{q}-\frac{1}{p^*}\ri)(l^{p^*}+\mm{u}_{p^*}^{p^*})- \la\lt(\frac{1}{k}-\frac{1}{q}\ri)  \ve{\Omega}^{\frac{p^*-k}{p^*}}\mm{u}_{p^*}^k \\
		& \geq  \lt(\frac{1}{q}-\frac{1}{p^*}\ri)(l^{p^*}+\mm{u}_{p^*}^{p^*})-\lt(\frac{1}{q}-\frac{1}{p^*}\ri)\mm{u}_{p^*}^{p^*}-\ve{\Omega} \lt(\frac{1}{q}-\frac{1}{p^*}\ri)^{-\frac{k}{p^*-k}}\lt(\la  \lt(\frac{1}{k}-\frac{1}{q}\ri)\ri)^{\frac{p^*}{p^*-k}} \\
		& \geq \lt(\frac{1}{q}-\frac{1}{p^*}\ri)\So^{\frac{p^*}{p^*-p}}-\ve{\Omega} \lt(\frac{1}{q}-\frac{1}{p^*}\ri)^{-\frac{k}{p^*-k}}\lt(\la \lt(\frac{1}{k}-\frac{1}{q}\ri)\ri)^{\frac{p^*}{p^*-k}}.
	\end{align*}
	For every $\la \in (0,\tilde{\la})$ with $\tilde{\la}$ satisfying \eqref{tiltla}, the right-hand side is strictly positive, hence $c>0$, contradicting the assumption $c<0$. Hence, $l=0$, and thus $u_n\to u$ strongly in $W$. This completes the proof.
\end{proof}

Since the energy functional $\I_{\la}$ involves the critical Sobolev exponent, it is not bounded from below on $W$. To address this, following the approach of Farkas--Fiscella--Winkert \cite{Farkas-Fiscella-Winkert-2022} and Garc\'{\i}a Azorero--Peral Alonso \cite{GarciaAzorero-PeralAlonso-1991}, we introduce the auxiliary function $g_{\la}\colon [0,\infty)\to \R$ defined by
\begin{align*}
	g_{\la}(t)=\frac{1}{q}t^q-\frac{\la}{k}C^k_k t^k-\frac{1}{p^*}C^{p^*}_{p^*}t^{p^*}.
\end{align*}
Since $k<q<p^*$, we have $g_{\la}(t)<0$ for $t>0$ sufficiently small and $g_{\la}(t)\to -\infty$ as $t\to \infty$. Moreover, because $1<k<p<q<p^*$, there exists $\tilde{\la}_1>0$ such that $g_{\la}$ attains its positive maximum for every $\la \in (0,\tilde{\la}_1)$. Let $\al_0(\la)$ and $\al_1(\la)$ be the two positive zeros of $g_{\la}$ such that $0<\al_0<\al_1$.

We now show that $\al_0(\la) \to 0$ as $\la \to 0$. Since $g_{\la}(\al_0)=0$ and $g'_{\la}(\al_0)>0$, we have, for $\la \in (0,\tilde{\la}_1)$,
\begin{align}
	\frac{1}{q}\al_0^q(\la)&=\frac{\la}{k}C^k_k \al_0^k(\la)+\frac{1}{p^*}C^{p^*}_{p^*}\al_0^{p^*}(\la),\label{6.9}\\
	\al_0^{q-1}(\la)&>\la C^k_k \al_0^{k-1}(\la)+C^{p^*}_{p^*}\al_0^{p^*-1}(\la).\label{6.10}
\end{align}
From \eqref{6.9} it follows that $\al_0(\la)$ is bounded. Suppose, for contradiction, that $\al_0(\la)\to \al\neq 0$ as $\la \to 0$. Passing to the limit in \eqref{6.9} and \eqref{6.10} yields
\begin{align*}
	\frac{1}{q}\al^q=\frac{1}{p^*}C^{p^*}_{p^*}\al^{p^*}  \quad \text{and} \quad \al^{q-1}>C^{p^*}_{p^*}\al^{p^*-1},
\end{align*}
which is impossible because $q<p^*$. Therefore, $\al_0(\la)\to 0$ as $\la \to 0$. Hence, there exists $\tilde{\la}_2$ such that $\al_0(\la)<1$ for every $\la \in (0,\tilde{\la}_2)$, and consequently $\al_0(\la)<\min\{\al_1(\la),1\}$.

Next, for $\la \in (0,\min\{\tilde{\la}_1,\tilde{\la}_2\}),$ we fix a $C^{\infty}$-function $\theta\colon [0,\infty) \to [0,1]$ such that
\begin{align*}
	\theta(t):=
	\begin{cases}
		1 &\text{if } t \in [0,\al_0(\la)] \\
		0 &\text{if } t \in [\min\{\al_1(\la),1\},\infty).
	\end{cases}
\end{align*}
We then define the truncated energy functional $\tilde{\I}_{\la}\colon W \to \R$ by
\begin{align*}
	\tilde{\I}_{\la}(u)
	& =\frac{1}{p}\int_{\R^N}\int_{\R^N} \dfrac{|u(x)-u(y)|^p}{|x-y|^{N+sp}}\dxy+\int_{\Omega}\lt(\frac{|\nabla u|^p}{p}+a(x)\frac{|\nabla u|^q}{q}\ri)\dx\\
	& \quad-\frac{\la}{k} \int_{\Omega}|u|^k\dx  -\frac{1}{p^*}\mm{u}_{p^*}^{p^*}\theta(\mm{u}_W).
\end{align*}
It is immediate that
\begin{align*}
	\tilde{\I}_{\la}(u)=\I_{\la}(u)\quad \text{for any } \|u\|_W\in [0,\al_0(\la)]
\end{align*}
and that $\tilde{\I}_{\la}$ is coercive and bounded from below.

Finally, choose $\Lambda>0$ such that
\begin{align*}
	\Lambda \leq \min\{\tilde{\la},\tilde{\la}_1,\tilde{\la}_2,\tilde{\la}_3\},
\end{align*}
where $\tilde{\la}$ is defined in \eqref{tiltla}, $\tilde{\la}_1,\tilde{\la}_2$ are as above and $\tilde{\la}_3=\frac{k}{qC^k_k}$.

\begin{lemma}\label{lem6.2}
	Let $\la \in (0,\Lambda)$. If $\tilde{\I}_{\la}(u)< 0$, then $\mm{u}_W<\al_0(\la)$ and $\tilde{\I}_{\la}(z)=\I_{\la}(z)$ for every $z$ in a sufficiently small neighborhood of $u$. Moreover, $\tilde{\I}_{\la}$ satisfies the \textnormal{(PS)$_c$}-condition at level $c$ for every $c<0$.
\end{lemma}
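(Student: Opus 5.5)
The plan follows the truncation scheme of García Azorero--Peral Alonso and Farkas--Fiscella--Winkert. The key is a lower bound of $\tilde{\I}_{\la}(u)$ by $g_{\la}(\mm{u}_W)$ whenever $\mm{u}_W\le 1$, which forces negative-energy points into the ball of radius $\al_0(\la)$, where the truncation is inactive.

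\textbf{Step 1 (lower bound).} Let $\mm{u}_W\le 1$. By Lemma~\ref{rel}(iii), $\xi(u)\ge \mm{u}_W^q$. Since $\frac1p\ge\frac1q$, the quadratic part of $\tilde{\I}_{\la}(u)$ is at least $\frac1q\xi(u)\ge \frac1q\mm{u}_W^q$. The embedding gives $\|u\|_k\le C_k\mm{u}_W$ and $\|u\|_{p^*}\le C_{p^*}\mm{u}_W$, and we have $0\le\theta\le 1$. Together these yield
\begin{align*}
\tilde{\I}_{\la}(u)\ge \frac1q\mm{u}_W^q-\frac{\la}{k}C_k^k\mm{u}_W^k-\frac1{p^*}C_{p^*}^{p^*}\mm{u}_W^{p^*}=g_{\la}(\mm{u}_W).
\end{align*}

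\textbf{Step 2 (location of negative-energy points).} Suppose $\tilde{\I}_{\la}(u)<0$; we show $\mm{u}_W<\al_0(\la)$.

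If $\mm{u}_W\ge\min\{\al_1(\la),1\}$, then $\theta(\mm{u}_W)=0$. Hence
\begin{align*}
\tilde{\I}_{\la}(u)\ge \frac1p[u]_{s,p}^p+\frac1p\|\nabla u\|_p^p+\frac1q\|\nabla u\|_{q,a}^q-\frac{\la}{k}C_k^k\mm{u}_W^k.
\end{align*}
In the subcase $\mm{u}_W\ge 1$, Lemma~\ref{rel}(iv) gives $\xi(u)\ge\mm{u}_W^p$, so $\tilde{\I}_{\la}(u)\ge \frac1q\mm{u}_W^p-\frac{\la}{k}C_k^k\mm{u}_W^k$. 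Because $\la<\tilde{\la}_3=\frac{k}{qC_k^k}$ and $k<p$, this is nonnegative. In the subcase $\al_1(\la)\le\mm{u}_W<1$, we instead get $\tilde{\I}_{\la}(u)\ge \frac1q\mm{u}_W^q-\frac{\la}{k}C_k^k\mm{u}_W^k\ge g_{\la}(\mm{u}_W)$, and this is $\ge 0$ when $\mm{u}_W$ lies between the zeros of $g_{\la}$. This subcase needs care, since $g_{\la}<0$ beyond $\al_1$. I will handle it by noting that, once $\theta=0$, the term $-\frac1{p^*}C_{p^*}^{p^*}t^{p^*}$ is absent. The function $\frac1qt^q-\frac{\la}kC_k^kt^k$ is increasing for $t\ge\al_0$ (its unique zero lies below $\al_0$) and is positive at $\al_1$, so it is positive on the whole subcase.

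If $\al_0(\la)\le\mm{u}_W<\min\{\al_1,1\}$, Step 1 gives $\tilde{\I}_{\la}(u)\ge g_{\la}(\mm{u}_W)\ge0$, since $g_{\la}\ge0$ on $[\al_0,\al_1]$.

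In every case we contradict $\tilde{\I}_{\la}(u)<0$, so $\mm{u}_W<\al_0(\la)$.

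\textbf{Step 3 (local agreement).} Since $\tilde{\I}_{\la}$ is continuous, there is a ball around $u$ on which $\tilde{\I}_{\la}<0$. By Step 2, every $z$ in that ball satisfies $\mm{z}_W<\al_0(\la)$, so $\theta(\mm{z}_W)=1$ and $\tilde{\I}_{\la}(z)=\I_{\la}(z)$.

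\textbf{Step 4 (Palais--Smale condition).} Let $c<0$ and let $\{u_n\}$ be a (PS)$_c$-sequence for $\tilde{\I}_{\la}$. For $n$ large, $\tilde{\I}_{\la}(u_n)<0$. By Step 3, $\tilde{\I}_{\la}$ and $\I_{\la}$ coincide near each $u_n$, so $\I_{\la}(u_n)=\tilde{\I}_{\la}(u_n)$ and $\I_{\la}'(u_n)=\tilde{\I}_{\la}'(u_n)$. Thus $\{u_n\}$ is a (PS)$_c$-sequence for $\I_{\la}$ with $c<0$. Since $\la<\tilde{\la}$, Lemma~\ref{PSc} provides a convergent subsequence.

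The main obstacle is the subcase $\al_1\le\mm{u}_W<1$ in Step 2, which requires carefully dropping the critical term once $\theta=0$. The remaining steps are routine.
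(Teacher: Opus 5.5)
Your proof is correct and follows essentially the same route as the paper. Both arguments make the same case split of $\|u\|_W$ relative to $\alpha_0(\lambda)$, $\min\{\alpha_1(\lambda),1\}$ and $1$, use the lower bound via $g_\lambda$ and the condition $\lambda<\tilde\lambda_3$, then conclude by continuity and Lemma~\ref{PSc}. Your handling of the subcase $\alpha_1(\lambda)\le\|u\|_W<1$ is more careful than the paper's appeal to ``exactly as in Case 1'': you use the monotonicity of $t\mapsto\frac1q t^q-\frac{\lambda}{k}C_k^k t^k$ beyond its zero, which lies below $\alpha_0(\lambda)$.
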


\begin{proof}
	Fix $\la \in (0,\Lambda)$ and suppose that $\tilde{\I}_{\la}(u)< 0$. We consider two cases.

	\textbf{Case 1:} $\mm{u}_W\geq 1$

	In this case, $\theta(\mm{u}_W)=0$. Hence
	\begin{align*}
		\tilde{\I}_{\la}(u)
		&=\frac{1}{p}([u]_{s,p}^p+\mm{\na u}_p^p )+\frac{1}{q}\mm{\na u}_{q,a}^q-\frac{\la}{k}\int_{\Omega}\ve{u}^k\dx\\
		&\geq \frac{1}{q}\mm{u}_W^p-\frac{\la}{k} C^k_k \mm{u}_W^k=:\sigma_{\la}(\mm{u}_W),
	\end{align*}
	where $\si_{\la}(t):=\frac{1}{q}t^p-\frac{\la}{k} C^k_kt^k$ for all $t\in[1,\infty)$. The function $\si_{\la}$ is minimized at
	\begin{align*}
		t_*=\lt(\frac{\la C^k_k q}{p}\ri)^{\frac{1}{p-k}},
	\end{align*}
	and
	\begin{align*}
		\si_{\la}(t_*)=\frac{1}{q}\lt(\frac{\la C^k_k q}{p}\ri)^{\frac{p}{p-k}}\lt(1-\frac{p}{k}\ri)<0,
	\end{align*}
	since $k<p$. Moreover, $\si_{\la}(t)\geq 0$ if and only if $t\geq \lt(\frac{\la C^k_k q}{k}\ri)^{\frac{1}{p-k}}$. If $\la\leq \tilde{\la}_3=\frac{k}{qC^k_k}$, we deduce that $\min_{t\in[1,\infty)}\si_{\la}(t)\geq 0$, which further implies $\tilde{\I}_{\la}(u)\geq 0$ for $\mm{u}_W\geq 1$, contradicting $\tilde{\I}_{\la}(u)<0$. Therefore, $\mm{u}_W\geq1$ is impossible.

	\textbf{Case 2:} $\mm{u}_W<1$

	We estimate
	\begin{align*}
		\tilde{\I}_{\la}(u)\geq\frac{1}{q}\mm{u}_W^q-\frac{\la}{k}C^k_k\mm{u}_W^k-\frac{1}{p^*}C^{p^*}_{p^*}\mm{u}_W^{p^*}\theta(\mm{u}_W)
		=:\tilde{g}_{\la}(\mm{u}_W),
	\end{align*}
	where $\tilde{g}_{\la}(t):=\frac{1}{q}t^q-\frac{\la}{k}C^k_k t^k-\frac{1}{p^*}C^{p^*}_{p^*}t^{p^*}\theta(t)$. Since $0 \leq \theta\leq 1$, we have
	\begin{align}\label{6.13}
		\tilde{g}_{\la}(t)\geq {g}_{\la}(t)\geq 0 \quad \text{for all }  t\in[\al_0(\la),\min\{\al_1(\la),1\}].
	\end{align}
	Thus, if $\min\{\al_1(\la),1\}=1$, then $\tilde{\I}_{\la}(u)<0$ together with \eqref{6.13} implies $\mm{u}_W<\al_0(\la)$.

	If $\min\{\al_1(\la),1\}=\al_1(\la)$, then $\al_1(\la)<1$. For $\al_1(\la)<\mm{u}_W<1$, we estimate as in Case 1 by dropping the critical term (it is truncated, hence nonnegative in the energy with a minus sign removed when $\theta=0$) and obtain
	\begin{align*}
		\tilde{\I}_{\la}(u)\geq \tilde{\si}_{\la}(\mm{u}_W),\quad  \text{where }  \tilde{\si}_{\la}(t)=\frac{1}{q}t^q-\frac{\la}{k}C^k_kt^k,
	\end{align*}
	which yields again a contradiction for $\lambda\leq \tilde{\lambda}_3$, exactly as in Case 1. If instead $\al_0(\la)<\mm{u}_W\leq \al_1(\la)$, then \eqref{6.13} implies $\tilde{\I}_{\la}(u)\geq 0$, contradicting $\tilde{\I}_{\la}(u)< 0$. Hence in all cases, $\mm{u}_W<\al_0(\la)$. Since $\tilde{\I}_{\la}$ is continuous, there exists a sufficiently small neighborhood $B\subset B_{\delta}(\al_0)$ of $u$ such that $\tilde{\I}_{\la}(z)<0$ for all $z\in B$ and ${\I_{\la}}(z)=\tilde{\I}_{\la}(z)$ follows from above. Furthermore, for $c<0$, we have $\tilde{\I}_{\la}=\I_{\la}$. Thanks to Lemma \ref{PSc}, $\tilde{\I}_{\la}$ satisfies \textnormal{(PS)$_c$}-condition. This completes the proof.
\end{proof}

\begin{lemma}\label{lem6.3}
	Let $\la \in(0,\min\{\tilde{\la}_1,\tilde{\la}_2\})$. Then, for every $m\in \N$, there exists $\delta=\delta(\la,m)>0$ such that $\ga\lt(\tilde{\I}_{\la}^{-\delta}\ri)\geq m$, where
	\begin{align*}
		\tilde{\I}_{\la}^{-\delta}=\{u\in W\colon  \tilde{\I}_{\la}(u)\leq -\delta\}.
	\end{align*}
\end{lemma}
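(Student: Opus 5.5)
Fix $m\in\N$. The plan is the standard Garc\'{\i}a Azorero--Peral Alonso construction: produce a symmetric set of genus $m$ (a sphere in an $m$-dimensional subspace) lying in a sublevel set $\{\tilde{\I}_{\la}\le-\delta\}$. Choose an $m$-dimensional subspace $W_m\subset W$, for instance spanned by $m$ functions in $C_c^\infty(\Omega)$ with pairwise disjoint supports. Since $W_m$ is finite dimensional, all norms on it are equivalent. In particular there is a constant $c_m>0$ such that
\begin{align*}
	\|u\|_k\geq c_m\|u\|_W \quad\text{for all } u\in W_m .
\end{align*}

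For $u\in W_m$ with $\|u\|_W=\rho\le\al_0(\la)<1$, the truncation satisfies $\theta(\|u\|_W)=1$, so $\tilde{\I}_{\la}(u)=\I_{\la}(u)$. We drop the negative critical term and use Lemma \ref{rel}(iii): since $\rho<1$ we have $\xi(u)\le\rho^p$. This gives
\begin{align*}
	\tilde{\I}_{\la}(u)\le \frac{1}{p}\,\xi(u)-\frac{\la}{k}\|u\|_k^k\le \frac1p\rho^p-\frac{\la}{k}c_m^k\rho^k .
\end{align*}
Because $k<p$, the right-hand side is strictly negative for $\rho>0$ small. So we fix $\rho\in(0,\al_0(\la))$ so small that $-\delta:=\frac1p\rho^p-\frac{\la}{k}c_m^k\rho^k<0$.

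With this choice, the sphere $S_\rho:=\{u\in W_m\colon \|u\|_W=\rho\}$ is contained in $\tilde{\I}_{\la}^{-\delta}$. The set $S_\rho$ is symmetric and closed, and it is odd-homeomorphic (by radial projection) to the unit sphere $S^{m-1}$. By Proposition \ref{genus}(ii),(iv), $\ga(S_\rho)=m$. Since $\tilde{\I}_{\la}$ is even and continuous, $\tilde{\I}_{\la}^{-\delta}$ is closed, symmetric, and does not contain $0$, so it belongs to $\Sigma$. Monotonicity of the genus under inclusion (Proposition \ref{genus}(i), with the inclusion map as the odd map) then yields $\ga(\tilde{\I}_{\la}^{-\delta})\ge m$.

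The only delicate point is keeping $\rho$ below $\al_0(\la)$, so that $\theta=1$ and the modular estimate in Lemma \ref{rel}(iii) applies. This is automatic once $\rho$ is taken small, because $\al_0(\la)>0$ for $\la$ in the stated range. The finite-dimensional norm equivalence supplies the constant $c_m$, which depends on $m$; this is why $\delta$ depends on both $\la$ and $m$.
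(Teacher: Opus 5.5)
Your proposal is correct and follows essentially the same route as the paper. Both take a sphere of small radius $\rho<\alpha_0(\lambda)$ in an $m$-dimensional subspace, bound $\tilde{\I}_{\la}(u)\le \frac1p\rho^p-\frac{\la}{k}c_m^k\rho^k<0$ on it via norm equivalence and Lemma \ref{rel}, and conclude with monotonicity of the genus and $\gamma(S^{m-1})=m$. The only difference is cosmetic: you read off $\delta$ directly from this uniform bound, whereas the paper uses continuity and compactness of the sphere, a step the uniform estimate already makes unnecessary.
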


\begin{proof}
	Fix $\la \in(0,\min\{\tilde{\la}_1,\tilde{\la}_2\})$ and $m\in \N$. Let $W_m\subset W$ be an $ m$-dimensional subspace. Since $W_m$ is finite dimensional, all norms are equivalent. In particular, there exists a constant $C(m)>0$ such that
	\begin{align}\label{6.31}
		\mm{u}_{k}^k\geq C(m)\mm{u}_W^k, \quad \text{for all }u\in W_m.
	\end{align}
	Let $u\in W_m$ with $\mm{u}_W\leq \alpha_0<1$. Using the definition of $\tilde{\I}_{\la}$, the truncation property, and \eqref{6.31}, we obtain
	\begin{align}\label{6.32}
		\tilde{\I}_{\la}(u) \leq \frac{1}{p} \mm{u}_W^p-\frac{\la}{k}C(m)\mm{u}_W^k.
	\end{align}
	Choose $r_0>0$ such that
	\begin{align}\label{6.33}
		r_0 <\min\lt(\al_0(\la), \lt(\frac{\la C(m)p}{k}\ri)^{\frac{1}{p-k}}\ri).
	\end{align}
	Next, define $S_{r_0}=\{u\in W_m\colon  \mm{u}_W=r_0\}$. Since $W_m$ is $m$-dimensional, $S_{r_0}$ is homeomorphic to the $(m-1)$-dimensional sphere. Moreover, from \eqref{6.32} and \eqref{6.33}, we obtain
	\begin{align*}
		\tilde{\I}_{\la}(u)<0, \quad \text{for all }u \in S_{r_0}.
	\end{align*}
	Therefore, by continuity of $\tilde{\I}_{\la}$ and compactness of $S_{r_0}$, there exists $\delta=\delta(\la,m)>0$ such that $\tilde{\I}_{\la}(u)<-\delta$ for every  $u \in S_{r_0}$. Hence, $S_{r_0} \subset \tilde{\I}_{\la}^{-\delta}\setminus\{0\}$ and by Proposition \ref{genus}, we get
	\begin{align*}
		\gamma\lt(\tilde{\I}_{\la}^{-\delta}\ri)\geq \gamma(S_{r_0})=m.
	\end{align*}
\end{proof}

\begin{proof}[Proof of Theorem \ref{t3}]
	Define
	\begin{align}\label{15-12-2}
		P_{c}=\{u\in W\colon \tilde{\I}_{\la}(u)=c,  \tilde{\I}'_{\la}(u)=0\}.
	\end{align}
	For each $m\in\N$, set
	\begin{align*}
		c_{m}=\inf_{A\in \Sigma_{m}}\sup_{u\in A}\tilde{\I}_{\la}(u),
	\end{align*}
	where
	\begin{align*}
		\Sigma_{m}=\{A\subseteq W\setminus \{0\}\colon  A \text{ is closed, symmetric, and } \gamma(A)\geq m\}.
	\end{align*}
	By Lemma \ref{lem6.3}, for every $m\in\mathbb{N}$, there exists $\delta_{m}>0$ such that
	\begin{align*}
		\gamma(\tilde{\I}_{\la}^{-\delta_{m}})\geq m, \quad  \tilde{\I}_{\la}^{-\delta_{m}}=\{u\in W\colon \tilde{\I}_{\la}(u)\leq -\delta_{m}\}.
	\end{align*}
	Hence, $\tilde{\I}_{\la}^{-\delta_{m}}\in\Sigma_{m}$. Since $\tilde{\I}_{\la}$ is bounded from below, it follows that
	\begin{align*}
		-\infty<c_{m}\leq\sup\limits_{u\in \tilde{\I}_{\la}^{-\delta_{m}}}\tilde{\I}_{\la}(u)\leq -\delta_{m}<0.
	\end{align*}
	Thus each $c_m$ is finite and negative.

	Let $P_{c_m}$ be as defined in \eqref{15-12-2}. We claim that $P_{c_{m}}$ is compact. To this end, let $\{u_{n}\}_{n\in\mathbb{N}}\subset P_{c_{m}}$ be a bounded sequence. Then $\tilde{\I}_{\la}(u_{n})=c_{m}$ and $\tilde\I'_{\la}(u_{m})=0$. By Lemma \ref{lem6.2},  $\tilde{\I}_{\la}$ satisfies the  \textnormal{(PS)$_{c_{m}}$} condition, i.e., there exist $u\in W$ such that $\{u_{n}\}_{n\in\mathbb{N}}$ converges to $u$ strongly in $W$, up to a subsequence if necessary. This implies $\tilde{\I}_{\la}(u)=c_{m}$ and  $\tilde{\I}'_{\la}(u)=0$, hence $u\in P_{c_{m}}$. Thus, $\{u_{n}\}_{n\in\mathbb{N}}$ admits a convergent subsequence in $P_{c_{m}}$, consequently $P_{c_{m}}$ is compact. Moreover, by Proposition \ref{genus}, we have $\gamma{(P_{c_{m}})}<\infty$.

	Since $\Sigma_{m+1}\subset \Sigma_{m}$, we have $c_{m}\leq c_{m+1}$. If all $c_{m}$ are distinct,
	then we obtain an infinite sequence of distinct critical values $c_m<0$, hence infinitely many distinct critical points of $\tilde{\I}_{\la}$ with negative energy.

	Suppose instead that for some $m\in\mathbb{N}$ and some $j\geq 1$,
	\begin{align*}
		c=c_{m}=c_{m+1}=\cdots=c_{m+j}.
	\end{align*}
	Then, by Lemma 3.6 of da Silva--Fiscella--Viloria \cite{daSilva-Fiscella-Viloria-2024}, we have $\gamma{(P_{c_{m}})}\geq j+1$. Following the classical argument presented by Rabinowitz \cite[Remark 7.3]{Rabinowitz-1986}, it follows that $P_{c_{m}}$ contains infinitely many distinct elements. Hence $\tilde{\I}_{\la}$ possesses infinitely many distinct critical points with negative energy.

	Finally, by Lemma \ref{lem6.2}, on negative levels the truncated functional $\tilde{\I}_{\la}$ coincides locally with the original functional $\I_{\la}$. Therefore, $\I_{\la}$ also has infinitely many distinct critical points with negative energy. This completes the proof.
\end{proof}

\section{Local \texorpdfstring{$p$}{p}-Laplacian with fractional double phase}\label{Section_4}

In this section, we study problem \eqref{frac_dbl} and prove Theorems \ref{t4} and \ref{t5}. We show that problem \eqref{frac_dbl} admits at least two nontrivial constant sign solutions by employing the Mountain Pass Theorem, and we obtain a least energy sign-changing solution by means of the Poincar\'{e}-Miranda existence theorem and the quantitative deformation lemma. Throughout this section we suppose that \eqref{H2} holds.

The energy functional $J\colon  E\to\R$ associated with \eqref{frac_dbl} is
\begin{align*}
	J(u)=\frac{1}{p}\int_{\Omega}|\nabla u|^p\dx+ \frac{1}{p}\int_{Q}|D_s(u)|^p\dv+\frac{1}{q}\int_{Q}b(x,y)|D_s(u)|^q\dv -\int_{\Omega} F(x,u)\dx
\end{align*}
where $F(x,t)=\int_{0}^{t}f(x,\tau)\,\mathrm{d}\tau$, and $D_s(u)$ as well as $\dv$ are defined in \eqref{notion-ds-dv}. By Lemma 3.10 of de Albuquerque--de Assis--Carvalho--Salort \cite{deAlbuquerque-deAssis-Carvalho-Salort-2023} we have $J\in C^{1}(E,\R)$, and its derivative is given by
\begin{align*}
	\langle J'(u),v\rangle & =\int_{\Omega} |\nabla u|^{p-2}\nabla u\cdot \nabla v\dx+\int_{Q}|D_s(u)|^{p-2}D_s(u)D_s(v)\dv \\
	& \quad +\int_{Q}b(x,y)|D_s(u)|^{q-2}D_s(u)D_s(v)\dv
	-\int_{\Omega} f(x,u) v \dx
\end{align*}
for $u,v\in E$. We also note that if $u \in E$ satisfies $u^{+} \neq 0 \neq u^{-}$, then
\begin{align*}
	J(u) > J(u^{+}) + J(-u^{-}), \quad \langle J'(u), u^{+} \rangle > \langle J'(u^{+}), u^{+} \rangle, \quad \langle J'(u), -u^{-} \rangle > \langle J'(-u^{-}), -u^{-} \rangle.
\end{align*}

\begin{remark}\label{rem_condition}
	The following properties are immediate consequences of \eqref{H2}:
	\begin{enumerate}
		\item[\textnormal{(i)}]
			By \eqref{H2}\eqref{H2ii} and \eqref{H2iii}, for every $\varepsilon > 0$ there exists $c_{*} > 0$ such that
			\begin{align*}
				F(x,t) \le \varepsilon |t|^{p} + c_{*} |t|^{r}, \quad \text{for a.a.\,} x \in \Omega \text{ and for all } t \in \mathbb{R}.
			\end{align*}
		\item[\textnormal{(ii)}]
			By \eqref{H2}\eqref{H2ii} and \eqref{H2iv}, for every $M > 0$, there exists $c_{M} > 0$ such that
			\begin{align*}
				F(x,t) \ge \frac{M}{q}|t|^{q} - c_{M}, \quad \text{for a.a.\,} x \in \Omega \text{ and for all } t \in \mathbb{R}.
			\end{align*}
		\item[\textnormal{(iii)}]
			From \eqref{H2}\eqref{H2ii} and \eqref{H2iv} we infer that $q < r$. Moreover,
			\begin{align*}
				\lim_{t\to \pm\infty} \frac{F(x,t)}{|t|^{q}} = \infty\quad \text{uniformly for a.a.\,} x \in \Omega.
			\end{align*}
		\item[\textnormal{(iv)}]
			From \eqref{H2}\eqref{H2iii} it follows that $f(x,0) = 0$ for a.a.\,$x \in \Omega$.
	\end{enumerate}
\end{remark}

To obtain constant sign solutions of problem \eqref{frac_dbl}, we introduce the truncated functionals $J_{\pm}\colon E \to \R$ defined by
\begin{align*}
	J_{\pm}(u)=\frac{1}{p}\int_{\Omega}|\nabla u|^p\dx+ \frac{1}{p}\int_{Q}|D_s(u)|^p\dv+\frac{1}{q}\int_{Q}b(x,y)|D_s(u)|^q\dv -\int_{\Omega} F(x,\pm u^{\pm})\dx.
\end{align*}
Next, we define the operator $\mathcal{A}\colon  E \to E^{*}$ corresponding to the principal part of the energy functional by
\begin{align*}
	\langle \mathcal{A}(u),v\rangle
	& =\int_{\Omega} |\nabla u|^{p-2}\nabla u \cdot \nabla v\dx+\int_{Q}|D_s(u)|^{p-2}D_s(u)D_s(v)\dv \\
	& \quad +\int_{Q}b(x,y)|D_s(u)|^{q-2}D_s(u)D_s(v)\dv,
\end{align*}
where $\langle \cdot,\cdot \rangle$ denotes the duality pairing between $E$ and $E^*$.

The following lemma establishes the key compactness property of the operator $\mathcal{A}$, which will be crucial in proving the existence of constant sign solutions.

\begin{lemma}\label{s_+}
	The operator $\mathcal{A}$ satisfies the \textnormal{(S$_+$)}-property. More precisely, if $\{u_{n}\}_{n\in\mathbb{N}}\subset E$ is such that
	$u_{n}\rightharpoonup u$ weakly in $E$ and
	\begin{align*}
		\limsup_{n\to\infty}\, \langle \mathcal{A}(u_{n}),u_{n}-u\rangle \leq 0,
	\end{align*}
	then $u_{n}\to u$ strongly in $E$.
\end{lemma}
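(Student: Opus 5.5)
We prove the (S$_+$)-property by splitting $\mathcal{A}$ into its three monotone pieces and showing that each contributes a nonnegative amount in the limit. Write
\begin{align*}
	\langle \mathcal{A}(u_n),u_n-u\rangle=\langle \mathcal{A}(u_n)-\mathcal{A}(u),u_n-u\rangle+\langle \mathcal{A}(u),u_n-u\rangle .
\end{align*}
Since $u_n\rightharpoonup u$ in $E$ and $\mathcal{A}(u)\in E^*$, the last term tends to zero. Hence
\begin{align*}
	\limsup_{n\to\infty}\langle \mathcal{A}(u_n)-\mathcal{A}(u),u_n-u\rangle\le 0 .
\end{align*}
The difference on the left is the sum of three integrals:
\begin{align*}
	T_n^1&=\int_\Omega(|\nabla u_n|^{p-2}\nabla u_n-|\nabla u|^{p-2}\nabla u)\cdot\nabla(u_n-u)\dx,\\
	T_n^2&=\int_Q(|D_s u_n|^{p-2}D_s u_n-|D_s u|^{p-2}D_s u)(D_s u_n-D_s u)\dv,\\
	T_n^3&=\int_Q b(x,y)(|D_s u_n|^{q-2}D_s u_n-|D_s u|^{q-2}D_s u)(D_s u_n-D_s u)\dv .
\end{align*}
Each integrand is pointwise nonnegative, by monotonicity of $\xi\mapsto|\xi|^{r-2}\xi$ and because $b\ge0$. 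It follows that $T_n^1,T_n^2,T_n^3\to0$.

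Next we convert this into convergence of the modular $\eta(u_n-u)\to0$ using Lemma \ref{inequalities}. We treat each exponent $r\in\{p,q\}$ and each term separately.

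\emph{Case $r\ge2$.} The inequality gives directly
\begin{align*}
	\int_\Omega|\nabla(u_n-u)|^p\dx\le C_p^{-1}T_n^1\to0,
\end{align*}
and similarly for $T_n^2$, and for $T_n^3$ with the weight $b$.

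\emph{Case $1<r<2$.} This is the main technical step, and we use the standard Hölder trick. Writing $V_n=D_s u_n$, $V=D_s u$, we have
\begin{align*}
	\int |V_n-V|^r\dv\le \int\big[(|V_n|+|V|)^{2-r}(\cdots)\cdot(V_n-V)\big]^{r/2}(|V_n|+|V|)^{(2-r)r/2}\dv .
\end{align*}
Hölder's inequality with exponents $2/r$ and $2/(2-r)$ then yields
\begin{align*}
	\int |V_n-V|^r\dv\le C\,(T_n)^{r/2}\Big(\int(|V_n|+|V|)^r\dv\Big)^{(2-r)/2}.
\end{align*}
The last factor is bounded, because weakly convergent sequences are bounded in $E$ and Lemma \ref{re2} bounds the modular $\eta(u_n)$. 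The same argument applies to $T_n^1$ with $dx$, and to $T_n^3$ with the measure $b\,\dv$.

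Summing the three pieces gives $\eta(u_n-u)\to0$. Lemma \ref{re2}(v), together with the remark that $\|\cdot\|_\eta$ and $\|\cdot\|_E$ are equivalent, then gives $u_n\to u$ in $E$.

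The only real obstacle is the singular case $1<p<2$ or $1<q<2$ handled above. There one must keep the weight $b$ inside the measure, so that Hölder's inequality is applied on $(Q,b\,\dv)$, and use boundedness of $\int_Q b|D_s u_n|^q\dv$, which follows from boundedness of $\eta(u_n)$. Everything else is routine.
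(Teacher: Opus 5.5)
Your proof is correct. Its first step coincides with the paper's: you split $\langle \mathcal{A}(u_n)-\mathcal{A}(u),u_n-u\rangle$ into the three pointwise nonnegative pieces and conclude that each tends to zero.

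The two arguments differ in the final step. The paper uses Lemma~\ref{inequalities} only for nonnegativity. It then reads the vanishing of the local piece as an (S$_+$)-statement in $W_0^{1,p}(\Omega)$ and cites Colasuonno--Pucci--Varga (Lemma 2.5). It reads the vanishing of the two nonlocal pieces as an (S$_+$)-statement in $W_0^{s,\mathcal{G}}(\Omega)$ and cites de Albuquerque--de Assis--Carvalho--Salort (Theorem 3.14). Finally it adds the two strong convergences in the sum norm of $E$.

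You instead prove the (S$_+$)-property from scratch. You use Lemma~\ref{inequalities} quantitatively, with the H\"older trick in the singular range $1<r<2$ and the weight $b$ kept inside the measure, to get $\eta(u_n-u)\to 0$. Modular convergence then gives norm convergence. Your route is self-contained and yields an explicit modular estimate. It also spares you checking that the cited fractional result is stated for exactly this modular: kernel over $Q$, symmetric bounded weight $b$, zero exterior data. The paper's route is shorter but rests on two external results.

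One slip to fix: in your intermediate display the bracket should contain only $\Phi_n:=(|V_n|^{r-2}V_n-|V|^{r-2}V)(V_n-V)$, not the weighted left-hand side of Lemma~\ref{inequalities}. The correct pointwise bound is $|V_n-V|^r\le C_r^{-r/2}(|V_n|+|V|)^{(2-r)r/2}\Phi_n^{r/2}$. As written, the factor $(|V_n|+|V|)^{(2-r)r/2}$ appears twice and the constant is missing, so H\"older would not return $T_n^{r/2}$. Your subsequent display, $\int|V_n-V|^r\le C\,T_n^{r/2}\big(\int(|V_n|+|V|)^r\big)^{(2-r)/2}$, is the correct one.
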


\begin{proof}
	Let $\{u_{n}\}_{n\in\mathbb{N}}\subset E$ satisfy $u_{n}\rightharpoonup u$ in $E$ and
	\begin{align}\label{s+1}
		\limsup_{n\to\infty}\,\langle \mathcal{A}(u_{n}),u_{n}-u\rangle \le 0.
	\end{align}
	Since the space $E$ is continuously embedded into both $W_{0}^{1,p}(\Omega)$ and $W_{0}^{s,\mathcal{G}}(\Omega)$, the weak convergence in $E$ implies
	\begin{align}
		u_{n} & \rightharpoonup u \quad \text{in } W_{0}^{1,p}(\Omega), \label{weak1}\\
		u_{n} & \rightharpoonup u \quad \text{in } W_{0}^{s,\mathcal{G}}(\Omega). \label{weak2}
	\end{align}
	Moreover, the weak convergence in $E$ yields
	\begin{align}\label{s+2}
		\lim_{n\to\infty}\langle \mathcal{A}(u),u_{n}-u\rangle = 0.
	\end{align}
	Combining \eqref{s+2}, \eqref{s+1} and Lemma \ref{inequalities}, we obtain
	\begin{align*}
		\lim_{n\to\infty}\langle \mathcal{A}(u_{n})-\mathcal{A}(u),u_{n}-u\rangle = 0.
	\end{align*}
	Using the definition of $\mathcal{A}$, we have
	\begin{align*}
		&\langle \mathcal{A}(u_n)-\mathcal{A}(u),u_n-u\rangle\\
		&=  \int_{\Omega}\big(|\nabla u_{n}|^{p-2}\nabla u_{n}-|\nabla u|^{p-2}\nabla u\big)\cdot\nabla (u_{n}-u)\dx \\
		& \quad+ \int_{Q}\big(|D_{s}u_{n}|^{p-2}D_{s}u_{n}-|D_{s}u|^{p-2}D_{s}u\big)D_{s}(u_{n}-u)\dv  \\
		&\quad +\int_{Q}b(x,y)\big(|D_{s}u_{n}|^{q-2}D_{s}u_{n}-|D_{s}u|^{q-2}D_{s}u\big)D_{s}(u_{n}-u)\dv.
	\end{align*}

	By the standard monotonicity inequalities for the $p$-Laplace operator and the fractional $p$- and $q$-Laplace  operators, each term in the above expression is nonnegative, see Lemma \ref{inequalities}. Therefore, since their sum converges to zero, each term must converge to zero separately. In particular,
	\begin{align}\label{s+4}
		\lim_{n\to\infty}\int_{\Omega} \big(|\nabla u_{n}|^{p-2}\nabla u_{n}-|\nabla u|^{p-2}\nabla u\big)\cdot\nabla (u_n-u)\dx = 0,
	\end{align}
	and
	\begin{equation}\label{s+5}
		\begin{aligned}
			& \lim_{n\to\infty}\Bigg(\int_{Q}\big(|D_{s}(u_{n})|^{p-2}D_{s}(u_{n})-|D_{s}(u)|^{p-2}D_{s}(u)\big)D_{s}(u_n-u)\dv \\
			&\qquad\qquad +\int_{Q}b(x,y)\big(|D_{s}(u_{n})|^{q-2}D_{s}(u_{n})-|D_{s}(u)|^{q-2}D_{s}(u)\big)D_{s}(u_n-u)\dv \Bigg)= 0.
		\end{aligned}
	\end{equation}
	From \eqref{weak1}, \eqref{s+4} and Lemma 2.5 by Colasuonno--Pucci--Varga \cite{Colasuonno-Pucci-Varga-2012}, it follows that $u_{n}\to u$ strongly in $W_{0}^{1,p}(\Omega)$. Similarly, using \eqref{weak2}, \eqref{s+5} and Theorem 3.14 of de Albuquerque--de Assis--Carvalho--Salort \cite{deAlbuquerque-deAssis-Carvalho-Salort-2023}, we deduce
	$u_{n}\to u$ strongly in $W_{0}^{s,\mathcal{G}}(\Omega)$. Combining these two strong convergences yields $u_{n}\to u$ strongly in $E$. This completes the proof.
\end{proof}

\begin{lemma}\label{cerami_condition}
	The functionals $J_{\pm}$ satisfy the \textnormal{(C)}-condition.
\end{lemma}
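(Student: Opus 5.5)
We treat $J_+$; the case of $J_-$ is symmetric. Let $\{u_n\}_{n\in\mathbb{N}}\subset E$ be a (C)$_c$-sequence, so that $J_+(u_n)\to c$ and $(1+\|u_n\|_E)\|J_+'(u_n)\|_*\to 0$. The plan has four steps. First we show $u_n^-\to 0$. Then we prove that $\{u_n^+\}$ is bounded by a rescaling argument that uses \eqref{H2}\eqref{H2v} in place of the Ambrosetti–Rabinowitz condition. This makes $\{u_n\}$ bounded, and the (S$_+$)-property of Lemma \ref{s_+} then gives strong convergence.

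\textbf{Step 1 ($u_n^-\to0$).} Test $J_+'(u_n)$ with $-u_n^-\in E$. The reaction term $\int_\Omega f(x,u_n^+)(-u_n^-)\dx$ vanishes. For the local part, $|\nabla u_n|^{p-2}\nabla u_n\cdot\nabla(-u_n^-)=|\nabla u_n^-|^p$. For the nonlocal parts we use the elementary inequality $|a-b|^{r-2}(a-b)(b^- - a^-)\ge |a^--b^-|^r$ for $r\in\{p,q\}$. Together these give $\eta(u_n^-)\le \langle J_+'(u_n),-u_n^-\rangle\to0$. By Lemma \ref{re2}, this yields $u_n^-\to0$ in $E$.

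\textbf{Step 2 (boundedness of $u_n^+$).} Suppose, along a subsequence, that $\|u_n^+\|_E\to\infty$, and set $y_n=u_n^+/\|u_n^+\|_E$. Passing to a subsequence, $y_n\rightharpoonup y$ in $E$, and by \eqref{embed} $y_n\to y$ in $L^r(\Omega)$ and a.e.

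If $y\ne0$, consider the set $\{y>0\}$, on which $u_n^+\to\infty$. Remark \ref{rem_condition}(iii) and Fatou's lemma give $\int_\Omega F(x,u_n^+)/\|u_n^+\|_E^q\dx\to\infty$. On the other hand, $J_+(u_n)\ge -C$ and the principal part is at most $\frac1p\eta(u_n)\le C\|u_n\|_E^q$, which bounds the same quotient. This is a contradiction.

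If $y=0$, we use the standard Cerami trick. For each $n$ pick $t_n\in[0,1]$ maximizing $t\mapsto J_+(tu_n)$. For a fixed $M>0$, Remark \ref{rem_condition}(ii) and $y_n\to0$ in $L^q(\Omega)$ give $J_+(M^{1/p}y_n)\ge \frac1p\min\{M,M^{q/p}\}\cdot c-o(1)$ for a constant $c>0$, using the modular bound $\eta(y_n)\ge \min\{1,\cdot\}$ from Lemma \ref{re2} and Step 1. Since $M$ is arbitrary, $J_+(t_nu_n)\to\infty$. Moreover $t_n\in(0,1)$ for large $n$, because $J_+(0)=0$ and $J_+(u_n)\to c$, so $\langle J_+'(t_nu_n),t_nu_n\rangle=0$. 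Hence
\[
qJ_+(t_nu_n)=\Big(\tfrac qp-1\Big)\Big(\|\nabla t_nu_n\|_p^p+\int_Q|D_s(t_nu_n)|^p\dv\Big)+\int_\Omega\big(f(x,t_nu_n^+)t_nu_n^+-qF(x,t_nu_n^+)\big)\dx .
\]
By \eqref{H2}\eqref{H2v}, the last integral is bounded by the corresponding one at $t=1$. The first term is bounded by its value at $t=1$ since $t_n\le1$. The resulting quantity is $qJ_+(u_n)-\langle J_+'(u_n),u_n\rangle+o(1)$, which is bounded. This contradicts $J_+(t_nu_n)\to\infty$.

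\textbf{Step 3 (convergence).} With $\{u_n\}$ bounded, extract $u_n\rightharpoonup u$ in $E$. By \eqref{H2}\eqref{H2ii} and the compact embedding \eqref{embed} (recall $r<p^*$), $\int_\Omega f(x,u_n^+)(u_n-u)\dx\to0$. Since $\langle J_+'(u_n),u_n-u\rangle\to0$, we get $\limsup\langle\mathcal A(u_n),u_n-u\rangle\le0$, and Lemma \ref{s_+} yields $u_n\to u$ in $E$.

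The main obstacle is Step 2, and in particular the case $y=0$. It requires the correct lower estimate of the modular of $M^{1/p}y_n$ across the two homogeneities $p$ and $q$. If this is awkward, the fallback is to use $\eta(y_n)\ge\|y_n\|_E^q=1$ when $\|y_n\|_E=1$ and choose scalings $\tau y_n$ with $\tau$ large, so that $\eta(\tau y_n)\ge\tau^p$.
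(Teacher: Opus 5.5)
Your proposal is correct and follows essentially the same route as the paper: you test with $-u_n^-$ to get $u_n^-\to0$, rule out $\|u_n^+\|_E\to\infty$ via $y_n=u_n^+/\|u_n^+\|_E$ (Fatou if $y\neq0$, a maximizer $t_n$ plus \eqref{H2}\eqref{H2v} if $y=0$), and conclude with Lemma \ref{s_+}; the only difference is that you maximize along $tu_n$ instead of $tu_n^+$, which needs the easy comparison $J_+(\tau u_n)\ge J_+(\tau u_n^+)$, true because $|\nabla u^+|\le|\nabla u|$ and $|D_s(u^+)|\le|D_s(u)|$. One slip: $\int_\Omega F(x,M^{1/p}y_n)\dx\to0$ comes from the growth bound \eqref{H2}\eqref{H2ii} (or Remark \ref{rem_condition}(i)) together with $y_n\to0$ in $L^r(\Omega)$, $r<p^*$, not from Remark \ref{rem_condition}(ii), which only bounds $F$ from below.
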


\begin{proof}
	We prove the result for $J_{+}$. The proof for $J_{-}$ is analogous.

	Let $\{u_{n}\}_{n\in\mathbb{N}}\subseteq E$ be a sequence such that, for some $c_{1}>0$,
	\begin{align*}
		|J_{+}(u_{n})|<c_{1}, \qquad (1+\|u_{n}\|_{E})J_{+}'(u_{n})\to 0 \quad \text{in } E^{*}.
	\end{align*}
	Thus,
	\begin{equation}\label{p11}
		\begin{aligned}
			&\bigg|\frac{1}{p}\int_{\Omega}|\nabla u_n|^p\dx+ \frac{1}{p}\int_{Q}|D_s(u_{n})|^p\dv\\
			&+\frac{1}{q}\int_{Q}b(x,y)|D_s(u_{n})|^q\dv-\int_{\Omega} F(x, u_n^{+})\dx\bigg|< c_1
		\end{aligned}
	\end{equation}
	and, for all $v\in E$,
	\begin{equation}\label{p222}
		\begin{aligned}
			& \bigg|\int_{\Omega}|\nabla u_n|^{p-2}\nabla u_n \cdot\nabla v\dx +\int_{Q}|D_s(u_{n})|^{p-2}D_s(u_{n})D_s(v)\dv \\
			&
			+\int_{Q}b(x,y)|D_s(u_{n})|^{q-2}D_s(u_{n})D_s(v)\dv-\int_{\Omega} f(x,u^+_n)v\dx\bigg|\\
			&\leq \frac{o_n(1)\|v\|_{E}}{(1+\|u_{n}\|_{E})}.
		\end{aligned}
	\end{equation}
	Taking $v=-u^{-}_n$ as a test function in \eqref{p222}, we obtain
	\begin{align*}
		\left|\int_{\Omega}|\nabla u^{-}_n|^p\dx+ \int_{Q}|D_s(u^{-}_n)|^p\dv+\int_{Q}b(x,y)|D_s(u^{-}_n)|^q\dv\right|\leq o_n(1).
	\end{align*}
	Therefore, using Lemma \ref{re2}, we have $\|u^{-}_{n}\|_{E}\to 0$, and consequently
	\begin{align}\label{c71}
		u^{-}_{n}\to 0 \quad\text{in } E.
	\end{align}
	Next, taking $v=u_n$ as a test function in \eqref{p222} and subtracting it from \eqref{p11} after multiplying with $q$, we obtain
	\begin{align}\label{p33}
		\left(\frac{q}{p}-1\right)\int_{\Omega}|\nabla u_n|^p\dx & + \left(\frac{q}{p}-1\right)\int_{Q}|D_s(u_{n})|^p\dv +\int_{\Omega} (u_n^{+}f(x, u_n^{+})-qF(x, u_n^{+}))\dx< \hat{c}_2
	\end{align}
	for some $\hat{c}_2>0$.

	We claim that $\{u^{+}_{n}\}_{n\in\mathbb{N}}$ is bounded in $E$. Suppose by contradiction that
	\begin{align}\label{c_3}
		\|u^{+}_{n}\|_{E}\to\infty.
	\end{align}
	Define $y_{n}=\frac{u^{+}_{n}}{\|u^{+}_{n}\|_{E}}$, then $\|y_{n}\|_{E}=1,$ which implies the boundedness of $\{y_{n}\}_{n\in\mathbb{N}}$ in $E$. By reflexivity of $E$, there exists $y\in E$ such that $y\geq 0$ and up to a subsequence, we have
	\begin{align}\label{conv_yn}
		\begin{cases}
			y_{n}   \rightharpoonup y & \text{in } E, \\
			y_{n}(x) \to y(x)  & \text{for a.a.\,} x\in\Omega, \\
			y_{n}  \to y & \text{in } L^{r}(\Omega) \text{ for } r\in[1,p^*).
		\end{cases}
	\end{align}
	We show that $y\equiv 0$. Suppose that  $\Omega^{+}=\{x\in\Omega\colon y^{+}(x)> 0\}$ has positive measure. Then
	\begin{align*}
		\lim_{n\to\infty}u^+_{n}(x)=\lim_{n\to\infty}y_{n}(x)\|u^+_{n}\|_{E}\to\infty \quad \text{in } \Omega^{+}.
	\end{align*}
	By Remark \ref{rem_condition} (iii), one gets
	\begin{align*}
		\lim_{n\to\infty}\frac{F(x,u^+_{n}(x))}{\|u^+_{n}\|_{E}^{q}}=\infty \quad \text{in } \Omega^{+}.
	\end{align*}
	Fatou's lemma yields
	\begin{align}\label{c1}
		\liminf_{n\to\infty}\int_{\Omega^+}\frac{F(x,u^+_{n}(x))}{\|u^+_{n}\|_{E}^{q}}\dx=\infty.
	\end{align}
	Using \eqref{H2} \eqref{H2ii} and \eqref{H2iv} gives
	\begin{align}\label{c2}
		F(x,s)\geq -c_2\quad \text{for a.a.\,} x\in\Omega \text{ and for all } s\in\R,
	\end{align}
	for some $c_2>0$. From \eqref{c1} and \eqref{c2}, we obtain
	\begin{align*}
		\int_{\Omega}\frac{F(x,u^+_{n}(x))}{\|u^+_{n}\|_{E}^{q}}\dx
		& =\int_{\Omega^+}\frac{F(x,u^+_{n}(x))}{\|u^+_{n}\|_{E}^{q}}\dx+\int_{\Omega\setminus \Omega^+}\frac{F(x,u^+_{n}(x))}{\|u^+_{n}\|_{E}^{q}}\dx \\
		& \geq \int_{\Omega^+}\frac{F(x,u^+_{n}(x))}{\|u^+_{n}\|_{E}^{q}}\dx-\frac{c_2}{\|u^+_{n}\|_{E}^{q}}|\Omega|.
	\end{align*}
	Thus, from \eqref{c_3} and   \eqref{c1}, we have
	\begin{align}\label{c4}
		\lim_{n\to\infty}\int_{\Omega}\frac{F(x,u^+_{n}(x))}{\|u^+_{n}\|_{E}^{q}}\dx=\infty.
	\end{align}
	By using \eqref{p11} and Lemma \ref{re2}, for sufficiently large $n$, we have, for some $c_3>0$,
	\begin{align*}
		\int_{\Omega}\frac{F(x,u^+_{n}(x))}{\|u^+_{n}\|_{E}^{q}}\dx
		& < c_3+\frac{1}{p}\int_{\Omega}\frac{|\nabla y_{n}(x)|^{p}}{\|u^+_{n}\|_{E}^{q-p}}\dx+ \frac{1}{p}\int_{Q}\dfrac{|y_{n}(x)-y_{n}(y)|^p}{\|u^+_{n}\|_{E}^{q-p}|x-y|^{N+sp}}\dxy \\
		&\quad +\frac{1}{q}\int_{Q}b(x,y)\dfrac{|y_{n}(x)-y_{n}(y)|^q}{|x-y|^{N+sq}}\dxy  \\
		& \leq c_3+ \frac{\eta(y_n)}{p}\\
		& = c_3+\frac{1}{p},
	\end{align*}
	which is a contradiction to \eqref{c4}. Hence, $y\equiv 0$ a.e.\,in $\Omega$.

	Let $0\leq t_{n}\leq 1$ be such that
    \begin{align*}
		J_+(t_{n}u^+_{n})=\max_{0\leq t\leq 1}J_+(tu^+_{n}).
	\end{align*}
	For any fixed $\lambda\geq 1$ there exists $n_0>0$ such that $0<\frac{\lambda}{\|u^+_{n}\|_{E}}\leq 1$ for all $n\geq n_0$. Hence
	\begin{align*}
		J_+(t_{n}u^+_{n})\geq J_+\left( \frac{\lambda}{\|u^+_{n}\|_{E}}u^+_{n}\right) =J_+(\lambda y_{n}),
	\end{align*}
	for all $n\geq n_0$. By \eqref{conv_yn}, we have
	\begin{align*}
		y_{n} \to 0 \quad\text{in } L^{r}(\Omega) \text{ for } r\in[1,p^*)
	\end{align*}
	which implies
	\begin{align}\label{conv_f(yn)}
		\int_{\Omega} F(x,\lambda y_{n})\dx\to 0.
	\end{align}
	By using  norm-modular relation and \eqref{conv_f(yn)}, we get, for some $c_4>0$,
	\begin{align*}
		J_+(\lambda y_{n})
		& = \frac{1}{p}\int_{\Omega}|\nabla (\lambda y_{n})|^p\dx+ \frac{1}{p}\int_{Q}|D_s(\lambda y_{n})|^p\dv+\frac{1}{q}\int_{Q}b(x,y)|D_s(\lambda y_{n})|^q\dv \\
		&\quad -\int_{\Omega} F(x,\lambda y_{n})\dx \\
		& \geq \frac{\lambda^p}{q} \eta(y_n)-\int_{\Omega} F(x,\lambda y_{n})\dx\\
		& \geq \frac{\lambda^p}{q} -c_4
	\end{align*}
	for all $n \geq n_{0}$. Because $\lambda \geq 1$ is arbitrarily chosen, we see that
	\begin{align}\label{c7}
		J_{+}(t_{n} u^+_{n}) \to +\infty \quad \text{as } n \to \infty.
	\end{align}

	Taking \eqref{p11} into account gives
	\begin{align}\label{c6}
		J_{+}(0) = 0
		\quad \text{and} \quad
		J_{+}(u^{+}_{n}) \leq c_5 \quad \text{for all } n \in \mathbb{N},
	\end{align}
	for some $c_5 > 0$. From \eqref{c7} and \eqref{c6} there exists $n_{2} \in \mathbb{N}$ such that
	\begin{align*}
		t_{n} \in (0,1) \quad \text{for all } n \geq n_{2}.
	\end{align*}
	Therefore, we have
	\begin{align*}
		0=\frac{\mathrm{d}}{\mathrm{d}t}J_+(tu^+_{n})\bigg|_{t=t_n} & =t_n^{p-1}\int_{\Omega}|\nabla (u^+_{n})|^p\dx+t_n^{p-1}\int_{Q}|D_s(u^+_{n})|^p\dv+t_n^{q-1}\int_{Q}b(x,y)|D_s(u^+_{n})|^q\dv \\
		&\quad -\int_{\Omega} f(x,t_nu^+_{n})u^+_{n}\dx \\
		& =\langle J'_+(t_nu^+_{n}),u^+_n\rangle, \quad \text{for all } n \geq n_{2}.
	\end{align*}
	By  \eqref{p33}, it follows
	\begin{align*}
		qJ_+(t_nu^+_{n}) & =\frac{q}{p}\int_{\Omega}|\nabla (t_nu^+_{n})|^p\dx+ \frac{q}{p}\int_{Q}|D_s(t_nu^+_{n})|^p\dv+\int_{Q}b(x,y)|D_s(t_nu^+_{n})|^q\dv \\
		&\quad -q\int_{\Omega} F(x,t_nu^+_{n})\dx \\
		& =\left(\frac{q}{p}-1\right)\int_{\Omega}|\nabla (t_nu^+_{n})|^p\dx+ \left(\frac{q}{p}-1\right)\int_{Q}|D_s(t_nu^+_{n})|^p\dv \\
		&\quad +\int_{\Omega} (t_nu_n^{+}f(x, t_nu_n^{+})-qF(x, t_nu_n^{+}))\dx\\
		& \leq \left(\frac{q}{p}-1\right)\int_{\Omega}|\nabla (u_{n})|^p\dx+ \left(\frac{q}{p}-1\right)\int_{Q}|D_s(u_{n})|^p\dv\\
		& \quad+\int_{\Omega} (u_n^{+}f(x,u_n^{+})-q F(x, u_n^{+}))\dx\\
		& \leq c_2,
	\end{align*}
	which is a contradiction to \eqref{c7}. Hence,  \eqref{c_3} cannot be true. Therefore, $\{u^{+}_n\}_{n\in\mathbb{N}}$ is bounded in $E$. Further from \eqref{c71}, we conclude that $\{u_n\}_{n\in\mathbb{N}}$ is bounded in $E$ as well. By reflexivity of $E$, there exists $u\in E$ such that up to a subsequence, we have
	\begin{align*}
		\begin{cases}
			u_{n}\rightharpoonup u &\text{in } E,\\
			u_{n}(x)\to u(x)& \text{for a.a.\,} x\in\Omega,\\
			u_{n}\to u& \text{in }L^{r}(\Omega)\text{ for }r\in [1,p^*).
		\end{cases}
	\end{align*}
	Taking $v=u_n-u$ as a test function in \eqref{p222}, we obtain
	\begin{equation}\label{pa1}
		\begin{aligned}
			&\bigg|\int_{\Omega}|\nabla u_n|^{p-2} \nabla u_n \cdot\nabla (u_n-u)\dx +\int_{Q}|D_s(u_n)|^{p-2}D_s(u_n)D_s(u_n-u)\dv \\
			& +\int_{Q}b(x,y)|D_s(u_n)|^{q-2}D_s(u_n)D_s(u_n-u)\dv-\int_{\Omega} f(x,u^+_n)(u_n-u)\dx\bigg|\leq o_n(1)\|u_n-u\|_{E}.
		\end{aligned}
	\end{equation}
	On the other hand, from \eqref{H2} \eqref{H2ii} and H\"{o}lder's inequality, we get
	\begin{equation}\label{pa2}
		\begin{aligned}
			\int_{\Omega} f(x,u^+_n)(u_n-u)\dx
			& \leq  c\int_{\Omega}  (u_n-u)\dx+c\int_{\Omega}  |u^+_n|^{r-1}(u_n-u)\dx\\
			&\leq c\|u_n-u\|_{1}+c\|u^+_n\|_r^{r-1}\|u_n-u\|_r\to 0.
		\end{aligned}
	\end{equation}
	By passing to the $\limsup$ as $n\to\infty$ in \eqref{pa1} and using \eqref{pa2}, we have
	\begin{align*}
		\limsup_{n\to\infty}&\bigg(\int_{\Omega}|\nabla u_n|^{p-2}\nabla u_n\cdot \nabla (u_n-u)\dx +\int_{Q}|D_s(u_n)|^{p-2}D_s(u_n)D_s(u_n-u)\dv \\
		&\quad +\int_{Q}b(x,y)|D_s(u_n)|^{q-2}D_s(u_n)D_s(u_n-u)\dv\bigg)\leq 0,
	\end{align*}
	that is,
	\begin{align*}\limsup_{n\to\infty}\langle \mathcal{A}(u_{n}),u_{n}-u\rangle \le 0.\end{align*} By Lemma \ref{s_+}, we have $u_n\to u$ in $E$. This completes the proof.
\end{proof}

\begin{lemma}\label{mpt_geometry_1}
	There exist positive real numbers $\delta$ and $\beta$ such that
	\begin{align*}
		J(u) \geq \beta > 0 \quad \text{and} \quad J_{\pm}(u) \geq \beta  > 0 \quad \text{for all }\, 0 < \|u\|_{E} < \delta.
	\end{align*}
\end{lemma}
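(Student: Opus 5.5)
We will prove the estimate in the form in which it is used for the mountain pass geometry of Theorem \ref{ce2}. We show that there is $\delta\in(0,1)$ with $J(u)\ge c_0\|u\|_E^q-C\|u\|_E^r>0$ for $0<\|u\|_E\le\delta$, and the same for $J_\pm$, and we take $\beta:=c_0\delta^q-C\delta^r$. Read literally, a bound $J\ge\beta>0$ on the whole punctured ball cannot hold, since $J(u)\to0$ as $u\to0$. So the uniform constant $\beta$ is attained on the sphere $\|u\|_E=\delta$, which is exactly what condition (i) of Theorem \ref{ce2} requires. Inside the ball we only get strict positivity.

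\emph{Step 1 (growth of $F$).} By Remark \ref{rem_condition}(i), for every $\varepsilon>0$ there is $c_*>0$ with $F(x,t)\le\varepsilon|t|^p+c_*|t|^r$. Since $|\pm u^\pm|\le|u|$, the same bound gives $\int_\Omega F(x,\pm u^\pm)\dx\le\varepsilon\|u\|_p^p+c_*\|u\|_r^r$. Hence one argument covers $J$, $J_+$ and $J_-$.

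\emph{Step 2 (absorbing the $p$-term; this is the main obstacle).} The natural lower bound $\eta(u)\ge\|u\|_E^q$ from Lemma \ref{re2}(iii) for $\|u\|_E<1$ is too weak to absorb $\varepsilon\|u\|_p^p\sim\|u\|_E^p$, because $q>p$. To get around this we use the local part of the operator directly. Let $\lambda_1>0$ be the first eigenvalue of $-\Delta_p$ in $W_0^{1,p}(\Omega)$, so that $\|u\|_p^p\le\lambda_1^{-1}\|\nabla u\|_p^p$. Choose $\varepsilon=\lambda_1/(2p)$. Then
\begin{align*}
J(u)\ge\frac1{2p}\|\nabla u\|_p^p+\frac1p\int_Q|D_s(u)|^p\dv+\frac1q\int_Q b(x,y)|D_s(u)|^q\dv-c_*\|u\|_r^r\ge\frac1{2q}\,\eta(u)-c_*\|u\|_r^r,
\end{align*}
where the last step uses $\frac{1}{2p}>\frac{1}{2q}$, $\frac1p>\frac1{2q}$ and $\frac1q>\frac1{2q}$. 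The same inequality holds for $J_\pm$.

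\emph{Step 3 (conclusion).} For $\|u\|_E<1$, Lemma \ref{re2}(iii) (equivalently, via the remark, with the Luxemburg norm) gives $\eta(u)\ge\|u\|_E^q$. The embedding \eqref{embed} gives $\|u\|_r\le\mathbb S_r\|u\|_E$, which is valid since $r<p^*$. Therefore
\begin{align*}
J(u),\,J_\pm(u)\ \ge\ \frac1{2q}\|u\|_E^q-c_*\mathbb S_r^r\|u\|_E^r.
\end{align*}
By Remark \ref{rem_condition}(iii) we have $q<r$. So there is $\delta\in(0,1)$ such that the right-hand side is positive on $(0,\delta]$; for instance $\delta^{r-q}<(4qc_*\mathbb S_r^r)^{-1}$ works. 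Setting $\beta=\frac1{4q}\delta^q$ gives $J,J_\pm\ge\beta$ on $\|u\|_E=\delta$, and $J,J_\pm>0$ for $0<\|u\|_E\le\delta$.
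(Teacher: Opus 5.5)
Your proof is correct and is essentially the paper's argument: you absorb the $\varepsilon|t|^p$ part of $F$ into $\frac1p\|\nabla u\|_p^p$ via the Poincar\'e inequality (your $\lambda_1^{-1}$ plays the role of the paper's $S_p^p$), then use $\eta(u)\ge\|u\|_E^q$ for $\|u\|_E<1$, the embedding $E\hookrightarrow L^r(\Omega)$, and $q<r$. Your caveat is also right: the paper's last line asserts $J\ge\beta$ on the whole punctured ball, which cannot hold because $J$ is continuous with $J(0)=0$, and its estimate really yields your version ($J,J_\pm\ge\beta$ on the sphere $\|u\|_E=\delta$ and $J,J_\pm>0$ inside); that is all Theorem \ref{ce2} needs, and it suffices for Proposition \ref{coercive} if one takes $\delta_1=\delta$ there.
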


\begin{proof}
	We demonstrate the claim for the functional $J$, as the same reasoning applies to $J_{\pm}$. By Remark \ref{rem_condition}, for every $\varepsilon > 0$ there exists a constant $c_{*} > 0$ such that
	\begin{align}\label{f_growth}
		F(x,t) \le \varepsilon |t|^{p} + c_{*} |t|^{r}, \quad \text{for a.a.\,} x \in \Omega \text{ and for all } t \in \mathbb{R}.
	\end{align}
	Let $\|u\|_{E} < 1$. Using \eqref{f_growth}, the Sobolev embedding \eqref{embed} and Lemma \ref{re2}, we estimate
	\begin{align*}
		J(u)
		& \ge \frac{1}{p} \int_{\Omega} |\nabla u|^{p} \dx + \frac{1}{p} \int_{Q} |D_{s}(u)|^{p} \dv + \frac{1}{q} \int_{Q} b(x,y) |D_{s}(u)|^{q} \dv - \varepsilon \int_{\Omega} |u|^{p} \dx - c_{*} \int_{\Omega} |u|^{r} \dx \\
		& \ge  \frac{1}{p}\mm{\na u}_p^{p} + \frac{1}{q} \Bigg(\int_{Q} |D_{s}(u)|^{p} \dv + \int_{Q} b(x,y) |D_{s}(u)|^{q} \dv \Bigg) - \varepsilon S^p_{p} \mm{\na u}_p^{p} - c_{*} \mathbb{S}^r_{r} \|u\|_{E}^{r}\\
		& \ge\mm{\na u}_p^{p} \Big(\frac{1}{p} - \varepsilon S^p_{p}\Big) + \frac{1}{q} \Bigg(\int_{Q} |D_{s}(u)|^{p} \dv + \int_{Q} b(x,y) |D_{s}(u)|^{q} \dv \Bigg)
		- c_{*} \mathbb{S}^r_{r} \|u\|_{E}^{r} \\
		& \ge  \min\left\{\Big(\frac{1}{p} - \varepsilon S^p_{p}\Big),\frac{1} {q}\right\}\eta(u) - c_{*} \mathbb{S}^r_{r} \|u\|_{E}^{r}\\
		& \ge  \min\left\{\Big(\frac{1}{p} - \varepsilon   S^p_{p}\Big),\frac{1}{q}\right\}\|u\|_{E}^{q} - c_{*} \mathbb{S}^r_{r} \|u\|_{E}^{r}.
	\end{align*}
	Now choose $\varepsilon > 0$ small such that $\frac{1}{p} - \varepsilon S^p_{p} > 0$, and take $\delta > 0$ sufficiently small so that
	\begin{align*}
		\beta := \min\left\{ \left( \frac{1}{p} - \varepsilon S^p_{p} \right), \frac{1}{q} \right\} \delta^{q}
		- c_{*} \mathbb{S}_{r} \delta^{r} > 0,
	\end{align*}
	which is possible since $q < r$ by Remark~\ref{rem_condition}.
	It follows that $J(u)\geq \beta> 0$ for all $0 < \|u\|_{E} < \delta$ and for some $\delta > 0$.
\end{proof}

\begin{lemma}\label{mpt_geometry_2}
	If $u \ge 0$ a.e.\,in $\Omega$, then $J_{\pm}(tu) \to -\infty$ as $t \to \pm\infty$.
\end{lemma}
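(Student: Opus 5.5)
We read the statement as follows: for $u\ge 0$ with $u\not\equiv 0$ we have $J_+(tu)\to-\infty$ as $t\to+\infty$, and $J_-(tu)\to-\infty$ as $t\to-\infty$. In both cases the argument $\pm(tu)^{\pm}$ of $F$ equals $tu$ with $|t|u\ge0$. The plan is to show that the $q$-superlinear growth of $F$, via Remark \ref{rem_condition}(ii), dominates the principal part. That part is homogeneous of degree at most $q$ in $|t|$.

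\textbf{Step 1 (principal part).} Fix $u\in E$, $u\ge0$, $u\not\equiv0$. For $|t|\ge1$, homogeneity of each term gives
\begin{align*}
	\frac{|t|^p}{p}\int_{\Omega}|\nabla u|^p\dx+\frac{|t|^p}{p}\int_Q|D_s(u)|^p\dv+\frac{|t|^q}{q}\int_Q b(x,y)|D_s(u)|^q\dv\le \frac{|t|^q}{p}\,\eta(u).
\end{align*}
Here $\eta(u)<\infty$ because $u\in E$.

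\textbf{Step 2 (nonlinear part).} For $J_+$ and $t>0$ we have $(tu)^+=tu$. For $J_-$ and $t<0$ we have $-(tu)^-=tu$, since $tu\le0$. In both cases $\int_\Omega F(x,\pm(tu)^{\pm})\dx=\int_\Omega F(x,tu)\dx$. Fix $M>0$, to be chosen. Remark \ref{rem_condition}(ii) gives
\begin{align*}
	\int_\Omega F(x,tu)\dx\ge \frac{M}{q}|t|^q\|u\|_q^q-c_M|\Omega|.
\end{align*}
We have $\|u\|_q<\infty$ because $q<p^*$ and \eqref{embed} holds. Also $\|u\|_q>0$ since $u\not\equiv0$; note that $u\in E$ vanishes outside $\Omega$, so $u\not\equiv0$ means $u\ne0$ on a set of positive measure in $\Omega$.

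\textbf{Step 3 (conclusion).} Combining the two steps,
\begin{align*}
	J_\pm(tu)\le |t|^q\Big(\frac{\eta(u)}{p}-\frac{M}{q}\|u\|_q^q\Big)+c_M|\Omega|.
\end{align*}
Choose $M$ large so that the bracket is negative, for instance $M>\frac{2q\,\eta(u)}{p\|u\|_q^q}$. Letting $|t|\to\infty$ in the appropriate direction gives $J_\pm(tu)\to-\infty$.

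The only delicate point is checking Remark \ref{rem_condition}(ii) itself. By \eqref{H2}\eqref{H2iv} there is $T_M$ with $f(x,t)\ge M t^{q-1}$ for $t\ge T_M$, and the symmetric bound for $t\le -T_M$. The growth condition \eqref{H2}\eqref{H2ii} bounds $|F|$ on $[-T_M,T_M]$. Integrating then yields $F(x,t)\ge \frac{M}{q}|t|^q-c_M$ for all $t$. We may take this as given, since it was stated earlier. The hypothesis $u\not\equiv0$ must be assumed, because for $u=0$ the conclusion fails trivially.
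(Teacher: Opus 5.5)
Your proof is correct and follows essentially the same route as the paper: bound the principal part by $\frac{|t|^q}{p}\eta(u)$ for $|t|\ge 1$, use the lower bound $F(x,t)\ge \frac{M}{q}|t|^q-c_M$ from Remark~\ref{rem_condition}(ii), and choose $M$ large enough that the coefficient of $|t|^q$ becomes negative. Your explicit treatment of $J_-$ as $t\to-\infty$ (using $-(tu)^-=tu$) and the added hypothesis $u\not\equiv 0$ are correct refinements of points the paper leaves implicit.
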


\begin{proof}
	We prove the statement for $J_{+}$. The case of $J_{-}$ follows analogously. By Remark \ref{rem_condition}, for every $M>0$ there exists a constant $c_M>0$ such that
	\begin{align}\label{f2_growth}
		F(x,t) \ge \frac{M}{q}|t|^{q} - c_M, \quad \text{for a.a.\,} x \in \Omega \text{ and for all }  t \in \mathbb{R}.
	\end{align}
	Let $u \in E$ with $u \ge 0$ a.e.\,in $\Omega$. For $t>1$, using \eqref{f2_growth} and Lemma \ref{re2}, we obtain
	\begin{align*}
		J_{+}(tu)
		& \le \frac{t^{p}}{p} \int_{\Omega} |\nabla u|^{p} \dx + \frac{t^{p}}{p} \int_{Q} |D_s(u)|^{p} \dv  + \frac{t^{q}}{q} \int_{Q} b(x,y) |D_s(u)|^{q} \dv - \frac{M}{q} \int_{\Omega} |t u|^{q} \dx + c_M |\Omega| \\
		& \le \frac{t^{q}}{p} \left[ \int_{\Omega} |\nabla u|^{p} \dx + \int_{Q} |D_s(u)|^{p} \dv + \int_{Q} b(x,y) |D_s(u)|^{q} \dv \right] - \frac{M t^{q}}{q} \int_{\Omega} |u|^{q} \dx + c_M |\Omega|  \\
		& \le \frac{t^{q}}{p} \left[ \eta(u) - \frac{M}{q} \int_{\Omega} |u|^{q} \dx \right] + c_M |\Omega|  \\
		& \le \frac{t^{q}}{p} \left[ \max\{\|u\|_{E}^{p}, \|u\|_{E}^{q}\} - \frac{M}{q} \int_{\Omega} |u|^{q} \dx \right] + c_M |\Omega|.
	\end{align*}
	Choosing $M>0$ sufficiently large so that
	\begin{align*}
		\max\{\|u\|_{E}^{p}, \|u\|_{E}^{q}\} - \frac{M}{q} \int_{\Omega} |u|^{q} \dx < 0,
	\end{align*}
	we conclude that $J_{+}(tu) \to -\infty$ as $t \to \infty$.
\end{proof}

\begin{proof}[Proof of Theorem \ref{t4}]
	By Lemmas \ref{mpt_geometry_1} and \ref{mpt_geometry_2}, the functionals $J_{\pm}$ satisfy the geometric assumptions of the mountain pass theorem stated in Theorem \ref{ce2}. Hence, by Theorem \ref{ce2}, for each functional $J_{\pm}$ there exists a (C)-sequence at the level
	\begin{align*}
		c_{\pm} = \inf_{\gamma \in \Gamma} \max_{t \in [0,1]}  J_{\pm}(\gamma(t)) > 0,
	\end{align*}
	where
	\begin{align*}
		\Gamma := \{ \gamma \in C([0,1],E) \colon  \gamma(0)  = 0, J_{\pm}(\gamma(1)) < 0 \}.
	\end{align*}
	Furthermore, by Lemma \ref{cerami_condition}, each functional $J_{\pm}$ satisfies the \textnormal{(C)}-condition. Therefore, by Theorem \ref{ce1}, there exist critical points $u_{*}, v_{*} \in E$ of $J_{+}$ and $J_{-}$ at the corresponding levels $c_{+}$ and $c_{-}$, respectively. Precisely,
	\begin{align*}
		J_{+}'(u_{*}) = 0, \quad J_{+}(u_{*}) = c_{+} > 0,\quad
		J_{-}'(v_{*}) = 0, \quad J_{-}(v_{*}) = c_{-} > 0.
	\end{align*}
	Since $c_{\pm} > 0$, the critical points $u_{*}$ and $v_{*}$ are nontrivial solutions of $J_{+}$ and $J_{-}$, respectively. As $\langle J_{+}'(u_{*}), v \rangle = 0$ for all $v \in E$, testing with $v = -u_{*}^{-}$ gives
	\begin{align*}
		\langle J_+'(u_{*}),-u_{*}^{-}\rangle
		& =-\int_{\Omega} |\nabla u_{*}|^{p-2} \nabla u_{*}\cdot\nabla u_{*}^{-}\dx-\int_{Q}|D_s(u_{*})|^{p-2}D_s(u_{*})D_s(u_{*}^{-})\dv \\
		& \quad-\int_{Q}b(x,y)|D_s (u_{*})|^{q-2}D_s(u_{*})D_s(u_{*}^{-})\dv+\int_{\Omega} f(x,(u^{+}_{*})) u_{*}^{-} \dx=0.
	\end{align*}
	Since $u_*^+u_*^-=0$ a.e.\,in $\Omega$, we have $f(x,(u^{+}_{*})) u_{*}^{-}=0$ a.e.\,in $\Omega$. Hence the last integral vanishes. Using the standard identities for positive and negative parts and the monotonicity inequalities, the remaining terms give $\eta(-u_{*}^{-}) \leq 0$. Then by Lemma \ref{re2}, we deduce that $u_{*}^{-} = 0$ a.e.\,in $\Omega$. Hence, $u_{*} \geq 0$ a.e.\,in $\Omega$. Similarly, by testing the equation for $J_{-}'(v_{*}) = 0$ with $v = v_{*}^{+}$, we obtain $v_{*} \leq 0$ a.e.\,in $\Omega$. This completes the proof.
\end{proof}

In the second part, we are interested in the existence of a least energy sign-changing solution of problem \eqref{frac_dbl}, which proves Theorem \ref{t5}. For this purpose, we introduce the set
\begin{align*}
	\s:=\left\{u\in E\colon  u^{\pm}\neq 0, \langle J'(u), u^+\rangle=\langle J'(u), -u^-\rangle=0\right\},
\end{align*}
which will serve as a natural constraint for sign-changing solutions. Before carrying out the variational analysis on $\s$, we derive several estimates that will be used later. For convenience, we set
\begin{align*}
	\R_+^N=\left\{x\in \R^N\colon  u(x)>0\right\} \quad\text{and}\quad \R_-^N=\left\{x\in \R^N\colon  u(x)\leq 0\right\}.
\end{align*}
Let $\tau, \gamma \geq 0$. We compute
\begin{align*}
	& \int_{\R^N}\int_{\R^N} \ve{D_s(\tau u^+-\ga u^-)}^{p-2}D_s(\tau u^+-\ga u^-)D_s(\tau u^+) \dv\\
	&=\int_{\R_+^N}\int_{\R_+^N}\ve{D_s(\tau u^+)}^{p} \dv\\
	& \quad +\int_{\R_+^N}\int_{\R_-^N}\ve{-\ga u^-(x)-\tau u^+(y)}^{p-2}(-\ga u^-(x)-\tau u^+(y)) (-\tau u^+(y)) \frac{\dv}{\ve{x-y}^{sp}}  \\
	& \quad +\int_{\R_-^N}\int_{\R_+^N}\ve{\tau u^+(x)+\ga u^-(y)}^{p-2}(\tau u^+(x)+\ga u^-(y))(\tau u^+(x))\frac{\dv}{\ve{x-y}^{sp}} \\
	& =\int_{\R_+^N}\int_{\R_+^N}\ve{D_s(\tau u^+)}^{p} \dv+\int_{\R_+^N}\int_{\R_-^N}({\ga u^-(x)+\tau u^+(y)})^{p-1}(\tau u^+(y))\frac{\dv}{\ve{x-y}^{sp}} \\
	& \quad +\int_{\R_-^N}\int_{\R_+^N}(\tau u^+(x)+\ga u^-(y))^{p-1}(\tau u^+(x))\frac{\dv}{\ve{x-y}^{sp}} \\
	& \geq \int_{\R_+^N}\int_{\R_+^N}\ve{D_s(\tau u^+)}^{p} \dv+\int_{\R_+^N}\int_{\R_-^N}(\tau u^+(y))^{p}\frac{\dv}{\ve{x-y}^{sp}}
	+\int_{\R_-^N}\int_{\R_+^N}(\tau u^+(x))^{p}\frac{\dv}{\ve{x-y}^{sp}} \\
	& =\int_{\R^N}\int_{\R^N} \ve{D_s(\tau u^+)}^p  \dv,
\end{align*}
where we have used the basic estimate $(a+b)^{p-1}a \geq a^p$ with $a,b \geq 0$ in the last inequality. Also, we have
\begin{align*}
	D_s(\tau u^+-\ga u^-)D_s(\tau u^+)  \leq D_s^2(\tau u^+-\ga u^-).
\end{align*}

Following similar steps, one can get the following estimates:
\begin{equation}\label{est}
	\begin{aligned}
		\int_{\R^N}\int_{\R^N} \ve{D_s(\tau u^+-\ga u^-)}^{p-2}D_s(\tau u^+-\ga u^-)D_s(\tau u^+) \dv & \geq \int_{\R^N}\int_{\R^N} \ve{D_s(\tau u^+)}^p  \dv, \\
		\int_{\R^N}\int_{\R^N} \ve{D_s(\tau u^+-\ga u^-)}^{p-2}D_s(\tau u^+-\ga u^-)D_s(-\ga u^-) \dv & \geq \int_{\R^N}\int_{\R^N} \ve{D_s(-\ga u^-)}^p  \dv,  \\
		\int_{\R^N}\int_{\R^N} \ve{D_s(\tau u^+-\ga u^-)}^{p-2}D_s(\tau u^+-\ga u^-)D_s(\tau u^+) \dv & \leq  \int_{\R^N}\int_{\R^N} \ve{D_s(\tau u^+-\ga u^-)}^{p} \dv, \\
		\int_{\R^N}\int_{\R^N} \ve{D_s(\tau u^+-\ga u^-)}^{p-2}D_s(\tau u^+-\ga u^-)D_s(-\ga u^-) \dv & \leq \int_{\R^N}\int_{\R^N} \ve{D_s(\tau u^+-\ga u^-)}^{p}\dv.
	\end{aligned}
\end{equation}
Analogous estimates hold when $p$ is replaced by $q$.

Next, we prove several propositions that will be used in the proof of Theorem \ref{t5}.

\begin{proposition}\label{sprop1}
	Let $u\in E$ with $u^{\pm}\neq 0$. Then there exists a unique pair of positive numbers $(\tau_u,\ga_u)$ such that $\tau_u u^+-\ga_u u^-\in \s$. In addition, if $u\in \s$, then
	\begin{align*}
		J(\al u^+-\ba u^-) \leq J(u^+-u^-)=J(u),
	\end{align*}
	for all $\al, \ba \geq 0$ and equality holds if and only if $(\al, \ba )=(1,1)$. Moreover, for every $u \in \s$, the following properties hold:
	\begin{enumerate}
		\item[\textnormal{(i)}]
			if $\tau>1$ and $0<\ga \leq \tau$, then $\langle J'(\tau u^+-\ga u^-), \tau u^+\rangle<0$;
		\item[\textnormal{(ii)}]
			if $\tau<1$ and $0<\tau \leq \ga$, then $\langle J'(\tau u^+-\ga u^-), \tau u^+\rangle>0$;
		\item[\textnormal{(iii)}]
			if $\ga>1$ and $0<\tau \leq \ga$, then $\langle J'(\tau u^+-\ga u^-), -\ga u^-\rangle<0$;
		\item[\textnormal{(iv)}]
			if $\ga<1$ and $0<\ga \leq \tau$, then $\langle J'(\tau u^+-\ga u^-), -\ga u^-\rangle>0$.
	\end{enumerate}
\end{proposition}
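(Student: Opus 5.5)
We follow the scheme of Crespo-Blanco--Gasi\'{n}ski--Winkert and Zeng--Lu--R\u{a}dulescu--Winkert. We start with (i)--(iv), since existence and uniqueness will follow from them.

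\textbf{Step 1: the sign statements (i)--(iv).} Take (i), so $u\in\s$, $\tau>1$ and $0<\ga\le\tau$. The local $p$-Laplacian term splits exactly: $\nabla u^+\cdot\nabla u^-=0$ a.e., so it equals $\tau^p\|\nabla u^+\|_p^p$. For the nonlocal $p$-term we use homogeneity, $D_s(\tau u^+-\ga u^-)=\tau D_s(u^+-\tfrac{\ga}{\tau}u^-)$, and the pointwise monotonicity in the coefficient of $-u^-$. Indeed, on the cross region where $x\in\R^N_+$ and $y\in\R^N_-$, the quantity $(\tau u^+(x)+\ga u^-(y))^{p-1}\tau u^+(x)$ is nondecreasing in $\ga$, and the diagonal blocks do not depend on $\ga$. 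Since $\ga\le\tau$, this gives
\begin{align*}
\int_Q|D_s(\tau u^+-\ga u^-)|^{p-2}D_s(\tau u^+-\ga u^-)D_s(\tau u^+)\dv\le \tau^p\int_Q|D_s(u)|^{p-2}D_s(u)D_s(u^+)\dv .
\end{align*}
The $b$-weighted $q$-term is bounded in the same way by $\tau^q\int_Q b|D_s(u)|^{q-2}D_s(u)D_s(u^+)\dv$. Since $\tau>1$, every principal term is at most $\tau^q$ times the corresponding term for $u$. Using $\langle J'(u),u^+\rangle=0$, this yields
\begin{align*}
\langle J'(\tau u^+-\ga u^-),\tau u^+\rangle\le \tau^q\int_\Omega\Big(f(x,u^+)u^+-\frac{f(x,\tau u^+)\tau u^+}{\tau^q}\Big)\dx .
\end{align*}
The right-hand side is negative by the strict monotonicity of $t\mapsto f(x,t)/|t|^{q-1}$ in (H$_2$)(vi), because $u^+\neq0$ on a set of positive measure. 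Item (ii) is the reverse argument: for $\tau<1$ the powers satisfy $\tau^p\ge\tau^q$, so we factor out $\tau^q$ as a lower bound. Items (iii) and (iv) follow by symmetry with $u^-$.

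\textbf{Step 2: existence and uniqueness of $(\tau_u,\ga_u)$.} For general $u$ with $u^\pm\ne0$ we set
\begin{align*}
d(\tau,\ga)=\big(\langle J'(\tau u^+-\ga u^-),\tau u^+\rangle,\ \langle J'(\tau u^+-\ga u^-),-\ga u^-\rangle\big).
\end{align*}
For small $\tau$, uniformly in $\ga\in[0,R]$, the first component is positive. This follows from the lower bound \eqref{est}, which gives principal part at least $\tau^p\|\nabla u^+\|_p^p+\int|D_s(\tau u^+)|^p\dv$, together with (H$_2$)(iii) and the estimate $f(x,t)t\le\varepsilon|t|^p+c|t|^r$ with $r>p$. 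For large $\tau$ the first component is negative. Here the upper bound in \eqref{est} controls the principal part by $c\,\tau^q$ times a quantity bounded for $\ga\le R$, and (H$_2$)(iv) makes $\int f(x,\tau u^+)\tau u^+\dx$ grow faster than $\tau^q$. The second component behaves symmetrically. Working on a suitable square $[r_0,R]^2$, continuity of $d$ and Theorem \ref{IVP}, after shifting to a centered box, give a zero, that is, $\tau_uu^+-\ga_uu^-\in\s$.

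For uniqueness, suppose two pairs exist and set $w=\tau_1u^+-\ga_1u^-\in\s$. The second pair equals $(\tau_2/\tau_1)w^+-(\ga_2/\ga_1)w^-$. Assume without loss of generality that $\tau_2/\tau_1\ge\ga_2/\ga_1$. If $\tau_2/\tau_1>1$, Step 1(i) gives a contradiction; if it is $\le1$, then $\ga_2/\ga_1<1$ unless both ratios equal $1$, and Step 1(iv) gives a contradiction. Hence both ratios equal $1$.

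\textbf{Step 3: the maximization property.} Let $u\in\s$ and $\phi(\al,\ba)=J(\al u^+-\ba u^-)$ on $[0,\infty)^2$. By (H$_2$)(iv), $\phi\to-\infty$ as $|(\al,\ba)|\to\infty$, so $\phi$ attains its maximum. The maximum cannot lie on an axis: for instance $\phi(\al,0)<\phi(\al,\ba)$ for small $\ba>0$, because $J(\al u^+-\ba u^-)\ge J(\al u^+)+J(-\ba u^-)$ (the cross terms only help) and $J(-\ba u^-)>0$ for small $\ba$ by Lemma \ref{mpt_geometry_1}. An interior maximizer $(\al,\ba)$ satisfies $\partial_\al\phi=\partial_\ba\phi=0$, that is, $\al u^+-\ba u^-\in\s$. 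By uniqueness this forces $(\al,\ba)=(1,1)$, which gives both the inequality and the equality case.

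\textbf{Main obstacle.} The delicate point is the monotonicity comparison of the nonlocal cross terms in Step 1, where positive and negative parts interact. I would verify the pointwise inequality on the four blocks of $\R^N_\pm\times\R^N_\pm$ directly, as in the computation preceding \eqref{est}.
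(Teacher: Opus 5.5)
Your proposal is correct and follows essentially the same route as the paper: Poincar\'{e}--Miranda on a square for existence, the strict monotonicity in (H$_2$)(vi) for the sign properties (i)--(iv), uniqueness by rescaling to an element of $\s$ and comparing the ratios, and the maximization argument, where decay at infinity and exclusion of boundary maxima force any maximizer to be an interior critical point lying in $\s$. Your version is more explicit than the paper at two points it glosses over: the blockwise monotonicity in $\gamma/\tau$, which justifies bounding the nonlocal cross terms by $\tau^p$ (hence $\tau^q$) times those of $u$, and the exclusion of axis maxima via $J(\alpha u^+-\beta u^-)\geq J(\alpha u^+)+J(-\beta u^-)$ together with Lemma \ref{mpt_geometry_1}, which replaces the paper's partial-derivative computation.
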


\begin{proof}
	We will establish this proposition in four steps.

	\noindent \textbf{Step I:} Existence of the pair $(\tau_u,\ga_u)$.

	For $u\in E$ with $u^{\pm}\neq 0$, we have $\ve{u(x)}>0$ for a.a.\,$x\in \Om$. For $s\in(0,1)$, by hypothesis \eqref{H2} \eqref{H2vi}, we have
	\begin{align*}
		\frac{f(x,su)su}{\ve{su}^{q}}\leq\frac{f(x,u)u}{\ve{u}^{q}} \quad \text{for a.a.\,} x \in \Om,
	\end{align*}
	which implies
	\begin{align}\label{f5}
		f(x,su) su \leq s^{q}f(x,u)u \quad \text{for a.a.\,} x \in \Om.
	\end{align}
	For every $\ga \geq 0$ and $\tau>0$ sufficiently small, using \eqref{est} and \eqref{f5}, we obtain
	\begin{align*}
		\langle J'(\tau u^+-\ga u^-), \tau u^+\rangle & =\int_{\Omega} \ve{\na(\tau u^+-\ga u^-)}^{p-2}\na(\tau u^+-\ga u^-) \na(\tau u^+)\dx\\
		&\quad +\int_{Q}\ve{D_s(\tau u^+-\ga u^-)}^{p-2} D_s(\tau u^+-\ga u^-)D_s(\tau u^+) \dv\\
		& \quad +\int_{Q}b(x,y)\ve{D_s(\tau u^+-\ga u^-)}^{q-2} D_s(\tau u^+-\ga u^-)D_s(\tau u^+) \dv  \\
		& \quad-\int_{\Omega} f(x,\tau u^+-\ga u^-)\tau u^+\dx  \\
		& \geq \mm{\na(\tau u^+)}_p^p+\int_{Q}\lt(\ve{D_s(\tau u^+)}^{p}+b(x,y)\ve{D_s(\tau u^+)}^q \ri)\dv \\
		& \quad-\int_{\Omega}f(x,\tau u^+)\tau u^+\dx\\
		& \geq \tau ^p \mm{\na u^+}_p^p-\tau^q \int_{\Omega}f(x, u^+)u^+\dx>0.
	\end{align*}
	For every $\tau \geq 0$ and $\ga>0$ sufficiently small, using \eqref{est} and \eqref{f5}, we obtain
	\begin{align*}
		\langle J'(\tau u^+-\ga u^-), -\ga u^-\rangle
		& \geq  \mm{\na(-\ga u^-)}_p^p+\int_{Q}\lt(\ve{D_s(-\ga u^-)}^p+b(x,y)\ve{D_s(-\ga u^-)}^q\ri) \dv \\
		&\quad  -\int_{\Omega}f(x,-\ga u^-)(-\ga u^-)\dx\\
		& \geq \ga^p \mm{\na u^-}_p^p-\ga^q \int_{\Omega}f(x,-u^-)(-u^-)\dx>0.
	\end{align*}
	Thus, there exists a positive number $\al>0$ such that
	\begin{align}\label{ex1}
		\langle J'(\al u^+-\ga u^-), \al u^+\rangle>0 \quad \text{and} \quad \langle J'(\tau u^+-\al u^-), -\al u^-\rangle>0
	\end{align}
	for all $\tau, \ga >0$.
	Further, choose a number $\ba>\max\{1,\al\}$. For $\ga \in [0,\ba]$, using \eqref{est} and \eqref{H2} \eqref{H2iv}, we get
	\begin{align*}
		\frac{\langle J'(\ba u^+-\ga u^-), \ba u^+\rangle}{\ba^q}
		& =\frac{1}{\ba^q}\int_{\Omega} \ve{\na(\ba u^+-\ga u^-)}^{p-2}\na(\ba u^+-\ga u^-) \na(\ba u^+)\dx\\
		& \quad +\frac{1}{\ba^q}\int_{Q}\ve{D_s(\ba u^+-\ga u^-)}^{p-2} D_s(\ba u^+-\ga u^-)D_s(\ba u^+) \dv\\
		& \quad +\frac{1}{\ba^q}\int_{Q}b(x,y)\ve{D_s(\ba u^+-\ga u^-)}^{q-2} D_s(\ba u^+-\ga u^-)D_s(\ba u^+) \dv \\
		& \quad -\frac{1}{\ba^q}\int_{\Omega} f(x,\ba u^+-\ga u^-)\ba u^+\dx\\
		& \leq \frac{\mm{\na(\ba u^+)}_p^p}{\ba^q}+\frac{1}{\ba^q} \int_{Q}\ve{D_s(\ba u^+-\ga u^-)}^{p}\dv \\
		& \quad +\frac{1}{\ba^q} \int_{Q} b(x,y)\ve{D_s(\ba u^+-\ga u^-)}^{q} \dv
		-\frac{1}{\ba^q} \int_{\Omega}f(x,\ba u^+)\ba u^+\dx\\
		& \leq\mm{\na(u^+)}_p^p+\int_{Q}\lt(\ve{D_s(u^+- u^-)}^{p}+b(x,y)\ve{D_s( u^+-u^-)}^{q}\ri) \dv\\
		& \quad - \int_{\Omega}\frac{f(x,\ba u^+) (u^+)^q}{(\ba u^+)^{q-1}}\dx <0
	\end{align*}
	for $\ba >0$ large enough.
	Following similar steps for $\tau \in [0,\ba]$ and for $\ba$ large enough, one can get
	\begin{align*}
		\frac{\langle J'(\tau u^+-\ba u^-), -\ba u^-\rangle}{\ba ^q}<0.
	\end{align*}
	Hence, for $\tau,\ga \in[0, \ba]$  and $\ba$ large enough, we have
	\begin{align}\label{ex2}
		\langle J'(\ba u^+-\ga u^-), \ba u^+\rangle<0
		\quad\text{and}\quad
		\langle J'(\tau u^+-\ba u^-), -\ba u^-\rangle<0.
	\end{align}
	Define the map $\Gamma_u \colon  [0,\infty)^2 \to \R^2$ by
	\begin{align*}
		\Gamma_u(\tau, \ga):=\lt(\langle J'(\tau u^+-\ga u^-), \tau u^+\rangle,\langle J'(\tau u^+-\ga u^-), -\ga u^-\rangle\ri).
	\end{align*}
	From \eqref{ex1}, \eqref{ex2} and Theorem \ref{IVP}, we can find a pair $(\tau_u,\ga_u)\in [\al,\ba] \times [\al,\ba]$ such that $\Gamma_u(\tau_u,\ga_u)=(0,0)$. Thus, we get the existence of a pair $(\tau_u,\ga_u)$ such that $\tau_u u^+-\ga_u u^-\in \s$.

	\noindent\textbf{Step II:} Sign of $\langle J'(\tau u^+-\ga u^-), \tau u^+\rangle$ and $\langle J'(\tau u^+-\ga u^-), -\ga u^-\rangle$.

	Let $u \in \s$. Then by definition of $S$,
	\begin{equation}\label{u+}
		\begin{aligned}
			0 = \langle J'(u),  u^+\rangle
			&= \mm{\na u^+}_p^p+\int_{Q} \lt(\ve{D_s(u)}^{p-2}+b(x,y) \ve{D_s(u)}^{q-2}\ri)D_s(u)D_s(u^+) \dv\\ &\quad-\int_{\Omega}f(x,u^+)u^+\dx
		\end{aligned}
	\end{equation}
	and
	\begin{equation}\label{u-}
		\begin{aligned}
			0 = \langle J'(u),  -u^-\rangle &=\mm{\na u^-}_p^p+\int_{Q} \lt(\ve{D_s(u)}^{p-2}+b(x,y) \ve{D_s(u)}^{q-2}\ri)D_s(u)D_s(-u^-) \dv\\
			&\quad-\int_{\Omega}f(x,-u^-)(-u^-)\dx.
		\end{aligned}
	\end{equation}
	We prove the four sign properties by contradiction.

	\noindent\textit{Case II.1:} Let $\tau>1$ and $0<\ga \leq \tau$. Assume that $\langle J'(\tau u^+-\ga u^-), \tau u^+\rangle\geq0$. Then
	\begin{align*}
		0
		& \leq \langle J'(\tau u^+-\ga u^-), \tau u^+\rangle\\
		& =\mm{\na (\tau u^+)}_p^p+\int_{Q} \ve{D_s(\tau u^+-\ga u^-)}^{p-2}D_s(\tau u^+-\ga u^-)D_s(\tau u^+) \dv \\
		& +\int_{Q}b(x,y) \ve{D_s(\tau u^+-\ga u^-)}^{q-2}D_s(\tau u^+-\ga u^-)D_s(\tau u^+) \dv
		-\int_{\Omega}f(x,\tau u^+)\tau u^+\dx
	\end{align*}
	and
	\begin{equation}\label{u+2}
		\begin{aligned}
			0
			& \leq \tau^q \lt(\mm{\na u^+}_p^p+\int_{Q} \lt(\ve{D_s(u)}^{p-2}+b(x,y) \ve{D_s(u)}^{q-2}\ri)D_s(u)D_s(u^+) \dv\ri) \\
			&\quad -\int_{\Omega}f(x,\tau u^+)\tau u^+\dx.
		\end{aligned}
	\end{equation}
	Divide \eqref{u+2} by $\tau^q$ and use \eqref{u+}, we deduce
	\begin{align*}
		\int_{\Omega}\lt(\frac{f(x,\tau u^+)}{(\tau u^+)^{q-1}}-\frac{f(x,u^+)}{{(u^+)}^{q-1}}\ri)(u^+)^q \dx \leq 0,
	\end{align*}
	which is a contradiction because of \eqref{H2} \eqref{H2vi}. Thus, \begin{align*}
		\langle J'(\tau u^+-\ga u^-), \tau u^+\rangle<0.
	\end{align*}
	\textit{Case II.2:} For $\tau<1$ and $0<\tau \leq \ga$, assume that $\langle J'(\tau u^+-\ga u^-), \tau u^+\rangle\leq 0$. Repeating a similar argument to that in Case II.1, we again obtain a contradiction. Therefore, $\langle J'(\tau u^+-\ga u^-), \tau u^+\rangle>0$.

	\noindent \textit{Case II.3:} For  $\ga>1$ and $0<\tau \leq \ga$, assume $\langle J'(\tau u^+-\ga u^-), -\ga u^-\rangle\geq 0$.
	Then
	\begin{align*}
		0 & \leq \langle J'(\tau u^+ - \ga u^-),-\ga u^-\rangle =\mm{\na(\ga u^-)}_p^p   \\
		& \quad +\int_{Q} \lt(\ve{D_s(\tau u^+-\ga u^-)}^{p-2}+b(x,y) \ve{D_s(\tau u^+-\ga u^-)}^{q-2}\ri)D_s(\tau u^+-\ga u^-)D_s(-\ga u^-) \dv \\
		& \quad -\int_{\Omega}f(x,- \ga u^-)(- \ga u^-)\dx
	\end{align*}
	and
	\begin{equation}
		\begin{aligned}
			\label{u-2} 0 & \leq \ga^q \lt(\mm{\na u^-}_p^p+\int_{Q} \lt(\ve{D_s(u)}^{p-2}+b(x,y) \ve{D_s(u)}^{q-2}\ri)D_s(u)D_s(-u^-)\ \dv \ri) \\
			& -\int_{\Omega}f(x,- \ga u^-)(- \ga u^-)\dx.
		\end{aligned}
	\end{equation}
	Divide \eqref{u-2} by $\ga^q$ and use \eqref{u-}, we deduce
	\begin{align*}
		\int_{\Omega} \lt(\frac{f(x,-u^-)}{(u^-)^{q-1}}-\frac{f(x,-\ga u^-)}{(\ga u^-)^{q-1}}\ri)(u^-)^{q}\dx \leq 0.
	\end{align*}
	Again this contradicts \eqref{H2} \eqref{H2vi}, since $\gamma>1$. Thus, $\langle J'(\tau u^+-\ga u^-), -\ga u^-\rangle< 0$.

	\noindent\textit{Case II.4:}
	If $\ga<1$ and $0<\gamma \leq \tau$, the same argument as in Case II.3 gives the opposite sign, and we omit the repeated details.

	\noindent\textbf{Step III:} The pair $(\tau_u, \ga_u)$ is unique.

	We prove uniqueness in two cases.

	\noindent\textit{Case III.1:} $u \in \s$

	We claim that $(\tau_u, \ga_u)=(1,1)$ is the unique pair such that $\tau_u u^+-\ga_u u^- \in \s$. Assume that there exists another pair $(\tau,\ga)\neq (1,1)$ such that $\tau u^+-\ga u^- \in \s$. If $0<\tau \leq \ga$, then Case II.2 and Case II.3 from Step II give $1\leq \tau \leq \ga \leq 1$. On the other hand, if $0< \ga \leq \tau$, then Case II.1 and Case II.4 from Step II imply $1 \leq \ga \leq \tau\leq 1$. In both situations we conclude $\tau=\ga=1$. Therefore, if $u\in\s$ the only pair producing an element of $\s$ is $(1,1)$.

	\noindent\textit{Case III.2:} $u\notin \s$

	Assume there exist two pairs $(\tau_1,\ga_1)$ and $(\tau_2, \ga_2)$ such that
	\begin{align*}
		w_1:=\tau_1 u^+-\ga_1 u^- \in \s \quad \text{and} \quad  w_2:=\tau_2 u^+-\ga_2 u^- \in \s.
	\end{align*}
	Then we obtain
	\begin{align}\label{unique}
		w_2=\frac{\tau_2}{\tau_1} \tau_1 u^+ - \frac{\ga_2}{\ga_1} \ga_1 u^-=\frac{\tau_2}{\tau_1} w_1^+- \frac{\ga_2}{\ga_1} w_1^-\in \s.
	\end{align}
	Since $w_1\in \s$, Case III.1 implies that $(1,1)$ is the unique pair such that $1\cdot w_1^+-1\cdot w_1^-\in \s$. Thus, from \eqref{unique}, we get
	\begin{align*}
		\frac{\tau_2}{\tau_1}=\frac{\ga_2}{\ga_1}=1,
	\end{align*}
	which leads to $\tau_1=\tau_2$ and $\ga_1=\ga_2$.

	\noindent\textbf{Step IV:} Unique maximum point

	Define $T_u\colon  [0,\infty)\times [0,\infty) \to \R$ by
	\begin{align*}
		T_u(\tau, \ga)=J(\tau u^+- \ga u^-).
	\end{align*}
	We claim that the unique pair $(\tau_u, \ga_u)$ obtained in Step III is the unique maximum of $T_u$ in $[0,\infty)\times [0,\infty)$. We will prove that $T_u$ has a maximum which cannot be achieved at the boundary of $[0,\infty)\times [0,\infty)$.

	For $\tau \geq \ga \geq 1$, we have
	\begin{align*}
		\frac{T_u(\tau, \ga)}{\tau^{q}}
		& = \frac{J(\tau u^+- \ga u^-)}{\tau^{q}} \\
		& =\frac{1}{p \tau^q}\mm{\na(\tau u^+- \ga u^-)}_p^p+\frac{1}{p\tau^{q}}\int_{Q}\ve{D_s(\tau u^+- \ga u^-)}^p\dv \\
		&\quad +\frac{1}{q\tau^{q}}\int_{Q}b(x,y)\ve{D_s(\tau u^+- \ga u^-)}^q \dv
		-\int_{\Omega}\frac{F(x,\tau u^+- \ga u^-)}{\tau^q}\dx \\
		& \leq \frac{1}{p}\eta(u)-\int_{\Omega}\frac{F(x,\tau u^+)}{(\tau u^+)^q}(u^+)^q\dx-\int_{\Omega}\frac{F(x,- \ga u^-)(u^-)^q}{\ve{-\ga u^-}^q}\frac{\ga^q}{\tau^q}\dx.
	\end{align*}
	By Remark \ref{rem_condition} and the above estimate, it follows that
	\begin{align*}
		\lim_{\ve{(\tau,\ga)}\to \infty}T_u(\tau, \ga)=-\infty,
	\end{align*}
	implying that $T_u$ admits a maximum. Further, assume that $(0,\ga_*)$ is a maximum point of $T_u$ with $\ga_* \geq0$. For $\tau<1$, we compute
	\begin{align*}
		\frac{\pa T_u(\tau, \ga_*)}{\pa \tau} & =\frac{\pa J(\tau u^+-\ga_*u^-)}{\pa \tau} \\
		& =\tau^{p-1}\mm{\na(u^+)}_p^p+\int_{Q}\ve{D_s(\tau u^+- \ga_* u^-)}^{p-2}D_s(\tau u^+- \ga_* u^-)D_s(u^+) \dv \\
		& \quad+\int_{Q}b(x,y)\ve{D_s(\tau u^+- \ga_* u^-)}^{q-2}D_s(\tau u^+- \ga_* u^-)D_s(u^+) \dv \\
		& \quad -\int_{\Omega} f(x,\tau u^+- \ga_* u^-)u^+\dx \\
		& \geq \tau^{p-1}\mm{\na(u^+)}_p^p+\int_{Q}\lt(\tau^{p-1}\ve{D_s(u^+)}^p+\tau^{q-1}\ve{D_s(u^+)}^q\ri)\dv      \\
		& \quad-\int_{\Omega} f(x,\tau u^+- \ga_* u^-)u^+\dx  \\
		& \geq \tau^{p-1}\lt(\mm{\na(u^+)}_p^p-\int_{\Omega}\frac{f(x,\tau u^+)}{(\tau u^+)^{p-1}}(u^+)^{p}\dx\ri).
	\end{align*}
	For $\tau>0$ sufficiently small, the above estimate together with \eqref{H2} \eqref{H2iii} yields $\frac{\pa T_u(\tau, \ga_*)}{\pa \tau}>0$. Thus, $(0,\ga_*)$ cannot be a maximum point since $T_u$ is increasing for $\tau \in (0, \varepsilon)$ with $\varepsilon>0$ small.

	A completely analogous argument shows that a point of the form $(\tau_*, 0)$ cannot be the maximum point of $T_u$. Hence, the global maximum $(\tau, \ga)$ lies in $(0,L) \times (0,L)$ for some $L>0$. From Step I and Step III, we conclude that $(\tau_u,\ga_u)$ is the only point of maximum.
\end{proof}

\begin{proposition} \label{sprop2}
	Let $u \in E$ with $u^\pm \neq 0$ such that $\langle J'(u), u^+\rangle \leq 0$ and $\langle J'(u), -u^-\rangle \leq 0$. Then $0<\tau_u,\ga_u\leq1$, where $\tau_u$ and $\ga_u$ are obtained in Proposition \ref{sprop1}. In addition, if $\langle J'(u), u^+\rangle \geq 0$ and $\langle J'(u), -u^-\rangle \geq 0$, then $\tau_u,\ga_u\geq1$.
\end{proposition}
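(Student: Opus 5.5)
Without loss of generality assume $\tau_u\ge\ga_u$; the case $\ga_u\ge\tau_u$ is symmetric, with the roles of $u^+$ and $-u^-$ exchanged. The plan is to prove the first assertion by contradiction: suppose $\tau_u>1$. Set $w:=\tau_u u^+-\ga_u u^-\in\s$ and write $u=\frac{1}{\tau_u}w^+-\frac{1}{\ga_u}w^-$. Since $0<\ga_u\le\tau_u$, we have $\frac1{\tau_u}\le\frac1{\ga_u}$ and $\frac1{\tau_u}<1$. We then apply Proposition \ref{sprop1}\textnormal{(ii)} to $w\in\s$ with $\tau=1/\tau_u$ and $\ga=1/\ga_u$. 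This gives
\begin{align*}
	\langle J'(u),\tfrac{1}{\tau_u}w^+\rangle=\langle J'(u),u^+\rangle>0,
\end{align*}
which contradicts the hypothesis $\langle J'(u),u^+\rangle\le 0$. Hence $\tau_u\le 1$, and therefore $\ga_u\le\tau_u\le 1$. Positivity of $\tau_u,\ga_u$ comes directly from Proposition \ref{sprop1}.

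The main subtlety is ordering the two parameters so that the hypotheses of the right case of Proposition \ref{sprop1} hold. Rather than assuming an ordering up front, I would split into the cases $\tau_u\ge\ga_u$ and $\ga_u>\tau_u$. In the second case, suppose $\ga_u>1$. Then $\frac1{\ga_u}<1$ and $\frac1{\ga_u}\le\frac1{\tau_u}$, so Proposition \ref{sprop1}\textnormal{(iv)} applied to $w$ gives $\langle J'(u),-u^-\rangle>0$, again a contradiction. So $\ga_u\le1$, and hence $\tau_u<\ga_u\le 1$.

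For the second assertion the argument is symmetric. Assume $\langle J'(u),u^+\rangle\ge0$ and $\langle J'(u),-u^-\rangle\ge0$, and suppose $\min\{\tau_u,\ga_u\}<1$.
\begin{itemize}
	\item If $\tau_u\le\ga_u$, then $\tau_u<1$, so $\frac1{\tau_u}>1$ and $\frac1{\ga_u}\le\frac1{\tau_u}$. Proposition \ref{sprop1}\textnormal{(i)} applied to $w$ gives $\langle J'(u),u^+\rangle<0$, a contradiction.
	\item If $\ga_u\le\tau_u$, then $\ga_u<1$, and Proposition \ref{sprop1}\textnormal{(iii)} gives $\langle J'(u),-u^-\rangle<0$, a contradiction.
\end{itemize}
Hence $\tau_u,\ga_u\ge1$.

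The only points to check are the identities $w^+=\tau_u u^+$ and $w^-=\ga_u u^-$, which hold because $\tau_u,\ga_u>0$, and that the pairing identity $\langle J'(\tau w^+-\ga w^-),\tau w^+\rangle=\langle J'(u),u^+\rangle$ holds for $\tau=1/\tau_u$ and $\ga=1/\ga_u$.
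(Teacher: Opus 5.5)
Your argument is correct, but it is organized differently from the paper's. The paper argues by direct computation. Assuming $0<\ga_u\le\tau_u$ and $\tau_u>1$, it bounds $0=\langle J'(\tau_u u^+-\ga_u u^-),\tau_u u^+\rangle$ from above by $\tau_u^q$ times the principal part of $\langle J'(u),u^+\rangle$, minus $\int_\Omega f(x,\tau_u u^+)\tau_u u^+\dx$. This repeats the pointwise estimates behind Step II of Proposition \ref{sprop1}. It then combines this bound with $\langle J'(u),u^+\rangle\le 0$ and \eqref{H2}\eqref{H2vi} to get a contradiction. The second assertion is only sketched as ``the same argument with reversed inequalities.''

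You instead use items (i)--(iv) of Proposition \ref{sprop1} as a black box, applied not to $u$ but to $w=\tau_u u^+-\ga_u u^-\in\s$, via $u=\frac{1}{\tau_u}w^+-\frac{1}{\ga_u}w^-$. Inverting the scalars turns $\tau_u>1$, $\ga_u\le\tau_u$ exactly into the hypotheses of (ii), and likewise for (iv), (i) and (iii). This is the same rescaling device the paper itself uses in Case III.2 to prove uniqueness.

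Your route buys economy and completeness. No estimate has to be redone, and the second assertion becomes a genuine two-line consequence of (i) and (iii). In the paper's direct approach, that assertion would instead need the lower-bound estimates of Cases II.2 and II.4. The paper's route is self-contained and shows explicitly where the one-sided hypothesis on $\langle J'(u),u^+\rangle$ enters, but both proofs use the same information.

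One cosmetic point: your opening ``without loss of generality'' is superseded by the explicit case split that follows, so keep only the case split.
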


\begin{proof}
	From Proposition \ref{sprop1}, for each $u\in E$ with $u^\pm \neq 0$, there exists a unique pair $(\tau_u,\ga_u)$ such that $\tau_u u^+-\ga_u u^-\in \s$. Assume $0<\ga_u \leq \tau_u$ with $\tau_u>1$. Then we have
	\begin{align*}
		0= & \,\langle J'(\tau_uu^+-\ga_u u^-), \tau _u u^+\rangle=\mm{\na (\tau_u u^+)}_p^p \\
		& +\int_{Q} \ve{D_s(\tau_uu^+-\ga_u u^-)}^{p-2}D_s(\tau_u u^+-\ga_u u^-)D_s(\tau_u u^+) \dv \\
		& +\int_{Q}b(x,y) \ve{D_s(\tau_u u^+-\ga_u u^-)}^{q-2}D_s(\tau_u u^+-\ga_u u^-)D_s(\tau_u u^+) \dv
		-\int_{\Omega}f(x,\tau_u u^+)\tau_u u^+\dx
	\end{align*}
	and
	\begin{equation}\label{u+3}
		\begin{aligned}
			0& \leq \tau_u^{q}\lt(\mm{\na u^+}_p^p+\int_{Q} \lt(\ve{D_s(u)}^{p-2}+b(x,y) \ve{D_s(u)}^{q-2}\ri)D_s(u)D_s(u^+) \dv \ri) \\
			& -\int_{\Omega}f(x,\tau_u u^+)\tau_u u^+\dx.
		\end{aligned}
	\end{equation}
	From the assumption $\langle J'(u), u^+\rangle \leq 0$, we have
	\begin{align}\label{u+4}
		\mm{\na u^+}_p^p+\int_{Q} \lt(\ve{D_s(u)}^{p-2}+b(x,y) \ve{D_s(u)}^{q-2}\ri)D_s(u)D_s(u^+) \dv -\int_{\Omega}f(x,u^+)u^+\dx \leq 0.
	\end{align}
	Dividing \eqref{u+3} by $\tau_u^q$ and using \eqref{u+4} gives
	\begin{align*}
		\int_{\Omega}  \lt(\frac{f(x,\tau_u u^+)}{(\tau_u u^+)^{q-1}}-\frac{f(x,u^+)}{(u^+)^{q-1}}\ri)(u^+)^{q}\dx \leq 0.
	\end{align*}
	But from \eqref{H2} \eqref{H2vi}, we have that the left-hand side of the above estimate is strictly positive. Hence, we get a contradiction and deduce $\tau_u\leq 1$.

	Now assume $0<\tau_u\leq \ga_u$ with $\ga_u>1$. Then, using $\langle J'(u), -u^-\rangle \leq 0$ and $ 0=\langle J'(\tau_uu^+-\ga_u u^-), \ga _u u^-\rangle$ and following as in \eqref{u+3} and \eqref{u+4}, we can deduce
	\begin{align*}
		\int_{\Omega} \lt(\frac{f(x,-u^-)}{(u^-)^{q-1}}-\frac{f(x,-\ga_u u^-)}{(\ga_u u^-)^{q-1}}\ri)(u^-)^{q-1}\dx \leq 0,
	\end{align*}
	which is again a contradiction because of \eqref{H2} \eqref{H2vi}. Thus, for $0<\tau_u \leq \ga_u$, we get $\ga_u \leq1$. Combining both parts, we conclude $0< \tau_u,\ga_u \leq 1$.
	The second statement, corresponding to the case $\langle J'(u), u^+\rangle \geq 0$ and $\langle J'(u), -u^-\rangle \geq 0$, follows by the same argument with reversed inequalities. This completes the proof.
\end{proof}

\begin{proposition}\label{coercive}
	The quantity $k:=\displaystyle\inf_{u\in \s}J(u)$ is strictly positive. Moreover, $J$ is coercive on $\s$.
\end{proposition}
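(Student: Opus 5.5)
We prove the two assertions separately: positivity of $k$ comes from a uniform lower bound on the norm of each component, and coercivity comes from the maximality property of Proposition \ref{sprop1}.

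\emph{Positivity of $k$.} Let $u\in\s$. From $\langle J'(u),u^+\rangle=0$ and the first estimate in \eqref{est} (and its $q$-analogue), together with the local identity $\int_\Omega|\nabla u|^{p-2}\nabla u\cdot\nabla u^+\dx=\|\nabla u^+\|_p^p$, we get
\begin{align*}
	\eta(u^+)\le \int_\Omega f(x,u^+)u^+\dx .
\end{align*}
By \eqref{H2}\eqref{H2ii} and \eqref{H2iii}, for every $\varepsilon>0$ we have $f(x,t)t\le \varepsilon|t|^p+c_\varepsilon|t|^r$. Combining this with \eqref{embed} and Lemma \ref{re2}, whenever $\|u^+\|_E<1$ we obtain
\begin{align*}
	\|u^+\|_E^q\le \eta(u^+)\le \varepsilon \mathbb{S}_p^p\|u^+\|_E^p+c_\varepsilon\mathbb{S}_r^r\|u^+\|_E^r .
\end{align*}
This estimate is the delicate point, since $q>p$ makes the $\varepsilon$-term dominant near $0$. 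We therefore keep the $p$-part separate: $\|\nabla u^+\|_p^p\le \varepsilon S_p^p\|\nabla u^+\|_p^p+c_\varepsilon\mathbb{S}_r^r\|u^+\|_E^r$ absorbs the $\varepsilon$-term. Choosing $\varepsilon$ with $\varepsilon S_p^p<1/2$, the full inequality gives $\eta(u^+)\le 2c_\varepsilon \mathbb{S}_r^r\|u^+\|_E^r$. Then $\|u^+\|_E^q\le c\|u^+\|_E^r$ with $r>q$ (Remark \ref{rem_condition}(iii)), hence $\|u^+\|_E\ge \delta_0>0$. The same argument gives $\|u^-\|_E\ge\delta_0$.

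Next, by Proposition \ref{sprop1}, $J(u)\ge J(\tau u^+-\ga u^-)$ for all $\tau,\ga\ge0$. Taking $\ga=0$ gives $J(u)\ge J(\tau u^+)$. Choose $\tau=\delta/\|u^+\|_E$ with $\delta$ from Lemma \ref{mpt_geometry_1}, halved so that $\|\tau u^+\|_E<\delta$; this is admissible because $\|u^+\|_E\ge\delta_0$ and the choice is uniform in $u$. Then $J(u)\ge J(\tau u^+)\ge\beta>0$, so $k\ge\beta>0$.

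\emph{Coercivity on $\s$.} Suppose $u_n\in\s$ with $\|u_n\|_E\to\infty$ but $J(u_n)\le C$. Set $y_n=u_n/\|u_n\|_E$, so that (up to a subsequence) $y_n\rightharpoonup y$ in $E$ and $y_n\to y$ in $L^\ell(\Omega)$ for $\ell<p^*$, with $y_n^\pm\to y^\pm$ a.e.

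If $y\ne0$, then $|u_n|\to\infty$ on $\{y\ne0\}$. Using Remark \ref{rem_condition}(iii) and Fatou's lemma, exactly as in Lemma \ref{cerami_condition}, $J(u_n)/\|u_n\|_E^q\to-\infty$, because the principal part divided by $\|u_n\|_E^q$ stays bounded for $\|u_n\|_E>1$ by Lemma \ref{re2}. This contradicts $J(u_n)\ge k>0$.

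If $y=0$, fix $\lambda>1$. Proposition \ref{sprop1} with $\tau=\ga=\lambda/\|u_n\|_E$ gives $J(u_n)\ge J(\lambda y_n)$. Since $\lambda y_n\to0$ in $L^\ell$ for $\ell<p^*$, and $F$ has subcritical growth, $\int_\Omega F(x,\lambda y_n)\dx\to0$. Meanwhile the principal part of $J(\lambda y_n)$ is at least $\frac{\lambda^p}{q}\eta(y_n)=\frac{\lambda^p}{q}$. Hence $\liminf J(u_n)\ge \lambda^p/q$ for every $\lambda$, contradicting boundedness of $J(u_n)$.

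Therefore $J$ is coercive on $\s$. The main obstacle is the $p$/$q$ mismatch in the lower bound for $\|u^\pm\|_E$, which is handled by isolating the gradient $p$-term as above.
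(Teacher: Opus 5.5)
Your proof is correct and follows the paper's argument. Positivity comes from rescaling $u\in\s$ into the ball of Lemma \ref{mpt_geometry_1} and invoking the maximality in Proposition \ref{sprop1}; coercivity comes from the dichotomy $y\neq 0$ (Fatou plus superlinearity of $F$) versus $y=0$ (comparison with $J(\lambda y_n)$), and your choice $\tau=\gamma=\lambda/\|u_n\|_E$ is a slightly cleaner version of the paper's pair $(\tau_1,\tau_2)$.

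Your preliminary lower bound $\|u^\pm\|_E\geq\delta_0$ (which is Proposition \ref{bound}) is superfluous, though, as is the ``main obstacle'' you attach to it. The choice $\tau=\delta/(2\|u^+\|_E)$ places $\tau u^+$ in the ball for every $u^+\neq 0$, and the uniformity in $u$ comes entirely from the constant $\beta$ of Lemma \ref{mpt_geometry_1}.
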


\begin{proof}
	From Lemma \ref{mpt_geometry_1}, we have
	\begin{align}\label{min1}
		J(u) \geq \ba >0,
	\end{align}
	with $0<\mm{u}_E<\delta<1$, where $\delta$ is defined in Lemma \ref{mpt_geometry_1}. For $u\in \s$, choose $\tau, \ga>0$ such that $\mm{\tau u^+-\ga u^-}_{E}=\delta_1<\delta$. Such a choice is always possible by scaling. Using Proposition \ref{sprop1} and \eqref{min1}, we obtain
	\begin{align*}
		J(u)\geq J(\tau u^+-\ga u^-)\geq\ba >0.
	\end{align*}
	Hence, it follows that $k>0$.

	Let us now show that $J$ is coercive on $\s$. We need to show that for every sequence $\{u_n\}_{n\in\mathbb{N}} \subset \s$, $J(u_n) \to \infty$ whenever $\mm{u_n}_E \to \infty$. Let $\{u_n\}_{n\in\mathbb{N}} \subset \s$ be a sequence with $\lim_{n \to \infty}\mm{u_n}_E=\infty$. Without loss of generality, we may assume that $\mm{u_n}_E>1$. Define $v_n=\frac{u_n}{\mm{u_n}_E}$, then we have $\mm{v_n}_E=1$. Since the space $E$ is reflexive, up to a subsequence, we have
	\begin{equation}\label{coer1}
		\begin{cases}
			v_n  \rightharpoonup v & \text{in }  E,  \\
			v_n  \to v & \text{in } L^s(\Om) \text{ for } s\in[1,p^*), \\
			v_n \to v  & \text{a.e.\,in }  \Om, \\
			v_n^{\pm} \rightharpoonup v^{\pm} & \text{in }  E, \\
			v_n^{\pm} \to v^{\pm}  & \text{in }  L^s(\Om)  \text{ for } s\in[1,p^*), \\
			v_n^{\pm} \to v^{\pm} & \text{a.e.\,in } \Om,
		\end{cases}
	\end{equation}
	for some $v \in E$. We first suppose that $v\neq 0$. From Lemma \ref{re2}, we have
	\begin{equation}\label{coer2}
		\begin{aligned}
			J(u_n)
			& =\frac{1}{p}\mm{\na u_n}_p^p+\int_{Q}\lt(\frac{1}{p}\ve{D_s(u_n)}^p+\frac{b(x,y)}{q}\ve{D_s(u_n)}^q \ri)\dv \\
			& \quad -\int_{\Omega}F(x,u_n)\dx \\
			& \leq \frac{1}{p}\mm{u_n}_E^q-\int_{\Omega}F(x,u_n)\dx.
		\end{aligned}
	\end{equation}
	Dividing \eqref{coer2} by $\mm{u_n}_E^q$, passing to the limit as $n \to \infty$, and using Remark \ref{rem_condition}, we deduce
	\begin{align*}
		\lim_{n \to \infty}\frac{J(u_n)}{\mm{u_n}^q_E}=-\infty,
	\end{align*}
	which is a contradiction as we have $J(u_n)\geq k>0$. Thus, $v=0$ implying $v^-=v^+=0$. As $\{u_n\}_{n\in\mathbb{N}}\subset \s$, by applying Proposition \ref{sprop1}, Lemma \ref{re2} and \eqref{coer1}, for every pair $(\tau_1,\tau_2)\in (0,\infty) \times(0,\infty)$ with $0<\tau_1 \leq \tau_2$, we have
	\begin{align*}
		J(u_n)
		& \geq J(\tau_1 v_n^+-\tau_2 v_n^-)  \\
		& =\frac{1}{p }\mm{\na(\tau_1 v_n^+- \tau_2 v_n^-)}_p^p+\int_{Q}\lt(\frac{1}{p}\ve{D_s(\tau_1 v_n^+- \tau_2 v_n^-)}^p+\frac{b(x,y)}{q}\ve{D_s(\tau_1 v_n^+- \tau_2 v_n^-)}^q\ri) \dv \\
		& -\int_{\Omega}F(x,\tau_1 v_n^+- \tau_2 v_n^-)\dx \\
		& \geq \frac{1}{q} \min(\tau_1^p,\tau_1^q) \eta(v_n)-\int_{\Omega}F(x,\tau_1v_n^+)\dx-\int_{\Omega}F(x,-\tau_2v_n^-)\dx   \\
		& \geq \frac{1}{q} \min(\tau_1^p,\tau_1^q)-\int_{\Omega}F(x,\tau_1v_n^+)\dx-\int_{\Omega}F(x,-\tau_2v_n^-)\dx\\
		& \to \frac{1}{q} \min(\tau_1^p,\tau_1^q).
	\end{align*}
	Hence, for any given $M>0$, we can choose $\tau_1$ large enough such that for all $n \geq m=m(\tau_1)$, we have $J(u_n)>M$. Thus, $J$ is coercive on $\s$.
\end{proof}

\begin{proposition} \label{bound}
	For every $u\in \s$, there exists a constant $L>0$, independent of $u$, such that $\mm{u^{\pm}}_E \geq L$.
\end{proposition}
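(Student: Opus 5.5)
We take $u\in\s$ and use only the relation $\langle J'(u),u^+\rangle=0$; the case of $u^-$ is symmetric, using $\langle J'(u),-u^-\rangle=0$. The aim is a bound of the type $\eta(u^+)\le \varepsilon c\,\|u^+\|_E^p + c_*\,\mathbb{S}_r^r\|u^+\|_E^r$. Since $r>q\ge p$, such a bound forbids $\|u^+\|_E$ from being arbitrarily small.

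\textbf{Step 1 (lower bound for the left side).} Write out $0=\langle J'(u),u^+\rangle$. In the local term, $\nabla u^+$ is supported where $u>0$, so that term equals exactly $\|\nabla u^+\|_p^p$. For the two nonlocal terms we apply the first inequality in \eqref{est} with $\tau=\gamma=1$, together with its $q$-analogue weighted by $b$. This gives
\begin{align*}
\int_\Omega f(x,u^+)u^+\dx \ge \|\nabla u^+\|_p^p+\int_Q |D_s(u^+)|^p\dv+\int_Q b(x,y)|D_s(u^+)|^q\dv=\eta(u^+).
\end{align*}
The integrand on the left is $f(x,u)u^+=f(x,u^+)u^+$, because $u^+$ vanishes wherever $u\le 0$.

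\textbf{Step 2 (upper bound for the right side).} By \eqref{H2}\eqref{H2ii} and \eqref{H2iii}, for every $\varepsilon>0$ there is $c_\varepsilon$ with $|f(x,t)t|\le \varepsilon|t|^p+c_\varepsilon|t|^r$. The embedding \eqref{embed} then yields
\begin{align*}
\eta(u^+)\le \varepsilon\,\mathbb{S}_p^p\|u^+\|_E^p+c_\varepsilon\,\mathbb{S}_r^r\|u^+\|_E^r .
\end{align*}

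\textbf{Step 3 (conclusion).} Suppose $\|u^+\|_E<1$; otherwise $L=1$ already works. By Lemma \ref{re2}(iii), $\eta(u^+)\ge\|u^+\|_E^q$. The difficulty is that the $\varepsilon$-term has the lower power $p<q$, so it cannot simply be absorbed into $\|u^+\|_E^q$.

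To handle this we split the modular. The local part $\|\nabla u^+\|_p^p$ alone already controls $\varepsilon\|u^+\|_p^p\le \varepsilon S_p^p\|\nabla u^+\|_p^p$. Choosing $\varepsilon S_p^p\le \tfrac12$, this gives
\begin{align*}
\tfrac12\|\nabla u^+\|_p^p+\int_Q|D_s(u^+)|^p\dv+\int_Q b\,|D_s(u^+)|^q\dv \le c\,\|u^+\|_r^r\le c\,\mathbb{S}_r^r\|u^+\|_E^r.
\end{align*}
The left side is at least $\tfrac12\eta(u^+)$, which by Lemma \ref{re2} is at least $\tfrac12\|u^+\|_E^q$. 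Hence $\tfrac12\|u^+\|_E^q\le c\|u^+\|_E^r$. Since $u^+\neq0$ and $r>q$ (Remark \ref{rem_condition}(iii)), this gives
\begin{align*}
\|u^+\|_E\ge (2c)^{-1/(r-q)}.
\end{align*}
Taking $L=\min\{1,(2c)^{-1/(r-q)}\}$, and the same for $u^-$, completes the proof. The constant $L$ depends only on $p,q,r,\Omega$ and the constants in \eqref{H2}, not on $u$.

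The main obstacle is Step 1: justifying that the nonlocal cross terms are bounded below by the pure $u^+$ modular. This is precisely what \eqref{est} provides, and it relies on the pointwise inequality $(a+b)^{p-1}a\ge a^p$ on the mixed-sign regions.
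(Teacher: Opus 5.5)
Your proof is correct and follows the paper's argument essentially step by step. You use the lower bound $\int_\Omega f(x,u^+)u^+\dx\ge\eta(u^+)$ from \eqref{est}, the growth estimate $|f(x,t)|\le\varepsilon|t|^{p-1}+c_\varepsilon|t|^{r-1}$, absorption of the $\varepsilon$-term into $\|\nabla u^+\|_p^p$ via the Poincar\'e inequality, and finally $\eta(u^+)\ge\|u^+\|_E^q$ for $\|u^+\|_E<1$ together with $q<r$; your Step~2 bound with $\|u^+\|_E^p$ is only a detour, which you rightly replace by the same gradient absorption the paper uses.
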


\begin{proof}
	Let $u\in \s$ with $\mm{u^{\pm}}_E<1$. By definition of $\s$, we have $\langle J'(u), u^+\rangle=0$, that is,
	\begin{equation}\label{u+5}
		\begin{aligned}
			0
			& = \mm{\na u^+}_p^p+\int_{Q} \lt(\ve{D_s(u)}^{p-2}+b(x,y) \ve{D_s(u)}^{q-2}\ri)D_s(u)D_s(u^+) \dv \\
			& -\int_{\Omega}f(x,u)u^+\dx                                                                                     \\
			  & \geq \mm{\na u^+}_p^p+\int_{Q}\lt(\ve{D_s(u^+)}^{p}+b(x,y)\ve{D_s(u^+)}^q\ri)\dv-\int_{\Omega}f(x,u^+)u^+ \dx.
		\end{aligned}
	\end{equation}
	From \eqref{H2} \eqref{H2ii} and \eqref{H2iii}, for a given $\delta>0$, there exists a positive constant $C_{\delta}>0$ such that
	\begin{align}\label{fgrowth}
		\ve{f(x,t)}\leq \delta \ve{t}^{p-1}+C_{\delta}\ve{t}^{r-1}, \quad \text{for a.a.\,}x\in \Omega\text{ and for all }t\in\R.
	\end{align}
	Combining \eqref{u+5} and \eqref{fgrowth} and using the embeddings $W_0^{1,p}(\Om) \hookrightarrow L^p(\Om)$ and $E \hookrightarrow L^r(\Om)$, we obtain
	\begin{align*}
		\mm{\na u^+}_p^p+\int_{Q}\lt(\ve{D_s(u^+)}^{p}+b(x,y)\ve{D_s(u^+)}^q\ri)\dv\leq \delta S^p_p\mm{\na u^+}_p^p+C_{\delta}\mathbb{S}^r_{r}\mm{u^+}^r_{E}.
	\end{align*}
	For $0<\delta<\frac{1}{S_p^p}$, by using Lemma \ref{re2}, we have
	\begin{align*}
		C_{\delta}\mathbb{S}^r_{r}\mm{u^+}^r_E\geq (1-\delta S_p^p)\lt(\mm{\na u^+}_p^p+\int_{Q}\lt(\ve{D_s(u^+)}^{p}+b(x,y)\ve{D_s(u^+)}^q\ri)\dv\ri)
	\end{align*}
	and
	\begin{align*}
		\mm{u^+}^r_E\geq c \eta(u^+)\geq c\mm{u^+}^q_E.
	\end{align*}
	Since $q<r$, there exists $L>0$ such that $\mm{u^+}_E\geq  L$. An analogous argument applies to the case $u^-$.
\end{proof}

\begin{proposition}\label{infm}
	There exists $v_0 \in \s$ such that $J(v_0)=k$.
\end{proposition}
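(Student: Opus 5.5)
Take a minimizing sequence $\{u_n\}_{n\in\mathbb{N}}\subset\s$ with $J(u_n)\to k$. By Proposition \ref{coercive}, $J$ is coercive on $\s$, so $\{u_n\}$ is bounded in $E$. Since $E$ is reflexive and by \eqref{embed}, after passing to a subsequence we get $u_n\rightharpoonup u$ in $E$ and $u_n^{\pm}\rightharpoonup u^{\pm}$ in $E$. Moreover $u_n^\pm\to u^\pm$ in $L^s(\Omega)$ for every $s\in[1,p^*)$, and the convergence also holds a.e.\ in $\Omega$. The weak convergence $u_n^\pm\rightharpoonup u^\pm$ holds because $\|u_n^\pm\|_E\le c\|u_n\|_E$, which follows from $|D_s(u^\pm)|\le|D_s(u)|$ and $|\nabla u^\pm|\le|\nabla u|$.

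The first step is to show $u^\pm\neq0$. Since $u_n\in\s$, we have $\langle J'(u_n),u_n^+\rangle=0$. Arguing as in \eqref{u+5}, this gives
\begin{align*}
\eta(u_n^+)\le\int_\Omega f(x,u_n^+)u_n^+\dx.
\end{align*}
Proposition \ref{bound} gives $\|u_n^+\|_E\ge L$, so $\eta(u_n^+)\ge\min\{L^p,L^q\}$ by Lemma \ref{re2}. By \eqref{fgrowth}, the right-hand side is at most $\delta\|u_n^+\|_p^p+C_\delta\|u_n^+\|_r^r$. The first term is at most $\delta c$ by boundedness of the sequence. Hence $\min\{L^p,L^q\}-\delta c\le C_\delta\|u_n^+\|_r^r$. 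Choosing $\delta$ small and using the strong convergence in $L^r$ with $r<p^*$, we get $\|u^+\|_r>0$. The same argument gives $u^-\neq0$.

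By Proposition \ref{sprop1}, there is a unique pair $(\tau_u,\gamma_u)$ with $v_0:=\tau_u u^+-\gamma_u u^-\in\s$. To compare $J(v_0)$ with $k$, we use the maximizing property of Proposition \ref{sprop1} for each $u_n\in\s$:
\begin{align*}
J(u_n)\ge J(\tau_u u_n^+-\gamma_u u_n^-).
\end{align*}
Now pass to the $\liminf$. The modular part of $J$ is convex and continuous on $E$, hence weakly lower semicontinuous. The term $\int_\Omega F(x,\tau_u u_n^+-\gamma_u u_n^-)\dx$ converges to $\int_\Omega F(x,\tau_u u^+-\gamma_u u^-)\dx$. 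This follows from the subcritical growth \eqref{H2}\eqref{H2ii}, the strong $L^r$ convergence and the continuity of the Nemytskii operator. Also $\tau_u u_n^+-\gamma_u u_n^-\rightharpoonup v_0$ in $E$, since the positive and negative parts converge weakly separately. Therefore
\begin{align*}
k=\lim_{n\to\infty}J(u_n)\ge\liminf_{n\to\infty}J(\tau_u u_n^+-\gamma_u u_n^-)\ge J(v_0)\ge k,
\end{align*}
so $J(v_0)=k$.

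The main obstacle is the first step, keeping $u^\pm$ nontrivial in the weak limit. Everything rests on the uniform lower bound of Proposition \ref{bound}, converted into a lower bound on the $L^r$ norms through the compact embedding. A secondary point is justifying $u_n^\pm\rightharpoonup u^\pm$ in $E$ and identifying the weak limits of $u_n^\pm$ as $u^\pm$. Boundedness gives weak limits $w_\pm$ along a subsequence, and a.e.\ convergence identifies $w_\pm=u^\pm$.
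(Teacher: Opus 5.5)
Your proof is correct. The first half is the paper's argument written quantitatively rather than by contradiction: you obtain nontriviality of $u^{\pm}$ from $\eta(u_n^+)\le\int_\Omega f(x,u_n^+)u_n^+\dx$, Proposition \ref{bound} and the compact embedding.

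The second half takes a different and shorter route.

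\textbf{The paper's route.} It first shows $\langle J'(v_0),\pm v_0^{\pm}\rangle\le 0$ for the weak limit $v_0$. For the nonlocal cross terms $\int_Q|D_s(v_n)|^{p-2}D_s(v_n)D_s(v_n^{\pm})\dv$ this is not lower semicontinuity of a norm. It can be justified by Fatou's lemma, because these integrands are pointwise nonnegative. The paper then invokes Proposition \ref{sprop2} to get $\tau_{v_0},\gamma_{v_0}\in(0,1]$. Finally, it uses hypothesis \eqref{H2}\eqref{H2v} on $t\mapsto tf(x,t)-qF(x,t)$ to compare $J-\frac1q\langle J'(\cdot),\cdot\rangle$ at $\tau_{v_0}v_0^+-\gamma_{v_0}v_0^-$ with its $\liminf$ along $v_n$.

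\textbf{Your route.} You freeze the pair $(\tau_u,\gamma_u)$ determined by the weak limit and apply the maximality part of Proposition \ref{sprop1} to each $u_n\in\s$. After that you only need two facts: the weak lower semicontinuity of the convex modular part of $J$, and the continuity of $v\mapsto F(\cdot,v)$ from $L^r(\Omega)$ to $L^1(\Omega)$. This avoids Proposition \ref{sprop2}, hypothesis \eqref{H2}\eqref{H2v}, and any semicontinuity of $v\mapsto\langle J'(v),\pm v^{\pm}\rangle$.

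\textbf{What each buys.} The paper's route also shows that the weak limit itself lies on $\s$. Equality throughout its chain forces $\tau_{v_0}=\gamma_{v_0}=1$, since $\tau^p\|\nabla v_0^+\|_p^p<\|\nabla v_0^+\|_p^p$ when $\tau<1$, and similarly for $\gamma$. Your minimizer is $\tau_u u^+-\gamma_u u^-$, which may a priori differ from $u$. That is all the statement requires.
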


\begin{proof}
	Let $\{v_n\}_{n\in\mathbb{N}}\subset \s$ be a minimizing sequence, that is, $J(v_n) \to k$. By Proposition \ref{coercive}, the sequence $\{v_n\}_{n\in\mathbb{N}}$ is bounded in $E$. Consequently, $\{v_n^+\}_{n\in\mathbb{N}}$ and $\{v_n^-\}_{n\in\mathbb{N}}$ are bounded in $E$ as well. Thus, up to a subsequence,
	\begin{align}\label{min2}
		\begin{cases}
			v_n^{\pm}  \rightharpoonup v_0^{\pm} & \text{in } E, \ v_0^{\pm} \geq 0,\\
			v_n^{\pm}  \to v_0^{\pm} &\text{in } L^s(\Om) \text{ for } s\in[1,p^*) \text { and a.e.\,in }\Omega
		\end{cases}
	\end{align}
	From \eqref{min2}, \eqref{H2} \eqref{H2ii} and the dominated convergence theorem, it follows that
	\begin{equation} \label{dct}
		\begin{aligned}
			\lim_{n \to \infty}\int_{\Omega}f(x,\tau v_n^+)\tau v_n^+\dx& =\int_{\Omega}f(x,\tau v_0^+)\tau v_0^+ \dx, \\
			\lim_{n \to \infty}\int_{\Omega}f(x,-\ga v_n^-)(-\ga v_n^-)\dx & = \int_{\Omega}f(x,-\ga v_0^-)(-\ga v_0^-) \dx, \\
			\lim_{n \to \infty}\int_{\Omega}F(x,\tau v_n^+)\dx & = \int_{\Omega}F(x,\tau v_0^+) \dx,  \\
			\lim_{n \to \infty} \int_{\Omega}F(x,-\ga v_n^-)\dx& =\int_{\Omega}F(x,-\ga v_0^-)\dx
		\end{aligned}
	\end{equation}
	for every $\tau,\ga>0$.

	We claim  that $v_0^+\neq 0$ and  $v_0^- \neq0$. Assume that $v^+_0=0$. Since  $v_n\in \s$, we have
	\begin{align*}
		0
		& =\langle J'(v_n), v_n^+ \rangle \\
		& =\mm{\na v_n^+}_p^p+\int_{Q} \lt(\ve{D_s(v_n)}^{p-2}+b(x,y) \ve{D_s(v_n)}^{q-2}\ri)D_s(v_n)D_s(v_n^+) \dv \\
		& \quad -\int_{\Omega}f(x,v_n^+)v_n^+\dx  \\
		& \geq  \eta(v_n^+)-\int_{\Omega} f(x,v_n^+)v_n^+\dx.
	\end{align*}
	Passing the limit as $n \to \infty$ in the above estimate and using \eqref{dct}, we get $\lim_{n\to \infty}\eta(v_n^+)=0$ which is, by Lemma \ref{re2} (v), equivalent to $\mm{v_n^+}_E \to 0$. But this is a contradiction as, by Proposition \ref{bound}, we have  $\mm{v_n^+}_E\geq L>0$. Hence, $v_0^+ \neq 0$. By employing a similar argument, one can show that $v_0^-\neq 0$.

	Since $v_0^\pm \neq0$, by Proposition \ref{sprop1}, there exists a unique pair $(\tau_{v_0},\ga_{v_0})$ satisfying $\tau_{v_0}v_0^+-\ga_{v_0}v_0^-\in \s$. By weak lower semicontinuity of norms and by \eqref{dct}, we obtain
	\begin{align*}
		\langle J'(v_0), \pm v_0^{\pm}\rangle & =\mm{\na v_0^{\pm}}_p^p+\int_{Q} \lt(\ve{D_s(v_0)}^{p-2}+b(x,y) \ve{D_s(v_0)}^{q-2}\ri)D_s(v_0)D_s(v_0^{\pm}) \dv\\
		& \quad-\int_{\Omega}f(x,v_0)v_0^{\pm}\dx\\
		& \leq \liminf_{n \to \infty}{\lt(\mm{\na v_n^\pm }_p^p+\int_{Q} \lt(\ve{D_s(v_n)}^{p-2}+b(x,y) \ve{D_s(v_n)}^{q-2}\ri)D_s(v_n)D_s(v_n^\pm) \dv\ri)} \\
		& \quad -\lim_{n \to \infty}\int_{\Omega}f(x,v_n)v_n^{\pm}\dx\\
		& = \liminf_{n \to \infty}\langle J'(v_n), \pm v_n^{\pm}\rangle =0,
	\end{align*}
	as $v_n\in\s$. Taking Proposition \ref{sprop2} into account yields
	$\tau_{v_0},\ga_{v_0}\in(0,1]$. From \eqref{H2} \eqref{H2v}, for a.a.\,$x\in \Om$, it follows that
	\begin{equation}\label{f}
		\begin{aligned}
			\frac{1}{q}f(x,\tau_{v_0} v_0^+)
			\tau_{v_0} v_0^+
			-F(x,\tau_{v_0} v_0^+)
			 & \leq \frac{1}{q} f(x, v_0^+) v_0^+ -F(x, v_0^+),     \\
			\frac{1}{q}f(x,-\ga_{v_0} v_0^-)
			(-\ga_{v_0} v_0^-)
			-F(x,-\ga_{v_0} v_0^-)
			 & \leq \frac{1}{q} f(x, -v_0^-) (-v_0^-)-F(x, -v_0^-).
		\end{aligned}
	\end{equation}
	Then, from $\tau_{v_0}v_0^+-\ga_{v_0}v_0^-\in \s$, $0<\tau_{v_0},\ga_{v_0}\leq1$, \eqref{dct}, \eqref{f} and $v_n \in \s$, we have
	\begin{align*}
		& k  \leq J(\tau_{v_0}v_0^+ -\ga_{v_0}v_0^-)-\frac{1}{q}\langle J'(\tau_{v_0}v_0^+-\ga_{v_0}v_0^-),\tau_{v_0}v_0^+-\ga_{v_0} v_0^-\rangle\\
		& =\lt(\frac{1}{p}-\frac{1}{q}\ri)\mm{\na (\tau_{v_0}v_0^+-\ga_{v_0} v_0^-)}_p^p\\
		&\quad -\int_{Q}\lt(\frac{1}{p}\ve{D_s (\tau_{v_0}v_0^+-\ga_{v_0}v_0^-)}^p+\frac{b(x,y)}{q}\ve{D_s(\tau_{v_0}v_0^+-\ga_{v_0}v_0^-)}^q\ri)\dv \\
		&\quad -\int_{\Omega}F(x,\tau_{v_0}v_0^+) \dx-\int_{\Omega} F(x,-\ga_{v_0}v_0^-)\dx \\
		&\quad -\frac{1}{q}\int_{Q}\lt(\ve{D_s (\tau_{v_0}v_0^+-\ga_{v_0}v_0^-)}^p+{b(x,y)}\ve{D_s(\tau_{v_0}v_0^+-\ga_{v_0}v_0^-)}^q\ri)\dv\\
		& \quad+\frac{1}{q} \int_{\Omega}f(x,\tau_{v_0}v_0^+)\tau_{v_0}v_0^+\dx+\frac{1}{q} \int_{\Omega}f(x,-\ga_{v_0}v_0^-)(-\ga_{v_0}v_0^-)\dx \\
		& \leq \lt(\frac{1}{p}-\frac{1}{q}\ri)\lt(\mm{\na v_0^+-\na v_0^-}_p^p+\int_{Q} \ve{D_s(v_0^+-v_0^-)}^p\dv\ri)\\
		&\quad +\int_{\Omega}\frac{1}{q} f(x, v_0^+) v_0^+ \dx-\int_{\Omega}F(x, v_0^+)\dx \\
		& \quad+\int_{\Omega}\frac{1}{q} f(x, -v_0^-) (-v_0^-) \dx-\int_{\Omega} F(x, -v_0^-)\dx \\
		& \leq \liminf_{n\to \infty} \lt(J(v_n^+-v_n^-)-\frac{1}{q}\langle J'(v_n^+-v_n^-),v_n^+-v_n^-\rangle\ri)=k.
	\end{align*}
	Thus, we conclude that $\tau_{v_0}=\ga_{v_0}=1$ and $J(v_0)=k$.
\end{proof}

\begin{proof}[Proof of Theorem \ref{t5}]
	We prove that the function $v_0$, obtained in Proposition \ref{infm}, is a least energy sign-changing solution of problem \eqref{frac_dbl}.

	Suppose by contradiction that $J'(v_0) \neq 0$. Then there exists $\beta>0$ and $\delta_1>0$ such that
	\begin{align*}
		\mm{J'(v)}_{*}\geq \ba \quad \text{for all } v\in E \text{ with } \mm{v-v_0}_{E}<3 \delta_1.
	\end{align*}
	Recall that $v_0^\pm \neq 0$. Then, for any $v \in E$, using \eqref{embed}, we obtain
	\begin{align*}
		\mm{v_0-v}_{E} \geq \mathbb{S}^{-1}_p \mm{v_0-v}_p \geq
		\begin{cases}
			\mathbb{S}^{-1}_p \mm{v_0^-}_p, &\text{if}\ v^-=0, \\
			\mathbb{S}^{-1}_p \mm{v_0^+}_p,&\text{if}\ v^+=0.
		\end{cases}
	\end{align*}
	Choose $\delta_2>0$ such that $\delta_2<\min\{\mathbb{S}^{-1}_p \mm{v_0^-}_p,\mathbb{S}^{-1}_p \mm{v_0^+}_p\}$. Then, every $v\in E$ satisfying $\mm{v_0-v}_E<\delta_2$ must fulfill $v^{\pm}\neq0$.

	Let $\delta_0=\min\{\delta_1,\delta_2/2\}$ and note that the map $(\tau, \ga) \to \tau v_0^+-\ga v_0^-$ from $[0,\infty)^2$ into $E$ is continuous. Thus, we find $\ell\in(0,1)$ such that for all $\tau, \ga \geq 0$ with $\ell>\max\{|\tau-1|,|\ga-1|\}$ it holds $\mm{\tau v_0^+-\ga v_0^--v_0}_E<\delta_0$.

	Let $L=(1-\ell,1+\ell) \times (1-\ell,1+\ell)$. By Proposition \ref{sprop1}, for every $\tau, \ga \geq 0$ with $(\tau,\ga)\neq (1,1)$, we obtain
	\begin{align}\label{L1}
		J(\tau v_0^+-\ga v_0^-)<J(v_0^+-v_0^-)=k.
	\end{align}
	Thus, we have
	\begin{align*}
		m=\max_{(\tau,\ga)\in \pa L}J(\tau v_0^+-\ga v_0^-)<J(v_0^+-v_0^-)=k.
	\end{align*}
	Now we are able to apply the quantitative deformation lemma, stated in Lemma \ref{defm}, with
	\begin{align*}
		M=B(v_0,\delta_0), \quad c=k, \quad \varepsilon=\min\left\{\frac{k-m}{4},\frac{\ba \delta_0}{8}\right\},
	\end{align*}
	where $\delta_0$ is defined as above. Note that $M_{2\delta_0}= B(v_0,3\delta_0)$. From the construction of $\varepsilon$, it follows that for every pair $(\tau,\ga)\in \pa L$ we have
	\begin{align}\label{L2}
		J(\tau v_0^+-\ga v_0^-)< m= m+k-k\leq k-\frac{(k-m)}{2}\leq k-2\varepsilon.
	\end{align}
	Further, let $\Theta\colon  [0,\infty) \times[0,\infty)\to E$ and $\Psi\colon [0,\infty) \times[0,\infty)\to \R^2$ be defined by
	\begin{align*}
		\Theta(\tau,\ga)&=\Pi(1,\tau v_0^+-\ga v_0^-),\\ \Psi(\tau,\ga)&=(\Psi_1,\Psi_2)=\left(\langle J'(\Theta(\tau,\ga)),\Theta^+(\tau,\ga)\rangle,\langle J'(\Theta(\tau,\ga)),-\Theta^-(\tau,\ga)\rangle\right),
	\end{align*}
	where $\Pi$ is the map as  in Lemma \ref{defm}.
	The continuity of the maps $\Theta$ and $\Psi$ follows from the continuity of $\Pi$ and the differentiability of $J$, respectively.
	Then \eqref{L2} together with Lemma \ref{defm} (i) gives $\Theta(\tau,\ga)=\tau v_0^+-\ga v_0^-$ for all $(\tau,\ga)\in \pa L$.
	From Step II of Proposition \ref{sprop1}, for $\zeta\in[1-\ell,1+\ell]$ we deduce that
	\begin{align*}
		\Psi_1(1-\ell, \zeta)&>0>\Psi_1(1+\ell, \zeta),\\
		\Psi_2(\zeta,1-\ell)&>0>\Psi_2(\zeta,1+\ell).
	\end{align*}
	From the Poincar\'{e}–Miranda existence theorem stated in Theorem \ref{IVP}, it follows that there exists a pair $(\tau_0,\ga_0)\in L$ with $\Psi(\tau_0,\ga_0)=0$, which means
	\begin{align}\label{theta1}
		\langle J'(\Theta^+(\tau_0,\ga_0)),\Theta^+(\tau_0,\ga_0)\rangle=0=\langle J'(-\Theta^-(\tau_0,\ga_0)),-\Theta^-(\tau_0,\ga_0)\rangle.
	\end{align}
	In view of our choice of $\ell$, Lemma \ref{defm} (iv) yields
	\begin{align*}
		\mm{\Theta(\tau_0,\ga_0)-v_0}_E < 2\delta_0 \leq \delta_2.
	\end{align*}
	Using the above inequality and the definition of $\delta_2$, we get
	$\Theta^\pm(\tau_0,\ga_0)\neq 0$ which together with \eqref{theta1} gives $\Theta(\tau_0,\ga_0)\in \s$.
	By our choice of $\ell$, \eqref{L1}, and  Lemma \ref{defm} (ii), we get  $J(\Theta(\tau_0,\ga_0))\leq k-\varepsilon$. This is a contradiction since $k$ is the infimum of $J$ over $\s$. Hence, $v_0$ is a least energy sign-changing solution of problem \eqref{frac_dbl}.
\end{proof}


\end{document}